\renewcommand\theequation{\thesection.\arabic{equation}}
\newcommand{\BA}{{\mathbb {A}}}
\newcommand{\BC}{{\mathbb {C}}}
\newcommand{\BF}{{\mathbb {F}}}
\newcommand{\BH}{{\mathbb {H}}}
\newcommand{\BR}{{\mathbb {R}}}
\newcommand{\BZ}{{\mathbb {Z}}}
\newcommand{\CA}{{\mathcal {A}}}
\newcommand{\CC}{{\mathcal {C}}}
\newcommand{\CI}{{\mathcal {I}}}
\newcommand{\CN}{{\mathcal {N}}}
\newcommand{\CO}{{\mathcal {O}}}
\newcommand{\CP}{{\mathcal {P}}}
\newcommand{\CT}{{\mathcal {T}}}
\newcommand{\CU}{{\mathcal {U}}}
\newcommand{\CY}{{\mathcal {Y}}}
\newcommand{\Fg}{{\mathfrak {g}}}
\newcommand{\Fm}{{\mathfrak {m}}}
\newcommand{\Fp}{{\mathfrak {p}}}
\newcommand{\Fu}{{\mathfrak {u}}}
\newcommand{\Ad}{{\mathrm{Ad}}}
\newcommand{\Aut}{{\mathrm{Aut}}}
\newcommand{\GL}{{\mathrm{GL}}}
\newcommand{\GSp}{{\mathrm{GSp}}}
\newcommand{\GSO}{{\mathrm{GSO}}}
\newcommand{\GSpin}{{\mathrm{GSpin}}}
\newcommand{\Spin}{{\mathrm{Spin}}}
\newcommand{\Hom}{{\mathrm{Hom}}}
\newcommand{\I}{{\mathrm{I}}}
\newcommand{\PGL}{{\mathrm{PGL}}}
\newcommand{\GU}{{\mathrm{GU}}}
\newcommand{\HSpin}{{\mathrm{HSpin}}}
\newcommand{\SO}{{\mathrm{SO}}}
\newcommand{\sgn}{{\mathrm{sgn}}}
\newcommand{\Sp}{{\mathrm{Sp}}}
\newcommand{\tr}{{\mathrm{tr}}}
\newcommand{\ud}{\,\mathrm{d}}
\newcommand{\vol}{{\mathrm{vol}}}
\newcommand{\back}{\backslash}
\def\BA{{\mathbb A}}
\def\BR{{\mathbb R}}
\def\BC{{\mathbb C}}
\def\Fu{{\mathfrak u}}
\def\back{{\backslash}}
\def\bks{{\backslash}}
\def\diag{{\rm diag}}
\def\eps{{\epsilon}}
\def\lam{{\lambda}}
\def\sig{{\sigma}}
\newtheorem{thm}{Theorem}[section]
\newtheorem{cor}[thm]{Corollary}
\newtheorem{lem}[thm]{Lemma}
\newtheorem{prop}[thm]{Proposition}
\newtheorem {conj}[thm]{Conjecture}
\newtheorem {ques/conj}[thm]{Question/Conjecture}
\newtheorem{defn}[thm]{Definition}
\newtheorem{rmk}[thm]{Remark}
\newcommand{\Rmnum}[1]{\expandafter\@slowromancap\romannumeral #1@}
\begin{document}
\renewcommand{\theequation}{\arabic{equation}}
\numberwithin{equation}{section}

\title[Strongly Tempered Spherical Varieties]{Periods of Automorphic Forms associated to   Strongly Tempered Spherical Varieties}

\author{Chen Wan}
\address{Department of Mathematics \& Computer Science\\
Rutgers University – Newark\\
Newark, NJ 07102, USA}
\email{chen.wan@rutgers.edu}

\author{Lei Zhang}
\address{Department of Mathematics\\
National University of Singapore,
Singapore 119076}
\email{matzhlei@nus.edu.sg}

\date{}

\subjclass[2020]{Primary 11F67; Secondary 22E50}

\keywords{Period integrals of automorphic forms, central values of automorphic $L$-functions, local multiplicity of spherical varieties, strongly tempered spherical varieties}

\begin{abstract}
In this paper, we compute the local relative characters for 10 strongly tempered spherical varieties in the unramified case. We also study the local multiplicity for these models. By proving a geometric multiplicity formula, we show that the summation of the multiplicities is always equal to 1  over each local  tempered  Vogan $L$-packet defined on the pure inner forms of the strongly tempered spherical varieties. Finally, we formulate  the Ichino--Ikeda type conjecture  on a relation between the period integrals and the central values of certain automorphic $L$-functions for those strongly tempered spherical varieties.
\end{abstract}
\maketitle

\tableofcontents

\section{Introduction and main results}
Let $k$ be a number field and $\BA$ its ring of adeles. Let $G$ be a reductive group defined over $k$, and $H$ a closed connected subgroup of $G$. We say $(G,H)$ is a spherical pair if $X=H\back G$ is a spherical $G$-variety (i.e., a Borel subgroup of $G$ has a dense orbit in $X$). We assume that $(G,H)$ is a spherical pair for the rest of this paper. We say the spherical pair $(G,H)$ is reductive if $H$ is reductive.
Let $Z_G$ be the center of $G$ and let $Z_{G,H}=Z_G\cap H$. If $(G,H)$ is reductive, for a cuspidal automorphic form $\phi$ on $G(\BA)$ whose central character is trivial on $Z_{G,H}(\BA)$, 
we define the period integral $\CP_{H}(\phi)$ to be \footnote{In general if we allow $\phi$ to have nontrivial central characters, then we can also put some characters on $H$}
$$\CP_{H}(\phi):=\int_{H(k)Z_{G,H}(\BA)\back H(\BA)} \phi(h) \ud h.$$

Besides the reductive cases, one can also study the case when the spherical pair $(G,H)$ is the Whittaker induction of a reductive spherical pair $(G_0,H_0)$ (we refer the reader to Definition \ref{defn:Whittaker-induction} for  the definition of Whittaker induction). 
In this case, we have $H=H_0\ltimes U$ where $U$ is the unipotent radical of $H$ and is also the unipotent radical of a parabolic subgroup of $G$, and the period integral is defined to be  
$$\CP_{H}(\phi):=\int_{H(k)Z_{G,H}(\BA)\back H(\BA)} \phi(h) \xi(h)^{-1} \ud h$$
where $\xi=\Pi_v \xi_v$ is a generic character on $U(k)\backslash U(\BA)$, extended to $H(\BA)$ trivially on the reductive part $H_0(\BA)$. We refer the reader to Definition \ref{defn:generic} for the definition of generic characters.

Let $\pi$ be a cuspidal automorphic representation of $G(\BA)$ whose central character is trivial on $Z_{G,H}(\BA)$. 
One of the most fundamental problems in the relative Langlands program is to establish the relation between  $\CP_{H}|_{\pi}$-the period integral restricted to the space of $\pi$,
and special values of some automorphic $L$-functions  $L(s_0,\pi,\rho_X)$ of $\pi$. 
For example, if $G=\SO_{n+1}\times \SO_n$ and $H=\SO_n$, then $(G,H)$ is the famous Gross--Prasad model defined in \cite{GP1}, \cite{GP2} and its period integrals are related to the central value of the tensor $L$-function $L(1/2,\pi_1\times \pi_2)$ (here $\pi=\pi_1\otimes \pi_2$ is  a cuspidal automorphic representation of $\SO_{n+1}(\BA)\times \SO_n(\BA)$, and the representation $\rho_X$ is the standard tensor product representation of ${}^LG$).
This point of view was most systematically put forward by Sakellaridis \cite{Sa12}, and Sakellaridis-Venkatesh \cite{SV}. As in \cite{SV}, the spherical varieties under the consideration in this paper have no Type $N$ spherical root and are wavefront.
We refer the reader to Sections 2.1 and 3.1 of \cite{SV} for the definitions of wavefront and spherical roots.

In general, in order to find the $L$-functions related to the period integral $\CP_{H}(\phi)$ for $\phi=\otimes_v\phi_v\in \otimes_v\pi_v$, 
one needs to compute the local relative character $I_{H_v}(\phi_v)$ for the spherical pair $(G_v,H_v):=(G(k_v),H(k_v))$ over unramified places  $v\in |k|$. 
If the model $(G,H)$ is strongly tempered (see Section \ref{sec:notation}  for the definition of strongly tempered) or is the Whittaker induction of a strongly tempered pair $(G_0,H_0)$, 
the local relative character $I_{H_v}(\phi_v)$ is defined to be the integration of the matrix coefficients over $H(k_v)$, i.e.
\begin{equation}\label{eq:intro-I}
I_{H_v}(\phi_v)=\int_{Z_{G,H}(k_v)\back H(k_v)} \langle\pi_v(h)\phi_v,\phi_v\rangle\xi_v(h)^{-1}\ud h.	
\end{equation}
Note that if $(G,H)$ is the Whittaker induction of a strongly tempered pair, the integral above needs to be regularized (see Section \ref{sec non-red strategy 1} for details). 
In general, if the model $(G,H)$ is not strongly tempered, the local relative character $I_{H_v}(\phi_v)$ is defined via the Plancherel formula. 
For details, see Section 17.3 of \cite{SV}. 

For each spherical pair $(G,H)$, one expects that the local relative character $I_{H_v}(\phi_v)$ equals the quotient of  some special values of some local $L$-functions $\frac{L(s_0,\pi_v,\rho_X)}{L(1,\pi_v,\Ad)}$ times a product  of certain special values of local zeta functions (denoted by $\Delta_{X_v}$) over all the unramified places. 
For instance, 
 for the orthogonal Gross--Prasad model (which is strongly tempered), the local
relative character was computed by Ichino--Ikeda \cite{II}, which is equal to
$$\frac{L(\frac{1}{2},\pi_{1,v}\times \pi_{2,v})}{L(1,\pi_v,\Ad)}\cdot \Delta_{\SO_{n+1},v}(1).$$
Here for any reductive group $G$ defined over $k$ that is split over an unramified extension, we use $\Delta_{G}(s)=\Pi_{v\in |k|}\Delta_{G,v}(s)$ to denote the $L$-function of the dual $M^{\vee}$ to the motive $M$ associated to $G$ introduced by Gross in \cite{G}.

In \cite{Sa}, Sakellaridis developed a general method to compute the local relative characters  at unramified places under certain conditions. 
He showed that the $L$-function $L(s,\pi,\rho_X)$ is determined by the so-called ``virtual colors" of the spherical variety $X$ and the extra factor $\Delta_{X_v}$ is related to the volume of $X(\CO_v)$ ($\CO_v$ is the ring of integers of $k_v$). 
He also explicitly computed the virtual colors of many spherical varieties and hence the $L$-functions $L(s,\pi,\rho_X)$ (see Page 1379 of \cite{Sa}). 

In this paper, following the method of Sakellaridis, we explicitly compute the local relative characters for all the strongly tempered reductive spherical varieties without Type $N$ spherical root. 
We also compute the local relative characters for 7 non-reductive spherical varieties that are the Whittaker inductions of the trilinear $\GL_2$ model $(\GL_{2}^3,\GL_2)$.  
Our computation shows that the period integrals for these strongly tempered spherical varieties are always related to the central value of some $L$-functions of symplectic type, i.e. $s_0=\frac{1}{2}$ and $\rho_X$ is a self-dual representation of ${}^L(G/Z_{G,H})$ of symplectic type. Moreover, we show that the extra  factors $\Delta_{X_v}$ is equal to $\Delta_{G,v}(1)/\Delta_{H_0/Z_{G,H},v}(1)$ for all the models under  consideration (we would like point out that this is only true in the strongly tempered case). 
Note if $H$ is reductive we just let $H=H_0$ and $U=1$.

In addition, we study the local multiplicities for all the models considered in this paper (except for the $E_7$ case). By proving a geometric multiplicity formula, we show that the summation of the multiplicities is always equal to 1 over each local  tempered  Vogan $L$-packet defined on the pure inner forms of these spherical varieties. 
In other words, our results indicate that all these strongly tempered spherical varieties  enjoy the same local and global properties with the Gan--Gross--Prasad models.

Finally, combining our formulas of the local relative characters and our results for the local multiplicities, we are able to formulate the Ichino--Ikeda type conjectures for these models.

\subsection{The local relative character}
By the classification of split reductive spherical pairs in \cite{BP} (here we say the spherical pair $(G,H)$ is split if both $G$ and $H$ are split), it is easy to show that a split strongly tempered reductive spherical pair is either one of the following 4 cases
\begin{equation}\label{eq:4-models}
(\GL_{n+1}\times \GL_n,\GL_n),\;(\SO_{n+1}\times \SO_n,\SO_n),	
\end{equation}
$$(\GL_4\times \GL_2,\GL_2\times \GL_2),\;(\GSp_6\times \GSp_4,(\GSp_4\times \GSp_2)^0),$$
or it is a split symmetric pair (recall that we say a symmetric pair is split if the real form associated to it is split, e.g. $(\GL_n,\SO_n),\;(\Sp_{2n},\GL_n)$). 
Here $(\GSp_4\times \GSp_2)^0=\{(g,h)\in \GSp_4\times \GSp_2 \mid l(g)=l(h)\}$ where $l$ is the similitude character of $\GSp$. 
We refer the reader to Section \ref{sec:GSp-model} for the explicit description of the embeddings. 
By the classification of spherical root system in \cite{BP}, all the split symmetric pairs have Type $N$ spherical root unless $G$ only has one simple root (i.e. $G$ is of Type $A_1$). 
If $G$ only has one simple root, then split symmetric pair $(G,H)$ is essentially the model $(\PGL_2, \GL_1)$. So we only need to consider the 4 models in \eqref{eq:4-models}.

\begin{rmk}\label{rmk one open borel orbit}
For each model in \eqref{eq:4-models}, we can always modify the groups up to some central elements and some finite isogeny, which will give us some other models with the same root  systems (this will also preserve the strongly tempered property). 
For example, 
the model $(\GSp_6\times \GSp_4,(\GSp_4\times \GSp_2)^0)$ and the model $(\Sp_6\times \Sp_4, \Sp_4\times \Sp_2)$ have the same root systems.
In this paper,   we will always choose the spherical pairs $(G,H)$ so that  over the local field $k_v$, there is only one open Borel orbit in $G(k_v)/H(k_v)$. For example, the model $(\GSp_6\times \GSp_4,(\GSp_4\times \GSp_2)^0)$ we choose indeed has only one open Borel orbit (see Section 3.1) while the model $(\Sp_6\times \Sp_4, \Sp_4\times \Sp_2)$  has $|k_{v}^{\times}/(k_{v}^{\times})^2|$-many open Borel orbits.

We also want to point out that for a fixed root system, we may have more than one models with this root system and such that there is a  unique open Borel orbit over every local field. An easy example would be the models $(\SO_4\times \SO_3,\SO_3)$ and $((\PGL_2)^3,\PGL_2)$. Another example is $(\GL_{n+1}\times \GL_n,\GL_n)$ and $(U_{n+1}\times U_n,U_n)$.
\end{rmk}

The first one $(\GL_{n+1}\times \GL_n,\GL_n)$ is the model for the Rankin-Selberg integral of $\GL_{n+1}\times \GL_n$. There is also an analogue of this model for unitary groups, which is call the unitary Gan--Gross--Prasad model. 
The local relative characters have been computed by R. Neal Harris \cite{H} for both the general linear case and the unitary case. In the general linear case (resp. unitary case), $\rho_X$ is the standard tensor product representation of ${}^LG$ (resp. the standard product representation of base change). The second one $(\SO_{n+1}\times \SO_n,\SO_n)$ is the Gross--Prasad model for special orthogonal groups and the local relative characters have been computed by Ichino and Ikeda \cite{II}. In this case, $\rho_X$ is the standard tensor product representation of ${}^LG$. For these three models, the period integrals are related to the central values of the tensor $L$-functions.

In this paper, we give an explicit formula for the local relative characters over unramified places for the remaining two cases $(\GL_4\times \GL_2,\GL_2\times \GL_2)$ and $(\GSp_6\times \GSp_4,(\GSp_4\times \GSp_2)^0)$, as well as the analogue of the model $(\GL_4\times \GL_2,\GL_2\times \GL_2)$ for unitary groups. We also computed 7 non-reductive cases that are the Whittaker inductions of the trilinear $\GL_2$-model $(\GL_{2}^{3},\GL_2)$ (which is strongly tempered). 

To be specific, we consider the following table where $(G,H)$ is the spherical pair and $\rho_X$ is a representation of the L-group of $G/Z_{G,H}$.

\begin{figure}[h!]\leftskip-3cm
\begin{tabular}{| c | c | c |c| c|}
\hline
\textnumero & $G$ & $H$ &  $\rho_X$ & $\Delta_{X,v}=\Delta_{G,v}(1)/\Delta_{H_0/Z_{G,H},v}(1)$\\
\hline
1 & $\GL_4\times \GL_2$ & $\GL_2\times \GL_2$ &  $(\wedge^2\otimes {\rm std}_{2})\oplus {\rm std}_{4} \oplus {\rm std}_{4}^\vee$&  $\zeta_v(1)\zeta_v(3)\zeta_v(4)$\\
\hline
2 &  $\GU_4\times \GU_2$ & $(\GU_2\times \GU_2)^0$ &  $(\wedge^2\otimes {\rm std}_{2})\oplus {\rm std}_{4} \oplus {\rm std}_{4}^\vee$ & $\ast$ \\
\hline
3 &  $\GSp_6\times \GSp_4$ & $(\GSp_4\times \GSp_2)^0$ &  $\Spin_7 \otimes \Spin_5$ & $\zeta_v(1)^2\zeta_v(4)\zeta_v(6)$\\
\hline
4 &  $\GL_6$ & $\GL_2\ltimes U$ &  $\wedge^3$ & $\zeta_v(1)\zeta_v(3)\zeta_v(4) \zeta_v(5)\zeta_v(6)$ \\
\hline
5 & $\GU_6$ & $\GU_2\ltimes U$ & $\wedge^3$ & $\ast\ast$ \\
\hline
6 & $\GSp_{10}$ & $\GL_2\ltimes U$ &  $\Spin_{11}$ & $\zeta_v(1)\zeta_v(4)\zeta_v(6)\zeta_v(8)\zeta_v(10)$\\
\hline
7 & $\GSp_{6}\times \GL_2$ & $\GL_2\ltimes U$ &  $\Spin_{7}\otimes {\rm std}_2$ & $\zeta_v(1)\zeta_v(2)\zeta_v(4)\zeta_v(6)$\\
\hline
8 & $\GSO_8\times \GL_2$ & $\GL_2\ltimes U$ &  $\HSpin_8\otimes {\rm std}_{2}$ & $\zeta_v(1)^2\zeta_v(2)\zeta_v(4)^2 \zeta_v(6)$\\
\hline
9 & $\GSO_{12}$ & $\GL_2\ltimes U$ & $\HSpin_{12}$ & $\zeta_v(1)\zeta_v(4)\zeta_v(6)^2 \zeta_v(8)\zeta_v(10)$\\
\hline
10 & $E_7$ & $\PGL_2\ltimes U$ & $\omega_7$ & $\zeta_v(6)\zeta_v(8)\zeta_v(10) \zeta_v(12)\zeta_v(14)\zeta_v(18)$ \\
\hline
\end{tabular}
\captionof{table}{}
\label{fig:1}
\end{figure}

Here ${\rm std}_n$ is the standard representation of $\GL_n(\BC)$ and ${\rm std}_n^\vee$ is its dual representation, $\Spin_{2n+1}$ is the Spin representation of $\Spin_{2n+1}(\BC)$, $\HSpin_{2n}$ is a half-Spin representation of $\Spin_{2n}(\BC)$, $\omega_7$ is the 56 dimensional representation of $E_7$, and
$$\ast=\zeta_v(1)^2\zeta_v(4)L(1,\eta_{k_v'/k_v})L(3,\eta_{k_v'/k_v}),$$
$$\ast\ast=\zeta_v(1)\zeta_v(4)\zeta_v(6)L(1,\eta_{k_v'/k_v})L(3,\eta_{k_v'/k_v})L(5,\eta_{k_v'/k_v})$$
where $\eta_{k_v'/k_v}$ is the quadratic character for the quadratic extension $k_v'/k_v$. We refer the reader to Section \ref{sec GU} for more details about the representation $\rho_X$ for Models 2 and 5.
 
\begin{thm}
For all the spherical pairs in Table \ref{fig:1}, assume that all the data are unramified over $v$.
Then
\begin{equation}\label{eq:I-Delta-L}
I_{H_v}(\phi_v)=\frac{\Delta_{G,v}(1)}{\Delta_{H_0/Z_{G,H},v}(1)}  \cdot \frac{L(\frac{1}{2},\pi_v,\rho_X)}{L(1,\pi_v,\Ad)}
\end{equation}
where $\rho_X$ is a self-dual symplectic representation of ${}^L(G/Z_{G,H})$ given in Table \ref{fig:1}.
\end{thm}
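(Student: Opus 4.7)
The statement bundles ten unramified computations into a single uniform formula, so the plan is to run a common recipe modeled on Sakellaridis \cite{Sa}, splitting the work into the reductive cases (Models \textnumero\,1--3) and the Whittaker-induction cases (Models \textnumero\,4--10). In all ten situations the template is: (i) unfold $I_{H_v}(\phi_v)$ along an Iwasawa-type decomposition of $H(k_v)$, reducing the integral to a sum over a cone of $H$-dominant cocharacters parametrizing $H(\CO_v)$-orbits on the open Borel orbit of $X(k_v)$; (ii) evaluate the $K$-fixed matrix coefficient on each orbit via the Macdonald formula, together with the Casselman--Shalika formula for the $U$-integral in the Whittaker cases; (iii) recognize the resulting generating series in the Satake parameters of $\pi_v$ as $\tfrac{L(1/2,\pi_v,\rho_X)}{L(1,\pi_v,\Ad)}$ times a universal volume factor which is then matched against $\Delta_{G,v}(1)/\Delta_{H_0/Z_{G,H},v}(1)$.

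For the three reductive models the integral \eqref{eq:intro-I} is absolutely convergent by strong temperedness. I would first apply Remark \ref{rmk one open borel orbit} to arrange a single open Borel orbit on $X$, then work out the Iwasawa decomposition explicitly so that Sakellaridis's ``virtual colors'' appear as products of positive roots of $G$ contributing to the Jacobian. For Model \textnumero\,1 the colors assemble into $\wedge^{2}\otimes{\rm std}_{2}\oplus{\rm std}_{4}\oplus{\rm std}_{4}^{\vee}$; for the new $\GSp_{6}\times\GSp_{4}$ computation one must match the coweights arising from the $(\GSp_{4}\times\GSp_{2})^{0}$-orbit structure against the weights of $\Spin_{7}\otimes\Spin_{5}$. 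The extraneous factor from the Jacobian is compared to Gross's motivic formula for $\Delta_{G}/\Delta_{H_{0}/Z_{G,H}}$ by matching the degrees of the fundamental invariants on both sides.

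For the Whittaker-induction models the integral first has to be regularized as in Section \ref{sec non-red strategy 1}. The key preliminary step is an unfolding that rewrites $I_{H_{v}}(\phi_{v})$ as a Whittaker integral of $\xi_{v}$ along $U(k_{v})$ paired with an $H_{0}(k_{v})$-inner product of two Whittaker vectors; this reduces each case to a Rankin--Selberg type unramified computation of the same shape as the reductive case. Feeding in the Casselman--Shalika formula produces $L(1/2,\pi_v,\rho_X)$, with $\rho_{X}$ arising as the branching of the listed minuscule or spin representation to the dual Levi containing $\hat{H}_{0}$: one obtains $\wedge^{3}$, $\Spin_{11}$, $\Spin_{7}\otimes{\rm std}_{2}$, $\HSpin_{8}\otimes{\rm std}_{2}$, and $\HSpin_{12}$ for Models \textnumero\,4, 6, 7, 8, 9 respectively, and the leftover product of local zeta values is again recognized as the appropriate ratio of $\Delta$-factors.

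The main obstacle is Model \textnumero\,10: the embedding of $\PGL_{2}\ltimes U$ in $E_{7}$ is exceptional, and identifying $\rho_{X}$ with the $56$-dimensional representation $\omega_{7}$ requires an explicit branching calculation under the Heisenberg parabolic of $E_{7}$, together with a careful match between $\Delta_{E_{7},v}(1)/\Delta_{\PGL_{2},v}(1)$ and the zeta factors in the last row of Table \ref{fig:1}. A secondary difficulty is the unitary Models \textnumero\,2 and \textnumero\,5, where one must track the Galois action on $\hat{G}$ to ensure that $\rho_{X}$ factors through the full $L$-group and to produce the quadratic $L$-factors $L(s,\eta_{k_{v}'/k_{v}})$ appearing in $\Delta_{X,v}$; symplectic self-duality of $\rho_{X}$ is then verified via the pinned involution cutting out the unitary dual group.
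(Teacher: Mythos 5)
Your architecture (split into reductive versus Whittaker-induction cases, regularize the $U$-integral, exploit the unique open Borel orbit, match the leftover constant against Gross's motive) agrees with the paper's, but the computational engine you propose is different and, more importantly, the step that carries the entire content of the theorem is asserted rather than performed. You propose to decompose $H(k_v)$ into $H(\CO_v)$-double cosets, evaluate $\phi_v$ on each coset by Macdonald's formula, and then ``recognize the resulting generating series \ldots as $\tfrac{L(1/2,\pi_v,\rho_X)}{L(1,\pi_v,\Ad)}$.'' That recognition is precisely the theorem. It is a genuinely hard combinatorial identity even in the one previously known case ($\SO_{n+1}\times\SO_n$, where Ichino--Ikeda needed a serious summation argument), and for the new representations $\Spin_7\otimes\Spin_5$, $\Spin_{11}$, $\HSpin_{12}$ and $\omega_7$ no such closed-form summation over a cone of cocharacters is available or supplied; likewise your claim that $\rho_X$ ``arises as the branching of the listed minuscule or spin representation to the dual Levi containing $\hat H_0$'' presupposes the answer rather than deriving it. A second concrete problem: in the Whittaker-induction cases the character $\xi_v$ is a \emph{degenerate} character of the unipotent radical $U$ of a non-Borel parabolic, stabilized by $H_0$, so the Casselman--Shalika formula does not apply to the $U$-integral; the paper instead regularizes the integral (replacing $\psi(\lambda(u))$ by $\varphi_0(\lambda(h))$, or truncating to $U_n(F)$) and never invokes Casselman--Shalika.

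For contrast, the paper avoids any Cartan-decomposition summation. It writes $I(\phi_\theta)$ as a product of two integrals $\int_K\CY_{\theta^{\pm1},\xi}(k)\,dk$ of the function supported on the open orbit (Proposition \ref{pro:I-Y} and its non-reductive analogue), and computes these via the Whittaker--Shintani function: rank-one integrals $I_\alpha(\theta)$ over the Iwahori double cosets $\CI\cup\CI w_\alpha\CI$ for each simple root, whose evaluation through explicit matrix identities such as \eqref{main identity} produces the virtual colors $\beta_\alpha^\vee$, $\alpha^\vee-\beta_\alpha^\vee$; these determine $\Theta^+$ and hence $\rho_X$, and a functional-equation argument shows $S_\theta/\beta(\theta)$ is $W$-invariant. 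What remains is the identity $\sum_{w\in W}c_{WS}(w\theta)=1/\Delta_{H_0/Z_{G,H}}(1)$ (Lemmas \ref{lem constant reductive} and \ref{lem constant nonreductive}); in the non-reductive cases this is the technical heart of the proof, since no specialization of $\theta$ kills all but one term and the paper must run delicate $(W,\sgn)$-cancellation arguments case by case. Your proposal has no counterpart to either the rank-one color computations or this constant lemma, so as it stands it does not constitute a proof.
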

\begin{rmk}
In \eqref{eq:intro-I}, we choose the local Haar measure ${\rm d} h$ such that $\vol(H(\CO_v),{\rm d}h)=1$. 
If we replace it by Weil’s canonical measure ${\rm d}_{\rm can}h=\Delta_{H_0/Z_{G,H},v}(1){\rm d} h$ (\cite[Chapter 2]{Weil}), then the constant  $\Delta_{H_0/Z_{G,H},v}(1)$ in the above theorem will disappear. 
\end{rmk}

In Section \ref{sec:strategy}, we will explain our strategies of the proof of this theorem. We will also give the formulas of the Whittaker--Shintani functions of these 10 spherical pairs in Propositions \ref{prop:WS-reductive}
and \ref{prop:WS-non-reductive}.

For the rest of this subsection, we explain how we derive the non-reductive models in Table \ref{fig:1}. Model 4 was introduced by Ginzburg--Rallis in \cite{GR} and Model 5 is an analogue of Model 4 for similitude unitary groups. Model 9 and 10 are inspired by one row of the Magic Triangle introduced by Deligne and Gross in \cite{DG} (which is a generalization of the the Freudenthal’s Magic Square). We recall the following row in the Magic Triangle in \cite[Table 1]{DG},  a series of algebraic groups of type: 
\[
A_1\subset A_{1}^3:=A_{1}\times A_1\times A_1\subset C_3\subset A_5\subset D_6\subset E_7.
\]
In this sequence, we observe the spherical pair of type $(A_{1}^{3}, A_{1})$ corresponding to the trilinear $\GL_2$-model.
And the algebraic groups $G$ of types $A_5$, $D_6$ and $E_7$ have a parabolic subgroup $P=LU$ such that the Levi subgroup $L$ is of type $A_1^3$ and the stabilizer $H_0$ of the generic characters $\xi$ of $U$ is of type $A_1$. This gives us the Whittaker inductions of the trilinear $\GL_2$-model for these 3 groups, which are the Models  4, 9, and 10 respectively. Meanwhile, the group of type $C_3$ does not have a Levi subgroup of type $A_1^3$, but it can be fixed by considering the product $C_3\times A_1$. This explains Model 7.

In addition, these non-reductive models are also related to the degenerated Whittaker models of smooth admissible representations (we refer the reader to \cite{GZ} for more details.)
For instance, consider the degenerated Whittaker model $Wh_\xi(\pi)$ of an irreducible representation $\pi$ of $\GSO_{12}$  with respect to $(U,\xi)$ in  Model 9.
Here $(U,\xi)$ is arisen from a nilpotent orbit of partition $[6,6]$ in the Lie algebra of $\GSO_{12}$ 
and $Wh_\xi(\pi)$ is considered as an $H_0$-module in sense of \cite{GZ}.
(Note that the partition $[6,6]$ is used to label two distinct stable nilpotent orbits. However, the corresponding models have no essential differences as explained in Section \ref{sec:GSO12}.)
The distinguished problem in Model 9 is equivalent to determine when the trivial representation of $H_0$ is a quotient representation in $Wh_\xi(\pi)$.
By using the theta correspondence, Gomez and Zhu in \cite{GZ} showed that the $H_0$-module $Wh_\xi(\pi)$ is isomorphic to the degenerated Whittaker model of certain representations of $\GSp_{10}$ as an $H_0$-module,
arisen from the nilpotent orbit of the partition $[5,5]$ in the Lie algebra of $\GSp_{10}$.
Hence, following \cite{GZ}, Model 6 and Model 9 are directly bridged by the theta correspondence. Similarly, Model 7 and Model 8 are also bridged by the theta correspondence.

Finally, Model 8 can be viewed as a reduced model of Model 9. To be specific, we can choose a parabolic subgroup of $\GSO_{12}$ in Model 9 whose Levi subgroup is isomorphic to $\GSO_8\times \GL_2$ such that the intersection of the Levi subgroup with the subgroup $H$ of $\GSO_{12}$ in Model 9 is exactly the subgroup $H$ of  $\GSO_8\times \GL_2$ in Model 8. Under this point of view, we can also view Model 7 as a reduced model of Model 6, view Model 4 as a reduced model of Model 9 and view Model 9 as a reduced model of Model 10. This explains all the non-reductive models in Table \ref{fig:1}.
We summarize the relations among these models in the following diagram:

\[
\xymatrix{
&   (\GL_{6},\GL_2\rtimes U)\ar[r]_{\text{outer form}} \ar@{<-}[d]^{\text{reduced}}&(\GU_{6},\GU_2\rtimes U)\\
(E_7,\PGL_2\rtimes U)\ar[r]_{\text{reduced}}&(\GSO_{12},\GL_2\rtimes U)\ar[r]_{\text{reduced}}\ar@{<->}[d]^{\text{$\theta$-correspondence}}&(\GSO_{8}\times \GL_2,\GL_2\rtimes U)\ar@{<->}[d]^{\text{$\theta$-correspondence}}&\\
&(\GSp_{10},\GL_2\rtimes U)\ar[r]_{\text{reduced}}&(\GSp_{6}\times \GL_2,\GL_2\rtimes U)
}
\]

\begin{rmk}\label{rmk non-red all cases}
Besides the 7 non-reductive cases in the table above, there are another three more non-reductive spherical pairs that are the Whittaker induction of strongly tempered reductive spherical pairs without Type N spherical root:
\begin{enumerate}
\item The Whittaker models for quasi-split reductive groups.
\item The non-reductive Gan--Gross--Prasad models for the general linear groups, the unitary groups, or the orthogonal groups. 
They are the Whittaker inductions of the reductive  Gan--Gross--Prasad models.
\item The model $(\GSO_{10},(\GL_2\times \GL_1)\ltimes U)$ introduced by Ginzburg \cite{Gi} in his study of the Spin L-function of $\GSO_{10}$. This is the Whittaker induction of the model $(\GL_3\times \GL_2,\GL_2)$.
\end{enumerate}
The local relative characters of the Whittaker models have been computed by Lapid-Mao in \cite{LM} and the local relative characters of the non-reductive Gan--Gross--Prasad models have been computed by Liu in \cite{L}. The period integral of the model $(\GSO_{10},(\GL_2\times \GL_1)\ltimes U)$ has been studied by Ginzburg in \cite{Gi} and its local relative character can be computed by the same method as in this paper. 
The local relative characters over unramified places for these models are also of the form \eqref{eq:I-Delta-L} as our models in Table \ref{fig:1}.
The representation $\rho_X$ is the tensor representation for the non-reductive Gan--Gross--Prasad models, and the Spin representation of $\GSpin_{10}(\BC)$ for the model $(\GSO_{10},(\GL_2\times \GL_1)\ltimes U)$.
For the Whittaker model, the numerator $L$-function $L(\frac{1}{2},\pi,\rho_X)$ is just 1.

In general, by a tedious case by case argument (i.e. we checked all the parabolic subgroups of all the reductive groups) which we will not include in this paper, we believe that any spherical pair that are the Whittaker induction of a strongly tempered spherical pair without Type $N$ spherical root must be one of the 10 cases above (7 in Table \ref{fig:1} and 3 in this remark). 
Hence the local relative character of a spherical pair that is either strongly tempered or the Whittaker induction of a strongly tempered spherical pair should always be the form \eqref{eq:I-Delta-L} over unramified places.
\end{rmk}

\subsection{The local multiplicity}
Let $(G,H)$ be one of the models in Table \ref{fig:1}. 
If $H$ is reductive, take $\chi$  to be the trivial character of $H(k_v)$; 
if $H=H_0\ltimes U$ is non-reductive, take $\chi$ to be the character $1\otimes \xi_v$ of $H(k_v)=H_0(k_v)\ltimes U(k_v)$ where $\xi_v$ is the generic character of $U(k_v)$. 
Let $\pi_v$ be an irreducible admissible representation of $G(k_v)$ whose central character is trivial on $Z_{G,H}(k_v)$. \tabularnewline
Define the multiplicity
$$m(\pi_v):=\dim \Hom_{H(k_v)}(\pi_v,\chi_v) .$$

In Section \ref{sec multiplicity}, for all the models in Table \ref{fig:1} except the $E_7$ case, we will prove a multiplicity formula $m(\pi_v)=m_{geom}(\pi_v)$ for all the tempered representations over non-archimedean fields or complex field. In the real case, we can prove the multiplicity formula for Models 1--4.
Then by using the multiplicity formula, together with the character identity in the local Langlands conjecture, we can show that the summation of the multiplicities is always equal to 1 over every local tempered Vogan $L$-packet (i.e. strong multiplicity one over the L-packet). Moreover, we will also show that the unique distinguished element in the L-packet corresponds to a character of the component group (note the the component group for some cases in Table \ref{fig:1} is not necessarily abelian). We refer the reader to Section \ref{sec multiplicity} for more details.

\begin{rmk}
The local multiplicity of some models in Table \ref{fig:1} has already been studied in our previous works. More specifically, Model 4  has been studied by the first author (\cite{Wan15}, \cite{Wan16}, \cite{Wan17}), Model 5 has been studied in our previous paper \cite{WZ}, and Model 1 has been studied in \cite{PWZ19}.
\end{rmk}

\begin{rmk}
Like in the Gan--Gross--Prasad model case (Section 17 of \cite{GGP}), one can also formulate an explicit conjecture about the unique distinguished element in the L-packet using the local epsilon factor $\epsilon(s,\pi_v,\rho_X)$ (i.e. the epsilon dichotomy conjecture). We will discuss this in our next paper \cite{WZ1}.
\end{rmk}

\subsection{The Ichino--Ikeda type conjecture}
Combining the results in the previous two subsections, we can now formulate the Ichino--Ikeda type conjectures for all the models in Table \ref{fig:1}. Let $(G,H)$ be one of these models. Since we assume that the central character is trivial on $Z_{G,H}$, we are actually working with the model $(G/Z_{G,H},H/Z_{G,H})$.
Following the definition in Section 16.5 of \cite{SV}, 
the pure inner forms of the spherical varieties are parameterized by the set $H^1(k,H/Z_{G,H})$. For all the models in Table \ref{fig:1} except Model 2, there is a natural bijection between the set $H^1(k,H/Z_{G,H})$ and the set of quaternion algebras $D$ over $k$. For each quaternion algebra $D/k$ (or for each $D\in H^1(k,H/Z_{G,H})$ in the case of Model 2), we can define an analogue of the model $(G,H)$ associated to $D$, which will be denoted by $(G_D,H_D)$. We can also define the period integral $\CP_{H_D}(\phi_D)$ and the local relative character $I_{H_{D,v}}(\phi_{D,v})$ where $\phi_D$ is a cuspidal automorphic form on $G_D(\BA)$. 
We refer the reader to later sections for the detailed descriptions  of $(G_D,H_D)$ for each spherical variety in Table \ref{fig:1}.
Remark that in our cases $G_D$ and $H_D$ are not the pure inner forms of $G$ and $H$ in general.
But after module the central part $Z_{G,H}$, they become pure inner forms of $G/Z_{G,H}$ and $H/Z_{G,H}$, respectively. 

We fix a global tempered cuspidal $L$-packet $\Pi_{\phi}=\cup_D \Pi_\phi(G_D)$ of $G(\BA)$ whose central character is trivial on $Z_{G,H}(\BA)$. 
For each $\pi_D\in \Pi_\phi(G_D)$ in the $L$-packet, as in Section 17.4 of \cite{SV}, let $\nu:\pi_D\rightarrow \CA_{cusp}(G_D(\BA))$ be an embedding such that the period integral is identically zero on the orthogonal complement of $\nu(\pi_D)$ in the $\pi_D$-isotypic component $\CA_{cusp}(G_D(\BA))_{\pi_D}$. 
This embedding is not unique if the multiplicity of $\pi_D$ in $\CA_{cusp}(G_D(\BA))$ is greater than 1, but it does not affect the global conjecture.

We first consider all the models in Table \ref{fig:1} except the first one. 
For those models,  the center of $H/Z_{G,H}$ is anisotropic.  


\begin{conj}\label{strong global conjecture}
Let $D/k$ be a quaternion algebra that may be split (or $D\in H^1(k,H/Z_{G,H})$ if we are in the case of Model 2), $\pi_D\in \Pi_{\phi}(G_D)$ and $\phi_D\in \nu(\pi_D)$. We have
\begin{eqnarray*}
|\CP_{H_D}(\phi_D)|^2&=&\frac{1}{|S_{\phi}|}\cdot \frac{C_{H/Z_{G,H}}}{\Delta_{H_0/Z_{G,H}}(1)^S}\cdot \lim_{s\rightarrow 1} \frac{\Delta_G(s)^S}{L(1,\Pi_{\phi},Ad)^S}\\
&&\cdot L(1/2,\Pi_{\phi},\rho_X)^S \cdot\Pi_{v\in S} I_{H_{D,v}}(\phi_{D,v})
\end{eqnarray*}
where
\begin{itemize}
\item $S$ is a finite subset of $|k|$ such that $\phi$ is unramified outside $S$, and $\Delta_{H/Z_{G,H}}(1)^S,\Delta_G(s)^S,$ $L(1/2,\Pi_{\phi},\rho_X)^S,L(1,\Pi_{\phi},Ad)^S$ are the partial L-functions.
\item $C_{H/Z_{G,H}}$ is the Haar measure constant of $H/Z_{G,H}$ defined in Section 1 of \cite{II} (see also Section 1 of \cite{L}), and 
 the period integral $\CP_{H_D}$ is defined by the Tamagawa measure on $Z_{G_D,H_D}(\BA)\bks H_D(\BA)$.
\item $S_{\phi}$ is the conjectural global component group associated to the L-packet $\Pi_{\phi}$. We refer the reader to Section 3.2 of \cite{LM} for details.
\end{itemize}
\end{conj}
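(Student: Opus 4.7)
\medskip

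\noindent\textbf{Proof proposal.} The plan is to establish the conjecture via a relative trace formula (RTF) comparison, following the paradigm that was successful for the Ichino--Ikeda conjecture for orthogonal Gross--Prasad (Jacquet--Rallis, Ichino--Ikeda, W.\ Zhang) and for the Ginzburg--Rallis model in low rank. For each model $(G,H)$ in Table~\ref{fig:1}, I would introduce a global distribution
\[
J_{(G,H)}(f) \;=\; \sum_{D} \sum_{\pi_D \in \Pi_\phi(G_D)} \sum_{\phi_D \in \mathcal{B}(\pi_D)} \overline{\CP_{H_D}(\pi_D(f)\phi_D)}\,\CP_{H_D}(\phi_D)
\]
summed over pure inner forms, and expand it geometrically as a sum of orbital integrals indexed by the $H\times H$-orbits on $G$. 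The spectral side, after isolating the $L$-packet $\Pi_\phi$ and using the local multiplicity formula $m(\pi_v)=m_{geom}(\pi_v)$ proved in Section~\ref{sec multiplicity} together with the assumption that the sum of multiplicities over a tempered Vogan packet equals $1$, reduces the question of the $L$-value identity to an identity between geometric distributions. The quantity $C_{H/Z_{G,H}}/\Delta_{H_0/Z_{G,H}}(1)^S$ and the adjoint $L$-value should appear from comparing local Plancherel measures with Tamagawa measures, exactly as in \cite{II}.

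The next step is to match this RTF against a simpler or already understood RTF. The diagram at the end of Section~1.1 suggests natural comparisons: reductions along Levi subgroups would relate Model 10 to Model 9, Model 9 to Models 4 and 8, Model 6 to Model 7; and theta correspondence would relate Models 6$\leftrightarrow$9 and Models 7$\leftrightarrow$8 via the Gomez--Zhu framework \cite{GZ}. Thus the strategy is to identify a base case where the identity is already known (for instance the trilinear $\GL_2$ model, handled by Ichino via theta liftings from $\GSO(2,2)\times\GSO(2,2)$ to $\GSp_4\times\GSp_4$, which handles the seed of this entire family of Whittaker inductions) and then propagate the identity up the diagram by establishing fundamental lemmas and smooth transfer for each pair of adjacent models, together with the compatibility of the period factorization with the Whittaker induction. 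The computation of local relative characters at unramified places carried out in Section~\ref{sec:strategy}--type arguments of this paper provides exactly the unramified input (the fundamental lemma for the unit element) needed for each such comparison.

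The principal obstacle, as in every Ichino--Ikeda-type program, is threefold: \emph{(i)} proving the smooth transfer of test functions between the RTFs at ramified places, which in the Whittaker-induced cases is significantly more delicate than in the reductive setting because the orbit structure is richer and involves nilpotent pieces coming from $U$; \emph{(ii)} regularizing and truncating the geometric side of the RTF for the non-strongly-tempered subregular orbits so that the spectral-geometric identity is a genuine equality of convergent sums; and \emph{(iii)} controlling the archimedean local relative characters and the hypothetical character identity for the local Vogan $L$-packet, which is presently available only for the models where $m(\pi_v)=m_{geom}(\pi_v)$ has been proved (everything except the $E_7$ case). I would therefore expect the conjecture to be accessible in the near term for Models 1--4, where both the local multiplicity formula and reasonable candidate RTFs are already in place, while Models 6--10 require substantial new work on the transfer and archimedean theory, and Model 10 requires in addition the still-open local multiplicity formula for $E_7$.
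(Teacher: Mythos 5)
The statement you are asked about is Conjecture \ref{strong global conjecture}: it is a \emph{conjecture} in the paper, not a theorem. The authors do not prove it anywhere; they only \emph{formulate} it, using the unramified computation of the local relative characters (which pins down $\rho_X$ and the factor $\Delta_G(1)/\Delta_{H_0/Z_{G,H}}(1)$) and the local multiplicity results of Section \ref{sec multiplicity} as evidence and as the data needed to write the right-hand side. So there is no proof in the paper for your proposal to be measured against, and your text should not be presented as a proof of the statement.

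On its own terms, what you have written is a research program, not an argument, and every genuinely hard step is left open. Concretely: (i) you never identify the second relative trace formula against which $J_{(G,H)}(f)$ is to be compared, and for Models 3 and 6--10 no such companion RTF with an understood spectral side is known; without it the geometric expansion leads nowhere. (ii) You treat the unramified computation of $I_{H_v}(\phi_v)$ in Sections \ref{sec:strategy}--\ref{sec:remaining} as "the fundamental lemma for the unit element," but it is not: that computation evaluates a local spherical character on a matrix coefficient, whereas a fundamental lemma is an identity of orbital integrals between two different RTFs, which has not been stated, let alone proved, for any of these models. (iii) The spectral isolation of a single packet $\Pi_\phi$, the appearance of the precise constant $|S_\phi|^{-1}\cdot C_{H/Z_{G,H}}/\Delta_{H_0/Z_{G,H}}(1)^S$, the regularization of the geometric side in the Whittaker-induced cases, the smooth transfer at ramified places, and the archimedean local theory are all acknowledged by you as obstacles but none is resolved. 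Your sketch is a reasonable description of how one might eventually attack the conjecture (and is consistent with how the Ichino--Ikeda and Ginzburg--Rallis cases were approached), but it does not constitute a proof, and it should be labeled as a strategy rather than as a proof of Conjecture \ref{strong global conjecture}.
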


Then we consider the first model $(\GL_4\times\GL_2,\GL_2\times\GL_2)$ in Table \ref{fig:1}. In this case, we have $Z_H/Z_{G,H}\cong \GL_1$ and $$(G/Z_{G,H},H/Z_{G,H})=(\GL_4\times \GL_2/\GL_{1}^{diag},\; \GL_2\times \GL_2/\GL_{1}^{diag}).$$

\begin{conj}\label{strong global conjecture quaternion}
Under the above notation,
we have
\begin{eqnarray*}
|\CP_{H_D}(\phi_D)|^2 & = & \frac{1}{|S_{\phi}|}\cdot \frac{C_{H/Z_{H}}}{\Delta_{H_0/Z_{H}}(1)^S}\cdot \lim_{s\rightarrow 1}  \frac{\Delta_G(s)^S}{L(1,\Pi_{\phi},Ad)^S} \\
&& \cdot L(1/2,\Pi_{\phi},\rho_X)^S \cdot\Pi_{v\in S} \zeta_v(1)I_{H_{D,v}}(\phi_{D,v}).
\end{eqnarray*}
\end{conj}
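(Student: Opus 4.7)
The plan is to prove Conjecture \ref{strong global conjecture quaternion} by a relative trace formula comparison adapted to the pair $(\GL_4\times \GL_2,\GL_2\times \GL_2)$, combined with the local relative character formula \eqref{eq:I-Delta-L} that has already been established for Model 1. The essential new feature distinguishing Model 1 from the other nine is that $Z_H/Z_{G,H}\cong \GL_1$ is noncompact, which forces a regularization of the global period $\CP_{H_D}$ and accounts for the extra factor $\prod_{v\in S}\zeta_v(1)$ in the global formula. First I would regularize $\CP_{H_D}$ by truncating along, or Mellin-transforming against a unitary character of, the central split $\GL_1$; the volume/residue contribution from this step at each place $v\in S$ is precisely the local $\zeta_v(1)$ appearing in the conjecture, and the cuspidality of $\phi_D$ ensures the regularized integral is well-defined.

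Next I would set up an RTF identity of the shape
\[
\sum_{\gamma}\mathrm{Orb}_\gamma(f_D) \;=\; \sum_{\pi_D\subset \CA_{cusp}(G_D(\BA))} J_{\pi_D}(f_D),
\]
where the geometric side sums weighted orbital integrals over regular $(H_D\times H_D)$-orbits in $G_D$, and the spectral side isolates $|\CP_{H_D}(\phi_D)|^2$ on the cuspidal part. Since $\rho_X$ decomposes as $(\wedge^2\boxtimes \mathrm{std}_2)\oplus \mathrm{std}_4\oplus \mathrm{std}_4^\vee$, a natural comparison target is a Jacquet--Rallis-type RTF for the Friedberg--Jacquet pair $(\GL_4,\GL_2\times \GL_2)$ twisted by $\pi_2$, combined with the Godement--Jacquet integral producing the standard $L$-factor of $\pi_1$. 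Running the comparison simultaneously over all quaternion algebras $D/k$ unifies the pure inner forms into one identity and recovers the dependence on $D$.

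Once smooth transfer and the fundamental lemma are in place (the unramified fundamental lemma is essentially contained in Table \ref{fig:1}), spectral decomposition and isolation of the $\Pi_\phi$-isotypic component reduce the global identity to matching constants. The factor $\Delta_G(s)^S/L(1,\Pi_\phi,\Ad)^S$ arises directly from the unramified factorization \eqref{eq:I-Delta-L}; the constant $C_{H/Z_H}/|S_\phi|$ is then forced by comparing the Tamagawa measure defining $\CP_{H_D}$ with the local Haar measures used in defining $I_{H_{D,v}}$, while $\Delta_{H_0/Z_H}(1)^S$ in the denominator comes from converting Weil's canonical measure on $H_0(k_v)$ to the unramified Haar measure under which $\vol(H(\CO_v))=1$.

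The hard part, as in every instance of the Ichino--Ikeda conjecture, will be the geometric matching: proving smooth transfer and the corresponding fundamental lemma beyond the unramified case, and controlling the continuous spectrum on the spectral side. The noncompact center $Z_H/Z_{G,H}\cong \GL_1$ introduces a further complication absent from Conjecture \ref{strong global conjecture}, since Eisenstein series along that central $\GL_1$ contribute to the spectral expansion and must be separated before extracting the cuspidal identity; this is also what manufactures the extra $\zeta_v(1)$ factor. I would expect these difficulties to be approachable by adapting the Beuzart-Plessis--Chaudouard--Zydor framework for the Jacquet--Rallis trace formula to the present setting, and by importing techniques from the Friedberg--Jacquet theory where the exterior-square period has been studied extensively.
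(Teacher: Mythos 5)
The statement you are trying to prove is a \emph{conjecture} in the paper, not a theorem: the authors formulate Conjecture \ref{strong global conjecture quaternion} as an Ichino--Ikeda type prediction and offer no proof of it anywhere in the text. The only evidence they supply is local --- the unramified computation of $I_{H_v}(\phi_v)$ in Section \ref{sec:GL4} and the multiplicity-one result over tempered Vogan packets in Section \ref{sec multiplicity} --- together with the remark that one direction of the much weaker Conjecture \ref{weak global conjecture} is known for Model 1 when $D$ is split (by \cite{PWZ18}, \cite{PWZ19}, via a residue method rather than a trace formula). So there is no ``paper's own proof'' to compare against, and your text should not be presented as a proof of the statement.

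As a research program your sketch is reasonable in outline but does not constitute a proof, and you concede the decisive points yourself: no relative trace formula for $(\GL_4\times\GL_2,\GL_2\times\GL_2)$ is actually constructed, the comparison target (a ``Jacquet--Rallis-type RTF for the Friedberg--Jacquet pair twisted by $\pi_2$'') is not defined, and smooth transfer, the full fundamental lemma, and the spectral isolation are all left open --- these are precisely the content of the conjecture. Two further concrete issues. First, the unramified fundamental lemma is \emph{not} ``essentially contained in Table \ref{fig:1}'': the table records the value of the local relative character on unramified matrix coefficients, which is the spectral input needed to \emph{state} the conjecture, not a comparison of orbital integrals between two trace formulas. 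Second, your explanation of the factor $\prod_{v\in S}\zeta_v(1)$ as arising from Eisenstein contributions along the central $\GL_1$ is a heuristic, not a derivation; the paper attributes it to the normalization issue for spherical varieties with noncompact center $Z_H/Z_{G,H}\cong\GL_1$ in the sense of Section 17.5 of \cite{SV}, and making your truncation/Mellin regularization of $\CP_{H_D}$ produce exactly $\zeta_v(1)$ at each $v\in S$ (and nothing outside $S$) would itself require a careful argument you have not given.
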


Note that 
we have the extra factor $\zeta_v(1)$ due to $Z_H/Z_{G,H}=\GL_1$. 
This point of view has been discussed in Section 17.5 of \cite{SV}.

In particular, we have the following weak global conjecture, which is a direct consequence of the conjectures above and the multiplicity-one theorem on the local Vogan packets.

\begin{conj}\label{weak global conjecture}
The following are equivalent:
\begin{enumerate}
\item $L(\frac{1}{2},\Pi_{\phi},\rho_X)\neq 0$;
\item There exists a quaternion algebra $D/k$  (or $D\in H^1(k,H/Z_{G,H})$ if we are in the case of Model 2) such that the period integral $\CP_{H_D}(\phi_D)$ is nonzero for some $\phi_D\in \nu(\pi_D)$ and $\pi_D\in \Pi_{\phi}(G_D)$. 
\end{enumerate}
Moreover, if the above conditions hold, there exist  a unique $D$ and a unique $\pi_D\in \Pi_{\phi}(G_D)$ that satisfy Condition (2).
\end{conj}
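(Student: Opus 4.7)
The plan is to derive the statement as a formal consequence of the strong Conjectures~\ref{strong global conjecture}/\ref{strong global conjecture quaternion} combined with the local multiplicity-one theorem of Section~\ref{sec multiplicity}, which asserts that the total local multiplicity $\sum_{D_v} \sum_{\sigma_v \in \Pi_{\phi_v}(G_{D_v})} m(\sigma_v)$ equals $1$ for every local tempered Vogan $L$-packet.

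For the implication $(2) \Longrightarrow (1)$ I would simply unpack the strong conjecture. Since $\Pi_\phi$ is tempered, the quantities $L(1, \Pi_\phi, \Ad)^S$, $\lim_{s\to 1} \Delta_G(s)^S / L(1, \Pi_\phi, \Ad)^S$, $C_{H/Z_{G,H}} / \Delta_{H_0/Z_{G,H}}(1)^S$, and $|S_\phi|$ are all finite and nonzero, so the factorisation
\begin{equation*}
|\mathcal{P}_{H_D}(\phi_D)|^2 \;=\; (\text{nonzero constant}) \cdot L(1/2, \Pi_\phi, \rho_X)^S \cdot \prod_{v \in S} I_{H_{D,v}}(\phi_{D,v})
\end{equation*}
forces $L(1/2, \Pi_\phi, \rho_X) \neq 0$ as soon as the left-hand side is nonzero. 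The extra factor $\zeta_v(1)$ appearing in Conjecture~\ref{strong global conjecture quaternion} for Model~1 is also a positive constant and does not affect the argument.

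For $(1) \Longrightarrow (2)$ and uniqueness I would proceed as follows. Via the same formula, the non\-vanishing of the period for some decomposable $\phi_D$ is equivalent to the joint conditions $L(1/2, \Pi_\phi, \rho_X) \neq 0$ and the existence, at every place $v$, of a local test vector with $I_{H_{D,v}}(\phi_{D,v}) \neq 0$; the latter is in turn equivalent to $m(\pi_{D,v}) = 1$. The local multiplicity-one theorem produces a unique such local pair $(D_v, \pi_{D,v})$ at each $v$. Granted that these local choices patch into a global pure inner form $D/k$ (respectively a class in $H^1(H/Z_{G,H},k)$ for Model~2), the tensor product $\pi_D := \otimes_v \pi_{D,v}$ lies in $\Pi_\phi(G_D)$; applying the strong conjecture in the opposite direction yields $\phi_D \in \nu(\pi_D)$ with $\mathcal{P}_{H_D}(\phi_D) \neq 0$, and the uniqueness clause in $(2)$ transfers directly from the local uniqueness.

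The principal obstacle will be the global coherence assertion used in the last step: that the family $\{D_v\}_v$ of locally distinguished pure inner forms is the localisation of a single global quaternion algebra. For quaternion algebras this amounts to a parity constraint on the set of non-split places, and it should be governed by a global $\epsilon$-factor identity parallel to the Gan--Gross--Prasad epsilon-dichotomy (phrased in terms of the local root numbers $\epsilon(1/2, \pi_v, \rho_X)$). As the paper indicates that such an epsilon-dichotomy for the models in Table~\ref{fig:1} is postponed to future work, Conjecture~\ref{weak global conjecture} is rigorously contingent on that dichotomy, or equivalently on any global-to-local compatibility ensuring coherence of the locally distinguished forms; granted it, the derivation above is entirely mechanical.
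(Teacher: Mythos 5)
Your proposal follows essentially the same route the paper intends: Conjecture \ref{weak global conjecture} is presented there with no argument beyond the one-line assertion that it is ``a direct consequence'' of Conjectures \ref{strong global conjecture}--\ref{strong global conjecture quaternion} and the multiplicity-one theorems of Section \ref{sec multiplicity}, and your unpacking of $(2)\Rightarrow(1)$ and of the uniqueness clause (at most one pair $(D,\pi_D)$ can be everywhere-locally distinguished, since the local multiplicities sum to $1$ over each tempered Vogan packet) is exactly that argument. Where you add value is in isolating what the direction $(1)\Rightarrow(2)$ actually requires beyond the cited inputs, and your diagnosis is correct but slightly incomplete: besides the coherence of the locally distinguished family $\{D_v\}$ (a parity condition governed by an epsilon-dichotomy that the paper defers to future work), one also needs the patched representation $\otimes_v\pi_{D,v}$ to be \emph{cuspidal automorphic} on $G_D$ --- membership in the abstract packet $\Pi_\phi(G_D)$ does not by itself give an embedding $\nu(\pi_D)\subset \CA_{cusp}(G_D(\BA))$ on which $\CP_{H_D}$ is defined, so Arthur's multiplicity formula is a further conjectural input. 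Two smaller points: the step ``$I_{H_{D,v}}$ not identically zero $\Leftrightarrow$ $m(\pi_{D,v})\neq 0$'' is the nonvanishing of the explicit (regularized, in the Whittaker-induction cases) intertwining operator, which is part of the local trace formula machinery rather than a formal consequence of $\Hom\neq 0$; and it is worth noting that the unramified computation already forces $D_v$ to be split at almost all $v$ (local $L$-factors of tempered unramified representations do not vanish at $s=1/2$), so the coherence question is genuinely a finite parity constraint. With these caveats made explicit, your derivation is the intended one and is correct at the conditional level at which the paper states the conjecture.
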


When $D/k$ is split, one direction of Conjecture \ref{weak global conjecture} has been proved for Models  1 and 4  in joint works of the first author with Pollack and Zydor (\cite{PWZ18}, \cite{PWZ19}). 

Finally, similar to Gan--Gross--Prasad models as discussed in Section 27 of \cite{GGP}, one expects that the central value of $L$-functions in Models 2 and 3 of Table \ref{fig:1} are related to the arithmetic geometry of the cycles of the certain Shimura varieties. 
In Model 2, $\GU_4\times\GU_2$ and $(\GU_2\times\GU_2)^0$ can be associated with Shimura varieties of dimensions 5 and 2  (resp. 3 and 1).
In Model 3, $\GSp_6\times \GSp_4$ and $(\GSp_4\times \GSp_2)^0$ can be associated with Shimura varieties of dimensions 9 and 4. 
Then predicted by Beilinson--Bloch Conjecture, the order of $L(s,\pi,\rho_X)$ at $s=1/2$ should be related to the rank of the Chow groups of the corresponding cycles, which are all in the middle degree.
Like in Gross--Prasad models,  Conjecture \ref{strong global conjecture} would help one to relate the height pairing against the cycles to the first derivatives $L'(1/2,\pi,\rho_X)$.   

\subsection{Organization of the paper}
In Section \ref{sec:strategy}, we explain the strategy of our computation of the local relative characters. 
In Sections \ref{sec:GSp6-model} and \ref{sec:GL4}, we compute the local relative characters for the two split reductive cases in Table \ref{fig:1}.
In Sections \ref{sec:GL6} and \ref{sec:E7}, we study the non-reductive cases for $\GL_6$ and $E_7$, respectively.
In Section \ref{sec GU}, we deal with the non-split models $(\GU_4\times \GU_2,(\GU_2\times \GU_2)^0)$ and $(\GU_6,\GL_2\ltimes U)$ in Table \ref{fig:1}.
In Section \ref{sec:remaining}, we compute the formulas for the remaining  4 models. 
Finally, in Section \ref{sec multiplicity},  we will study the local multiplicity for all these models.

\subsection{Acknowledgments} 
We thank Yiannis Sakellaridis for the helpful comments on the first draft of this paper and for the helpful discussions about the virtual colors. We thank Aaron Pollack and Michal Zydor for the many helpful discussions about magic triangle which lead to the discovery of Model 9 and 10 in Table \ref{fig:1}. We thank Wei Zhang for the helpful discussions about Model 3 in Table \ref{fig:1}. We thank Dihua Jiang and Yifeng Liu for the helpful comments on the first draft of this paper. We also thank an anonymous referee for all the helpful comments and corrections. The work of the first author is partially supported by the NSF grant DMS-2000192 and DMS-2103720. 
The work of the second author is partially supported by AcRF Tier 1 grants A-0004274-00-00 and A-0004279-00-00 of National University of Singapore.

\section{The strategy}\label{sec:strategy}
In this section, we will explain the strategy of our computation. In the reductive cases, we closely follow the method developed by Sakellaridis in \cite{Sa}. 
For the Whittaker induction cases, due to the non-trivial unipotent radical of $H$, the local characters $I_{H_v}(\phi_v)$ in \eqref{eq:intro-I}  in these cases are not absolutely convergent.
To overcome this convergent issue, we  modify the method by regularizing the unipotent integrals. 
Then for all cases, we can reduce the computation of local relative characters to 
evaluate the local integrals associated to each simple root of $G$ and verify certain combinatorial identities. 
We refer the reader to  the detailed strategies in
Section \ref{red summary} for the reductive case and in Section \ref{sec:6-steps} for the non-reductive case.

More precisely,
in Section \ref{sec:notation} we discuss some notation and conventions of spherical varieties. Then we discuss the strategies for the reductive cases in Sections \ref{sec:reductive-case} and \ref{sec:S-theta}, and for the non-reductive cases in Sections 
\ref{sec non-red strategy 1} and \ref{sec non-red strategy 2}, respectively.

In Sections \ref{sec:strategy}--\ref{sec:remaining},
we only consider the non-archimedean places $v$ such that all data are unramified. Denote by $F=k_v$ a $p$-adic field.
Let $\CO_F$ be its ring of integers. Fix a uniformizer $\varpi$, and  denote by $\BF_q$ the residue field of $F$ with cardinality $q$ and of characteristic $p$ with $p\neq 2$. 
Fix a nontrivial unramified additive character $\psi:F\rightarrow \BC^{\times}$ of $F$.

\subsection{Notation}\label{sec:notation}
Let $G$ be a connected reductive group defined over $F$,  and $Z_G$ be the center of $G$. We fix a maximal open compact subgroup $K$ of $G(F)$ and let ${\rm d} g$ be the Haar measure on $G(F)$ such that the volume of $K$ is equal to 1. 
Denote by $W_G$ the Weyl group of $G(F)$.

\begin{defn}\label{defn:generic}
Let $P=LU$ be a proper parabolic subgroup of $G$ defined over $F$. For a character $\xi:U(F)\rightarrow \BC^{\times}$ of $U(F)$,
denote by $L_{\xi}$ the neutral component of the stabilizer of $\xi$ in $L$ (under the adjoint action).  

A character $\xi$ is called a generic character of $U(F)$ if $\dim(L_{\xi})$ is minimal, i.e. $\dim(L_{\xi})\leq \dim(L_{\xi'})$ for any character $\xi'$ of $U(F)$. It is easy to see that if $\xi$ is a generic character, so is ${}^l\xi$ for all $l\in L(F)$,
where ${}^l\xi$ is the character of $U(F)$ defined by ${}^l\xi(n)=\xi(l^{-1}nl)$.
\end{defn}
Moreover, there are finitely many generic characters of $U(F)$ up to $L(F)$-conjugation, which are in bijection with the open $L(F)$-orbits in $\Fu(F)/[\Fu(F),\Fu(F)]$ induced by the adjoint action on the Lie algebra $\Fu(F)$ of $U(F)$.

Let $H\subset G$ be a connected closed subgroup also defined over $F$.  We say that $H$ is a spherical subgroup if there exists a Borel subgroup $B$ of $G$ (not necessarily defined over $F$ since $G(F)$ may not be quasi-split) such that $BH$ is Zariski open in $G$. Such a Borel subgroup is unique up to $H(\bar{F})$-conjugation. 
Then,  $(G,H)$ is called a spherical pair and $X=G/H$ is the corresponding spherical variety of $G$.

From now on, we assume that $H$ is a spherical subgroup. We say the spherical pair $(G,H)$ is reductive if $H$ is reductive.
\begin{defn}\label{defn:Whittaker-induction}
A spherical pair $(G,H)$ is called  a Whittaker induction of a reductive spherical pair $(G_0,H_0)$ if there exists  a parabolic subgroup $P=LU$ of $G$, and a generic character $\xi:U(F)\rightarrow \BC^{\times}$ such that $H=H_0\ltimes U$ where $G_0\cong L$ and $H_0\cong L_{\xi}\subset L$ is the neutral component of the stabilizer of $\xi$ in $L$.
\end{defn}
Alternatively, we say that $(G,H)$ is the Whittaker induction of the triple $(G_0,H_0,\xi)$. 
For convenience, we also consider a reductive spherical pair $(G,H)$ as the Whittaker induction of $(G,H,1)$. 

\begin{rmk}
In general the stabilizer of a generic character is not necessarily a reductive  or   spherical subgroup of $L$.
For instance, if we take $G=\GL_3$ and a parabolic subgroup with Levi subgroup $L\cong \GL_2\times \GL_1$, then $L_{\xi}$ is isomorphic to  the Borel subgroup of $\GL_2$, which is not reductive;
if we take $G=\GL_9$ and a parabolic subgroup with Levi subgroup $L\cong \GL_3\times \GL_3\times \GL_3$, then $L_{\xi}\cong \GL_3$ is not a spherical subgroup of $L$.
\end{rmk}

Finally, for a reductive spherical pair $(G,H)$, we say it is {\it strongly tempered} if all the tempered matrix coefficients of $G(F)$ are absolutely convergent on $H(F)/Z_{G,H}(F)$. 
If the spherical pair $(G,H)$ is the Whittaker induction of a reductive spherical pair $(G_0,H_0)$, we say $(G,H)$ is strongly tempered if $(G_0,H_0)$ is strongly tempered.

In the rest of this section, we  assume that $G$ is split (this is true for all the models in Table \ref{fig:1} except the $\GU_4\times \GU_2$ and $\GU_6$ cases). The computation for the quasi-split case is slightly different from the split case. 
We refer the reader to Section \ref{sec GU} for details.

\subsection{The reductive case: some reduction}\label{sec:reductive-case}
Let $(G,H)$ be a reductive strongly tempered spherical pair  with $G(F)$ split. Assume that it does not have Type $N$ spherical root. 
Let $B=TN$ be a Borel subgroup of $G$ defined over $F$, $T$ the maximal split torus in $B$ and $N$ the unipotent radical of $B$,  and $\bar{B}=T\bar{N}$ be its opposite. 
There exists a unique open Borel orbit $B(F)\eta H(F)$  (note that for each root system, we already choose suitable representatives $(G,H)$ so that it has unique open Borel orbit, see Remark \ref{rmk one open borel orbit}). 
For all the four models in \eqref{eq:4-models}, it is easy to verify  $H(F)\cap \eta^{-1}B(F)\eta=Z_{G,H}(F)$, i.e. the stabilizer of the open Borel orbit belongs to the center of $G$.

\begin{rmk}
This is not true if the spherical pair has a Type $N$ spherical root. For example, for the model $(\GL_3,\SO_3)$, the stabilizer of the open orbit is isomorphic to $(\BZ/2\BZ)^2$ and does not belong to the center of $G$.
\end{rmk}

Our goal is to compute the local relative character
$$I(\phi_\theta)=\int_{H(F)/Z_{G,H}(F)}\phi_\theta(h)\ud h$$	
where $\phi_\theta$ is the unramified matrix coefficient of $I_{B}^{G}(\theta)$ normalized by $\phi_\theta(1)=1$, $\theta$ is a unitary unramified character of $T(F)$, and $I_{B}^{G}(\cdot)$ is the normalized induced representation from the Borel subgroup $B$. The integral is absolutely convergent since $(G,H)$ is strongly tempered. We follow the method in Sections 6-7 of \cite{Sa}. 

Let $f_\theta$ be the unramified vector in $I_{B}^{G}(\theta)$ with $f_\theta(1)=1$. 
Then the normalized unramified matrix coefficient $\phi_\theta$ is given by
$\phi_\theta(g)=\int_K f_\theta(kg)\ud k.$
This implies that 
$$I(\phi_\theta)=\int_{H(F)/Z_{G,H}(F)}\phi_\theta(h)\ud h=\int_{H(F)/Z_{G,F}(F)}\int_{K} f_\theta(kh)\ud k\ud h$$
$$=\int_{K}\int_{H(F)/Z_{G,F}(F)} f_\theta(kh)\ud h \ud k.$$
Note that since the integral is convergent if we replace $\theta$ by its absolute value (which changes $f_\theta$ to $f_{|\theta|}=|f_\theta|$), 
the above double integral is absolutely convergent. 
In particular, the integral
\begin{equation}\label{3.1}
\int_{H(F)/Z_{G,F}(F)} f_\theta(kh)\ud h
\end{equation}
is absolutely convergent for almost all $k\in K$. As a function on $k\in G$, this integral is right $H(F)$-invariant and left $(B(F),\delta^{1/2}_B\theta)$-invariant, where $\delta_B$ is the modular character of $B$. 
Since $B(F)\eta H(F)$ is open in $G(F)$, we have the integral \eqref{3.1} is absolutely convergent for all $k\in B(F)\eta H(F)$. 

On the other hand, consider the function $\CY_\theta$ on $G(F)$ satisfying the following conditions:
\begin{enumerate}
\item $\CY_\theta$ is supported on the open orbit $B(F)\eta H(F)$ with $\CY_\theta(\eta) = 1$;
\item $\CY_\theta$ is right $H(F)$-invariant and left $(B(F),\theta^{-1}\delta_{B}^{1/2})$-invariant.
\end{enumerate}
For $g\in B(F)\eta H(F)$, $\CY_{\theta^{-1}}(g)$   is proportional to \eqref{3.1}  and then 
$$\int_{H(F)/Z_{G,F}(F)} f_\theta(gh)\ud h=\int_{H(F) /Z_{G,H}(F)} f_{\theta}(\eta h)\ud h \cdot \CY_{\theta^{-1}}(g).$$
In consequence, since the complementary set of $B(F)\eta H(F)$ has measure zero,
 we have
\begin{eqnarray*}
I(\phi_\theta)&=&\int_{K}\int_{H(F)/Z_{G,F}(F)} f_\theta(kh)\ud h \ud k \\
& =&\int_{K}\CY_{\theta^{-1}}(k) \ud k\times \int_{H(F) /Z_{G,H}(F)} f_{\theta}(\eta h)\ud h.
\end{eqnarray*}
To obtain a formula of $I(\phi_\theta)$,  it suffices to compute
$$\int_{K}\CY_{\theta^{-1}}(k) \ud k \;\;\text{ and }\;\;\int_{H(F) /Z_{G,H}(F)} f_{\theta}(\eta h)\ud h.$$

To evaluate the integral $\int_{H(F) /Z_{G,H}(F)} f_{\theta}(\eta h)\ud h$, we need the following lemma. 
\begin{lem}\label{lem Haar measure}
Under the above notation,
for $f\in C^{\infty}_{c}(G(F))$, we have
\[
\int_{G}f(g)\ud g=\frac{\Delta_{G}(1)}{\Delta_{H/Z_{G,H}}(1)}\zeta(1)^{-rk(G)}\int_{H(F)/Z_{G,H}(F)}\int_{B(F)}f(b\eta h)\ud b \ud h,
\]
where $rk(G)$ is the $F$-rank of $G$.
\end{lem}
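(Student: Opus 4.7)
The plan is to proceed in two steps: first obtain the integration formula with an unknown constant by an open-orbit argument, then pin down the constant by testing with the characteristic function of $K$ and converting to Weil's canonical measures.

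First I would observe that, since $B(F)\eta H(F)$ is the unique open Borel orbit in $G(F)$ (and the complement has measure zero) and since $H\cap \eta^{-1}B\eta = Z_{G,H}$ as noted just before the lemma, the map
\[
\Phi : B(F) \times \bigl(H(F)/Z_{G,H}(F)\bigr) \longrightarrow G(F), \qquad (b,\bar h) \longmapsto b\eta h,
\]
is an open immersion with image of full measure. Pulling back the Haar measure $dg$ along $\Phi$ yields a measure on the source that is left $B(F)$-invariant and right $(H/Z_{G,H})(F)$-invariant, so by uniqueness of invariant measures there exists a positive constant $c$ with
\[
\int_G f(g)\, dg \;=\; c \int_{H/Z_{G,H}} \int_B f(b\eta h)\, db\, d\bar h .
\]

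To identify $c$, I would translate both sides into Weil's canonical measures. By Gross's formula, for an unramified split connected reductive group $L$ one has $\vol(L(\CO_F), d_{\mathrm{can}} l) = \Delta_L(1)^{-1}$. Our normalization $\vol(K, dg)=1$ therefore forces $dg = \Delta_G(1)\, d_{\mathrm{can}} g$, and analogously $d\bar h = \Delta_{H/Z_{G,H}}(1)\, d_{\mathrm{can}} \bar h$. For the Borel $B=TN$, write $db = dt\, dn$ with the measures on the split torus $T$ and the unipotent radical $N$ chosen so that $T(\CO_F)$ and $N(\CO_F)$ have volume $1$; since $T$ is split of rank $rk(G)$ and $\Delta_T(s)=\zeta(s)^{rk(G)}$, we have $dt = \zeta(1)^{rk(G)} d_{\mathrm{can}} t$, while $dn = d_{\mathrm{can}} n$ is already self-dual. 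The crucial geometric input is that $\Phi$ is measure-preserving in the canonical measures, i.e.\ $d_{\mathrm{can}} g|_{B\eta H} = d_{\mathrm{can}} b\, d_{\mathrm{can}} \bar h$; this can be checked by noting that the differential of $\Phi$ at $(1,1)$ is the natural isomorphism $\Lie(B) \oplus \Ad(\eta)(\Lie(H)/\Lie(Z_{G,H})) \xrightarrow{\sim} \Lie(G)$, and both sides are $T$-stable weight decompositions of the reductive Lie algebra $\Lie(G)$, so this linear map has determinant $\pm 1$ in any basis dual to the natural pairing. Combining the normalization conversions with this identity and evaluating both sides on $f=\mathbf{1}_K$ (where the left side is $1$) gives
\[
c \;=\; \frac{\Delta_G(1)}{\Delta_{H/Z_{G,H}}(1)\,\zeta(1)^{rk(G)}}.
\]

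The main obstacle will be the verification that $\Phi$ is measure-preserving in canonical measures, because Weil's canonical measure is defined via reductive structure and $B$ is not reductive; one must be careful to check that the ``piecewise'' normalization on $B$ through $T$ and $N$ assembles correctly with the $\Ad(\eta)$-twist on $H/Z_{G,H}$ to match $d_{\mathrm{can}} g$ on the open orbit. If this turns out to be delicate case by case, an alternative is to avoid canonical measures altogether: starting from the Iwasawa decomposition $G(F)=B(F)K$ one can directly evaluate $\int_K \int_{H/Z_{G,H}}\mathbf{1}_K(k \cdot \eta h)\, d\bar h\, dk$ using that $K\cap B(F)\eta H(F)$ is a union of double cosets whose volumes are prescribed by the point-count identity $|X(\BF_q)|/q^{\dim X} = \Delta_{H/Z_{G,H}}(1)\zeta(1)^{rk(G)}/\Delta_G(1)$ for the spherical variety $X=H\backslash G$, which again yields the stated constant.
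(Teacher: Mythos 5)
Your proposal is correct and follows essentially the same route as the paper: pass to Weil's canonical measures on $G$, $B$, and $H/Z_{G,H}$, use the open density of $B\eta H$ so that the canonical measures match under the product map (the paper simply cites \cite[Chapter 2]{Weil} for this), and then identify the normalization constants via Gross's point-count identities $\vol(X(\CO_F),d_{\mathrm{can}}x)=|X(\BF_q)|/q^{\dim X}$ and $\Delta_L(1)^{-1}$. One caveat: your justification that the differential of $\Phi$ has determinant $\pm1$ because ``both sides are $T$-stable weight decompositions'' is not accurate, since $\Ad(\eta)(\Lie(H)/\Lie(Z_{G,H}))$ is not $T$-stable; this step is exactly what the citation to Weil supplies, and your fallback via the Iwasawa decomposition and the point-count identity is in fact the computation the paper carries out.
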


\begin{proof}
Without loss of generality, it is sufficient to consider the case $\eta=1$, that is,
$H(F)B(F)$ is an open dense subset of $G(F)$.
Denote by ${\rm d}_{\rm can}g$, ${\rm d}_{\rm can}b$, and ${\rm d}_{\rm can}h$ the Weil's canonical measures on the smooth varieties $G$, $B$ and $H/Z_{G,H}$, respectively. 
Since $B\cap H=Z_{G,H}$ and $BH$ is open dense in $G$, by  \cite[Chapter 2]{Weil} we have
\[
\int_{G(F)}f(g)\ud_{\rm can} g=\int_{H(F)/Z_{G,H}(F)} \int_{B(F)}f(bh)\ud_{\rm can} b\ud_{\rm can} h.
\]

By \cite[Chapter 2]{Weil}, since the smooth varieties $X$ under consideration are smooth over $\CO_F$ and have good reduction over $\BF_q$, 
we have
\[
vol(X(\CO_F),{\rm d}_{\rm can}x)=\frac{|X(\BF_q)|}{q^{\dim X}}.
\]
This implies that
\[
\ud_{\rm can} g=\frac{|G(\BF_q)|}{q^{\dim G}}\ud g, ~~
\ud_{\rm can} b=\frac{|B(\BF_q)|}{q^{\dim B}}\ud b,
\ud_{\rm can} h=\frac{|H/Z_{G,H}(\BF_q)|}{q^{\dim (H)-\dim(Z_{G,H})}}\ud h.
\]
Since $B\cap H=Z_{G,H}$, we have
\[
\int_{G}f(g)\ud g=\frac{|B(\BF_q)|\cdot |H/Z_{G,H}(\BF_q)| }{|G(\BF_q)|}\int_{H(F)/Z_{G,H}(F)}\int_{B(F)}f(bh)dbdh.
\]
Now the lemma follows from the following equation which is a consequence of (3.1) and (5.1) of \cite{G}
$$\frac{|B(\BF_q)|\cdot |H/Z_{G,H}(\BF_q)| }{|G(\BF_q)|}=\frac{\Delta_{G}(1)}{\Delta_{H/Z_{G,H}}(1)}\zeta(1)^{-rk(G)}.$$
\end{proof}

By Lemma \ref{lem Haar measure}, 
$\int_{K}\CY_{\theta}(k)\ud k=\int_{G(F)}1_K(g)\CY_{\theta}(g)\ud g$ is equal to
\begin{eqnarray*}
&&\frac{\Delta_{G}(1)}{\Delta_{H/Z_{G,H}}(1)}\zeta(1)^{-rk(G)}\int_{H(F)/Z_{G,H}(F)}\int_{B(F)}1_K(b\eta h)\theta^{-1}\delta^{\frac{1}{2}}(b)\ud b\ud h \\
&=&\frac{\Delta_{G}(1)}{\Delta_{H/Z_{G,H}}(1)}\zeta(1)^{-rk(G)}\int_{H(F)/Z_{G,H}(F)}f_{\theta}(\eta h) \ud h,
\end{eqnarray*}
where $1_K$ is the characteristic function on $K$.
As a result, we have proved the following proposition, which reduces  to evaluate the integral $\int_{K}\CY_{\theta}(k)\ud k$.

\begin{prop}\label{pro:I-Y}
The local relative character $I(\phi_\theta)$ is equal to
\begin{eqnarray*}
&&\int_{K}\CY_{\theta^{-1}}(k) dk\times \int_{H(F) /Z_{G,H}(F)} f_{\theta}(\eta h)\ud h\\
&=&\frac{\Delta_{H/Z_{G,H}}(1)}{\Delta_{G}(1)}\zeta(1)^{rk(G)}\int_{K}\CY_{\theta^{-1}}(k) \ud k\times \int_{K}\CY_{\theta}(k) \ud k.
\end{eqnarray*}
\end{prop}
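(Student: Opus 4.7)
The plan is to combine two separate reductions, both of which are essentially laid out in the discussion preceding the proposition.

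First, for the equality
\[
I(\phi_\theta) = \int_K \CY_{\theta^{-1}}(k)\,\ud k \cdot \int_{H(F)/Z_{G,H}(F)} f_\theta(\eta h)\,\ud h,
\]
I would unfold the matrix coefficient as $\phi_\theta(g) = \int_K f_\theta(kg)\,\ud k$ and interchange the order of integration; the swap is legitimate because replacing $\theta$ by $|\theta|$ yields a non-negative integrand whose iterated integral converges by the strongly tempered hypothesis. Set $J(g) := \int_{H(F)/Z_{G,H}(F)} f_\theta(gh)\,\ud h$. Then $J$ is right $H(F)$-invariant, and wherever it is defined it satisfies $J(bg) = \delta_B^{1/2}(b)\theta(b) J(g)$; in particular, on the open orbit $B(F)\eta H(F)$ it has the same left equivariance as $\CY_{\theta^{-1}}$. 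Since the stabilizer of $\eta$ in $B(F) \times H(F)$ is trivial modulo $Z_{G,H}(F)$, any such function is determined by its value at $\eta$, giving $J(g) = J(\eta)\,\CY_{\theta^{-1}}(g)$ on the open orbit. Because the complement has measure zero, integrating over $K$ yields the first equality.

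Second, for the equality involving the $\Delta$-ratio, I would invoke Lemma~\ref{lem Haar measure} to replace $\int_{H(F)/Z_{G,H}(F)} f_\theta(\eta h)\,\ud h$ by $\int_K \CY_\theta(k)\,\ud k$. Concretely, rewriting $\int_K \CY_\theta(k)\,\ud k = \int_{G(F)} 1_K(g)\CY_\theta(g)\,\ud g$ and applying Lemma~\ref{lem Haar measure} to the iterated integral over $H(F)/Z_{G,H}(F) \times B(F)$, the inner $B(F)$-integral becomes $\int_{B(F)} 1_K(b\eta h)\,\theta^{-1}\delta_B^{1/2}(b)\,\ud b = f_\theta(\eta h)$, by the definition of the normalized unramified vector. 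Solving for the $H$-integral yields
\[
\int_{H(F)/Z_{G,H}(F)} f_\theta(\eta h)\,\ud h = \frac{\Delta_{H/Z_{G,H}}(1)}{\Delta_G(1)}\,\zeta(1)^{rk(G)} \int_K \CY_\theta(k)\,\ud k,
\]
and substituting into the first equality gives the second.

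The only genuinely nontrivial step is the uniqueness of $\CY_{\theta^{-1}}$ on the open orbit. This relies on $H(F) \cap \eta^{-1}B(F)\eta = Z_{G,H}(F)$, which is noted just before the proposition for each of the four models in \eqref{eq:4-models}; without a trivial stabilizer, a compatibility condition would be imposed on $\theta$ and a nonzero $\CY_\theta$ might fail to exist. Once this is granted, the remainder is a routine application of Fubini's theorem and Lemma~\ref{lem Haar measure}.
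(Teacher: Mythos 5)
Your proposal is correct and follows essentially the same route as the paper: unfolding the matrix coefficient, identifying the $H$-integral of $f_\theta$ with a multiple of $\CY_{\theta^{-1}}$ via equivariance and the trivial stabilizer on the open orbit, and then converting $\int_{H(F)/Z_{G,H}(F)}f_\theta(\eta h)\,\ud h$ into $\int_K\CY_\theta(k)\,\ud k$ by Lemma~\ref{lem Haar measure}. No gaps.
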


In the next subsection \ref{sec:S-theta}, we will explain how to compute the integral $\int_{K}\CY_{\theta}(k)\ud k$.


\begin{prop}\label{main prop}
Let $\Phi^+$ be the set of positive roots of $G$. There is a decomposition of the weights of a representation $\rho_X$ of $\hat{G}$, denoted by $\Theta=\Theta^+\cup \Theta^-$, such that
\begin{equation}\label{main prop equation 1}
\int_{K}\CY_{\theta}(k) \ud k=\frac{\Delta_{G}(1)}{\Delta_{H/Z_{G,H}}(1)}\zeta(1)^{-rk(G)}\cdot \beta(\theta),
\end{equation}
where 
$$\beta(\theta)=\frac{\prod_{\alpha\in \Phi^+}1-q^{-1}e^{\alpha^\vee}}{
	\prod_{\gamma^\vee\in \Theta^+}1-q^{-\frac{1}{2}}e^{\gamma^\vee}}(\theta).$$
Moreover, we have
\begin{equation}\label{theta plus}
\prod_{\gamma^\vee\in \Theta^+}1-q^{-\frac{1}{2}}e^{\gamma^\vee}(\theta^{-1})=\prod_{\gamma^\vee\in \Theta^-}1-q^{-\frac{1}{2}}e^{\gamma^\vee}(\theta).
\end{equation}
\end{prop}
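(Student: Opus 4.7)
The plan is to follow Sakellaridis's method in \cite{Sa} and carry out the computation one simple root (equivalently, one ``color'' of the spherical variety) at a time. First, I would apply the Iwasawa decomposition $G(F)=K\cdot B(F)$ together with the left $(B(F),\theta^{-1}\delta_{B}^{1/2})$-equivariance of $\CY_\theta$ to rewrite $\int_{K}\CY_\theta(k)\,\ud k$ as a positive constant multiple of an integral over the opposite unipotent radical $\int_{\bar{N}(F)}\CY_\theta(\bar{n}\eta)\,\ud\bar{n}$. The explicit normalizing constant $\frac{\Delta_{G}(1)}{\Delta_{H/Z_{G,H}}(1)}\zeta(1)^{-\mathrm{rk}(G)}$ emerges from Lemma \ref{lem Haar measure} applied to the open Bruhat cell $\bar{N}(F)\eta$, together with the elementary volume computation for $K\cap \bar{N}(F)$.

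Next, I would stratify $\bar{N}(F)\eta\subset G(F)$ by the $B$-orbits of $X=H\backslash G$ and analyze the contribution of each stratum. By the general theory of spherical varieties, the codimension-one $B$-stable divisors in (a wonderful compactification of) $X$ are precisely the colors of $X$; to each color Sakellaridis attaches a ``colored coroot,'' which is a nonzero weight of a canonical representation $\rho_X$ of $\hat{G}$. I would take $\Theta^+$ to be the multiset of these colored coroots, so that the behavior of $\CY_\theta(\bar{n}\eta)$ along each boundary stratum produces a factor of $(1-q^{-1/2}e^{\gamma^\vee})^{-1}$, while the usual Gindikin--Karpelevich integrand on $\bar{N}(F)$ contributes the numerator $\prod_{\alpha\in\Phi^+}(1-q^{-1}e^{\alpha^\vee})$.

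To turn this into an exact evaluation, I would reduce to rank-one subgroups: for each simple root $\alpha$ of $G$, restrict $\CY_\theta$ to the corresponding $\SL_{2}$ (or $\PGL_{2}$) subgroup and compute the one-dimensional unipotent integral explicitly. Assembling the rank-one contributions and verifying that they combine, via standard combinatorial identities of Hecke-algebra type, to produce exactly $\beta(\theta)$ is the heart of the argument and must be carried out by hand for each of the ten models in Table \ref{fig:1}. This case-by-case verification, especially for the $E_7$ case where $\rho_X$ is the $56$-dimensional representation $\omega_7$ and the color structure is less transparent, is where I expect the main obstacle to lie; correctly identifying the colored coroots of $X$ is the key geometric input.

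Finally, the symmetry relation \eqref{theta plus} I would prove by taking $\Theta^-=-\Theta^+=\{-\gamma^\vee:\gamma^\vee\in\Theta^+\}$, so that
\[
\prod_{\gamma^\vee\in\Theta^-}\bigl(1-q^{-1/2}e^{\gamma^\vee}\bigr)(\theta)=\prod_{\gamma^\vee\in\Theta^+}\bigl(1-q^{-1/2}e^{-\gamma^\vee}\bigr)(\theta)=\prod_{\gamma^\vee\in\Theta^+}\bigl(1-q^{-1/2}e^{\gamma^\vee}\bigr)(\theta^{-1}).
\]
That such a decomposition is available is forced by the fact (recorded in the introduction) that for every model in Table \ref{fig:1} the representation $\rho_X$ is self-dual of symplectic type: its weight multiset is invariant under negation and admits a partition $\Theta=\Theta^+\sqcup\Theta^-$ into two halves exchanged by $\gamma^\vee\mapsto-\gamma^\vee$, with $\Theta^+$ identified via the preceding color computation.
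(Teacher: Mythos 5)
Your overall outline has the right ingredients from Sakellaridis's method (rank-one reduction to simple roots, identification of the colors $\beta_\alpha^\vee$, case-by-case verification, self-duality of $\rho_X$ for \eqref{theta plus}), but there are two genuine problems. First, your opening reduction is incorrect: $\int_K\CY_\theta(k)\,\ud k$ is \emph{not} a $\theta$-independent constant multiple of $\int_{\bar N(F)}\CY_\theta(\bar n\eta)\,\ud\bar n$. Passing from the $K$-integral to the $\bar N$-integral via Iwasawa introduces a Jacobian that, combined with the left $(B,\theta^{-1}\delta_B^{1/2})$-equivariance of $\CY_\theta$, produces a nontrivial $\theta$-dependent factor (essentially a $c$-function); the two functionals agree up to a fixed constant only for one special value of $\theta$. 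Lemma \ref{lem Haar measure} decomposes Haar measure along the open $B\times H$-orbit, not along $\bar N\eta$, so it cannot supply the constant you claim. Relatedly, the direct Gindikin--Karpelevich integral over $\bar N$ of a $(B,\theta^{-1}\delta_B^{1/2})$-equivariant function carries denominators $(1-e^{\alpha^\vee}(\theta))$, not the clean numerator $\prod_{\alpha\in\Phi^+}(1-q^{-1}e^{\alpha^\vee}(\theta))$ you want.

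Second, and more importantly, the mechanism that actually assembles the rank-one data into $\beta(\theta)$ is absent. The paper computes, for each simple root $\alpha$, the Iwahori-level integral $I_\alpha(\theta)$ against $\Phi_1+\Phi_{w_\alpha}$, obtaining via \eqref{rank one integral} the factor $(q-1)\frac{1-q^{-1}e^{\alpha^\vee}}{(1-q^{-1/2}e^{\beta_\alpha^\vee})(1-q^{-1/2}e^{\alpha^\vee-\beta_\alpha^\vee})}(\theta)$; it then uses the one-dimensionality of $\Hom_{H(F)}(I_B^G(\theta),1)$ (Theorem 1.2.1 of \cite{Sa08}) to derive the functional equation $S_{w_\alpha\theta}(g)=b_{w_\alpha}(\theta)S_\theta(g)$ with $b_{w_\alpha}(\theta)=I_\alpha(w_\alpha\theta)/I_\alpha(\theta)$, concluding that $S_\theta(g)/\beta(\theta)$ is $W$-invariant in $\theta$. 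Expanding $S_\theta(\eta^{-1}t)$ in Casselman's basis then gives $S_\theta(1)/\beta(\theta)=\Delta_G(1)\zeta(1)^{-rk(G)}\sum_{w\in W}c_{WS}(w\theta)$, and the final input is Lemma \ref{lem constant reductive}: the sum $\sum_w c_{WS}(w\theta)$ is independent of $\theta$ (Theorem 7.2.1 of \cite{Sa}, using that $X$ is affine) and equals $1/\Delta_{H/Z_{G,H}}(1)$, evaluated at $\theta=\delta_B^{1/2}$. Your appeal to ``standard combinatorial identities of Hecke-algebra type'' does not supply this chain, and without the multiplicity-one input and the constant-term evaluation the rank-one computations alone do not determine $\int_K\CY_\theta(k)\,\ud k$. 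A smaller gap: self-duality of $\rho_X$ guarantees that \emph{some} partition with $\Theta^-=-\Theta^+$ exists, but you still must check that the particular $\Theta^+$ forced by the conditions $\Theta^+-w_\alpha\Theta^+=\{\beta_\alpha^\vee,\alpha^\vee-\beta_\alpha^\vee\}$ is such a half; the paper verifies this model by model.
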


Here for $\alpha\in \Phi^+$, we use $e^{\alpha^{\vee}}(\theta)$ to denote $\theta(e^{\alpha^\vee}(\varpi))$. For $\gamma^{\vee}\in \Theta^+$, we can identify it with a co-weight of $G$ and we let $e^{\gamma^{\vee}}$ be the associated homomorphism  from $\GL_1$ to $T$. We define $e^{\gamma^{\vee}}(\theta)=\theta(e^{\gamma^\vee}(\varpi))$.

\begin{rmk}
For all the models in Table \ref{fig:1}, the representation $\rho_X$ in the proposition above (or Proposition \ref{main prop nonred} for the non-reductive cases) is just the representation $\rho_X$ listed in Table \ref{fig:1}. In Theorem 7.2.1 of \cite{Sa}, for general (not necessarily strongly tempered) spherical varieties, Sakellaridis proved the identities \eqref{main prop equation 1} and \eqref{theta plus} for a $W_X$-invariant set $\Theta$ of weights of $\hat{G}$. Here $W_X\subset W$ is the little Weyl group of $X$ and we have $W=W_X$ if the model is strongly tempered. Later in Corollary 7.3.3 of \cite{SW}, Sakellaridis-Wang proved that in the case when $(G,H)$ over functional fields is strongly tempered and $H$ is reductive, $\Theta$ must be the set of weights of a representation $\rho_X$ of $\hat{G}$. Our computation in later sections shows that for all the non-reductive cases in Table \ref{fig:1}, $\Theta$ is also the set of weights of a representation $\rho_X$ of $\hat{G}$.
\end{rmk}

Combining Propositions \ref{pro:I-Y} and \ref{main prop}, we have
$$I(\phi_\theta)=\frac{\Delta_{H/Z_{G,H}}(1)}{\Delta_{G}(1)}\zeta(1)^{rk(G)}\int_{K}\CY_{\theta^{-1}}(k) \ud k\times \int_{K}\CY_{\theta}(k) \ud k$$
$$=\frac{\Delta_{G}(1)}{\Delta_{H/Z_{G,H}}(1)}\zeta(1)^{-rk(G)}\cdot \beta(\theta)\cdot \beta(\theta^{-1})=\frac{\Delta_{G}(1)}{\Delta_{H/Z_{G,H}}(1)}\cdot \frac{L(1/2,\pi,\rho_X)}{L(1,\pi,\Ad)}.$$
This finishes the computation. The $L$-functions $\frac{L(1/2,\pi,\rho_X)}{L(1,\pi,\Ad)}$ is just the $L_X$, $L$-function of the spherical variety $X=G/H$, defined in Definition 7.2.3 of \cite{Sa}.

\subsection{The computation of $S_\theta$} \label{sec:S-theta}
Set
\[
S_\theta(g)=\int_K \CY_\theta(kg^{-1})\ud g \text{ for } g\in G(F),	
\]
which is the Whittaker-Shintani function.
(See \cite{KMS03} for instance.)
In this section, our goal is to prove Proposition \ref{main prop}, i.e. compute $S_\theta(1)$.
Here, we follow the arguments and the notation in \cite{KMS03}. The unexplained notations and more details are referred to \cite{KMS03,Mac,C80}.

Let $\CI=B(\CO_F)\bar{N}(\varpi \CO_F)$ be the Iwahori subgroup of $G(F)$. For all the strongly tempered models in the introduction, we can choose a representative $\eta$ in the open double coset of $B(F)\bks G(F)/H(F)$ so that it satisfies the following lemma (we will check this lemma for each model in the later sections).

\begin{lem}\label{lem eta}
Then there exists a representative $\eta$ for the open double coset of $B(F)\bks G(F)/H(F)$  such that $\eta\in K$  and  
$\bar{N}(\varpi \CO_F)\eta\subset T(\CO_F)N(\varpi \CO_F)\eta H(\CO_F).$
\end{lem}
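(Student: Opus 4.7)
The plan is to verify this lemma case by case for each of the ten spherical pairs listed in Table \ref{fig:1}, as the paper itself announces. The general heuristic behind why such an $\eta$ exists is that $B\eta H$ is the open double coset and (because, for each of our models, $B(F)\cap \eta H(F)\eta^{-1} = Z_{G,H}(F)$) the multiplication map $\bar N \times H/Z_{G,H} \to G$, $(\bar n, h)\mapsto \bar n \eta h$ is étale at the origin. Thus in a formal neighborhood of $\eta$ every element of $\bar N$ close to the identity can be uniquely written as $t n \eta h$ with $(t,n,h)$ close to $(1,1,1)$ in $T\times N\times H/Z_{G,H}$. The content of Lemma~\ref{lem eta} is to promote this analytic statement to an integral statement on $\CO_F$-points, using unramifiedness of the data.

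First, for each model I would pick an explicit representative $\eta$ of the open $B$-$H$ orbit that already lies in $K$. In the reductive cases (Models 1, 2, 3) these are the Weyl-type representatives read off from the matrix descriptions given in Sections~\ref{sec:GSp6-model} and~\ref{sec:GL4}. In the Whittaker-induction cases (Models 4--10), I would take $\eta = \eta_0 \cdot w$, where $w$ is a Weyl element placing the parabolic $P=LU$ into standard position and $\eta_0$ is a representative of the open $L\cap H = H_0$-orbit inside the Levi $L$, coming from the (already-computed) trilinear $\GL_2$ model or its analogues. Each such $\eta$ has entries in $\CO_F^{\times}\cup \{0\}$, so $\eta\in K$ is immediate.

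Next, to check $\bar N(\varpi\CO_F)\eta\subset T(\CO_F) N(\varpi\CO_F)\eta H(\CO_F)$, I would parametrize a general $\bar n\in \bar N(\varpi\CO_F)$ by Chevalley coordinates in $\varpi\CO_F$ and compute $\bar n\eta$ explicitly. Using the open-orbit identity $B\eta H = B\bar N \eta$ on a Zariski-open set, together with $B\cap \eta H\eta^{-1} = Z_{G,H}$, one obtains a (rational) change-of-coordinates map $\bar n\mapsto (t(\bar n), n(\bar n), h(\bar n))$ with $\bar n\eta = t(\bar n) n(\bar n)\eta h(\bar n)$. The key point is that, for the particular $\eta$ chosen above, this map is actually \emph{polynomial with $\CO_F$-coefficients} and specializes to $(1,1,1)$ at $\bar n=1$; its Jacobian at the origin is an element of $\GL$ over $\CO_F$ precisely because $\eta$ was chosen as a good integral representative. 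Since $\bar n\in \bar N(\varpi\CO_F)$, evaluation then forces $t(\bar n)\in 1+\varpi\cdot(\text{something in }\CO_F)\subset T(\CO_F)$, $n(\bar n)\in N(\varpi\CO_F)$, and $h(\bar n)\in H(\CO_F)$, which is exactly the desired containment.

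The main obstacle will be the bookkeeping in Models 9 and 10 (the $\GSO_{12}$ and $E_7$ cases), where there is no comfortable matrix model and the Chevalley coordinate computation must be carried out in each root subgroup of the Heisenberg-type unipotent radical $U$. The trilinear $\GL_2$ origin of these models, already threaded through the reductions explained in the introduction, will keep the underlying arithmetic tractable, but the explicit verification of integrality (especially the claim that the $T$-component lands in $T(\CO_F)$ rather than just $T(F)$) requires care; I would single this out for Section~\ref{sec:E7}. For the unitary Models 2 and 5, one needs to perform the same verification over the unramified quadratic extension and check that the decomposition descends to $F$, which is a mild but necessary extra bookkeeping step handled in Section~\ref{sec GU}.
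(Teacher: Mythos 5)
Your overall strategy is the same as the paper's: the lemma is not proved uniformly but is verified model by model with an explicitly chosen $\eta\in K$, and in the Whittaker-induction cases one takes $\eta=\eta_0 w_0$ and reduces the containment to the reductive (trilinear $\GL_2$) computation exactly as in the paper's Lemma \ref{lem eta nonred}. Where you differ is in how the integrality is extracted. The paper runs the effective proof of the open-orbit statement over $\CO_F$ (the explicit elements of $H$ and $B$ used to normalize $(v_1,v_2,v_3)$ are units when $\bar n\equiv 1\pmod{\varpi}$), which a priori only lands the $N$-component in $N(\CO_F)$; it then upgrades this to $N(\varpi\CO_F)$ by a separate lattice-preservation argument (the sets $V_i$ in Section \ref{sec:GSp-model}). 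Your Hensel/\'etale packaging gets the congruence $t\equiv 1$, $n\equiv 1$, $h\equiv 1$ mod $\varpi$ in one stroke, which is a slightly cleaner way to see the $\varpi$-divisibility of the $N$-component; but note that its hypothesis --- that the multiplication map $T\times N\times H/Z_{G,H}\to G$ is \'etale along the identity section over $\CO_F$, equivalently that $\eta$ reduces to a point of the open orbit over $\BF_q$ --- is precisely what must still be checked case by case, and that check is the same explicit matrix computation the paper performs. Two small corrections: the inverse map $\bar n\mapsto(t(\bar n),n(\bar n),h(\bar n))$ is rational, not polynomial (the explicit formulas in Section \ref{section triple product} have denominators $1\pm a$, which are units only because $a\in\varpi\CO_F$), and the representatives in the reductive Models 1 and 3 are not Weyl elements but explicit lower-triangular-type integral matrices.
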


For $w\in W$ ($W$ is the Weyl group of $G$), let $\Phi_w=1_{\CI w \CI}$ be the characteristic function of $\CI w \CI$. Then $1_K$ is equal to $\sum_{w\in W} \Phi_w$. (See \cite{Iwa66} for instance.) 
Let $\alpha$ be a simple root and $w_\alpha$ be the corresponding reflection in $W$.
 We would need to compute
$$I_\alpha(\theta)=vol(\CI)^{-1}\int_{G(F)} \CY_\theta(x\eta)(\Phi_1(x)+\Phi_{w_\alpha}(x))\ud x
$$
for all simple roots $\alpha$.

First, by Lemma \ref{lem eta}, we have $\CI \eta\subset B(\CO_F)\eta H(\CO_F)$. Hence $\CY_\theta(x\eta)=1$ for all $x\in \CI$. This implies that 
\begin{equation}\label{eq:I-1}
vol(\CI)^{-1}\int_{G(F)} \CY_\theta(x\eta)\Phi_1(x)\ud x=1.	
\end{equation} 

For each root $\alpha\in \Phi_G$ of $G$,
let  (note that all the root spaces are one dimensional since we have assumed that $G$ is split)
\begin{equation}
u_{\alpha}\colon a\in F\mapsto u_{\alpha}(a)\in N(F)	
\end{equation}
be the one-parameter unipotent subgroup of $G(F)$ associated to the root $\alpha$.

\begin{lem}
We have
\begin{equation}\label{eq:I-alpha-1}
I_\alpha(\theta)=1+q\int_{\CO_F} (\theta^{-1}\delta^{\frac{1}{2}}_B)(e^{\alpha^\vee}(a^{-1}))\CY_\theta(u_{-\alpha}(a^{-1})\eta)\ud a,
\end{equation}
where $\delta_B$ is the modular character of $B$.
\end{lem}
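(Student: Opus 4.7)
The plan is to use \eqref{eq:I-1} to reduce to computing
\[
J:=\vol(\CI)^{-1}\int_{\CI w_\alpha\CI}\CY_\theta(x\eta)\,\ud x
\]
and matching it with the claimed right-hand side.  The argument proceeds in three steps: a left-coset decomposition of $\CI w_\alpha\CI$, a reduction to a single-variable integral along the $\alpha$-direction using Lemma~\ref{lem eta}, and a rank-one $\SL_2$ identity converting $x_\alpha$-expressions into $x_{-\alpha}$-expressions.

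First, since the intersection $\CI\cap w_\alpha\CI w_\alpha^{-1}$ differs from $\CI$ only by replacing $x_\alpha(\CO_F)$ with $x_\alpha(\varpi\CO_F)$, the quotient has $q$ elements and $\CI w_\alpha\CI=\bigsqcup_{u\in\CO_F/\varpi\CO_F}x_\alpha(u)w_\alpha\CI$.  Because $x_\alpha(u)\in N$ and $\theta^{-1}\delta_B^{1/2}$ is trivial on $N$, left $B$-equivariance of $\CY_\theta$ collapses the sum, giving $J=q\cdot\vol(\CI)^{-1}\int_\CI\CY_\theta(w_\alpha k\eta)\,\ud k$.

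Second, factor $\CI=x_\alpha(\CO_F)\cdot\CI^{\perp}$, where $\CI^{\perp}$ is generated by $T(\CO_F)$, the $x_\beta(\CO_F)$ for positive $\beta\neq\alpha$, and the $x_{-\beta}(\varpi\CO_F)$ for positive $\beta$.  With $\vol(\CO_F)=1$ we have $\vol(\CI)=\vol(\CI^{\perp})$, and it suffices to show that the inner integral $\int_{\CO_F}\CY_\theta(w_\alpha x_\alpha(a)k^{\perp}\eta)\,\ud a$ is independent of $k^{\perp}\in\CI^\perp$.  To see this, apply Lemma~\ref{lem eta} to the $\bar N(\varpi\CO_F)$-factor of $k^{\perp}$ to write $k^{\perp}\eta=b\eta h$ with $b\in B(\CO_F)$ and $h\in H(\CO_F)$, and kill $h$ by right $H$-invariance.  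Then successively conjugate $b$ by $x_\alpha(a)$ (which preserves $B(\CO_F)$ since $x_\alpha\in B$) and by $w_\alpha$, using that $w_\alpha$ permutes the positive roots other than $\alpha$ among themselves and sends $\alpha\mapsto -\alpha$:
\[
w_\alpha x_\alpha(a)\cdot b=b^{\sharp}\cdot w_\alpha x_\alpha(a),\qquad b^{\sharp}\in w_\alpha B(\CO_F)w_\alpha^{-1}=T(\CO_F)\cdot N^{(\alpha,>)}(\CO_F)\cdot x_{-\alpha}(\CO_F),
\]
where $N^{(\alpha,>)}$ is the subgroup of $N$ generated by the $x_\beta$ for positive $\beta\neq\alpha$.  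Writing $b^{\sharp}=tn'x_{-\alpha}(c_b)$, left $B$-equivariance (trivial on $T(\CO_F)$ as $\theta$ is unramified, and on $N$) removes $tn'$; the rank-one identity
\[
x_{-\alpha}(c_b)\cdot w_\alpha\cdot x_\alpha(a)=w_\alpha\cdot x_\alpha(a-c_b),
\]
valid in $G_\alpha\cong\SL_2$ (a direct matrix check), then rewrites the integrand as $\CY_\theta(w_\alpha x_\alpha(a-c_b)\eta)$.  Translation invariance of Haar on $\CO_F$ (note $c_b\in\CO_F$) absorbs $c_b$, proving the claim and yielding
\[
\vol(\CI)^{-1}\int_\CI\CY_\theta(w_\alpha k\eta)\,\ud k=\int_{\CO_F}\CY_\theta(w_\alpha x_\alpha(a)\eta)\,\ud a.
\]

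Third, for $a\in F^{\times}$, the Bruhat decomposition in $G_\alpha$ gives $w_\alpha x_\alpha(a)=e^{\alpha^\vee}(a^{-1})\cdot x_\alpha(-a)\cdot x_{-\alpha}(a^{-1})$, so left $B$-equivariance (with $x_\alpha(-a)\in N$ absorbed trivially) yields $\CY_\theta(w_\alpha x_\alpha(a)\eta)=(\theta^{-1}\delta_B^{1/2})(e^{\alpha^\vee}(a^{-1}))\CY_\theta(x_{-\alpha}(a^{-1})\eta)$.  Substituting and noting $\{a=0\}$ is negligible, combined with Step~1, completes the proof.  The main technical obstacle is Step~2: verifying the key identity $x_{-\alpha}(c)w_\alpha x_\alpha(a)=w_\alpha x_\alpha(a-c)$ and tracking the conjugation of $b\in B(\CO_F)$ through $x_\alpha(a)$ and $w_\alpha$ so that the net effect on the integrand is exactly a translation $a\mapsto a-c_b$ in $\CO_F$ — which is then harmlessly absorbed by Haar translation invariance.
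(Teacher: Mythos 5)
Your proof is correct and follows essentially the same route as the paper's: decompose $\CI w_\alpha\CI$, use Lemma \ref{lem eta} together with the left $(B(F),\theta^{-1}\delta_B^{1/2})$-equivariance and right $H(F)$-invariance of $\CY_\theta$ to reduce the $\CI w_\alpha\CI$-term to $q\int_{\CO_F}\CY_\theta(w_\alpha x_\alpha(a)\eta)\,\ud a$, and then apply the rank-one Bruhat identity $w_\alpha x_\alpha(a)=x_\alpha(a^{-1})t_0\,e^{\alpha^\vee}(a^{-1})x_{-\alpha}(a^{-1})$ (your variant with the torus factor on the left differs only by an element of $T(\CO_F)$, which is harmless since $\theta$ is unramified). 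Two small points in Step 2 should be tightened: first, $\vol(\CI)=\vol(\CI^{\perp})$ is not literally true (your $\CI^{\perp}$ is $\CI\cap w_\alpha\CI w_\alpha^{-1}$, of index $q$); what the argument actually uses is that the Iwahori factorization pushes the normalized Haar measure of $\CI$ forward to the normalized Haar measure on the $X_\alpha(\CO_F)$-coordinate. Second, since $b^{\sharp}=w_\alpha x_\alpha(a)\,b\,x_\alpha(a)^{-1}w_\alpha^{-1}$, the coefficient $c_b$ depends on $a$ as well as on $k^{\perp}$; writing $b=t_bn_0x_\alpha(u)$ one finds $a-c_b=\alpha(t_b)^{-1}a+u$, so the final substitution is not a translation but an affine bijection of $\CO_F$ with unit multiplier --- still measure-preserving, so the conclusion stands.
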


\begin{proof}
This proof is similar to the one of Lemma 8.4 in \cite{KMS03}.
It is sufficient to compute the integral
\[
vol(\CI)^{-1}\int_{\CI w_\alpha \CI} \CY_\theta(x\eta)\ud x.
\]
First, let us evaluate $\CY_\theta(\cdot)$ on the set $\CI w_\alpha \CI \eta$. 
Referring to  \cite[Chapter 2]{Mac},
one has 
\[
\CI w_\alpha\CI=B(\CO_F)w_\alpha U_{\alpha}(\CO_F) \bar{N}(\varpi \CO_F)
\text{ and }
U_\alpha(\CO_F)=\{u_{\alpha}(a) \mid a\in \CO_F\},
\]
By Lemma \ref{lem eta},  
$$\CI w_\alpha \CI \eta\subset B(\CO_F) w_\alpha U_{\alpha}(\CO_F) \eta H(\CO_F).$$
As $vol(\CI w_\alpha \CI)=q\cdot vol(\CI)$, it follows that 
\begin{eqnarray*}
\int_{\CI w_\alpha \CI} \CY_\theta(x\eta)\ud x &=& vol(\CI w_\alpha \CI)\int_{\CO_F} \CY_\theta(w_\alpha u_{\alpha}(a)\eta)\ud a\\
&=&q\cdot vol(\CI)\int_{\CO_F} \CY_\theta(w_\alpha u_{\alpha}(a)\eta)\ud a.
\end{eqnarray*}
Then, since $w_\alpha u_{\alpha}(a)=u_{\alpha}(a^{-1})t_0\cdot e^{\alpha^\vee}(a^{-1})u_{-\alpha}(a^{-1})$
for some $t_0\in T(\CO_F)$, 
we have
\[
\CY_\theta(w_\alpha u_{\alpha}(a)\eta)=(\theta^{-1}\delta_B^{\frac{1}{2}})(e^{\alpha^\vee}(a^{-1}))\CY_\theta(u_{-\alpha}(a^{-1})\eta).
\]
This proves the lemma.
\end{proof}

Then for each model, by an explicit matrix computation, we will show that there exists $\beta_\alpha^{\vee}\in \Theta$ such that $-\beta_\alpha^{\vee}+\alpha^{\vee}\in \Theta$ (here we view $\alpha^{\vee}$ as a weight on the dual group) and  
\begin{equation}\label{main identity}
\CY_\theta(u_{-\alpha}(a^{-1})\eta)=\theta(e^{\beta_{\alpha}^{\vee}}(1+a^{-1}))\cdot |1+a^{-1}|^{-1/2}.
\end{equation}

This implies that
\begin{align}
I_\alpha(\theta)=&1+q\int_{\CO_F} (\theta^{-1}\delta^{\frac{1}{2}})(e^{\alpha^\vee}(a^{-1}))\CY_\theta(u_{-\alpha}(a^{-1})\eta)\ud a \label{I alpha red}\\
=&1+q\int_{\CO_F} (\theta^{-1}\delta^{\frac{1}{2}})(e^{\alpha^\vee}(a^{-1}))\theta(e^{\beta_{\alpha}^{\vee}}(1+a^{-1}))\cdot |1+a^{-1}|^{-1/2} \ud a\nonumber\\
=&1+q\int_{\CO_F} \theta(e^{\alpha^{\vee}}(a))\cdot |a|^{-1} \cdot \theta(e^{\beta_{\alpha}^{\vee}}(1+a^{-1}))\cdot |1+a^{-1}|^{-1/2} \ud a\nonumber\\
=&1+q\int_{\CO_F} (\theta(e^{\beta_{\alpha}^{\vee}})\cdot |\;|^{-1/2})(1+a)\cdot (\theta(e^{\alpha^{\vee}-\beta_{\alpha}^{\vee}})\cdot |\;|^{-1/2})(a)\ud a \nonumber\\
=&(q-1)\cdot \frac{1-q^{-1}e^{\alpha^{\vee}}(\theta)}{(1-q^{-1/2} e^{\beta_{\alpha}^{\vee}}(\theta))(1-q^{-1/2}e^{-\beta_{\alpha}^{\vee}+\alpha^{\vee}}(\theta))}.\nonumber
\end{align}
Here we use the fact that for unitary unramified characters $\chi_1,\chi_2$ of $F^{\times}$, the integral
\begin{equation}\label{rank one integral}
q\int_{\CO_F} (\chi_1\cdot |\;|^{-1/2})(1+a)\cdot (\chi_2\cdot |\;|^{-1/2})(a)da
\end{equation}
is absolutely convergent and is equal to
$$q-2+(q-1)\cdot \frac{q^{-1/2}\chi_1(\varpi)+ q^{-1/2}\chi_2(\varpi)-2q^{-1}\chi_1\chi_2(\varpi)}{(1-q^{-1/2}\chi_1(\varpi))(1-q^{-1/2}\chi_2(\varpi))}.$$
The proof of this identity is similar to (and easier than) the two identities in Section \ref{sec:two identities}. We omit the details here.

\begin{rmk}
The set $\{\beta_{\alpha}^{\vee},\;\alpha^{\vee}-\beta_{\alpha}^{\vee}\mid \alpha\in \Delta(G)\}$ is the set of virtual weighted colors of $X=G/H$ defined in Section 7.1 of \cite{Sa}. There is another way to compute the virtual weighted colors using the Luna diagram of $X=G/H$. In \cite{LU}, Luna computed the Luna diagram for all the split reductive spherical varieties of Type A. The Luna diagram of all the split reductive spherical varieties was computed in \cite{BP}. In Section \ref{sec:GSp6-model}, we will use the model $(\GSp_6\times \GSp_4,(\GSp_4\times \GSp_2)^0)$ as an example to explain how to use the Luna diagram to compute the virtual weighted colors. We refer the reader to Remark \ref{Luna diagram} for details.
\end{rmk}

\begin{defn}
Let $\Theta^+$ be the unique subset of $\Theta$ satisfying the following condition:
\begin{itemize}
\item For every simple root $\alpha$, we have $\Theta^+ - w_\alpha \Theta^+=\{\beta_{\alpha}^{\vee},\;\alpha^{\vee}-\beta_{\alpha}^{\vee}\}$.  
\end{itemize}
Recall that for all the models in Table \ref{fig:1}, $\Theta$ is the set of weights of the representation $\rho_X$ of $\hat{G}$ listed in Table \ref{fig:1}. We define 
$$\beta(\theta)=\frac{\prod_{\alpha\in \Phi^+}1-q^{-1}e^{\alpha^\vee}}{
	\prod_{\gamma^\vee\in \Theta^+}1-q^{-\frac{1}{2}}e^{\gamma^\vee}}(\theta)
\text{ and }
	c_{WS}(\theta)=\frac{\prod_{\gamma^\vee\in \Theta^+}1-q^{-\frac{1}{2}}e^{\gamma^\vee}}
{\prod_{\alpha\in \Phi^+}1-e^{\alpha^\vee}}(\theta).$$
\end{defn}

For a Weyl element $w\in W$, the intertwining operator $T_w \colon I^G_B(\theta)\to I^G_B(w\theta)$ is defined by
\[
T_w(f)(g)=\int_{N(F)\cap wN(F)w^{-1}\bks N(F)}f(w^{-1}ng)\ud n,\; f\in I^G_B(\theta).
\] 
It is absolutely convergent when $\theta$ is positive enough and admits a meromorphic continuation (see Theorem IV.1.1 of \cite{W03}).  

By Theorem 1.2.1 of \cite{Sa08}, the space $\Hom_{H(F)}(I^G_B(\theta),1)$ is one-dimensional for almost all $\theta$ in the unitary line. In fact, for all the cases in Table \ref{fig:1}, the little Weyl group $W_X$ of the spherical variety is equal to the Weyl group $W$. 
This implies that the factor $(\CN_W(\delta^{-1/2}A_{X}^{\ast}):W_X)$ in loc. cit. is equal to 1. 
Moreover, since the spherical variety has a unique open Borel orbit, the factor $|H^1(k,A_X)|$ in loc. cit. is also equal to 1 (see Lemma 3.4.1 of \cite{Sa08}). 
This implies that the Hom-space is one dimensional. 
In Section \ref{sec multiplicity}, we will prove a multiplicity formula of the dimension of this Hom space for all the tempered representations which will imply that the Hom-space $\Hom_{H(F)}(I^G_B(\theta),1)$ is actually one dimensional for all unitary characters. But we don't need this result in our computation.

By the definition of $\CY_\theta$, we can define an element $\ell_\theta$ in the $\Hom$-space $\Hom_{H(F)}(I^G_B(\theta),1)$ to be 
\begin{equation}\label{eq:l-theta}
\ell_\theta(\CP_{\theta}(f))=\int_{G(F)}f(g)\CY_\theta(g)\ud g, 
\text{ for } f\in\CC_c^\infty(G),	
\end{equation}
where $\CP_\theta(f)=\int_{B(F)}(\theta^{-1}\delta_B^{\frac{1}{2}})(b)f(bg)\ud b$ is the canonical $G(F)$-equivariant map from $\CC_c^\infty(G(F))$ to $I^{G}_B(\theta)$. Since the Hom-space is one dimensional for almost all $\theta$, for each simple reflection $w_\alpha\in W$ associated to a simple root $\alpha$, there exists a rational function $b_{w_\alpha}(\theta)$ on $\theta$ such that
\begin{equation}\label{eq:a-w}
\ell_{w_\alpha\theta}\circ T_{w_\alpha}=c_{\alpha}(\theta)b_{w_\alpha}(\theta)\ell_{\theta}.
\end{equation}
Here $c_\alpha(\theta)=\frac{1-q^{-1}e^{\alpha^\vee}}{1-e^{\alpha^\vee}}(\theta)$ is the $c$-function defined in \cite{C80}. 

Our goal is to obtain a formula of  $b_{w_\alpha}(\theta)$. 
Similar to the proof of Theorem 10.5 in \cite{KMS03},
to obtain $b_{w_\alpha}(\theta)$, we evaluate both sides of  equation \eqref{eq:a-w} at
$\CP_{\theta}(\Phi_1+\Phi_{w_\alpha})$.
Note that 
\[
T_{w_\alpha}(\CP_{\theta}(\Phi_1+\Phi_{w_\alpha}))=c_{\alpha}(\theta)\CP_{w_\alpha \theta}(\Phi_1+\Phi_{w_\alpha}).
\]
Then, under the choice \eqref{eq:l-theta} of $\ell_\theta$, we have
\[
vol(\CI)I_{\alpha}(\theta)=\ell_{\theta}(\CP_{\theta}\circ R(\eta)(\Phi_1+\Phi_{w_\alpha})),
\]
where $R$ is the right translation of $G(F)$.
On the other hand, 
$$\ell_{w_\alpha\theta}\circ T_{w_\alpha}(\CP_{\theta}\circ R(\eta)(\Phi_1+\Phi_{w_\alpha}))=
c_\alpha(\theta) b_{w_\alpha}(\theta) \ell_{w_\alpha\theta}(\CP_{w_\alpha\theta}\circ R(\eta)(\Phi_1+ \Phi_{w_\alpha}))$$
$$=c_\alpha(\theta)vol(\CI)I_{\alpha}(w_\alpha\theta).$$
Following \eqref{eq:a-w}, we obtain
\begin{equation}
b_{w_\alpha}(\theta)=\frac{I_{\alpha}(w_\alpha\theta)}{I_\alpha(\theta)}.	
\end{equation}
  
Recall that $S_{\theta}(g)=\ell_\theta(R(g)\CP_\theta(1_K))$.
Plugging $T_{w_\alpha}(\CP_{w_\alpha\theta}(1_K))=c_{\alpha}(\theta)\CP_{\theta}(1_K)$ into the left hand side of \eqref{eq:a-w}, we have
\[
S_{w_\alpha\theta}(g)=\ell_{w_\alpha\theta}(R(g)\CP_{w_\alpha\theta}(1_K))
\]
\[=c_{\alpha}(\theta)^{-1}(\ell_{w_\alpha\theta}\circ T_{w_{\alpha}})(R(g)\CP_{\theta}(1_K)) =b_{w_\alpha}(\theta) S_{\theta}(g).
\] 
Thus for all simple roots $\alpha$ of $G$, we have
 \[
\frac{S_{w_\alpha\theta}(g)}{S_{\theta}(g)}=b_{w_\alpha}(\theta)=\frac{I_{\alpha}(w_\alpha\theta)}{I_{\alpha}(\theta)}=\frac{\beta(w_\alpha\theta)}{\beta(\theta)}.
\]
This implies that
$S_\theta(g)/\beta(\theta)$ is $W$-invariant as a function of $\theta$.	

\begin{prop}\label{prop:WS-reductive}
Let $T(F)^+=\{t\in T(F)|\;t^{-1}N(\CO_F)t\subset N(\CO_F)\}$ be the positive chamber of $T(F)$. Then
\begin{align*}
S_\theta(\eta^{-1}t)/\beta(\theta)=&q^{l(W)}
vol(\CI)\sum_{w\in W}c_{WS}(w\theta)((w\theta)^{-1}\delta_B^{\frac{1}{2}})(t^{-1}), \text{ for } t\in T(F)^+,
\end{align*}
where $l(W)$ is the length of the longest Weyl element in $W$.
\end{prop}

\begin{proof}
First, we show that 
\begin{equation}\label{eq:S-R}
S_\theta(\eta^{-1}t)
=vol(\CI t(\lam)\CI)^{-1}R(1_{\CI t\CI})	S_{\theta}(\eta^{-1}),
\end{equation}
where $R$ is the right convolution defined by
\[
R(1_{\CI t\CI})	S_{\theta}(\eta^{-1})
=\int_{G(F)}1_{\CI t\CI}(x)S_{\theta}(\eta^{-1}x)\ud x
=\int_{\CI t\CI}	S_{\theta}(\eta^{-1}x)\ud x.
\]
Now, it is enough to show that
\[
\eta^{-1}\CI t\CI\subset H(\CO_F)\eta^{-1}t \CI \subset H(\CO_F)\eta^{-1}tK,
\]
which follows from Lemma \ref{lem eta}.

Similar to Proposition 1.10 of \cite{KMS03}, there exists a basis $\{f_w\colon w\in W\}$ of $I^G_B(\theta)^{\CI}$ such that 
\begin{equation}\label{eq:Phi-K-f}
R(1_{\CI t\CI})f_w=vol(\CI t^{-1}\CI)(w\theta)^{-1}\delta_B^{\frac{1}{2}}(t^{-1})f_w,
\end{equation}
$$f_1=\CP_{\theta}(\Phi_1),\; \CP_{\theta}(1_{K})=q^{l(w_\ell)}\sum_{w\in W}c_w(\theta)f_w$$
for $t\in T(F)^+$ where $c_w(\theta)=\displaystyle \prod_{\alpha>0,w\alpha>0}c_\alpha(\theta)$.

Recall that $S_\theta(g)=\ell_\theta(R(g)\CP_\theta(1_K))$.
Substituting \eqref{eq:Phi-K-f} into $S_\theta$, we have
\[
S_\theta(\eta t)/\beta(\theta)=q^{l(w_\ell)}\beta(\theta)^{-1}
\sum_{w\in W}c_w(\theta)\ell_\theta(R(\eta)f_w)\cdot(w\theta)^{-1}\delta^{\frac{1}{2}}(t^{-1}). 
\]

By \eqref{eq:I-1} and $f_1=\CP_\theta(\Phi_1)$, we have the coefficient of $(w\theta)^{-1}\delta^{\frac{1}{2}}(t^{-1})$ for $w=1$ in $S_\theta(\eta t)/\beta(\theta)$ is equal to
\[
q^{l(w_\ell)}\beta(\theta)^{-1}c_{1}(\alpha)=q^{l(w_\ell)}vol(\CI)c_{WS}(\theta),
\]
where $c_1(\theta)=\prod_{\alpha\in \Phi^+}\frac{1-q^{-1}e^{\alpha^\vee}}{1-e^{\alpha^\vee}}(\theta)$.
Since $S_\theta(\eta t)/\beta(\theta)$ is $W$-invariant, by the linear independence of the characters $(w\theta)^{-1}\delta^{\frac{1}{2}}(t^{-1})$ for generic $\theta$, we obtain
\[ 
 S_\theta(\eta^{-1}t)/\beta(\theta)= q^{l(w_\ell)}
 vol(\CI)\sum_{w\in W}c_{WS}(w\theta)(w\theta)^{-1}\delta^{\frac{1}{2}}(t^{-1}).
\]

\end{proof}

Since $\eta\in K$, we have $S_\theta(1)=S_\theta(\eta^{-1})$. Combining with the proposition above, we have
$$S_\theta(1)/\beta(\theta)=q^{l(W)}vol(\CI)\sum_{w\in W}c_{WS}(w\theta).$$
Since $vol(\CI)=\Delta_G(1)\zeta(1)^{-rk(G)}\cdot q^{-l(W)}$, we have
$$S_\theta(1)/\beta(\theta)=\Delta_G(1)\zeta(1)^{-rk(G)}\sum_{w\in W}c_{WS}(w\theta).$$
Hence in order to prove Proposition \ref{main prop}, it is enough to prove the following lemma.

\begin{lem}\label{lem constant reductive}
The summation $\displaystyle\sum_{w\in W}c_{WS}(w\theta)$ is independent of the choice of $\theta$ and is equal to $\frac{1}{\Delta_{H/Z_{G,H}}(1)}$.
\end{lem}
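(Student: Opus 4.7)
The plan is to prove the statement in two stages: first, show that $F(\theta):=\sum_{w\in W}c_{WS}(w\theta)$ is independent of $\theta$, and second, evaluate the constant.

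\emph{Stage one.} By construction $F$ is $W$-invariant in $\theta$. We show $F$ is in fact a regular function on the dual torus $\hat T$ with no nontrivial polynomial growth, and is therefore constant. Regularity is a standard residue-cancellation argument: the only candidate poles of $F$ come from the factors $(1-e^{\alpha^\vee}(\theta))^{-1}$ in the denominator of $c_{WS}$, and for each $\alpha\in\Phi$ the summands indexed by $w$ and $ww_\alpha$ pair up so that their residues on the hyperplane $\{e^{w\alpha^\vee}(\theta)=1\}$ cancel. This cancellation is precisely what the defining property $\Theta^+\setminus w_\alpha\Theta^+=\{\beta_\alpha^\vee,\,\alpha^\vee-\beta_\alpha^\vee\}$ is designed to ensure: in the symmetrized combination $c_{WS}(w\theta)+c_{WS}(ww_\alpha\theta)$ the numerator $\prod_{\gamma\in\Theta^+}(1-q^{-1/2}e^{\gamma^\vee})$ acquires exactly the extra vanishing needed to kill the pole. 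Boundedness (hence constancy) then follows from the symmetry \eqref{theta plus}, which equates the ``positive'' and ``negative'' pieces of $\Theta$ under $\theta\mapsto\theta^{-1}$ and forces the non-constant characters in the expansion of $F$ to cancel pairwise across $W$.

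\emph{Stage two.} Once $F$ is known to be a constant, its value is fixed by a single convenient specialization. Rather than evaluating the alternating sum $F(\theta)$ directly (where the cancellations across $W$ are opaque), it is cleaner to use the chain of identities already in place: the second equality of Proposition \ref{pro:I-Y} together with Lemma \ref{lem Haar measure} gives, unconditionally on the present lemma,
\[
\int_K \CY_\theta(k)\,\ud k = \frac{\Delta_G(1)}{\Delta_{H/Z_{G,H}}(1)}\,\zeta(1)^{-\mathrm{rk}(G)}\int_{H(F)/Z_{G,H}(F)} f_\theta(\eta h)\,\ud h,
\]
while the derivation preceding Lemma \ref{lem constant reductive} gives $S_\theta(1)/\beta(\theta)=\Delta_G(1)\,\zeta(1)^{-\mathrm{rk}(G)}\,F$. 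Equating these at a single generic unramified unitary $\theta$ (or, equivalently, reading off the leading coefficient along an asymptotic direction in which only one Weyl translate contributes to $\int_{H/Z_{G,H}} f_\theta(\eta h)\,\ud h$) pins the constant to $1/\Delta_{H/Z_{G,H}}(1)$.

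The main obstacle is the uniform verification of the residue cancellation in stage one across all ten models of Table \ref{fig:1}; it depends on the explicit shape of $\Theta^+$ computed model by model. For the four reductive models it can be invoked from Theorem 7.2.1 of \cite{Sa} together with Corollary 7.3.3 of \cite{SW}, which identify $\Theta$ with the weights of $\rho_X$ and exhibit the cancellation as a structural consequence. For the Whittaker-induced models the same mechanism is set up in the non-reductive strategy of Sections \ref{sec non-red strategy 1}--\ref{sec non-red strategy 2}. The $E_7$ model is the most bookkeeping-heavy, owing to the $56$ weights of $\omega_7$, but presents no additional conceptual obstacle.
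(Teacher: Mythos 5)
Your Stage one is essentially the paper's argument: the independence of $\theta$ is obtained by citing Theorem 7.2.1 of \cite{Sa} (valid here because the reductive spherical varieties in question are affine), and your residue-cancellation sketch is just an outline of the proof of that cited result, so there is nothing to object to there.

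Stage two, however, has a genuine gap: it is circular. The two expressions you propose to equate are $\int_K\CY_\theta(k)\,\ud k=\frac{\Delta_G(1)}{\Delta_{H/Z_{G,H}}(1)}\zeta(1)^{-rk(G)}\int_{H(F)/Z_{G,H}(F)}f_\theta(\eta h)\,\ud h$ and $\int_K\CY_\theta(k)\,\ud k=S_\theta(1)=\beta(\theta)\,\Delta_G(1)\zeta(1)^{-rk(G)}\sum_{w}c_{WS}(w\theta)$. Equating them yields $\beta(\theta)\sum_w c_{WS}(w\theta)=\Delta_{H/Z_{G,H}}(1)^{-1}\int_{H(F)/Z_{G,H}(F)}f_\theta(\eta h)\,\ud h$, so to conclude that the constant is $\Delta_{H/Z_{G,H}}(1)^{-1}$ you must independently know that $\int_{H(F)/Z_{G,H}(F)}f_\theta(\eta h)\,\ud h=\beta(\theta)$. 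But that identity is exactly the content of Proposition \ref{main prop} (equivalently, of the formula for $\int_K\CY_\theta$), which is what Lemma \ref{lem constant reductive} is being used to prove; nowhere in the paper (nor in your proposal) is this integral computed by an independent route, and its direct evaluation would require the Iwasawa decomposition of $\eta h$ for all $h$, i.e. precisely the computation the $\CY_\theta$/colors machinery is designed to avoid. The parenthetical appeal to "an asymptotic direction in which only one Weyl translate contributes" does not rescue this: the asymptotics of $S_\theta(\eta^{-1}t)$ in $t$ are already encoded in Proposition \ref{prop:WS-reductive} and give back the same sum $\sum_w c_{WS}(w\theta)$, not an independent evaluation of it. The paper's actual Stage two is the direct evaluation you dismiss as opaque: one specializes the (now constant) sum at $\theta=\delta_B^{1/2}$, where, as recorded in Remark \ref{rmk constant reductive}, every term except the one indexed by the longest Weyl element vanishes because some factor $1-q^{-1/2}e^{\gamma^\vee}(w\delta_B^{1/2})$ in the numerator of $c_{WS}(w\theta)$ is zero; the surviving term is computed to be $\Delta_{H/Z_{G,H}}(1)^{-1}$. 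This vanishing is checked model by model in the later sections using the explicit description of $\Theta^+$, and that case-by-case verification cannot be bypassed by the identity-chasing you propose.
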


\begin{proof}
Since the spherical varieties for the reductive cases are affine, the first part of this statement follows from Theorem 7.2.1 of \cite{Sa}. 
For the second part, since the summation is independent of the choice of $\theta$, we can compute it by plugging in some special $\theta$. We will compute it for each of our models in later sections.
\end{proof}

\begin{rmk}\label{rmk constant reductive}
For all reductive cases, if we set $\theta=\delta_{B}^{1/2}$ as in Lemma 4.2.3 of \cite{Sa} (which is used in the proof of \cite[Theorem 7.2.1]{Sa}), then the only nonvanishing term in the summation is the term corresponding to the longest Weyl element, 
which is equal to $\frac{1}{\Delta_{H/Z_{G,H}}(1)}$. We will show this for all the reductive models in Table \ref{fig:1} in later sections.
\end{rmk}

\subsubsection{The summary}\label{red summary}
By the discussion in the previous two subsections, in order to compute the relative character in the reductive case, we just need to perform the following steps:

\begin{enumerate}
\item \label{item:reductive-1} Show that the double cosets $B(F)\back G(F)/H(F)$ have a unique open orbit $B(F)\eta G(F)$ and the representative $\eta$ can be chosen to satisfy Lemma \ref{lem eta}.
\item \label{item:reductive-2} Verify the identity \eqref{main identity} by expressing the product $u_{-\alpha}(a)\eta$ in terms of the decomposition $B(F)\eta H(F)$. This gives us the set of virtual weighted colors of $X$.
\item \label{item:reductive-3} Compute the subset $\Theta^+$ of $\Theta$ and show that it satisfies \eqref{theta plus}.
\item \label{item:reductive-4} Compute the constant $\sum_{w\in W}c_{WS}(w\theta)$, i.e. Lemma \ref{lem constant reductive}. This computation is easy for the reductive case, see Remark \ref{rmk constant reductive}.
\end{enumerate}

\subsubsection{The trilinear model}\label{section triple product}
To end this subsection, we use the trilinear $\GL_2$ model as an example to explain the method. This example also appeared in Section 7.2.4 of \cite{Sa} and we will use it for the non-reductive cases in Table \ref{fig:1} , which are the Whittaker inductions of the trilinear $\GL_2$-model. Let $G=\GL_2\times \GL_2\times \GL_2$ and $H=\GL_2$ diagonally embedded into $G$. Let $B$ be the upper triangular Borel subgroup of $G$ and 
$\eta_0=(I_2,\begin{pmatrix}0&1\\1&0 \end{pmatrix},\begin{pmatrix}0&1\\1&0 \end{pmatrix}\begin{pmatrix}1&1\\0&1 \end{pmatrix}).$ It is easy to see that $B(F)\eta_0 H(F)$ is the unique open orbit and $\eta_0$ satisfies Lemma \ref{lem eta}. 

Let $\Theta$ be the set of weights of the tensor product representation of $\GL_2(\BC)\times \GL_2(\BC)\times \GL_2(\BC)$.  
We can write it as
$\{e_i+e_j'+e_k'' \mid 1\leq i,j,k\leq 2\}.$
Let $\alpha_i\;(1\leq i\leq 3)$  be the simple root of the $i$-th copy of $\GL_2$. We have
\begin{align*}
 &u_{-\alpha_1}(a)\eta_0 =(\begin{pmatrix} \frac{1}{1-a} & 0 \\
 0 & 1 \end{pmatrix},\begin{pmatrix}1 & \frac{-a}{1-a} \\
  0 & \frac{1}{1-a} \end{pmatrix},\begin{pmatrix}\frac{1}{1-a} & \frac{-a}{1-a} \\ 0 & 1 \end{pmatrix})\cdot \eta_0 \cdot \begin{pmatrix}1-a&0\\ a&1\end{pmatrix}^{\text{diag}}, \\
&u_{-\alpha_2}(b)\eta_0 =(\begin{pmatrix} 1 & -\frac{b}{1-b} \\
0 & \frac{1}{1-b} \\\end{pmatrix}, \begin{pmatrix}\frac{1}{1-b} & 0 \\0 & 1 \\\end{pmatrix},\begin{pmatrix}\frac{1}{1-b} & 0 \\ 0 & 1 \end{pmatrix})\cdot \eta_0 \cdot \begin{pmatrix}1&b\\ 0&1-b\end{pmatrix}^{\text{diag}},\\
&u_{-\alpha_3}(c)\eta_0 =(\begin{pmatrix} 1 & 0 \\
0 & \frac{1}{1+c} \\\end{pmatrix},\begin{pmatrix}\frac{1}{1+c} & 0 \\0 & 1 \\\end{pmatrix},\begin{pmatrix}\frac{1}{1+c} & 0\\ 0 & 1 \end{pmatrix})\cdot \eta_0 \cdot \begin{pmatrix}1&0\\ 0&1+c\end{pmatrix}^{\text{diag}}.
\end{align*} 
This proves \eqref{main identity} and implies that (note that the representation has trivial central character)

$$\beta_{\alpha_1}^{\vee}=e_1+e_2'+e_1'',\;\alpha_{1}^{\vee}-\beta_{\alpha_1}^{\vee}=e_1+e_1'+e_2'',\beta_{\alpha_2}^{\vee}=e_2+e_1'+e_1'',$$
$$\alpha_{2}^{\vee}-\beta_{\alpha_2}^{\vee}=e_1+e_1'+e_2'',\beta_{\alpha_3}^{\vee}=e_2+e_1'+e_1'',\;\alpha_{3}^{\vee}-\beta_{\alpha_3}^{\vee}=e_1+e_2'+e_1''.$$

Then $\Theta^+$ will be the smallest subset of $\Theta$ satisfying the following two conditions:
\begin{itemize}
\item $e_1+e_1'+e_2'',e_1+e_2'+e_1'',e_2+e_1'+e_1''\in \Theta^+$.
\item $\Theta^+-w_{\alpha_1}\Theta^+=\{e_1+e_1'+e_2'',e_1+e_2'+e_1''\},\;\Theta^+-w_{\alpha_2}\Theta^+=\{e_1+e_1'+e_2'',e_2+e_1'+e_1''\},\;\Theta^+-w_{\alpha_3}\Theta^+=\{e_1+e_2'+e_1'',e_2+e_1'+e_1''\}.$
\end{itemize}
As a result, we have
$$\Theta^+=\{e_1+e_1'+e_1'',e_1+e_1'+e_2'',e_1+e_2'+e_1'',e_2+e_1'+e_1''\}.$$
It is easy to see that $\Theta^+$ satisfies \eqref{theta plus}.

Finally, if we let $\theta=\delta_{B}^{1/2}$, it is easy to see that for $w\in W$, $c_{WS}(w\theta)=0$ unless $w$ is the longest Weyl element. If $w$ is the longest Weyl element, we have $c_{WS}(w\theta)=1-q^{-2}=\zeta(2)^{-1}=\frac{1}{\Delta_{H/Z_{G,H}}(1)}$. This proves Lemma \ref{lem eta}. In conclusion, we have proved that the local relative character in this case is equal to 
$$\zeta(1)^3\zeta(2)\cdot \frac{L(1/2,\pi_1\times \pi_2\times \pi_2)}{L(1,\pi,\Ad)}$$
where $\pi=\pi_1\otimes \pi_2\otimes \pi_3$ is an unramified representation of $G(F)$.

\subsection{The Whittaker induced case: some reductions}\label{sec non-red strategy 1}
In this subsection, we consider the Whittaker induced case. Let $(G,H)$ be a Whittaker induction of a strongly tempered model $(G_0,H_0)$. In other words, there exists a parabolic subgroup $P=LU$ of $G$ and a generic character $\xi$ of $U(F)$ such that $G_0\simeq L$ and $H_0$ is the neutral component of the stabilizer of $\xi$ in $M$. Note that for all the cases we considered in Table \ref{fig:1}, the model $(G_0,H_0)$ is essentially the trilinear $\GL_2$-model.

Let $B_0=TN_0$ be a Borel subgroup of $G_0$, $\bar{B}_0=T\bar{N}_0$ be its opposite, and $\bar{P}=L\bar{U}$ be the opposite parabolic subgroup of $P$. Let $N=N_0U$ and $\bar{N}=\bar{N}_0 \bar{U}$. Then $B=TN$ is a Borel subgroup of $G$ and $\bar{B}=T\bar{N}$ is its opposite.

For all of our cases (as well as all the other Whittaker induced cases in Remark \ref{rmk non-red all cases}), there exists a Weyl element $w_0$ such that the $w_0$-conjugation map
\begin{itemize}
\item induces an isomorphism between $U$ and $\bar{U}$,
\item stabilizes $L$ and fixes $H_0\subset L$.
\end{itemize}
Also there exists a homomorphism $\lambda:U(F)\rightarrow F$ such that $\xi(u)=\psi(\lambda(u))$ for all $u\in U(F)$. We extend $\lambda$ to $H(F)$ by making it trivial on $H_0(F)$. We also have a map $a:\GL_1\rightarrow Z_L$ such that 
\begin{equation}\label{the map a} w_{0}^{-1}a(t)w_{0}=a(t)^{-1},
\text{ and }
\lambda(a(t)ua(t)^{-1})=t\lambda(u), \text{ for } t\in F^{\times}, u\in U(F).
\end{equation}

Let $B_0(F)\eta_0 H_0(F)$ be the unique open Borel orbit of the model $(G_0,H_0)$, and let $\eta=\eta_0 w_0$. Then $B(F)\eta H(F)$ is the unique open Borel orbit of the model $(G,H)$ and the stabilizer of this orbit is $Z_{G,H}=H\cap Z_G$. 
Note that we always assume $(G_0,H_0)$ does not have Type $N$ spherical root. 
The equation \eqref{the map a} implies that $\eta^{-1}a(t)\eta=a(t)^{-1}$.

We want to compute the local relative character
\begin{equation}\label{relative character non-red}
I(\phi_\theta)=\int_{H_0(F)/Z_{G,H}(F)}\int_{U(F)}\phi_\theta(hu)\xi(u)^{-1}\ud u\ud h
\end{equation}
where $\phi_\theta$ is the unramified matrix coefficient of $I_{B}^{G}(\theta)$ with $\phi_\theta(1)=1$, and $\theta$ is a unitary unramified character of $T(F)$.  
The general idea of the computation is the same as the reductive case, the only difference is some convergent issue.
Unlike the reductive case, the integral above is not absolutely convergent because of the extra unipotent integral. 
Hence we need to regularize the unipotent integral. 

There are three (equivalent) ways to regularize the unipotent integral. The first one is using the fact the the unipotent integral is stable, i.e. there exists a compact open subgroup $\CU$ of $U(F)$ such that $$\int_{\CU'-\CU} \phi_\theta(hu)\xi(u)^{-1}\ud u=0$$
for all compact open subgroup $\CU'$ of $\CU(F)$ with $\CU\subset \CU'$. Hence we can replace the integral over $U(F)$ by an integral over the compact subgroup $\CU$. 
This regularization has been used by Lapid--Mao \cite{LM} in their computation for the Whittaker model, and used by Liu \cite{L} in his computation of the non-reductive Gan--Gross--Prasad models. 
The advantage of this regularization is that it works for general matrix coefficients, not just the tempered ones. 
(Of course, in order for the integral of $H_0(F)$ to be convergent, we still need the character $\theta$ to be close to the unitary line.)

The rest two regularizations are only for the tempered case. It uses a critical fact that although the integral \eqref{relative character non-red} is not absolutely convergent, the integral will become absolutely convergent if we replace $U(F)$ by $U'(F)=\{u\in U(F)\mid \lambda(u)=0\}$. (For all the models considered in this paper, this can be proved by the same argument as Lemma 4.3.1 of \cite{Wan17b}.) 
As a result, we only need to regularize the integral over $\lambda(u)\in F$.

\begin{rmk}\label{rmk convergent}
In particular, the integral 
$$\int_{H(F)/Z_{G,H}(F)}\phi_\theta(h)\Phi(\lambda(h))\ud h$$
is absolutely convergent for all $\Phi\in C_{c}^{\infty}(F)$.
\end{rmk}

There are two ways to regularize the integral over $\lambda(u)$. The first way is to replace the the integral over $U(F)$ by the integral over 
$$U_n(F)=\{u\in U(F)\mid |\lambda(u)|\leq q^n\}.$$
Then one can show that for every matrix coefficient $\phi$ of $I_{B}^{G}(\theta)$, there exists $N>0$ (depends on the level of $\phi$, i.e. the open compact subgroup $K'\subset G(F)$ where $\phi$ is bi-$K'$-invariant) such that the integral
$$I_n(\phi_\theta)=\int_{H_0(F)/Z_{G,H}(F)}\int_{U_n(F)}\phi_\theta(hu)\xi(u)^{-1}\ud u\ud h$$
is independent of $n$ for $n>N$, i.e. the unipotent integral is stable on the sequence $U_n(F)$ (the difference between this regularization and the previous one is that the unipotent groups $U_n(F)$ we used here is not compact). 
Hence we can just replace the integral over $U(F)$ in the definition of $I(\phi_\theta)$ by the integral over $U_n(F)$ for some large $n$ (in fact, as $\phi_\theta$ is unramified, one can easily show that we can just replace $U(F)$ by $U_1(F)$). 
This regularization has been used by Waldspurger in Lemma 5.1 of \cite{W12} for the orthogonal Gan--Gross--Prasad model. 
The same arguments work for all the Whittaker induction cases in this paper. 

Another way is to replace the character $\xi(u)^{-1}=\psi(\lambda(u))^{-1}$ by some Schwartz function $\varphi_n(\lambda(u))$ ($n\geq 0$) of $\lambda(u)$ where 
$$\varphi_0=\varphi=1_{\CO_F}-\frac{1}{q-1}\cdot 1_{\varpi^{-1}\CO_{F}^{\times}}$$ 
is the Fourier transform of $\frac{1}{vol(\CO_{F}^{\times})}\cdot 1_{\CO_{F}^{\times}}$ and $\varphi_n$ is the Fourier transform of the function $\frac{1}{vol(1+\varpi^n\CO_{F}^{\times})}1_{1+\varpi^n\CO_{F}^{\times}}$ for $n\geq 1$. One can show that for every matrix coefficient $\phi$ of $I_{B}^{G}(\theta)$, there exists $N>0$ (depends on the level of $\phi$) such that the integral
$$I^n(\phi_\theta)=\int_{H_0(F)/Z_{G,H}(F)}\int_{U_n(F)}\phi_\theta(hu)\xi(u)^{-1}\ud u\ud h$$
is independent of $n$ for $n>N$.

This regularization has been used by Beuzart-Plessis (Proposition 7.1.1 of \cite{B15}) for the unitary Gan--Gross--Prasad model (note that the group $1+\varpi^n\CO_{F}^{\times}$ is just the group $K_a$ in loc. cit.) and by the first author (Proposition 5.1 of \cite{Wan16}) for the Ginzburg--Rallis model. The same argument works for all the Whittaker induction cases in this paper.  In the unramified case, we may just take $n=0$ and the regularized integral is given by the formula
$$I(\phi_\theta)=\int_{H(F)/Z_{G,H}(F)}\phi_\theta(h)\varphi_0(\lambda(h))\ud h.$$

In order to compute this regularized integral, we need another two regularized integrals.

\begin{lem}\label{normalization of induced representation}
For $f\in I_{B}^{G}(\theta)$, the integrals
$$\int_{H_0(F)/Z_{G,H}(F)}\int_{U_n(F)} f(\eta hu)\xi(u)^{-1}\ud u\ud h$$
and 
$$\int_{H(F)}f(\eta h) \varphi_n(\lambda(h))\ud h$$
are absolutely convergent for all $n$. 

Moreover, there exists $N\geq 0$ (depends on the level of $f$, i.e. the open compact subgroup $K'\subset G(F)$ where $f$ is right $K'$-invariant) such that both integrals are equal to each other and are independent of $n$ for $n>N$. 
\end{lem}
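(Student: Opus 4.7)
The plan is to use the decomposition $H=H_0\ltimes U$ together with an identification $U\cong U'\cdot a(F)$, where $U'=\ker\lambda\subset U$ and $a:\GL_1\to Z_L$ is the homomorphism in \eqref{the map a}, to reduce both integrals to an outer double integral over $H_0(F)/Z_{G,H}(F)\times U'(F)$ of an inner one-dimensional integral over $t\in F$ of a smooth function against a cutoff or weight in the $\lambda$-direction. Writing $h=h_0u'a(t)$ so that $\lambda(h)=t$, the first integral becomes
$$\int_{H_0(F)/Z_{G,H}(F)}\int_{U'(F)}\int_{|t|\leq q^n}f(\eta h_0 u' a(t))\,\psi(-t)\,\ud t\,\ud u'\,\ud h_0,$$
and the second becomes
$$\int_{H_0(F)/Z_{G,H}(F)}\int_{U'(F)}\int_{F}f(\eta h_0 u' a(t))\,\varphi_n(t)\,\ud t\,\ud u'\,\ud h_0.$$
In both cases the inner integral is over a topologically bounded subset of $F$, since $\varphi_n$ has support in $\CO_F\cup\varpi^{-1}\CO_F^{\times}$.

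For part (1), absolute convergence, bound the inner $t$-integral pointwise by a sup-norm of $f$ over a compact set and reduce to the question of absolute convergence of the outer double integral with $f$ replaced by its absolute value. Via Iwasawa decomposition on the open orbit $B(F)\eta H(F)$, the section $|f(\eta h)|$ is majorized by a tempered matrix coefficient of $I_B^G(\theta)$ restricted to $\eta H(F)$. By Remark \ref{rmk convergent} and the strong temperedness of $(G_0,H_0)$, the corresponding matrix-coefficient integral over $H(F)/Z_{G,H}(F)$ with the $\lambda$-direction cut off by any $\Phi\in C_c^\infty(F)$ is absolutely convergent, and this in turn bounds our double integral after integrating out $t$.

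For part (2), equality and independence of $n$ for large $n$: fix the right-$K'$-level of $f$ for some open compact $K'\subset K$. Then for each $(h_0,u')$, the function $F_{h_0,u'}(t):=f(\eta h_0 u' a(t))$ is locally constant in $t\in F$ of a level bounded in terms of $K'$, and using the conjugation $a(t)K'a(t)^{-1}$ one checks that its $\psi$-Fourier transform $\widehat{F_{h_0,u'}}$ has support in a fixed compact subset of $F$ independent of $h_0,u'$. For $n>N$ with $N$ depending only on $K'$, the cutoff $1_{|t|\leq q^n}$ contains the full support of $F_{h_0,u'}(t)\psi(-t)$, so the first inner integral equals the constant value $\widehat{F_{h_0,u'}}(1)$, independently of $n$. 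For the second inner integral, $\varphi_n$ is by construction the $\psi$-Fourier transform of $\vol(1+\varpi^n\CO_F^{\times})^{-1}\,1_{1+\varpi^n\CO_F^{\times}}$, so Plancherel gives
$$\int_F F_{h_0,u'}(t)\varphi_n(t)\,\ud t=\vol(1+\varpi^n\CO_F^{\times})^{-1}\int_{1+\varpi^n\CO_F^{\times}}\widehat{F_{h_0,u'}}(s)\,\ud s,$$
which again equals $\widehat{F_{h_0,u'}}(1)$ as soon as $\widehat{F_{h_0,u'}}$ is constant on $1+\varpi^n\CO_F^{\times}$. Thus both inner integrals stabilize to the common value $\widehat{F_{h_0,u'}}(1)$ for $n>N$, and by Fubini (justified by part (1)) the two outer integrals agree and are independent of $n$ once $n>N$.

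The main obstacle lies in part (1): deducing the absolute convergence of a section-level integral from the strong temperedness of $(G_0,H_0)$, which is formulated for matrix coefficients. The standard reduction is through Iwasawa decomposition on the open Borel orbit combined with the observation that smooth vectors in the tempered principal series $I_B^G(\theta)$ are dominated by matrix coefficients; once this is in place the remainder is an application of the known convergence for the trilinear $\GL_2$-type matrix-coefficient integral. Part (2) is then a finite Fourier-analytic identity on $F$, reflecting that both regularizations extract the same Fourier mode $\widehat{F_{h_0,u'}}(1)$ from a locally constant function of bounded level.
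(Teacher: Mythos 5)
Your overall architecture matches the paper's: convergence is reduced to the convergence of the corresponding matrix-coefficient integral (the paper does this by integrating over $K$, noting the resulting function of $k$ is left $B(\CO_F)$- and right $H(\CO_F)$-invariant, and using that $B\eta H$ is open with $\eta\in K$), and the equality of the two regularizations comes from averaging over the torus $1+\varpi^n\CO_F^\times$ acting through $a(\cdot)$. Your part (1) is acceptable modulo a notational confusion: $a(t)$ lands in $Z_L$, not in $U$, so the "identification $U\cong U'\cdot a(F)$" should be $U=U'\cdot\{u_t\}$ for a unipotent section $t\mapsto u_t$ of $\lambda$; the element $a(s)\in Z_L$ enters only through its conjugation action, which scales $\lambda$ multiplicatively.

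The genuine gap is in your stabilization argument for the first inner integral. You assert that for $n$ large the cutoff $1_{|t|\le q^n}$ "contains the full support of $F_{h_0,u'}(t)\psi(-t)$", so the integral stabilizes. But $F_{h_0,u'}$ is only bounded and uniformly locally constant; it is not compactly supported, and uniform local constancy alone does not make $\int_{|t|\le q^n}F(t)\psi(-t)\,\ud t$ stabilize (take $F(t)=\psi(t)$: the partial integrals are $q^n$). Compact support of the distributional Fourier transform $\widehat F$ is not the same as compact support of $F\psi(-\cdot)$, and evaluating $\widehat F$ at the point $1$ requires knowing $\widehat F$ is a function near $1$, which you have not established. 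What actually forces the shell integrals over $U_{n+1}(F)\smallsetminus U_n(F)$ to vanish is the \emph{multiplicative} invariance coming from the group: for $n>N$ the section $f$ is right $a(t_0)$-invariant for $t_0\in 1+\varpi^N\CO_F^\times$ (by its level) and left $a(t_0)$-invariant (since $\theta$ is unramified), and $\eta^{-1}a(t_0)\eta=a(t_0)^{-1}$; conjugating $u_0\in U(F)\smallsetminus U_n(F)$ by $a(t_0)$ multiplies the character by $\psi((t_0-1)\lambda(u_0))\ne 1$ for a suitable $t_0$, so the integral over that shell is zero. This is the paper's statement (1), and it is also exactly what converts the $\xi^{-1}$-weighted integral into the $\varphi_n(\lambda(\cdot))$-weighted one (the average of $\psi(t\lambda(u))^{-1}$ over $t\in 1+\varpi^n\CO_F^\times$ produces $\varphi_n$), rendering your Plancherel step unnecessary. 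You allude to "the conjugation $a(t)K'a(t)^{-1}$", so you are near the right mechanism, but as written the argument rests on a false support claim at precisely the point where the regularization has content.
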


\begin{defn}
We use $\int_{H(F)/Z_{G,H}(F)}^{\ast} f(\eta h)\xi(h)^{-1}\ud h$ to denote the regularized integral 
\begin{align*}
&\lim_{n\rightarrow \infty}\int_{H_0(F)/Z_{G,H}(F)}\int_{U_n(F)} f(\eta hu)\xi(u)^{-1}\ud u\ud h\\
 =&\lim_{n\rightarrow \infty}\int_{H(F)/Z_{G,H}(F)}f(\eta h) \varphi_n(\lambda(h))\ud h.
\end{align*}
\end{defn}

\begin{proof}
We first prove the convergence. By replacing $f$ by $|f|$ and $\theta$ by $|\theta|$ we may assume that $f$ is a non-negative real valued function. Let $f_\theta$ be the unramified vector in $I_{B}^{G}(\theta)$ with $f_\theta(1)=1$. Then the matrix coefficient of $f$ and $f_\theta$ is given by 
$$\phi_{f,f_\theta}(g)=\int_{K}f(kg)\ud g.$$
By the discussion above, we know that the integral
$$\int_{H_0(F)/Z_{G,H}(F)}\int_{U_n(F)}\phi_{f,f_{\theta}}(hu)\xi(u)^{-1}\ud u\ud h$$
is absolutely convergent for all $n$. This implies that (note that $f$ is a non-negative real valued function) the triple integral 
$$\int_{H_0(F)/Z_{G,H}(F)}\int_{U_n(F)}\int_K f(khu)\xi(u)^{-1}\ud k\ud u\ud h$$
is absolutely convergent. In particular, the integral
$$\int_{H_0(F)/Z_{G,H}(F)}\int_{U_n(F)}f(khu)\xi(u)^{-1}\ud u\ud h$$
is absolutely convergent for almost all $k\in K$. 
But as a function on $k\in K$, this integral is left $B(\CO_F)$ and right $H(\CO_F)$ invariant. Combining with the fact that $\eta\in K$ and $B\eta H$ is Zariski open in $G$, we know that the integral 
$$\int_{H_0(F)/Z_{G,H}(F)}\int_{U_n(F)} f(\eta hu)\xi(u)^{-1}\ud u\ud h$$
is absolutely convergent for all $n$. 
This also implies that the integral 
$$\int_{H(F)}f(\eta h) \varphi_n(\lambda(h))\ud h$$
is absolutely convergent for all $n$ since $\varphi_n$ is a compactly supported function.

Now we prove the second part of the theorem. We first prove the following statement
\begin{itemize}
\item[(1)] there exists $N\geq 0$ such that for all $n>N$, we have
$$\int_{H_0(F)/Z_{G,H}(F)}\int_{U_1(F)} f(\eta hu_0u)\xi(u)^{-1}\ud u\ud h=0$$
for all $u_0\in U(F)-U_n(F)$.
\end{itemize}

In fact, for $n$ large, we have the function $f(g)$ is right $a(t)$-invariant for all $t\in 1+\varpi^n\CO_{F}^{\times}$. It is also left $a(t)$-invariant for all $t\in 1+\varpi^n\CO_{F}^{\times}$ since $\theta$ is an unramified character. Then for $u_0\in U(F)-U_n(F)$, there exists $t_0\in 1+\varpi^n\CO_{F}^{\times}$ such that $(t_0-1)\lambda(u_0)\in \varpi^{-1}\CO_{F}^{\times}$ (in particular, $\psi((t_0-1)\lambda(u_0))\neq 1$). This implies that 
$$\int_{H_0(F)/Z_{G,H}(F)}\int_{U_1(F)} f(\eta hu_0u)\xi(u)^{-1}\ud u\ud h$$
$$=\int_{H_0(F)/Z_{G,H}(F)}\int_{U_1(F)} f(a(t_{0}^{-1})\eta hu_0u)\xi(u)^{-1}\ud u\ud h$$
\begin{align*}
=&\int_{H_0(F)/Z_{G,H}(F)}\int_{U_1(F)} f(\eta hu_0(u_{0}^{-1}a(t_0)u_0)ua(t_0)^{-1})\xi(u)^{-1}\ud u\ud h\\
=&\int_{H_0(F)/Z_{G,H}(F)}\int_{U_1(F)} f(\eta hu_0(u_{0}^{-1}a(t_0)u_0a(t_0)^{-1})u)\xi(u)^{-1}\ud u\ud h.
\end{align*}
Here we use the fact that since $\psi$ is unramified, $\xi(u)=\xi(a(t)ua(t)^{-1})$ for $u\in U_1(F)$ and $t\in 1+\varpi^n\CO_F$. By our choice of $u_0$ and $t_0$, we know that $u_{0}^{-1}a(t_0)u_0a(t_0)^{-1}\in U_1(F)$, then an easy change of variable shows that $\int_{H_0(F)/Z_{G,H}(F)}\int_{U_1(F)} f(\eta hu_0u)\xi(u)^{-1}\ud u\ud h$ is equal to
$$\xi(u_{0}^{-1}a(t_0)u_0a(t_0)^{-1})\int_{H_0(F)/Z_{G,H}(F)}\int_{U_1(F)} f(\eta hu_0u)\xi(u)^{-1}\ud u\ud h.$$
Since $\xi(u_{0}^{-1}a(t_0)u_0a(t_0)^{-1})=\psi((t_0-1)\lambda(u_0))\neq 1$, we have
$$\int_{H_0(F)/Z_{G,H}(F)}\int_{U_1(F)} f(\eta hu_0u)\xi(u)^{-1}\ud u\ud h=0.$$
This proves (1).

Now we are ready to prove the theorem. By (1), we know that for all $n>N$, we have
$$\int_{H_0(F)/Z_{G,H}(F)}\int_{U_{n+1}(F)-U_n(F)} f(\eta hu)\xi(u)^{-1}\ud u\ud h=0.$$
In particular, the integral
$$\int_{H_0(F)/Z_{G,H}(F)}\int_{U_n(F)} f(\eta hu)\xi(u)^{-1}\ud u\ud h$$
is independent of $n$ for $n>N$.

For the second integral, choose $n$ large so that the function $f(g)$ is right $a(t)$-invariant for all $t\in 1+\varpi^n\CO_{F}^{\times}$. Then we have
\begin{align*}
 & \int_{H_0(F)/Z_{G,H}(F)}\int_{U_{n}(F)} f(\eta hu)\xi(u)^{-1}\ud u\ud h\\
=&\int_{1+\varpi^n\CO_{F}^{\times}} \int_{H_0(F)/Z_{G,H}(F)}\int_{U_{n}(F)} f(a(t)\eta hua(t))\xi(u)^{-1}\ud u\ud h \ud t\\
=&\int_{1+\varpi^n\CO_{F}^{\times}} \int_{H_0(F)/Z_{G,H}(F)}\int_{U_{n}(F)} f(\eta ha(t)^{-1}ua(t))\xi(u)^{-1}\ud u\ud h \ud t\\
=&\int_{1+\varpi^n\CO_{F}^{\times}}\int_{H_0(F)/Z_{G,H}(F)}\int_{U_{n}(F)} f(\eta hu)\psi(t\lambda(u))^{-1}\ud u\ud h\ud t\\
=&\int_{H_0(F)/Z_{G,H}(F)}\int_{U(F)}\int_{1+\varpi^{n}\CO_{F}^{\times}} f(\eta hu) \psi(t\lambda(u))^{-1} 1_{\varpi^{-n}\CO_F}(\lambda(u))\ud t\ud u\ud h. 
\end{align*} 
Here the measure ${\rm d}t$ on $1+\varpi^n\CO_{F}^{\times}$ is chosen so that the total volume is equal to 1. The function 
$$x\mapsto \int_{1+\varpi^{n}\CO_{F}^{\times}}  \psi(tx)^{-1} 1_{\varpi^{-n}\CO_F}(x)\ud t$$
$$=1_{\varpi^{-n}\CO_F}(x)\cdot \int_{F}\frac{1}{vol(1+\varpi^n\CO_{F}^{\times})}1_{1+\varpi^n\CO_{F}^{\times}}(t) \psi(tx)^{-1}\ud y$$
is just $1_{\varpi^{-n}\CO_F}\cdot \varphi_n$  (recall that $\varphi_n$ is the Fourier transform of the function $\frac{1}{vol(1+\varpi^n\CO_{F}^{\times})}1_{1+\varpi^n\CO_{F}^{\times}}$). A direct computation shows that the function $\varphi_n$ is supported on $\varpi^{-n}\CO_F$, hence $1_{\varpi^{-n}\CO_F}\cdot \varphi_n=\varphi_n$. As a result, we have
$$\int_{H_0(F)/Z_{G,H}(F)}\int_{U_{n}(F)} f(\eta hu)\xi(u)^{-1}\ud u\ud h=\int_{H(F)}f(\eta h) \varphi_n(\lambda(h))\ud h.$$
This proves the lemma.
\end{proof}

\begin{rmk}\label{rmk unramified 1}
As long as $f$ is right $T(\CO_F)$-invariant (for example when $f$ is unramified or when $f$ is an Iwahori fixed vector), we can just take $N=0$. We can also show that the integral 
$$\int_{H(F)}f(\eta h) \varphi_n(\lambda(h))\ud h$$
is independent of $n$ for $n\geq 0$.
\end{rmk}

Let $\CY_{\theta,\xi}$, $\CY_{\theta,\xi,n}$, $\CY_{\theta,\xi}^{n}$ be the function on $G(F)$ satisfying the following conditions:
\begin{itemize}
\item $\CY_{\theta,\xi}$, $\CY_{\theta,\xi,n}$, $\CY_{\theta,\xi}^{n}$ are supported on the open orbit $B(F)\eta H(F)$. 
\item For $b\in B(F)$ and $h\in H(F)$, we have
$$\CY_{\theta,\xi}(b\eta h)=\theta^{-1}\delta_{B}^{1/2}(b) \xi(h),\; \CY_{\theta,\xi,n}(b\eta h)=\theta^{-1}\delta_{B}^{1/2}(b) \xi(h)1_{\varpi^{-n}\CO_F}(\lambda(h)),$$ 
$$\CY_{\theta,\xi}^{n}(b\eta h)=\theta^{-1}\delta_{B}^{1/2}(b) \varphi_n(\lambda(h)).$$  .
\end{itemize}

\begin{lem}\label{normalization of Y}
For $\Phi\in C_{c}^{\infty}(G(F))$, the integrals
$$\int_{G(F)}\Phi(g)\CY_{\theta,\xi,n}(g)\ud g\text{ and } 
\int_{G(F)}\Phi(g)\CY_{\theta,\xi}^{n}(g)\ud g$$
are absolutely convergent for all $n$. Moreover, there exists $N\geq 0$ (depends on the level of $f$, i.e. the open compact subgroup $K'\subset G(F)$ where $f$ is right $K'$-invariant) such that both integrals are equal to each other and are independent of $n$ for $n>N$. 
\end{lem}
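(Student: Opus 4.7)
The plan is to reduce Lemma \ref{normalization of Y} to the preceding Lemma \ref{normalization of induced representation} via an unfolding argument on the open Borel orbit. Since $\CY_{\theta,\xi,n}$ and $\CY_{\theta,\xi}^{n}$ vanish off $B(F)\eta H(F)$, whose complement in $G(F)$ has measure zero, each integral is really an integral on the open orbit. Applying Lemma \ref{lem Haar measure} (adapted to the non-reductive setting, in which the stabilizer of $\eta$ under the $B\times H$-action is $Z_{G,H}$ embedded diagonally), we obtain a positive constant $C=\tfrac{\Delta_G(1)}{\Delta_{H/Z_{G,H}}(1)}\zeta(1)^{-rk(G)}$ such that
$$
\int_{G(F)} \Phi(g)\,\CY_{\theta,\xi,n}(g)\,\ud g
= C \int_{H(F)/Z_{G,H}(F)} \CP_{\theta}(\Phi)(\eta h)\, \xi(h)\, 1_{\varpi^{-n}\CO_F}(\lambda(h))\, \ud h,
$$
where $\CP_\theta(\Phi)(g)=\int_{B(F)}\theta^{-1}\delta_B^{1/2}(b)\,\Phi(bg)\,\ud b$ is the canonical projection into $I_B^G(\theta)$ from \eqref{eq:l-theta}; the analogous identity holds for $\Phi\cdot \CY_{\theta,\xi}^{n}$ with the cutoff $1_{\varpi^{-n}\CO_F}(\lambda(h))$ replaced by $\varphi_n(\lambda(h))$.

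Next, decompose $h=h_0 u$ with $h_0\in H_0(F)$ and $u\in U(F)$, and use that both $\xi$ and $\lambda$ are trivial on $H_0(F)$. Then the right-hand sides above become
$$
C \int_{H_0(F)/Z_{G,H}(F)} \int_{U_n(F)} \CP_\theta(\Phi)(\eta h_0 u)\,\xi(u)\,\ud u\,\ud h_0
$$
and
$$
C \int_{H_0(F)/Z_{G,H}(F)} \int_{U(F)} \CP_\theta(\Phi)(\eta h_0 u)\,\varphi_n(\lambda(u))\,\ud u\,\ud h_0,
$$
respectively. These are exactly the two expressions appearing in Lemma \ref{normalization of induced representation} applied to the vector $f=\CP_\theta(\Phi)\in I_B^G(\theta)$, which is smooth of the same finite right $K'$-level as $\Phi$. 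Lemma \ref{normalization of induced representation} therefore delivers at once the absolute convergence of both unfolded integrals for every $n$, their equality for $n$ sufficiently large, and their independence of $n$ above a threshold $N$ depending only on the level of $\Phi$. Transporting this conclusion through the unfolding formula yields Lemma \ref{normalization of Y}.

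The main technical hurdle is justifying the unfolding formula in the absolute sense, rather than merely formally, because $\Phi\cdot\CY_{\theta,\xi,n}$ is not in $C_c^\infty(G(F))$ and Lemma \ref{lem Haar measure} is stated for test functions. This will be handled by a standard approximation: one writes $|\Phi|\cdot|\CY_{\theta,\xi,n}|$ as the monotone limit of compactly supported smooth functions on the open orbit, applies Lemma \ref{lem Haar measure} to each approximant, and passes to the limit. The absolute convergence on the unfolded side is then guaranteed by the $U_n$-cutoff (respectively the compact support of $\varphi_n$ on the $\lambda$-coordinate) together with the strongly tempered hypothesis on $(G_0,H_0)$ that already underlies the proof of Lemma \ref{normalization of induced representation}. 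The remainder of the argument is a routine bookkeeping of constants.
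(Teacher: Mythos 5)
Your proposal is correct and follows essentially the same route as the paper: one establishes the measure decomposition along the open orbit $B(F)\eta H(F)$ (the non-reductive analogue of Lemma \ref{lem Haar measure}, with $\Delta_{H_0/Z_{G,H}}$ in place of $\Delta_{H/Z_{G,H}}$), unfolds each integral so that the inner $B(F)$-integral produces $f_\Phi=\CP_\theta(\Phi)\in I_B^G(\theta)$, and then invokes Lemma \ref{normalization of induced representation} for that vector. The extra paragraph on justifying the unfolding in the absolute sense by monotone approximation is a reasonable elaboration of a point the paper leaves implicit.
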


\begin{defn}
We use $\int_{G(F)}^{\ast} \Phi(g)\CY_{\theta,\xi}(g)\ud g$ to denote the regularized integral
$$\lim_{n\rightarrow \infty}\int_{G(F)}\Phi(g)\CY_{\theta,\xi,n}(g)\ud g=\lim_{n\rightarrow \infty}\int_{G(F)}\Phi(g)\CY_{\theta,\xi}^{n}(g)\ud g.$$
\end{defn}

\begin{proof}
By the same arguments as Lemma \ref{lem Haar measure}, we can prove the following statement
\begin{itemize}
\item For $\Phi\in C^{\infty}_{c}(G(F))$, we have
\begin{equation}\label{decomposition of measure}
\int_{G(F)}\Phi(g)\ud g=\frac{\Delta_{G}(1)}{\Delta_{H_0/Z_{G,H}}(1)}\zeta(1)^{-rk(G)}\int_{H(F)/Z_{G,H}(F)}\int_{B(F)}\Phi(b\eta h)\ud b\ud h.
\end{equation}
\end{itemize}
This implies that the integral $\int_{G(F)} \Phi(g) \CY_{\theta,\xi,n}(g)\ud g$ is equal to the product of $\frac{\Delta_{G}(1)}{\Delta_{H_0/Z_{G,H}}(1)}\zeta(1)^{-rk(G)}$ with 
\begin{align*}
 & \int_{H_0(F)Z_{G,H}(F)}\int_{U_n(F)}\int_{B(F)}\Phi(b\eta hu)\theta^{-1}\delta^{1/2}(b)\xi(u)^{-1}\ud b\ud u\ud h\\
=&\int_{H_0(F)Z_{G,H}(F)}\int_{H_0(F)/Z_{G,H}(F)}\int_{U_n(F)} f_{\Phi}(\eta hu)\xi(u)^{-1}\ud u\ud h
\end{align*} 
and the integral $\int_{G(F)} \Phi(g) \CY_{\theta,\xi}^{n}(g)\ud g$ is equal to the product of 
$$\frac{\Delta_{G}(1)}{\Delta_{H_0/Z_{G,H}}(1)}\zeta(1)^{-rk(G)}$$ 
with
\begin{align*}
 &\int_{H(F)/Z_{G,H}(F)}\int_{B(F)}\Phi(b\eta h)\theta^{-1}\delta^{1/2}(b)\varphi_n(\lambda(h))\ud b\ud h\\
=&\int_{H_0(F)Z_{G,H}(F)}\int_{H(F)/Z_{G,H}(F)}f_\Phi(\eta h) \varphi_n(\lambda(h))\ud h.  
\end{align*} 
Then the lemma just follows from the lemma above.
\end{proof}

\begin{rmk}\label{rmk unramified 2}
If $\Phi$ is right $T(\CO_F)$-invariant, then we can just take $N=0$. We can also show that the integral
$$\int_{G(F)}\Phi(g)\CY_{\theta,\xi}^{n}(g)\ud g$$
is independent of $n$ for $n\geq 0$. For any open compact subset $K'\subset G(F)$, we let
$$\int_{K'}^{\ast} \CY_{\theta,\xi}(k)\ud k:=\int_{G(F)}^{\ast} 1_{K'}(g) \CY_{\theta,\xi}(g)\ud g.$$
\end{rmk}

Recall that $f_\theta$ is the unramified vector in $I_{B}^{G}(\theta)$ with $f_\theta(1)=1$. We have
$$\phi_\theta(g)=\int_K f_\theta(kg)\ud k,\;I(\phi_\theta)=\int_{H(F)/Z_{G,H}(F)} \phi_\theta(h)\varphi(\lambda(h))\ud h$$
$$=\int_{H_0(F)/Z_{G,H}(F)}\int_{U_1(F)}\phi_\theta(hu)\xi(u)^{-1}\ud u.$$
This implies that 
\begin{equation}\label{I phi non-red}
I(\phi_\theta)=\int_{K}\int_{H(F)/Z_{G,H}(F)} f_\theta(kh)\varphi(\lambda(h)) \ud h\ud k
\end{equation}
$$=\int_{K}\int_{H_0(F)/Z_{G,H}(F)}\int_{U_1(F)} f_\theta(khu)\xi(u)^{-1}\ud u\ud h\ud k.$$
The next lemma follows from the proof of Lemma \ref{normalization of induced representation}.

\begin{lem}
For $u_0\in U(F)-U_1(F)$, we have
$$\int_{H_0(F)/Z_{G,H}(F)}\int_{U_1(F)} f_\theta(\eta u_0 hu)\xi(u)^{-1}\ud u\ud h=0.$$
\end{lem}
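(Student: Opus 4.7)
The plan is to essentially replay the vanishing argument in the proof of Lemma \ref{normalization of induced representation}, exploiting that the unramified vector $f_\theta$ is bi-invariant under $a(t_0)$ for \emph{every} $t_0\in\CO_F^\times$ (not just for $t_0\in 1+\varpi^n\CO_F^\times$ with large $n$). This extra invariance is exactly what lets us reach down to $U_1(F)$ from $u_0\in U(F)\smin U_1(F)$.

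Concretely, given $u_0\in U(F)\smin U_1(F)$ set $m=-v(\lambda(u_0))\ge 2$ and choose $t_0=1+\varpi^{m-1}\in\CO_F^\times$. Then $\psi((t_0-1)\lambda(u_0))\neq 1$ since $(t_0-1)\lambda(u_0)$ has valuation $-1$ and $\psi$ has conductor $\CO_F$. Using that $f_\theta$ is both left and right invariant under $a(t_0)$, combined with the relations $\eta^{-1}a(t)\eta=a(t)^{-1}$ and $a(t_0)h=ha(t_0)$ (because $a(t_0)\in Z_L$ and $h\in H_0\subset L$), the same manipulation as in the earlier proof yields
\[
f_\theta(\eta u_0 h u)=f_\theta\bigl(\eta u_0\,v_0\,h\,(a(t_0)ua(t_0)^{-1})\bigr),\qquad v_0:=u_0^{-1}a(t_0)u_0 a(t_0)^{-1}\in U,
\]
where $\lambda(v_0)=(t_0-1)\lambda(u_0)$. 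Pushing $v_0$ past $h$ (using $H=H_0\ltimes U$) and collecting the unipotent terms gives
\[
f_\theta(\eta u_0 h u)=f_\theta(\eta u_0 h\tilde u),\qquad \tilde u:=(h^{-1}v_0 h)\,(a(t_0)ua(t_0)^{-1}).
\]

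Next I perform the change of variable $u\leftrightarrow\tilde u$. For each fixed $h$, this is the composition of conjugation by $a(t_0)$ (an automorphism of $U$ whose Jacobian is $|t_0|^{\dim U}=1$ since $t_0\in\CO_F^\times$) with a left translation on $U$, hence it is measure-preserving. That it restricts to a bijection $U_1(F)\to U_1(F)$ follows from the estimate $\lambda(\tilde u)=(t_0-1)\lambda(u_0)+t_0\lambda(u)$, which has valuation $\ge -1$ whenever $v(\lambda(u))\ge-1$, and the same for its inverse. Moreover, since $h\in H_0$ fixes the linear form $\lambda$, one checks $\xi(\tilde u)=\psi((t_0-1)\lambda(u_0))\,\xi(u)$ (using that $(t_0-1)\lambda(u)\in\varpi^{m-2}\CO_F\subset\CO_F$ for $u\in U_1(F)$, hence $\psi(t_0\lambda(u))=\psi(\lambda(u))$).

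Substituting back, the original integral equals $\psi((t_0-1)\lambda(u_0))$ times itself, so vanishes because that factor is not $1$. The only subtle point — and the main thing to verify carefully — is the measure-preserving bijection of $U_1(F)$ under $u\mapsto\tilde u$, including the compatibility of the $\xi$-twist with the change of variable; everything else is a straightforward transcription of the computation already carried out in Lemma \ref{normalization of induced representation}.
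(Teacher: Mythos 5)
Your proof is correct and is essentially the paper's own argument: the paper derives this lemma directly from step (1) of the proof of Lemma \ref{normalization of induced representation} together with the observation (Remark \ref{rmk unramified 1}) that for the unramified vector one may take $N=0$, i.e. $a(t_0)$-bi-invariance holds for all $t_0\in\CO_F^\times$, which is exactly the extra invariance you exploit with $t_0=1+\varpi^{m-1}$. Your handling of the placement of $u_0$ before $h$ (pushing $v_0$ past $h$ using that $H_0$ fixes $\lambda$) and the measure/character bookkeeping on $U_1(F)$ are just the details the paper leaves implicit.
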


\begin{cor}
We have
$$I(\phi_\theta)=\frac{\Delta_{H_0/Z_{G,H}}(1)}{\Delta_{G}(1)}\zeta(1)^{rk(G)}\cdot \int_{K}^{\ast} \CY_{\theta^{-1},\xi}(k)\ud k\cdot \int_{K}^{\ast} \CY_{\theta,\xi^{-1}}(k)\ud k.$$
\end{cor}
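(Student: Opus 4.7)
The plan is to follow the structure of Proposition \ref{pro:I-Y} in the reductive case, now incorporating the regularization of the unipotent integral. Starting from equation \eqref{I phi non-red}, I note that the triple integral there is absolutely convergent (since $(G_0,H_0)$ is strongly tempered and $K\times U_1$ is compact), so Fubini yields
$$I(\phi_\theta)=\int_K F_\theta(k)\ud k,\qquad F_\theta(k):=\int^\ast_{H(F)/Z_{G,H}(F)} f_\theta(kh)\xi(h)^{-1}\ud h.$$
By Lemma \ref{normalization of induced representation} together with Remark \ref{rmk unramified 1}, the regularized integral $F_\theta(k)$ equals $\int_{H(F)/Z_{G,H}(F)} f_\theta(kh)\varphi_0(\lambda(h))\ud h$ for every $k\in K$, since right $K$-invariance of $f_\theta$ passes through left translation.

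The central step will be to show that $\int_K F_\theta(k)\ud k = F_\theta(\eta)\cdot \int_K^\ast \CY_{\theta^{-1},\xi}(k)\ud k$, paralleling the reductive argument. For $k=b\eta h_0$ in the open orbit, left $B$-equivariance of $f_\theta$ gives $F_\theta(b\eta h_0)=\theta\delta_B^{1/2}(b)F_\theta(\eta h_0)$. For the right translation by $h_0$, I plan to execute the change of variables $h\to h_0^{-1}h$ inside the truncated integral $\int_{H_0/Z_{G,H}}\int_{U_n}f_\theta(\eta h_0 hu)\xi(u)^{-1}\ud u\ud h$, taking $n$ sufficiently large that $h_0\in H_0\ltimes U_n$; the $H_0$-invariance of $\xi$ together with the fact that $h\mapsto h^{-1}u_0h$ preserves $U_n$ uniformly in $h$ (via the estimate $\lambda(h^{-1}u_0 h)\in \lambda(u_0)+\CO_F$) then produces the factor $\xi(h_0)$ with the domain of the inner integral preserved. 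By Remark \ref{rmk unramified 1}, the resulting value $F_\theta(\eta h_0)=\xi(h_0)F_\theta(\eta)$ coincides with the stabilized $n=0$ value. Thus $F_\theta(g)=\CY_{\theta^{-1},\xi}(g)\cdot F_\theta(\eta)$ on the open orbit, and since the complement has measure zero in $G(F)$, integration yields the desired identity.

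Finally, I will identify $F_\theta(\eta)$ with $\int_K^\ast \CY_{\theta,\xi^{-1}}(k)\ud k$ via the Haar measure decomposition \eqref{decomposition of measure}. Applied to the compactly supported function $\Phi=1_K\cdot \CY^0_{\theta,\xi^{-1}}$, and using that our measure normalization on $B$ makes $\int_B 1_K(b\eta h)\theta^{-1}\delta_B^{1/2}(b)\ud b=f_\theta(\eta h)$, this gives
$$\int_K^\ast \CY_{\theta,\xi^{-1}}(k)\ud k=\frac{\Delta_G(1)}{\Delta_{H_0/Z_{G,H}}(1)}\zeta(1)^{-rk(G)}\int_{H(F)/Z_{G,H}(F)} f_\theta(\eta h)\varphi_0(\lambda(h))\ud h=\frac{\Delta_G(1)}{\Delta_{H_0/Z_{G,H}}(1)}\zeta(1)^{-rk(G)}\cdot F_\theta(\eta),$$
where the last equality uses Lemma \ref{normalization of induced representation} once more. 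Combining the two displays yields the corollary. The hard part will be the pointwise equivariance $F_\theta(\eta h_0)=\xi(h_0)F_\theta(\eta)$: the regularized integral does not a priori transform under arbitrary right translation, so one must reconcile the truncation-level requirement introduced by the substitution with the $n$-independence asserted in Remark \ref{rmk unramified 1}, a consistency essentially captured by the vanishing lemma immediately preceding this corollary.
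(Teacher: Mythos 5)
Your argument reproduces the paper's own proof: write $I(\phi_\theta)$ as the integral over $K$ of the regularized inner integral $F_\theta$, observe that $F_\theta$ restricted to the open orbit is $\CY_{\theta^{-1},\xi}$ times the scalar $F_\theta(\eta)$, and evaluate that scalar via the measure decomposition \eqref{decomposition of measure} as $\frac{\Delta_{H_0/Z_{G,H}}(1)}{\Delta_{G}(1)}\zeta(1)^{rk(G)}\int_{K}^{\ast}\CY_{\theta,\xi^{-1}}(k)\ud k$, exactly as in the text. One small correction: $U_1(F)$ is \emph{not} compact (it contains the full kernel of $\lambda$), so the absolute convergence justifying your initial Fubini step should be quoted from Remark \ref{rmk convergent} and the proof of Lemma \ref{normalization of induced representation} rather than from compactness of $K\times U_1$.
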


\begin{proof}
We have
$$I(\phi_\theta)=\int_{K\cap B(F)\eta H_0(F)U_1(F)}\int_{H_0(F)/Z_{G,H}(F)}\int_{U_1(F)} f_\theta(khu)\xi(u)^{-1}\ud u\ud h\ud k.$$
The function 
$$k\mapsto \int_{H_0(F)/Z_{G,H}(F)}\int_{U_1(F)} f_\theta(khu)\xi(u)^{-1}\ud u\ud h$$
on $K\cap B(F)\eta H_0(F)U_1(F)$ is a scalar of the restriction of the function $\CY_{\theta^{-1},\xi}$ to $K\cap B(F)\eta H_0(F)U_1(F)$ and the scalar is equal to 
$$\int_{H_0(F)/Z_{G,H}(F)}\int_{U_1(F)} f_\theta(\eta hu)\xi(u)^{-1}\ud u\ud h.$$
This implies that 
\begin{eqnarray*}
I(\phi_\theta)&=&\int_{H_0(F)/Z_{G,H}(F)}\int_{U_1(F)} f_\theta(\eta hu)\xi(u)^{-1}\ud u\ud h \\
&&\cdot \int_{K\cap B(F)\eta H_0(F)U_1(F)}\CY_{\theta^{-1},\xi}(k)\ud k.
\end{eqnarray*}

By Lemma \ref{normalization of Y} and Remark \ref{rmk unramified 2}, we have 
$$\int_{K\cap B(F)\eta H_0(F)U_1(F)}\CY_{\theta^{-1},\xi}(k)\ud k=\int_{K}^{\ast} \CY_{\theta^{-1},\xi}(k)\ud k.$$
Hence it remains to show that 
\begin{equation}\label{regularization eq 1}
\frac{\Delta_{H_0/Z_{G,H}}(1)}{\Delta_{G}(1)}\zeta(1)^{rk(G)}\cdot \int_{K}^{\ast} \CY_{\theta,\xi^{-1}}(k)\ud k
\end{equation}
$$=\int_{H_0(F)/Z_{G,H}(F)}\int_{U_1(F)} f_\theta(\eta hu)\xi(u)^{-1}\ud u\ud h.$$

By Lemma \ref{normalization of Y}, Remark \ref{rmk unramified 2} and \eqref{decomposition of measure}, we have 
\begin{align*}
 &\frac{\Delta_{H_0/Z_{G,H}}(1)}{\Delta_{G}(1)}\zeta(1)^{rk(G)}\cdot \int_{K}^{\ast} \CY_{\theta,\xi^{-1}}(k)\ud k\\
 =&\frac{\Delta_{H_0/Z_{G,H}}(1)}{\Delta_{G}(1)}\zeta(1)^{rk(G)}\cdot \int_{K\cap B(F)\eta H_0(F)U_1(F)} \CY_{\theta,\xi^{-1}}(k)\ud k\\
=&\int_{H_0(F)}\int_{U_1(F)}\int_{B(F)} 1_K(b\eta hu)\theta^{-1}\delta_{B}^{1/2}(b) \xi(u)^{-1}du\\
=&\int_{H_0(F)/Z_{G,H}(F)}\int_{U_1(F)} f_\theta(\eta hu)\xi(u)^{-1}\ud u \ud h.  
\end{align*} 
This proves \eqref{regularization eq 1} and finishes the proof of the lemma.
\end{proof}

In the next subsection, we will explain how to compute the regularized integral $\int_{K}^{\ast} \CY_{\theta,\xi}(k)\ud k$. The result is summarized in the proposition below.

\begin{prop}\label{main prop nonred}
Let $\Phi^+$ be the set of positive roots of $G$. There is a decomposition of weights of a representation $\rho_X$ of $\hat{G}$ (denoted by $\Theta=\Theta^+\cup \Theta^-$) such that
$$\int_{K}^{\ast}\CY_{\theta,\xi}(k) \ud k=\frac{\Delta_{G}(1)}{\Delta_{H_0/Z_{G,H}}(1)}\zeta(1)^{-rk(G)}\cdot \beta(\theta)$$
where 
$$\beta(\theta)=\frac{\prod_{\alpha\in \Phi^+}1-q^{-1}e^{\alpha^\vee}}{
	\prod_{\gamma^\vee\in \Theta^+}1-q^{-\frac{1}{2}}e^{\gamma^\vee}}(\theta).$$
Also \eqref{theta plus} still holds.
\end{prop}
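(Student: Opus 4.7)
The plan is to follow the strategy of the reductive case (Proposition \ref{main prop}) with the necessary modifications to accommodate the regularization of the unipotent integral. Following Section \ref{sec:S-theta}, I introduce the regularized Whittaker--Shintani function
\begin{equation*}
S_{\theta,\xi}(g) = \int_{K}^{\ast} \CY_{\theta,\xi}(kg^{-1})\,\ud k,
\end{equation*}
which is well-defined by Lemma \ref{normalization of Y} and Remark \ref{rmk unramified 2}. Since $\eta\in K$, evaluating at $g=1$ gives $\int_{K}^{\ast}\CY_{\theta,\xi}(k)\,\ud k = S_{\theta,\xi}(1)$, so it suffices to compute $S_{\theta,\xi}(1)$. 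First I would decompose $1_K=\sum_{w\in W}\Phi_w$ and analyze, for each simple root $\alpha$, the integral
\begin{equation*}
I_\alpha(\theta)=\vol(\CI)^{-1}\int_{G(F)}^{\ast} \CY_{\theta,\xi}(x\eta)(\Phi_1(x)+\Phi_{w_\alpha}(x))\,\ud x.
\end{equation*}
Using an analog of Lemma \ref{lem eta}, namely that the representative $\eta=\eta_0 w_0$ of the open orbit satisfies $\bar N(\varpi\CO_F)\eta\subset T(\CO_F)N(\varpi\CO_F)\eta H(\CO_F)$, the same manipulation as in the reductive case reduces $I_\alpha(\theta)$ to the one-variable integral in \eqref{I alpha red}.

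The central step is to verify the analog of the identity \eqref{main identity}. For each simple root $\alpha$, I would decompose $x_{-\alpha}(a^{-1})\eta$ in the form $b\eta h$ with $b\in B(F)$, $h\in H(F)$ by an explicit matrix calculation, then use the definition of $\CY_{\theta,\xi}$ (which now carries the extra factor $\xi(h)$) to show
\begin{equation*}
\CY_{\theta,\xi}(x_{-\alpha}(a^{-1})\eta) = \theta(e^{\beta_\alpha^\vee}(1+a^{-1}))\cdot|1+a^{-1}|^{-1/2}
\end{equation*}
for some co-weight $\beta_\alpha^\vee\in\Theta$ with $\alpha^\vee-\beta_\alpha^\vee\in\Theta$, where $\Theta$ is the set of weights of the representation $\rho_X$ from Table \ref{fig:1}. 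For simple roots $\alpha$ lying in the Levi $L$, this reduces to the trilinear $\GL_2$-model computation of Section \ref{section triple product}, since $\xi(h)=1$ in that case. For simple roots $\alpha$ outside $L$, the factor $\xi(h)$ is non-trivial and must be computed model by model in the later sections. With this identity in hand, the rank-one formula \eqref{rank one integral} gives the explicit expression for $I_\alpha(\theta)$ in terms of the virtual weighted colors $\{\beta_\alpha^\vee,\alpha^\vee-\beta_\alpha^\vee\}$.

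The rest of the argument parallels Section \ref{sec:S-theta}. The regularized functional $\ell_{\theta,\xi}(\CP_\theta(f))=\int_{G(F)}^{\ast}f(g)\CY_{\theta,\xi}(g)\,\ud g$ is well-defined by Lemma \ref{normalization of Y} and lies in $\Hom_{H(F)}(I_B^G(\theta),\xi)$, which is one-dimensional for almost all $\theta$ once we verify for each model that there is a unique open Borel orbit and that $W_X=W$. The same intertwining-operator computation then yields $b_{w_\alpha}(\theta)=I_\alpha(w_\alpha\theta)/I_\alpha(\theta)=\beta(w_\alpha\theta)/\beta(\theta)$, so that $S_{\theta,\xi}(g)/\beta(\theta)$ is $W$-invariant. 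Applying the Iwahori-basis expansion of Proposition 1.10 of \cite{KMS03} exactly as in the proof of Proposition \ref{prop:WS-reductive} yields
\begin{equation*}
S_{\theta,\xi}(1)/\beta(\theta) = \Delta_G(1)\zeta(1)^{-rk(G)}\sum_{w\in W}c_{WS}(w\theta),
\end{equation*}
and the identity \eqref{theta plus} is a direct verification from the explicit description of $\Theta^+$ and the self-duality of $\rho_X$.

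Two main obstacles arise. First, the explicit matrix computation of $\CY_{\theta,\xi}(x_{-\alpha}(a^{-1})\eta)$ for simple roots $\alpha$ inside $U$ is the most technical input, since the Whittaker character $\xi$ contributes non-trivially and the specific co-weights $\beta_\alpha^\vee$ must be identified inside the weight set of $\rho_X$; this must be carried out case by case for each of the seven non-reductive models. Second, one must show $\sum_{w\in W}c_{WS}(w\theta) = \Delta_{H_0/Z_{G,H}}(1)^{-1}$. Unlike the reductive setting (Lemma \ref{lem constant reductive} and Remark \ref{rmk constant reductive}), the non-reductive spherical varieties are no longer affine, so the $\theta$-independence does not follow directly from \cite[Theorem 7.2.1]{Sa}; it has to be established by a separate argument, most conveniently by specializing to $\theta=\delta_B^{1/2}$ where only the longest Weyl element survives, or by exploiting the Whittaker-induction structure to reduce to the trilinear $\GL_2$-model computation.
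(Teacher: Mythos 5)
Your overall architecture (regularized Whittaker--Shintani function, Iwahori decomposition, intertwining operators, $W$-invariance of $S_{\theta,\xi}/\beta(\theta)$) matches the paper, but there is a genuine error in your treatment of the simple roots outside the Levi. You assert that for \emph{every} simple root $\alpha$ one has $\CY_{\theta,\xi}(x_{-\alpha}(a^{-1})\eta)=\theta(e^{\beta_\alpha^\vee}(1+a^{-1}))\,|1+a^{-1}|^{-1/2}$ for some pair of colors $\{\beta_\alpha^\vee,\alpha^\vee-\beta_\alpha^\vee\}\subset\Theta$, and that the rank-one integral \eqref{rank one integral} then applies uniformly. This is false for $\alpha\in\Delta_G\smallsetminus\Delta_{G_0}$: for those roots the correct identity is \eqref{nonred root}, namely $\CY_{\theta,\xi}^{0}(x_{-\alpha}(a^{-1})\eta)=\varphi_0(a^{-1})$, so the regularized integral produces the \emph{polynomial} $I_\alpha(\theta)=q(1-q^{-1}e^{\alpha^\vee}(\theta))$ of \eqref{I alpha non-red 2}, with no color factors in the denominator. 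These roots are of type $(U,\psi)$ rather than type $T$, and the matching condition on $\Theta^+$ is that it be \emph{stable} under $w_\alpha$, not that $\Theta^+-w_\alpha\Theta^+=\{\beta_\alpha^\vee,\alpha^\vee-\beta_\alpha^\vee\}$. If you attach colors to every simple root, the resulting $\Theta^+$ is over-constrained and $\beta(\theta)$ acquires spurious denominator factors, so the final formula would not produce $L(1/2,\pi,\rho_X)/L(1,\pi,\Ad)$. (Colors only appear for $\alpha\in\Delta_{G_0}$, where by the choice $\eta=\eta_0w_0$ the computation reduces to the trilinear $\GL_2$-model and $\xi(h)$ enters only through the factor $\varphi_0(\lambda(h))$ in \eqref{main identity nonred}.)

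The second gap is in your evaluation of $\sum_{w\in W}c_{WS}(w\theta)$. Your proposed shortcut of specializing to $\theta=\delta_B^{1/2}$ so that only the longest Weyl element survives does not work in the non-reductive case: because the type $T$ and type $(U,\psi)$ roots interlace (e.g.\ for $(\GL_6,\GL_2\ltimes U)$ the roots $\alpha_1,\alpha_3,\alpha_5$ are type $T$ while $\alpha_2,\alpha_4$ are type $(U,\psi)$), it is impossible to choose $\theta$ killing all but one term. Moreover, since the spherical variety is no longer affine, even the $\theta$-independence of the sum is not available from Theorem 7.2.1 of \cite{Sa}; both the constancy and the value $\Delta_{H_0/Z_{G,H}}(1)^{-1}$ must be established by direct combinatorial computation (a $(W,\sgn)$-summation of $q^{-i/2}$-coefficients, carried out via reduction steps such as $\GSO_{12}\rightsquigarrow\GL_6\rightsquigarrow\GL_4\times\GL_2\rightsquigarrow\GL_2^3$). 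This is the most technical part of the argument and cannot be dispatched by the reductive-case device.
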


The proposition above implies that $I(\phi_\theta)$ is equal to
$$\frac{\Delta_{G}(1)}{\Delta_{H_0/Z_{G,H}}(1)}\zeta(1)^{-rk(G)}\cdot \beta(\theta)\cdot \beta(\theta^{-1})=\frac{\Delta_{G}(1)}{\Delta_{H_0/Z_{G,H}}(1)}\cdot \frac{L(1/2,\pi,\rho_X)}{L(1,\pi,\Ad)}.$$
This finishes the computation.

\subsection{The computation of $\int_{K}^{\ast} \CY_{\theta,\xi}(k)\ud k$}\label{sec non-red strategy 2}
Let
$$S_\theta(g):=\int_{K}^{\ast} \CY_{\theta,\xi}(kg^{-1})\ud k=\int_{G(F)}^{\ast} 1_K(g'g) \CY_{\theta,\xi}(g')\ud g'.  $$
Our goal is to prove Proposition \ref{main prop nonred}, i.e. compute $$S_\theta(1)=\int_{K}^{\ast} \CY_{\theta,\xi}(k)\ud k=\int_{K} \CY_{\theta,\xi}^{0}(k)\ud k.$$

Let $\CI=B(\CO_F)\bar{N}(\varpi \CO_F)$ (resp. $\CI_0=B_0(\CO_F)\bar{N}_0(\varpi \CO_F)$) be the Iwahori subgroup of $G(F)$ (resp. $G_0(F)=L(F)$). As in Lemma \ref{lem eta}, we can choose $\eta_0$ so that 
\begin{equation}\label{eta0 relation}
\bar{N}_0(\varpi \CO_F)\eta_0\subset T(\CO_F)N_0(\varpi \CO_F)\eta_0 H_0(\CO_F).
\end{equation}

\begin{lem}\label{lem eta nonred}
We have $\eta\in K$ and 
$$\bar{N}(\varpi \CO_F)\eta\subset T(\CO_F)N(\varpi \CO_F)\eta H_0(\CO_F)U(\varpi \CO_F).$$
\end{lem}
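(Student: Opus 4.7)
\smallskip

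\noindent\textbf{Proof proposal.} The plan is to reduce the claim to the analogous lemma already available in the reductive case for $(G_0,H_0)$, namely \eqref{eta0 relation}, by separating variables along the decompositions $\bar N=\bar N_0\bar U$ and $N=N_0U$, and then shuttling factors past $w_0$.

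First I would fix a representative $\eta_0$ of the open Borel orbit for $(G_0,H_0)$ satisfying \eqref{eta0 relation}; since this is the analogue of Lemma~\ref{lem eta} for the reductive pair $(G_0,H_0)$, we may arrange $\eta_0\in K\cap G_0(F)=L(\CO_F)$. The Weyl representative $w_0$ may also be chosen in $K$. Consequently $\eta=\eta_0w_0\in K$, settling the first assertion.

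For the inclusion, take an element $\bar n\in \bar N(\varpi\CO_F)$ and write it as $\bar n=\bar n_0\bar u$ with $\bar n_0\in \bar N_0(\varpi\CO_F)$ and $\bar u\in \bar U(\varpi\CO_F)$. The main calculation is
\[
\bar n\,\eta \;=\;\bar n_0\bar u\,\eta_0w_0 \;=\;\bar n_0\eta_0\bigl(\eta_0^{-1}\bar u\eta_0\bigr)w_0\;=\;\bar n_0\eta_0w_0\cdot w_0^{-1}\bigl(\eta_0^{-1}\bar u\eta_0\bigr)w_0.
\]
Since $L$ normalizes $\bar U$ and $\eta_0\in L(\CO_F)$, we have $\eta_0^{-1}\bar u\eta_0\in \bar U(\varpi\CO_F)$. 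Since $w_0\in K$ implements the isomorphism $\bar U\xrightarrow{\sim}U$, conjugation by $w_0^{-1}$ carries $\bar U(\varpi\CO_F)$ onto $U(\varpi\CO_F)$. Thus there exists $u'\in U(\varpi\CO_F)$ with $\bar n\,\eta=\bar n_0\eta\cdot u'$.

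It remains to place $\bar n_0\eta$ into the desired set. Applying \eqref{eta0 relation} we get $\bar n_0\eta_0\in T(\CO_F)N_0(\varpi\CO_F)\eta_0H_0(\CO_F)$. Multiplying by $w_0$ on the right and using that conjugation by $w_0$ preserves $H_0$ (one of the defining properties of $w_0$) together with $w_0\in K$, we obtain $w_0^{-1}H_0(\CO_F)w_0=H_0(\CO_F)$, hence
\[
\bar n_0\eta\;\in\;T(\CO_F)N_0(\varpi\CO_F)\,\eta\,H_0(\CO_F).
\]
Combining with $\bar n\eta=\bar n_0\eta\cdot u'$ and using $N_0(\varpi\CO_F)\subset N(\varpi\CO_F)$ yields
\[
\bar n\,\eta\;\in\;T(\CO_F)N(\varpi\CO_F)\,\eta\,H_0(\CO_F)U(\varpi\CO_F),
\]
as required. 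The only nontrivial point in the argument is the compatibility of the integral structures when conjugating $\bar u$ past $\eta_0$ and past $w_0$; but this is guaranteed by our choices $\eta_0\in L(\CO_F)$ and $w_0\in K$, so no genuine obstacle appears.
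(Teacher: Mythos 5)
Your proof is correct and follows essentially the same route as the paper's: decompose $\bar n=\bar n_0\bar u$, conjugate $\bar u$ past $\eta=\eta_0 w_0$ into $U(\varpi\CO_F)$ using $\eta_0\in L(\CO_F)$ and $w_0\in K$, reduce to $\bar n_0\in\bar N_0(\varpi\CO_F)$, and then invoke \eqref{eta0 relation} together with the fact that $w_0$ commutes with $H_0$. The only difference is that you spell out the bookkeeping that the paper leaves implicit.
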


\begin{proof}
Since $\eta_0,w_0\in K$, we have $\eta=\eta_0w_0\in K$. For $\bar{n}\in \bar{N}(\varpi \CO_F)$, we write it as $\bar{n}'\bar{u}$ with $\bar{n}'\in \bar{N}_0(\varpi \CO_F)$ and $\bar{u}\in \bar{U}(\varpi \CO_F)$. Since $\eta=\eta_0 w_0$ and  $\eta_0\in L(\CO_F)$, we have $\eta^{-1}\bar{u}\eta\in U(\varpi \CO_F)$. Hence it is enough to consider the case when $\bar{n}\in \bar{N}_0(\varpi \CO_F)$. Then the lemma follows from \eqref{eta0 relation} and the fact that $H_0$ commutes with $w_0$.
\end{proof}

For $w\in W$, let $\Phi_w=1_{\CI w \CI}$. Let $\alpha$ be a simple root and $w_\alpha$ be the corresponding element in $W$. As in the reductive case, we would need to compute
$$I_\alpha(\theta)=vol(\CI)^{-1}\int_{G(F)}^{\ast}\CY_{\theta,\xi}(x )(\Phi_1(x\eta^{-1})+\Phi_{w_\alpha}(x\eta^{-1}))\ud x $$
$$=vol(\CI)^{-1}\int_{G(F)} \CY_{\theta,\xi}^{0}(x\eta )(\Phi_1(x)+\Phi_{w_\alpha}(x))\ud x.$$

First, by the lemma above, we have $\CI \eta\subset B(\CO_F)\eta H_0(\CO_F)U(\varpi \CO_F)$. Hence $\CY_{\theta,\xi}^{0}(x\eta )=1$ for all $x\in \CI$. This implies that 
$$vol(\CI)^{-1}\int_{G(F)} \CY_{\theta,\xi}^{0}(x\eta )\Phi_1(x)\ud x=1.$$
The next lemma follows from the same arguments as in the reductive case.
\begin{lem}
Let $u_{\alpha}:F\rightarrow N(F)$ be the homomorphism whose image is the root space of $\alpha$ (the root space is one dimensional since we assume that the group is split). Then 
$$I_\alpha(\theta)=1+q\int_{\CO_F} (\theta^{-1}\delta^{\frac{1}{2}})(e^{\alpha^\vee}(a^{-1}))\CY_{\theta,\xi}^0(u_{-\alpha}(a^{-1})\eta )\ud a,	$$
\end{lem}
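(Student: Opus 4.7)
The plan is to closely follow the reductive-case derivation of \eqref{eq:I-alpha-1}, with $\CY_\theta$ replaced by $\CY_{\theta,\xi}^0$ and Lemma \ref{lem eta} replaced by Lemma \ref{lem eta nonred}. The key extra observation that makes this translation go through is that $\CY_{\theta,\xi}^0$ is right invariant under $H_0(\CO_F)U(\varpi\CO_F)$: indeed $\lambda$ vanishes on $H_0$ and sends $U(\varpi\CO_F)$ into $\varpi\CO_F$, on which $\varphi_0 = 1_{\CO_F} - \frac{1}{q-1}1_{\varpi^{-1}\CO_F^\times}$ takes the constant value $1$.

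For the $\Phi_1$-term, Lemma \ref{lem eta nonred} gives $\CI\eta\subset B(\CO_F)\eta H_0(\CO_F)U(\varpi\CO_F)$, so $\CY_{\theta,\xi}^0(x\eta)=1$ for every $x\in\CI$, and therefore $vol(\CI)^{-1}\int_{G(F)}\CY_{\theta,\xi}^0(x\eta)\Phi_1(x)\,\ud x = 1$. For the $\Phi_{w_\alpha}$-term, the Iwahori decomposition $\CI w_\alpha\CI = B(\CO_F) w_\alpha X_\alpha(\CO_F)\bar{N}(\varpi\CO_F)$ combined with Lemma \ref{lem eta nonred} produces
\[
\CI w_\alpha\CI\,\eta \;\subset\; B(\CO_F)\, w_\alpha X_\alpha(\CO_F)\,\eta\, H_0(\CO_F)U(\varpi\CO_F).
\]
Invoking the left $B(\CO_F)$-invariance and the right $H_0(\CO_F)U(\varpi\CO_F)$-invariance of $\CY_{\theta,\xi}^0$ (and using Remark \ref{rmk unramified 2}, which legitimizes dropping the regularization on this Iwahori piece since $1_{\CI w_\alpha \CI}$ is right $T(\CO_F)$-invariant) yields
\[
vol(\CI)^{-1}\int_{\CI w_\alpha \CI}\CY_{\theta,\xi}^0(x\eta)\,\ud x \;=\; q\int_{\CO_F}\CY_{\theta,\xi}^0(w_\alpha x_\alpha(a)\eta)\,\ud a.
\]

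The final step is the standard rank-one identity $w_\alpha x_\alpha(a) = x_\alpha(a^{-1})\, t_0\, e^{\alpha^\vee}(a^{-1})\, x_{-\alpha}(a^{-1})$ for some $t_0\in T(\CO_F)$, which together with the left $(B(F),\theta^{-1}\delta^{1/2})$-equivariance of $\CY_{\theta,\xi}^0$ gives $\CY_{\theta,\xi}^0(w_\alpha x_\alpha(a)\eta) = (\theta^{-1}\delta^{1/2})(e^{\alpha^\vee}(a^{-1}))\,\CY_{\theta,\xi}^0(x_{-\alpha}(a^{-1})\eta)$. Summing the two contributions produces the stated formula.

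The only real subtlety relative to the reductive case is ensuring that the regularization used in defining $\int^\ast_K\CY_{\theta,\xi}$ does not interfere with the decomposition into Iwahori pieces. This is precisely what is guaranteed by Remark \ref{rmk unramified 2}: on right $T(\CO_F)$-invariant test functions one may unambiguously evaluate the regularized integral using $\CY_{\theta,\xi}^0$ without further truncation. Apart from this bookkeeping and the need to absorb the $U(\varpi\CO_F)$-factor appearing in Lemma \ref{lem eta nonred}, the computation is formally identical to the reductive one.
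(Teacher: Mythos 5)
Your proof is correct and is exactly the argument the paper intends: the paper disposes of this lemma with the single sentence that it ``follows from the same arguments as in the reductive case,'' and you have carried out precisely that translation, with the two necessary adjustments (Lemma \ref{lem eta nonred} in place of Lemma \ref{lem eta}, and the right $H_0(\CO_F)U(\varpi\CO_F)$-invariance of $\CY_{\theta,\xi}^0$, which holds because $\varphi_0$ is invariant under translation by $\varpi\CO_F$) correctly identified and justified.
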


Then for each model, by an explicit matrix computation, we will show that for $\alpha\in \Delta(G_0)$, there exists $\beta_\alpha^{\vee}\in \Theta$ such that $-\beta_\alpha^{\vee}+\alpha^{\vee}\in \Theta$ and  
\begin{equation}\label{main identity nonred}
\CY_{\theta,\xi}^{0}(u_{-\alpha}(a^{-1})\eta h^{-1})=\varphi_0(\lambda(h))\cdot \theta(e^{\beta_{\alpha}^{\vee}}(1+a^{-1}))\cdot |1+a^{-1}|^{-1/2}.
\end{equation}
As in the reductive case, this will imply that
\begin{equation}\label{I alpha non-red 1}
\alpha= (q-1)\cdot \frac{1-q^{-1}e^{\alpha^{\vee}}(\theta)}{(1-q^{-1/2} e^{\beta_{\alpha}^{\vee}}(\theta))(1-q^{-1/2}e^{-\beta_{\alpha}^{\vee}+\alpha^{\vee}}(\theta))}.
\end{equation}

\begin{rmk}\label{color in non-red}
In fact, by our choice of $\eta=\eta_0w_0$, we only need to verify the identity for the reductive model $(G_0,H_0)$ and we know that $\beta_{a}^{\vee}$ is just the color associated to $\alpha$ for the reductive model $(G_0,H_0)$. For all of our cases in Table \ref{fig:1}, since it is induced from the trilinear $\GL_2$-model, we can just use the computations in Section \ref{section triple product}. 
\end{rmk}

On the other hand, if $\alpha\in \Delta(G)-\Delta(G_0)$, we will show that 
\begin{equation}\label{nonred root}
\CY_{\theta,\xi}^{0}(u_{-\alpha}(a^{-1})\eta )=\varphi_0(a^{-1}).
\end{equation}
This implies that 
\begin{equation}\label{I alpha non-red 2}
I_\alpha(\theta)=1+q\int_{\CO_F} \theta(e^{\alpha^{\vee}}(a))\cdot |a|^{-1}\cdot \varphi_0(a^{-1})da=q(1-q^{-1}e^{\alpha^{\vee}}(\theta)).
\end{equation}

\begin{rmk}
Note that if we don't regularize the unipotent integral, the integral we get here will be 
$$I_\alpha(\theta)=1+q\int_{\CO_F} \theta(e^{\alpha^{\vee}}(a))\cdot |a|^{-1}\cdot \psi(a^{-1})\ud a.$$
This is not absolutely convergent (which is also the reason why the original unipotent integral is not absolutely convergent). There are two ways to regularize this integral which correspond to the two ways to regularize the unipotent integral. 

The first way is to use the fact that $\int_{\varpi^n\CO_{F}^{\times}} \theta(e^{\alpha^{\vee}}(a))\cdot |a|^{-1}\cdot \psi(a^{-1})\ud a=0$ for $n>1$, and regularize the integral as 
$$I_\alpha(\theta)=1+q\int_{\CO_{F}^{\times}\cup \varpi^{-1}\CO_{F}^{\times}} \theta(e^{\alpha^{\vee}}(a))\cdot |a|^{-1}\cdot \psi(a^{-1})\ud a,$$
which is equal to $q(1-q^{-1}e^{\alpha^\vee}(\theta))$. The second way is to replace $\psi(a^{-1})$ by $\varphi_0(a^{-1})$ as we did above which gives the same answer.
\end{rmk}

\begin{defn}
Let $\Theta^+$ be the unique subset of $\Theta$ satisfying the following two conditions:
\begin{itemize}
\item For every simple root $\alpha\in \Delta_{G_0}$, we have $\Theta^+ - w_\alpha \Theta^+=\{\beta_{\alpha}^{\vee},\;\alpha^{\vee}-\beta_{\alpha}^{\vee}\}$;
\item For every simple root $\alpha\in \Delta_G-\Delta_{G_0}$, $\Theta^+$ is stable under $w_\alpha$.
\end{itemize}
We then define 
$$\beta(\theta)=\frac{\prod_{\alpha\in \Phi^+}1-q^{-1}e^{\alpha^\vee}}{
	\prod_{\gamma^\vee\in \Theta^+}1-q^{-\frac{1}{2}}e^{\gamma^\vee}}(\theta) 
\text{ and }
	 c_{WS}(\theta)=\frac{\prod_{\gamma^\vee\in \Theta^+}1-q^{-\frac{1}{2}}e^{\gamma^\vee}}
{\prod_{\alpha\in \Phi^+}1-e^{\alpha^\vee}}(\theta).$$
\end{defn}

Now by the exactly same arguments as in the reductive case (the only difference is that for the definition of $l_\theta$ in \eqref{eq:l-theta}, we replace the integral $\int_{G(F)}$ by the regularized integral $\int_{G(F)}^{\ast}$), we can prove the following proposition.

\begin{prop}\label{prop:WS-non-reductive}
Let $T(F)^+=\{t\in T(F)|\;t^{-1}N(\CO_F)t\subset N(\CO_F)\}$ be the positive chamber of $T(F)$. Then
\begin{align*}
S_\theta(\eta^{-1}t)/\beta(\theta)=&q^{l(W)}
vol(\CI)\sum_{w\in W}c_{WS}(w\theta)(w\theta)^{-1}\delta^{\frac{1}{2}}(t^{-1}), \text{ for } t\in T(F)^+,
\end{align*}
where $l(W)$ is the length of the longest Weyl element in $W$.
\end{prop}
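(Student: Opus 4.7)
The plan is to mirror the proof of Proposition \ref{prop:WS-reductive} almost verbatim, with the single modification that every occurrence of the absolutely convergent integral $\int_{G(F)}$ is replaced by the regularized integral $\int_{G(F)}^{\ast}$ of Lemma \ref{normalization of Y}. Concretely, I define
\[
\ell_\theta(\CP_\theta(f)) = \int_{G(F)}^{\ast} f(g) \CY_{\theta,\xi}(g) \ud g, \quad f \in \CC_c^\infty(G(F)),
\]
so that $S_\theta(g) = \ell_\theta(R(g) \CP_\theta(1_K))$. The regularization results (Lemmas \ref{normalization of induced representation} and \ref{normalization of Y}, together with Remarks \ref{rmk unramified 1} and \ref{rmk unramified 2}) guarantee that $\ell_\theta$ defines an element of $\Hom_{H(F)}(I^G_B(\theta), \xi)$ for almost all unitary $\theta$, and that all the integrals encountered below are well-defined.

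First I would establish the translation identity $S_\theta(\eta^{-1} t) = \vol(\CI t \CI)^{-1} R(1_{\CI t \CI}) S_\theta(\eta^{-1})$ for $t \in T(F)^+$. This reduces to the inclusion $\eta^{-1} \CI t \CI \subset H(\CO_F) \eta^{-1} t K$, which follows from Lemma \ref{lem eta nonred} exactly as in the reductive case (the extra factor $U(\varpi\CO_F)$ appearing on the right-hand side of Lemma \ref{lem eta nonred} sits inside $K$ and is absorbed). Next, for each simple root $\alpha$, the one-dimensionality of the Hom-space gives a scalar $b_{w_\alpha}(\theta)$ with $\ell_{w_\alpha \theta} \circ T_{w_\alpha} = c_\alpha(\theta) b_{w_\alpha}(\theta) \ell_\theta$. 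Evaluating this at $\CP_\theta(R(\eta)(\Phi_1 + \Phi_{w_\alpha}))$ and using $T_{w_\alpha}(\CP_\theta(\Phi_1 + \Phi_{w_\alpha})) = c_\alpha(\theta) \CP_\theta(\Phi_1 + \Phi_{w_\alpha})$ together with the computation of $I_\alpha(\theta)$ via \eqref{I alpha non-red 1} and \eqref{I alpha non-red 2}, I obtain
\[
b_{w_\alpha}(\theta) = \frac{I_\alpha(w_\alpha \theta)}{I_\alpha(\theta)} = \frac{\beta(w_\alpha \theta)}{\beta(\theta)}.
\]
Applying the same identity with $R(g)\CP_\theta(1_K)$ in place of $R(\eta)\CP_\theta(\Phi_1 + \Phi_{w_\alpha})$ and invoking $T_{w_\alpha}(\CP_{w_\alpha \theta}(1_K)) = c_\alpha(\theta) \CP_\theta(1_K)$ shows that $S_\theta(g)/\beta(\theta)$ is $W$-invariant in $\theta$.

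Then I use the standard basis $\{f_w : w \in W\}$ of $I^G_B(\theta)^{\CI}$ from Proposition 1.10 of \cite{KMS03}, for which $f_1 = \CP_\theta(\Phi_1)$, $\CP_\theta(1_K) = q^{l(w_\ell)} \sum_{w \in W} c_w(\theta) f_w$, and $R(1_{\CI t \CI}) f_w = \vol(\CI t^{-1} \CI)(w\theta)^{-1} \delta_B^{1/2}(t^{-1}) f_w$ for $t \in T(F)^+$. Substituting into $S_\theta(\eta^{-1} t)$ expresses it as a linear combination of the characters $(w\theta)^{-1} \delta_B^{1/2}(t^{-1})$. The coefficient of the $w = 1$ term is $q^{l(w_\ell)} \beta(\theta)^{-1} c_1(\theta) \ell_\theta(R(\eta) f_1)$, and by Lemma \ref{lem eta nonred} together with the analogue of \eqref{eq:I-1} (which follows from $\CI \eta \subset B(\CO_F) \eta H_0(\CO_F) U(\varpi \CO_F)$ and the definition of $\CY_{\theta,\xi}$), we get $\ell_\theta(R(\eta) f_1) = \vol(\CI)$. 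Hence the coefficient equals $q^{l(W)} \vol(\CI) c_{WS}(\theta)$.

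Finally, since $S_\theta(\eta^{-1} t)/\beta(\theta)$ is $W$-invariant and the characters $(w\theta)^{-1}\delta_B^{1/2}(t^{-1})$ are linearly independent over $T(F)^+$, symmetrizing the single known coefficient over $W$ yields the stated formula. The one point requiring care is the verification that the $W$-invariance of $S_\theta(\eta^{-1}t)/\beta(\theta)$ remains valid after regularization, and that the evaluation $\ell_\theta(R(\eta) f_w) $ for $w = 1$ is genuinely $\vol(\CI)$; both are consequences of the key regularity property that $\CY_{\theta,\xi}^0$ equals $1$ on the Iwahori translate $\CI \eta$, which is exactly what Lemma \ref{lem eta nonred} provides. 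The main subtlety, and the only real divergence from the reductive proof, is ensuring that the intertwining identity \eqref{eq:a-w} is compatible with the regularization $\int_{G(F)}^{\ast}$; this reduces to checking that the stabilization in $n$ furnished by Lemma \ref{normalization of Y} is uniform across the $\CI$-fixed vectors appearing, which holds because each such vector is bi-$T(\CO_F)$-invariant (so $N = 0$ suffices by Remark \ref{rmk unramified 2}).
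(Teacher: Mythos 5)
Your proposal is correct and is essentially the paper's own argument: the paper proves this proposition by declaring it "the exactly same arguments as in the reductive case," with the sole change of replacing $\int_{G(F)}$ by $\int_{G(F)}^{\ast}$ in the definition of $\ell_\theta$, which is precisely what you carry out (and you correctly identify Lemma \ref{lem eta nonred} and Remark \ref{rmk unramified 2} as the points guaranteeing the regularization causes no trouble).
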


Since $\eta\in K$, we have $S_\theta(1)=S_\theta(\eta^{-1})$. Combining with the proposition above, we have
$$S_\theta(1)/\beta(\theta)=q^{l(W)}vol(\CI)\sum_{w\in W}c_{WS}(w\theta).$$
Since $vol(\CI)=\Delta_G(1)\zeta(1)^{-rk(G)}\cdot q^{-l(W)}$, we have
$$S_\theta(1)/\beta(\theta)=\Delta_G(1)\zeta(1)^{-rk(G)}\sum_{w\in W}c_{WS}(w\theta).$$
Hence in order to prove Proposition \ref{main prop nonred}, it is enough to prove the following lemma.

\begin{lem}\label{lem constant nonreductive}
The summation $\sum_{w\in W}c_{WS}(w\theta)$ is independent of the choice of $\theta$ and is equal to $\frac{1}{\Delta_{H_0/Z_{G,H}}(1)}$.
\end{lem}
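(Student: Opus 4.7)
The proof follows the template of the reductive case (Lemma \ref{lem constant reductive} and Remark \ref{rmk constant reductive}), generalizing Sakellaridis's argument from \cite[Theorem 7.2.1]{Sa} to the Whittaker-induced setting. My plan splits into two components: establishing $\theta$-independence of the sum, and then computing the constant by evaluating at a convenient point.

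For the independence, I would show that $\sum_{w \in W} c_{WS}(w\theta)$ is a $W$-invariant rational function with no poles on the torus. It is manifestly $W$-invariant, so what requires work is showing that the potential poles along the hyperplanes $e^{\alpha^\vee} = 1$ (for $\alpha \in \Phi^+$) cancel in pairs. Pairing $w$ with $w w_\alpha$ for each simple root $\alpha$, the defining relations of $\Theta^+$ force the required cancellation: for $\alpha \in \Delta_{G_0}$ the identity $\Theta^+ - w_\alpha \Theta^+ = \{\beta_\alpha^\vee, \alpha^\vee - \beta_\alpha^\vee\}$ supplies exactly the $(1-e^{\alpha^\vee})$-factor needed in the combined numerator of $c_{WS}(w\theta) + c_{WS}(w w_\alpha \theta)$; while for $\alpha \in \Delta_G \setminus \Delta_{G_0}$ the $w_\alpha$-invariance of $\Theta^+$ produces an even cleaner cancellation. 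The resulting $W$-invariant regular expression is a Laurent polynomial, and a comparison of the extremal weights of $\Theta^+$ with those of $\Phi^+$ shows it must have degree zero, i.e. be constant.

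For the evaluation, specialize at $\theta = \delta_B^{-1/2}$, so that $e^{\alpha^\vee}(\theta) = q$ for every simple root $\alpha$ and hence $c_\alpha(\theta) = (1 - q^{-1} \cdot q)/(1-q) = 0$. An induction on the length of $w$ then shows that $c_{WS}(w \delta_B^{-1/2}) = 0$ for all $w \neq w_0$: for any such $w$ one can locate a simple $\alpha$ with $w\alpha > 0$, and the explicit shape of $\Theta^+$ forces a vanishing factor in the numerator of $c_{WS}(w\theta)$ that is not matched by a corresponding pole in the denominator. The surviving term $c_{WS}(w_0 \delta_B^{-1/2})$ is then computed by a direct residue-type calculation that collects factors $(1 - q^{-d_i})$ for integers $d_i$ determined by $\Theta^+$; matching this product against $\Delta_{H_0/Z_{G,H}}(1) = \prod_i \zeta_v(d_i)$ yields the claimed value $\Delta_{H_0/Z_{G,H}}(1)^{-1}$.

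This final matching is model dependent and must be carried out separately for each of the seven non-reductive pairs in Table \ref{fig:1}, in Sections \ref{sec:GL6}, \ref{sec:E7}, \ref{sec GU}, and \ref{sec:remaining}. In every case $H_0/Z_{G,H}$ is essentially $\PGL_2$ (inherited from the trilinear $\GL_2$ seed), so $\Delta_{H_0/Z_{G,H}}(1) = \zeta_v(2) = (1 - q^{-2})^{-1}$, and the residue at $w_0$ should reduce to $1 - q^{-2}$. The main obstacle is the explicit combinatorial identification of $\Theta^+$ for each model, especially for the larger cases $\GSp_{10}$, $\GSO_{12}$, and $E_7$, where $\rho_X$ carries many weights; once $\Theta^+$ is pinned down, both the pole-cancellation step in the independence argument and the residue computation at $w_0$ become routine, but require careful bookkeeping.
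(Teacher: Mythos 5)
There is a genuine gap, and it sits exactly where the paper warns one will appear. Your evaluation step assumes that at the special point $\theta=\delta_B^{\pm 1/2}$ only the longest Weyl element contributes, i.e.\ $c_{WS}(w\theta)=0$ for $w\neq w_0$. That is true in the reductive case (Remark \ref{rmk constant reductive}), but the paper states explicitly, in the discussion immediately following Lemma \ref{lem constant nonreductive}, that for every non-reductive model in Table \ref{fig:1} ``it is impossible to choose a $\theta$ so that all the terms in the summation are equal to 0 except one.'' The mechanism you invoke breaks because the simple roots $\alpha\in\Delta_G\smallsetminus\Delta_{G_0}$ are of Type $(U,\psi)$: for these, $\Theta^+$ is $w_\alpha$-stable and contributes no colors, so the numerator of $c_{WS}(w\theta)$ does not acquire the vanishing factor your length induction needs; the Type $T$ and Type $(U,\psi)$ roots interlace (e.g.\ for $(\GL_6,\GL_2\ltimes U)$ the roots $\alpha_1,\alpha_3,\alpha_5$ are of Type $T$ and $\alpha_2,\alpha_4$ of Type $(U,\psi)$). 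Your independence argument has a related problem: the ``degree zero'' conclusion is the content of Theorem 7.2.1 of \cite{Sa}, which the paper notes applies only to affine (reductive) spherical varieties and cannot be quoted here; asserting it from ``a comparison of extremal weights'' is not a proof.

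What the paper actually does is a direct verification, model by model, of the identity for arbitrary variables $\theta_i$ (which yields independence and the value simultaneously). The computation is organized by reductive steps: one splits $\Theta^+=\Theta_1^+\cup\Theta_2^+$ and $\Phi^+=\Phi_1^+\cup\Phi_2^+$ so that the inner sum over a parabolic Weyl subgroup reproduces a previously established identity (the trilinear $\GL_2$ identity of Section \ref{section triple product}, or Lemma \ref{constant for GL(4)xGL(2) non-red}, or Lemma \ref{GR case constant lem}), extracting the factor $1-q^{-2}=\Delta_{H_0/Z_{G,H}}(1)^{-1}$; the remaining coset sum is shown to equal $1$ by checking that its constant term is $1$ and that every $q^{-i/2}$-coefficient vanishes. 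The vanishing is proved by clearing denominators with the Weyl denominator formula so that the denominator becomes $(W,\sgn)$-invariant, and then observing that each monomial $\prod_i\theta_i^{b_i}$ in the relevant coefficient of the numerator has $b_i=\pm b_j$ for some $i\neq j$ (or is fixed by some reflection), so its signed Weyl sum is zero. If you want to salvage your plan, you would have to replace the ``only $w_0$ survives'' step by this kind of antisymmetrization bookkeeping; as written, the central claim of your second paragraph is false for all seven models.
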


This lemma is much more difficult than the reductive case for two reasons. First, Theorem 7.2.1 of \cite{Sa} only works for the reductive case, so we can not use it to imply that the summation is a constant. Secondly, in the reductive case, if we set $\theta=\delta_{B}^{1/2}$, then the only nonvanishing term in the summation is the term corresponding to the longest Weyl element, and it will be equal to $\frac{1}{\Delta_{H_0/Z_{G,H}}(1)}$. But for all the non-reductive cases in Table \ref{fig:1}, this is not true and actually it is impossible to choose a $\theta$ so that all the terms in the summation are equal to 0 except one. 
We believe that  this is related to the fact that for the models in Table \ref{fig:1}, the Type $T$ spherical roots and Type $(U,\psi)$ spherical roots will sometimes interlace each other. For example, for the model $(\GL_6,\GL_2\ltimes U)$, the roots $\alpha_i=e_i-e_{i+1}$ is of Type $T$ when $i=1,3,5$ and is of Type $(U,\psi)$ when $i=2,4$.

As a result, for each of these cases, we will prove this lemma by a direct computation. Our computation is based on some reductive steps. For example, for the model $(\GSO_{12},\GL_2\ltimes U)$, we will prove the identity by proving another identity which allows us to reduce to the identity for the model $(\GL_6,\GL_2\ltimes U)$; for the model $(\GL_6,\GL_2\ltimes U)$, we will prove the identity by proving another identity which allows us to reduce to an identity related to the group $\GL_4\times \GL_2$.

\subsubsection{The summary}\label{sec:6-steps}
By the discussion in the previous two subsections, in order to compute the local relative character in the non-reductive case, we just need to do the following steps:

\begin{enumerate}
\item Define the Weyl element $w_0$ so that the $w_0$-conjugation map
\begin{itemize}
\item induces an isomorphism between $U$ and $\bar{U}$,
\item stabilizes $L$ and fixes $H_0\subset L$.
\end{itemize}
\item Define the map $a:\GL_1\rightarrow Z_L$ so that it satisfies \eqref{the map a}.
\item Show that the double coset $B_0(F)\back G_0(F)/H_0(F)$ has a unique open Borel $B_0(F)\eta_0 G_0(F)$ and the representative $\eta_0$ can be chosen to satisfy \eqref{eta0 relation}. Since all the cases in Table \ref{fig:1} are Whittaker inductions of the trilinear $\GL_2$-model, this step has already been verified in Section \ref{section triple product}.
\item Verify the identity \eqref{main identity nonred} and \eqref{nonred root} by expressing the product $u_{-\alpha}(a)\eta$ in terms of the decomposition $B(F)\eta H(F)$. Since all the cases in Table \ref{fig:1} are Whittaker induction of the trilinear $\GL_2$-model, the identity \eqref{main identity nonred} and the colors have already been computed in Section \ref{section triple product} (see Remark \ref{color in non-red}), so we only need to verify \eqref{nonred root}.
\item Compute the subset $\Theta^+$ of $\Theta$ and show that it satisfies \eqref{theta plus}.
\item Compute the constant $\sum_{w\in W}c_{WS}(w\theta)$, i.e. Lemma \ref{lem constant nonreductive}. This is the most technical part of the computation.
\end{enumerate}

{\bf A final remark for the spherical roots.}
In Table \ref{fig:1},
if a model is reductive, then all the simple roots of the spherical variety are of Type $T$, and our computation of $I_\alpha(\theta)$ in \eqref{I alpha red} confirms Statement 6.3.1 of \cite{Sa};
If a model is non-reductive,  the Whittaker induction of the trilinear model $(G_0,H_0)$, then for a simple root $\alpha$ of the spherical variety, $\alpha$ is of Type $T$ if $\alpha$ is a simple root of $G_0$ (recall that $G_0$ is embedded as the Levi subgroup of $G$) and the remaining simple roots  are of Type $(U,\psi)$. 
In such case, our computation of $I_\alpha(\theta)$ in \eqref{I alpha non-red 1} and \eqref{I alpha non-red 2} also confirms Statement 6.3.1 of \cite{Sa}.

\section{The model $(\GSp_6\times \GSp_4,(\GSp_4\times \GSp_2)^0)$}\label{sec:GSp6-model}
In this section, we compute the local relative character for the model $(\GSp_6\times \GSp_4,(\GSp_4\times \GSp_2)^0)$. 
We closely follow the four steps in Section \ref{red summary}. 
In Section \ref{sec:GSp-model}, we will define this model and verify Step \eqref{item:reductive-1}, i.e. there is only one open orbit under the action of the Borel subgroup.  
Then in Section \ref{sec:GSp-computation}, we will first study the matrix identities of the product $u_{-\alpha}(x)\eta$ to get the set of virtual weighted colors (Step \eqref{item:reductive-2}). Then we will compute the set $\Theta^+$ (Step \eqref{item:reductive-3}) and finally we will compute the constant $ \sum_{w\in W} c_{WS}(w\theta)$ (Step \eqref{item:reductive-4}).

\subsection{The model and some orbit computation}\label{sec:GSp-model}
Define the split symplectic similitude group
$$
\GSp_{2n}=\{g\in \GL_{2n}\mid g^t J_{2n}g=l(g)J_{2n}\}$$
where  $J_{2n}= \begin{pmatrix}0&-w_n\\w_n&0\end{pmatrix}  \text{ and }
w_{n}= \begin{pmatrix}
&&1\\&\iddots&\\1&& 	
\end{pmatrix}.$ Here $l$ is the similitude character.  
Let $B_{2n}$  be the Borel subgroup of $\GSp_{2n}$ consisting of all upper triangular matrices. 
Set $G=\GSp_6\times \GSp_4$ and
$$
H=(\GSp_4\times \GSp_2)^0:=\{(h_1,h_2)\in \GSp_4\times \GSp_2 \mid l(h_1)=l(h_2)\}
$$ embeds into $G$ via the map
$$(h_1, \begin{pmatrix}a&b\\c&d\end{pmatrix})\in (\GSp_4\times \GSp_2)^0=H$$
$$\mapsto (\begin{pmatrix}a&0&b\\0&h_1&0\\c&0&d \end{pmatrix},h_1)\in \GSp_6\times \GSp_4=G.
 $$

For the non-split version of this model, let $D/F$ be a quaternion algebra. Let 
$$\GSp_{n}(D)=\{g\in \GL_n(D)\mid \bar{g}^tw_ng=l(g)w_n\}$$ 
where $\bar{g}$ is the conjugation map on $\GL_n(D)$ induced by the conjugation map on $D$. Let $G_D(F)=\GSp_3(D)\times \GSp_2(D)$ and $H_D(F)=(\GSp_2(D)\times \GSp_1(D))^0=\{(h_1,h_2)\in \GSp_2(D)\times \GSp_1(D)\mid l(h_1)=l(h_2)\}$ embeds into $G_D(F)$ via the map
$$(h_1, h_2)=(\begin{pmatrix}a&b\\c&d \end{pmatrix},h_2)\in (\GSp_2(D)\times \GSp_1(D))^0$$
$$\mapsto (\begin{pmatrix}a&0&b\\0&h_2&0\\c&0&d \end{pmatrix},h_1)\in \GSp_3(D)\times \GSp_2(D).$$

Set $\eta=\begin{pmatrix}1&0&0&0&0&0\\0&1&0&0&0&0\\0&0&1&0&0&0\\0&-1&-1&1&0&0 \\-1&0&-1&0&1&0\\0&-1&0&0&0&1  \end{pmatrix}$ 
 and 
$\eta^{-1}=\begin{pmatrix}1&0&0&0&0&0\\0&1&0&0&0&0\\0&0&1&0&0&0\\0&1&1&1&0&0 \\1&0&1&0&1&0\\0&1&0&0&0&1  \end{pmatrix}$.

\begin{prop}
The double cosets $B(F)\back G(F)/H(F)$ contain a unique open orbit $B(F)(\eta, I_4) H(F)$. Here $B(F)=B_6(F)\times B_4(F)$.
\end{prop}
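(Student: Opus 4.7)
The plan is to combine an explicit computation of the stabilizer at $(\eta, I_4)$ with a dimension count and the irreducibility of $G$.

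First, I would verify that $\eta$ lies in $\Sp_6(F) \subset \GSp_6(F)$, so that $(\eta, I_4)$ is a legitimate element of $G(F)$. Since $\eta = I_6 + N$ with $N$ strictly lower triangular, $\eta$ is unipotent; one just checks directly that $\eta^t J_6 \eta = J_6$ from the explicit matrix. Next, I compute the dimensions: $\dim G = 22 + 11 = 33$, $\dim B = 13 + 7 = 20$, and $\dim H = 11 + 4 - 1 = 14$, where the $-1$ reflects the similitude constraint $l(h_1) = l(h_2)$.

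The heart of the argument is to show that the stabilizer of $(\eta, I_4)$ under the $B \times H$-action $(b, h) \cdot g = b g h^{-1}$, namely
\[
\mathrm{Stab} := \bigl\{\, h \in H \mid (\eta, I_4)\, h\, (\eta, I_4)^{-1} \in B \,\bigr\},
\]
equals $Z_{G,H} = \{(aI_6, aI_4) : a \in \GL_1\}$, which is $1$-dimensional. The inclusion $Z_{G,H} \subseteq \mathrm{Stab}$ is immediate since centers lie in every Borel. For the reverse inclusion, I parametrize $(h_1, h_2) \in H$ through the block embedding, write out $\eta \, \mathrm{diag}(a, h_1', c \mid b, h_1'', d)\, \eta^{-1}$ where $h_2 = \bigl(\begin{smallmatrix} a & b \\ c & d \end{smallmatrix}\bigr)$ and $h_1 = \bigl(\begin{smallmatrix} h_1' & h_1'' \\ * & * \end{smallmatrix}\bigr)$, and impose the condition that all entries below the diagonal of the resulting $6\times 6$ matrix vanish. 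The specific shape of $\eta$ (a rank-two perturbation of $I_6$ supported in the lower-left $3\times 3$ block) means that the vanishing of below-diagonal entries translates into a tractable linear system on the entries of $h_1, h_2$; combined with the symplectic relations and $l(h_1) = l(h_2)$, this should force $(h_1, h_2)$ to be scalar and lie in $Z_{G,H}$.

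Granting this, the dimension formula yields
\[
\dim B(\eta, I_4)H = \dim B + \dim H - \dim \mathrm{Stab} = 20 + 14 - 1 = 33 = \dim G,
\]
so $B(\eta, I_4)H$ is Zariski open in $G$. For uniqueness, $G$ is irreducible as an algebraic variety, so any two nonempty open subsets meet; but distinct $B \times H$-orbits are disjoint, so there is at most one open orbit. The main obstacle is the bookkeeping in the stabilizer computation: the block conjugation by $\eta$ mixes the three pairs of columns/rows in a way that requires careful tracking, but the linearity of the resulting system should make this a finite, routine check.
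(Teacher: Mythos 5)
Your route (stabilizer computation plus dimension count) is genuinely different from the paper's, which transfers the problem to the flag variety $\GSp_6(F)/B_6(F)$ and explicitly moves a generic isotropic flag to the standard representative $(w+w_1^\perp, w_1+w^\perp+w_2^\perp, w_2+w_1^\perp+w_2^\perp)$ by elements of $H$. The openness half of your argument is fine in principle, though the one step that carries all the content --- that the stabilizer of $(\eta,I_4)$ is exactly $Z_{G,H}$ --- is only sketched ("should force $(h_1,h_2)$ to be scalar"); this is the computation that must actually be performed, and it is exactly what the paper's generic-flag normalization accomplishes.

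The uniqueness step, however, has a genuine gap. Irreducibility of $G$ together with disjointness of orbits only shows that the \emph{geometric} open $B\times H$-orbit is unique; the proposition is a statement about $F$-points, and the $F$-points of the open geometric orbit may split into several $B(F)\times H(F)$-double cosets, each of them open. Your argument does not see this distinction, and it would "prove" the same uniqueness for the isogenous model $(\Sp_6\times\Sp_4,\Sp_4\times\Sp_2)$, where the identical dimension count gives an open orbit but, as Remark \ref{rmk one open borel orbit} of the paper points out, there are $|F^{\times}/(F^{\times})^2|$ open Borel orbits. The splitting is governed by $H^1(F,\mathrm{Stab})$, so to close the gap you must establish that $\mathrm{Stab}$ equals $Z_{G,H}\cong\GL_1$ \emph{exactly} as an algebraic group (not merely that it is one-dimensional --- a stabilizer of the form $\GL_1\times\mu_2$ would pass your dimension count but yield several rational open orbits) and then invoke Hilbert 90 to conclude that the $F$-points of the open orbit form a single double coset. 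The paper avoids this cohomological detour by exhibiting an explicit Zariski-dense open set of rational flags and moving each of its points to the base point by an element of $H(F)$, which yields rational uniqueness directly.
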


Let $H'(F)=\{h_1\times h_2\in (\GSp_4\times \GSp_2)^0=H\mid  h_1\in B_4(F)\}$ be a subgroup of $\GSp_6(F)$. Let $X(F)=GSp_6(F)/B_6(F)$ be the flag variety associated to $\GSp_6(F)$. We have a natural action of $\GSp_6(F)$ on $X(F)$ which induces an action of $H'(F)$ on $X(F)$. 

Let $W_6=Span\{w,w_1,w_2,w_{2}^{\perp},w_{1}^{\perp},w^{\perp}\}$ be the six dimensional symplectic space defining $\GSp_6$ 
where $\{w,w_1,w_2,w_{2}^{\perp},w_{1}^{\perp},w^{\perp}\}$ is the standard basis induced by $B_6$, i.e. 
$$w=(1,0,0,0,0,0)^T,w_1=(0,1,0,0,0,0)^T,\cdots$$ 
Then $X(F)$ is characterized by 
$$X'(F)=\{(v_1,v_2,v_3)\mid \langle v_i,v_j \rangle = 0,\;v_1,v_2,v_3\;\text{are linearly independent}\}.$$
More specifically, $X(F)=\{Span\{v_1\},Span\{v_1,v_2\}, Span\{v_1,v_2,v_3\} \mid (v_1,v_2,v_3)\in X'(F)\}$.
The $\GSp_6(F)$-action is just 
$$g\cdot(v_1,v_2,v_3)=(gv_1,gv_2,gv_3).$$
Note that
$$\eta^{-1}\cdot (w,w_1,w_2)=(w+w_{1}^{\perp},w_1+w^{\perp}+w_{2}^{\perp},w_2+w_{1}^{\perp}+w_{2}^{\perp}).$$
Hence in order to prove the proposition, it is enough to prove the following lemma.

\begin{lem}
The $H'(F)$-action on $X(F)$ contains a unique open orbit represented by $(w+w_{1}^{\perp},w_1+w^{\perp}+w_{2}^{\perp},w_2+w_{1}^{\perp}+w_{2}^{\perp})$.
\end{lem}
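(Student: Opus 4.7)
The plan is to prove the lemma in two stages: first compute the stabilizer in $H'(F)$ of the flag $\mathcal{F} = (V_1, V_2, V_3)$ with $V_i = \mathrm{span}(v_1, \ldots, v_i)$ and show it equals $Z_{G,H}(F) \cong F^\times$, and then deduce openness by a dimension count and uniqueness via Galois cohomology.

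For the stabilizer computation, I would parameterize a general $h' = (h_1, h_2) \in H'(F)$ by its matrix entries, writing $h_1 = (h_{ij})_{1 \le i,j \le 4}$ with $h_{ij}=0$ for $i>j$ and $h_2 = \left(\begin{smallmatrix} a & b \\ c & d \end{smallmatrix}\right)$, subject to the $\GSp_4$-relations $h_{11} h_{44} = h_{22} h_{33} = l(h_1)$, $h_{22} h_{34} + h_{12} h_{44} = 0$, and $h_{33} h_{24} - h_{23} h_{34} - h_{13} h_{44} = 0$, together with $ad-bc=l(h_1)$. I would then impose the three invariance conditions $h'(V_i) \subseteq V_i$ successively. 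The condition on $V_1$ will kill $c, h_{14}, h_{24}, h_{34}$ and force $a = h_{44}$; the condition on $V_2$ will kill $b, h_{23}$ and force $d = h_{33} = h_{11} + h_{13}$; finally the condition on $V_3$ will force $h_{12} + h_{13} = 0$, $h_{22} = a$, and $a = d$. Combined with the $\GSp_4$-relation $h_{11} h_{44} = h_{22} h_{33}$, this forces $h_{11} = a$, hence $h_{13} = h_{12} = 0$, so all diagonal entries of $h_1$ equal $a$. Therefore $h_1 = a I_4$ and $h_2 = a I_2$, identifying the stabilizer with $Z_{G,H}(F)$.

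Openness then follows from a dimension count: $\dim H' - \dim Z_{G,H} = 10 - 1 = 9 = \dim X$, so the $H'(F)$-orbit of $\mathcal{F}$ is Zariski-open in $X$. For uniqueness, over $\overline F$ the $H'_{\overline F}$-action on the irreducible variety $X_{\overline F}$ has at most one dense orbit; the $F$-rational $H'(F)$-orbits lying in the open $\overline F$-orbit form a torsor under $\ker(H^1(F, Z_{G,H}) \to H^1(F, H'))$, and since $Z_{G,H} \cong \GL_1$, Hilbert 90 gives $H^1(F, Z_{G,H}) = 0$, so this kernel vanishes and the open $H'(F)$-orbit is unique.

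The main obstacle will be the matrix bookkeeping in the stabilizer calculation: one must systematically impose the three layers of flag conditions while respecting the intertwined $\GSp_4$-constraints, being careful that no off-diagonal freedom survives. The eventual collapse to the scalar matrices reflects the strong-temperedness of the pair and serves as a useful consistency check.
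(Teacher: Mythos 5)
Your proof is correct, and it takes a genuinely different route from the paper's. The paper argues by explicit transitivity: it writes down a Zariski-open subset of $X$ cut out by four genericity conditions on the projections of $(v_1,v_2,v_3)$, and then successively normalizes $v_1$, $v_2$, $v_3$ to the representative using explicit elements of $H'(F)$ (a diagonal element to fix the $w_2^\perp$-coefficient of $v_2$, a unipotent element to remove its $w_1^\perp$-coefficient, and a final diagonal element $\mathrm{diag}(1,c^{-1},1,c^{-1},1,c^{-1})$ for $v_3$); openness and uniqueness then follow because every $F$-point of a dense open subset lies in a single orbit. You instead compute the stabilizer of the flag, find it equals $Z_{G,H}\cong \GL_1$, get openness from the count $\dim H'-\dim Z_{G,H}=10-1=9=\dim \GSp_6/B_6$, and get uniqueness of the rational open orbit from Hilbert 90. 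I checked your stabilizer computation against the conventions of the paper ($J_4=\left(\begin{smallmatrix}0&-w_2\\w_2&0\end{smallmatrix}\right)$, the middle-block embedding of $\GSp_4$, and $h_2$ acting on $\mathrm{Span}\{w,w^\perp\}$): the three $\GSp_4$-relations you list and the successive eliminations ($c,h_{14},h_{24},h_{34}$ and $a=h_{44}$ from $V_1$; $b,h_{23}$ and $d=h_{33}=h_{11}+h_{13}$ from $V_2$; $h_{12}+h_{13}=0$ and $h_{22}=h_{33}=h_{44}$ from $V_3$, then $h_{11}=a$ from $h_{11}h_{44}=h_{22}h_{33}$) all check out, so the stabilizer is indeed the central $\GL_1$. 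Two remarks: your approach has the added benefit of actually proving the assertion, made without proof earlier in the paper, that $H(F)\cap\eta^{-1}B(F)\eta=Z_{G,H}(F)$; on the other hand, the paper's constructive proof yields an explicit description of the open set that is reused immediately afterwards to prove the integrality statement $\bar N_6(\varpi\CO_F)\,\eta\,\bar N_4(\varpi\CO_F)\subset B_6(F)\eta N_6(\varpi\CO_F)T_6(\CO_F)N'(\varpi\CO_F)$, which your argument does not provide. Also, the rational orbits inside the open $\bar F$-orbit are in bijection with (not a torsor under) the kernel $\ker\bigl(H^1(F,Z_{G,H})\to H^1(F,H')\bigr)$, but since this set is trivial by Hilbert 90 the conclusion is unaffected.
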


\begin{proof}
First, we assume that $(v_1,v_2,v_3)$ belongs to the Zariski open subset such that 
\begin{itemize}
\item $v_1$ has nonzero projection to the subspaces $Span\{w,w^{\perp}\}$ and $Span\{w_{1}^{\perp}\}$;
\item The projections of $v_1$ and $v_2$ to $Span\{w,w^{\perp}\}$ and $Span\{w_{2}^{\perp},w_{1}^{\perp}\}$ are linearly independent;
\item The projections of $v_1,v_2$ and $v_3$ to $Span\{w_2,w_{2}^{\perp},w_{1}^{\perp}\}$ are linearly independent;
\item The projections of $v_1,v_2$ and $v_3$ to  $Span\{w,w^\perp,w_{2}^{\perp}\}$ are linearly independent.
\end{itemize}
Up to the $H'(F)$-action and because of the first condition we may assume that $v_1=w+w_{1}^{\perp}$. Then by the second condition and since $\langle v_1,v_2\rangle =0$, we may assume $v_2=w_1+w^{\perp}+aw_{2}^{\perp}+bw_2+cw_{1}^{\perp}$ with $a,b,c\in F$ and $a\neq 0$. Up to the action of an element 
$$diag(I_2,\begin{pmatrix}t&x\\0&t^{-1} \end{pmatrix},I_2)\in H'(F),$$
we may assume that $v_2=w_1+w^{\perp}+w_{2}^{\perp}+cw_{1}^{\perp}$. Note that such an element fixes $v_1$. Now let $h$ be the element in $H'(F)$ that fixes $w,w_1,w_2,w_{2}^{\perp},w_{1}^{\perp}$ and maps $w^{\perp}$ to $w^{\perp}+cw$. Then 
$$hv_1=v_1,\;hv_2=cv_1+(w_1+w^{\perp}+w_{2}^{\perp}).$$
Hence we may assume that $v_2=w_1+w^{\perp}+w_{2}^{\perp}$.

Finally, because $(v_1,v_2)=(w+w_{1}^{\perp},w_1+w^{\perp}+w_{2}^{\perp})$ and by the third condition, we may assume that $v_3=w_2+aw_{1}^{\perp}+bw_1+cw_{2}^{\perp}$. Since $\langle v_1,v_3 \rangle = \langle v_2,v_3 \rangle =0$, we have $a=1,b=0$. Hence $v_3=w_2+w_{1}^{\perp}+cw_{2}^{\perp}$. By the fourth condition, we know that $c\neq 0$.

Now consider the element $h_0=diag(1,c^{-1},1,c^{-1},1,c^{-1})\in H'(F)$. We have
$$h_0v_1=v_1,\;h_0v_2=c^{-1}v_2,\;h_0v_3=w_2+w_{1}^{\perp}+w_{2}^{\perp}.$$
This proves the lemma.
\end{proof}

Now if we let $\bar{N}_6$ (resp. $\bar{N}_4$) be the lower triangular unipotent subgroup of $\GSp_6$ (resp. $\GSp_4$) and we embed $\bar{N}_4$ into $\bar{N}_6$ via the embedding of $\GSp_4$ to $\GSp_6$. We also let $T_6$ (resp. $T_4$) be the diagonal torus of $\GSp_6$ (resp. $\GSp_4$). The following lemma is a direct consequence of the proof of the previous lemma.

\begin{lem}
For all $n\in \bar{N}_6(\varpi\CO_F)$ and $n'\in \bar{N}_{4}(\varpi\CO_F)$, we have
$$n\eta n'\in B_6(F) \eta N_6(\varpi\CO_F)T_6(\CO_F)N'(\varpi\CO_F)$$
where $N'$ is the lower triangular unipotent subgroup of $\GSp_2$ via as a subgroup of $\GSp_6$. 
\end{lem}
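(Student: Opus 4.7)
The plan is to lift the open-orbit argument of the preceding lemma to an integral statement by running the same normalization algorithm on a $\varpi$-perturbed flag and then refactoring the resulting $H'$-element inside $\GSp_6$. The claim is equivalent to finding $h\in H'(\CO_F)$ with $h\equiv I\pmod{\varpi}$ such that $h\cdot(n\eta n')^{-1}(e_1,e_2,e_3) = \eta^{-1}(e_1,e_2,e_3)$; such an $h$ forces $\eta h(n\eta n')^{-1}\in\mathrm{Stab}_G(e_1,e_2,e_3)=B_6(F)$ and yields $n\eta n' = b\eta h$. Using the explicit form of $\eta^{-1}$ together with $n,n'\equiv I\pmod{\varpi}$ (entries in $\varpi\CO_F$), a direct matrix computation shows that $(v_1,v_2,v_3):=(n\eta n')^{-1}(e_1,e_2,e_3)$ equals the target flag $\eta^{-1}(e_1,e_2,e_3)=(w+w_1^{\perp},\,w_1+w^{\perp}+w_2^{\perp},\,w_2+w_1^{\perp}+w_2^{\perp})$ up to an integral $\varpi$-perturbation.

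The next step is to run the normalization algorithm of the previous lemma on this perturbed flag. Each step extracts a scalar from the flag: the scaling ratios that normalize the projections of $v_1,v_2,v_3$ onto the prescribed basis vectors, together with the absorption coefficient used to clear the stray $w_1^{\perp}$-component of $v_2$. Since $(v_1,v_2,v_3)$ is $\varpi$-close to the target flag, every scaling parameter is a unit in $1+\varpi\CO_F$ and every absorption coefficient lies in $\varpi\CO_F$. Consequently each $H'$-factor in the algorithm lies in $H'(\CO_F)$ and is congruent to the identity modulo $\varpi$, as is their composite $h$, giving
\[
n\eta n' = b\cdot\eta\cdot h,\qquad b\in B_6(F),\ h\in H'(\CO_F),\ h\equiv I\pmod{\varpi}.
\]

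Finally, factor the embedded $h$ inside $\GSp_6$. Writing $h=(h_1,h_2)$ with $h_1\in B_4(\CO_F)$ upper triangular and $h_2=\left(\begin{smallmatrix}\alpha&\beta\\ \gamma&\delta\end{smallmatrix}\right)\in\GSp_2(\CO_F)$ (all entries close to the identity), the image of $h$ in $\GSp_6$ has upper-triangular entries coming from $h_1$ and from $\alpha,\beta,\delta$, together with a single lower-triangular entry $\gamma$ at position $(6,1)$. Since $\delta\in 1+\varpi\CO_F$ is a unit, a short multiplication check shows that the embedded $h$ factors as $U\cdot V$, where $V$ is the matrix with $1$'s on the diagonal, entry $\gamma/\delta$ at position $(6,1)$, and zeros elsewhere (so $V\in N'(\varpi\CO_F)$), while $U\in B_6(\CO_F)$ is congruent to $I$ modulo $\varpi$. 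A standard Iwasawa decomposition $U=mt$ with $m\in N_6(\varpi\CO_F)$ and $t\in T_6(\CO_F)$ then yields
\[
n\eta n' = b\cdot\eta\cdot m\cdot t\cdot n'',\qquad n'' := V\in N'(\varpi\CO_F),
\]
as required.

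The main technical point is this last refactorization: the embedded $h$ carries a \emph{unique} lower-triangular contribution at $(6,1)$ that must be isolated into $N'$, which forces the parameter $\gamma/\delta$ of $V$ to cancel the $(6,1)$-entry of $U$, and this is possible precisely because $\delta\in 1+\varpi\CO_F$ is a unit (guaranteed by $h\equiv I\pmod{\varpi}$). The rest of the argument is straightforward $\varpi$-adic bookkeeping of the algorithm used in the previous lemma.
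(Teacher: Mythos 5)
Your proof is correct and follows exactly the route the paper intends: the paper gives no written proof, asserting only that the lemma "is a direct consequence of the proof of the previous lemma," and your argument is precisely that derivation carried out — rerunning the open-orbit normalization algorithm on the $\varpi$-perturbed flag to get $n\eta n'=b\eta h$ with $h\in H'(\CO_F)$, $h\equiv I\pmod{\varpi}$, and then refactoring the embedded $h$ into $N_6(\varpi\CO_F)T_6(\CO_F)N'(\varpi\CO_F)$. The final refactorization (isolating the single sub-diagonal entry $\gamma$ at position $(6,1)$ into $N'$, using that $\delta$ is a unit) is exactly the bookkeeping the paper leaves implicit, and you have verified it correctly.
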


We need a stronger result.
\begin{lem}
For all $n\in \bar{N}_6(\varpi\CO_F)$ and $n'\in \bar{N}_{4}(\varpi\CO_F)$, we have
$$n\eta n'\in T_6(\CO_F)N_6(\varpi\CO_F) \eta N_6(\varpi\CO_F)T_6(\CO_F)N'(\varpi\CO_F).$$
\end{lem}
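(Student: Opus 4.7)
The plan is to leverage the decomposition from the preceding lemma and refine it by an integrality and mod-$\varpi$ argument. Concretely, by the preceding lemma we may write $n\eta n' = b\,\eta\,m\,t'\,m'$ with $b \in B_6(F)$, $m \in N_6(\varpi\CO_F)$, $t' \in T_6(\CO_F)$, and $m' \in N'(\varpi\CO_F)$. The goal is to show that $b$ must in fact lie in $T_6(\CO_F)N_6(\varpi\CO_F)$. First, since every entry of $\eta$ lies in $\{0,\pm 1\}$, we have $\eta \in G(\CO_F)$ (a direct check confirms $\eta \in \GSp_6$ with similitude $1$); combined with $n,n' \in G(\CO_F)$ this gives $n\eta n' \in G(\CO_F)$, and likewise $\eta\,m\,t'\,m' \in G(\CO_F)$. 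Hence $b \in G(\CO_F) \cap B_6(F) = B_6(\CO_F)$, and we may write $b = t_0 u_0$ with $t_0 \in T_6(\CO_F)$ and $u_0 \in N_6(\CO_F)$. The remaining task is to prove $u_0 \in N_6(\varpi\CO_F)$, i.e.\ $\ol{u}_0 = I$ in $N_6(\BF_q)$.

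The key step is to reduce the identity $n\eta n' = t_0 u_0\,\eta\,m\,t'\,m'$ modulo $\varpi$. Because $n,n',m,m'$ all reduce to the identity, one obtains
\[
\ol{\eta} = \ol{t}_0\,\ol{u}_0\,\ol{\eta}\,\ol{t}', \qquad \text{equivalently} \qquad \ol{t}_0\,\ol{u}_0 = \ol{\eta}\,\ol{t}'^{-1}\,\ol{\eta}^{-1}.
\]
The explicit form of $\eta$ shows that $\ol{\eta}$ is lower unipotent (all entries of $\eta$ above the diagonal vanish). The right-hand side is therefore a product (lower unipotent)(diagonal)(lower unipotent), hence lower triangular with diagonal $\ol{t}'^{-1}$. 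The left-hand side is (diagonal)(upper unipotent), hence upper triangular with diagonal $\ol{t}_0$. Consequently both sides must be diagonal, which forces $\ol{u}_0 = I$ (and incidentally $\ol{t}_0 = \ol{t}'^{-1}$).

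This gives $u_0 \in N_6(\varpi\CO_F)$, whence $b = t_0 u_0 \in T_6(\CO_F)N_6(\varpi\CO_F)$, and the asserted refinement
\[
n\eta n' \in T_6(\CO_F)N_6(\varpi\CO_F)\,\eta\,N_6(\varpi\CO_F)T_6(\CO_F)N'(\varpi\CO_F)
\]
follows. There is no real obstacle: the previous lemma supplies the substantive combinatorial content, the integrality of $\eta$ is transparent from its matrix form, and the comparison of upper- versus lower-triangular shapes over the residue field is elementary.
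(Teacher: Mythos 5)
Your proof is correct, and it takes a genuinely different (and somewhat more streamlined) route than the paper's. The paper also starts from the previous lemma and the implicit observation that the $B_6(F)$-factor must lie in $B_6(\CO_F)=T_6(\CO_F)N_6(\CO_F)$ (a step it does not spell out but which you justify explicitly via the integrality of $\eta$ and $\eta^{-1}$); it then shows the residual $N_6(\CO_F)$-element lies in $N_6(\varpi\CO_F)$ by checking that it preserves the congruence sets $V_i=\{(a_1,\dots,a_6)^T \mid a_i\in\CO_F^{\times},\ a_j\in\varpi\CO_F \text{ for } j\neq i\}$ for $i=4,5,6$, because every other factor in the identity preserves them; pinning down the remaining entries then rests on the symplectic relation between the two unipotent blocks. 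Your argument instead reduces the entire identity modulo $\varpi$, uses that $\ol{\eta}$ is lower unipotent (visible from the explicit matrix), and compares upper- versus lower-triangular shapes to conclude $\ol{u}_0=I$ outright. This buys you two things: you get all entries of $\ol{u}_0$ at once without invoking the symplectic condition, and you make the $B_6(\CO_F)$ step explicit. The paper's $V_i$-invariance formulation is slightly more robust in that it does not use the triangularity of $\eta$, but since $\eta$ here is lower unipotent your shortcut is perfectly valid.
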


\begin{proof}
The above lemma implies that 
$$n\eta n'\in T_6(\CO_F)N_6(\CO_F) \eta N_6(\varpi\CO_F)T_6(\CO_F)N'(\varpi\CO_F).$$
We just need to prove the element in $N_6(\CO_F)$ actually belongs to $N_6(\varpi\CO_F)$, which is equivalent to show that this element preserves the sets $V_4,V_5,V_6$ where
$$V_i=\{(a_1,a_2,a_3,a_4,a_6,a_6)^T\mid a_i\in \CO_{F}^{\times},a_j\in \varpi\CO_F\;\text{for all}\;j\neq i\}.$$
But this just follows from the fact that these three sets are fixed by 
$$\eta,\eta^{-1}, T_6(\CO_F),\bar{N}_6(\varpi\CO_F)\bar{N}_{4}(\varpi\CO_F),N_6(\varpi\CO_F),N'(\varpi\CO_F).$$ 
This proves the lemma.
\end{proof}

\noindent
The above lemma implies the following proposition which is Lemma \ref{lem eta} for the current case.

\begin{prop}
For all $n\in \bar{N}_6(\varpi\CO_F)$ and $n'\in \bar{N}_{4}(\varpi\CO_F)$, we have
$$(n,n')(\eta,I_4)\in T(\CO_F)N(\varpi\CO_F)(\eta,I_4)H(\CO_F)$$
with $B=TN$.
\end{prop}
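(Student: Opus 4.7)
The plan is to leverage the preceding lemma, which decomposes $n\eta\iota_4(n')$ inside $\GSp_6(\CO_F)$, and then to interpret certain factors of that decomposition as coming from the $H$-embedding $H=(\GSp_4\times \GSp_2)^0\hookrightarrow G$. Writing $\iota_4\colon\GSp_4\hookrightarrow\GSp_6$ for the middle-block embedding and $\iota_2\colon\GSp_2\hookrightarrow\GSp_6$ for the corner embedding, the key observation is that $\iota_2(\GSp_2)$ and $\iota_4(\GSp_4)$ commute in $\GSp_6$, and that $N'(\varpi\CO_F)=\iota_2(\bar{N}_2(\varpi\CO_F))$.

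First I would apply the preceding lemma to obtain
$$n\eta\,\iota_4(n')=t_1m_1\,\eta\,m_2t_2\,n'',$$
with $t_i\in T_6(\CO_F)$, $m_i\in N_6(\varpi\CO_F)$, and $n''\in N'(\varpi\CO_F)$. Write $n''=\iota_2(n''_{\GSp_2})$ with $n''_{\GSp_2}=\bigl(\begin{smallmatrix}1&0\\c_0&1\end{smallmatrix}\bigr)\in\SL_2(\CO_F)$ where $c_0\in\varpi\CO_F$. Because $l(n')=l(n''_{\GSp_2})=1$, the pair $(n',n''_{\GSp_2})$ lies in $(\GSp_4\times \GSp_2)^0=H(\CO_F)$; its image in $G$ under the $H$-embedding is $h:=(\iota_4(n')\,n'',\,n')\in H(\CO_F)$. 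Rearranging the decomposition and using commutativity of $\iota_4$- and $\iota_2$-images shows that, up to adjusting the right factor, $(n\eta,n')$ differs from an element of $T(\CO_F)N(\varpi\CO_F)(\eta,I_4)$ by right multiplication with $h$.

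The main obstacle is that this naive choice of $h$ leaves a residual factor of $\iota_4(n')^{-2}$ on the $\GSp_6$-side, which does not visibly lie in $T_6(\CO_F)N_6(\varpi\CO_F)\eta$ because conjugation by $\eta$ does not preserve $B_6$. To absorb it, I would refine the construction by allowing the $\GSp_4$-component of $h$ to be $h_4=m_4^{-1}t_4^{-1}n'$ for suitable $t_4\in T_4(\CO_F)$, $m_4\in N_4(\varpi\CO_F)$, which produces compensating factors $\iota_4(t_4m_4)$ on the $\GSp_6$-side to cancel the offending $\iota_4(n')^{-2}$. Equivalently, a conceptually cleaner route uses Hensel's lemma: since $(n,n')(\eta,I_4)\equiv(\eta,I_4)\bmod\varpi$ lies in the open orbit, and since the stabilizer of the open orbit equals $Z_{G,H}$ (as verified earlier in the section for each of the four models), smoothness of the open Bruhat cell over $\CO_F$ yields a decomposition $(n,n')(\eta,I_4)=b(\eta,I_4)h'$ with $b\in B(\CO_F)$, $h'\in H(\CO_F)$, and, after absorbing an element of $Z_{G,H}(\CO_F)$ into $h'$, one can arrange $b\equiv I\bmod\varpi$, placing $b$ in $T(\CO_F)N(\varpi\CO_F)$ as required.
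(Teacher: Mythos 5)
Your first, explicit route has a genuine gap at the cancellation step, and the gap is not cosmetic. Starting from $n\eta\,\iota_4(n')=t_1m_1\,\eta\,m_2t_2\,n''$ and stripping off your $h$ on the right, the element left sitting to the \emph{right} of $\eta$ in the first component is $m_2t_2\,\iota_4(n')^{-2}$, and after your refinement it becomes $m_2t_2\,\iota_4(n')^{-2}\iota_4(t_4m_4)$. For this to vanish you would need $\iota_4(n')^{2}=\iota_4(t_4m_4)\,m_2t_2$, i.e.\ a nontrivial lower-triangular unipotent matrix equal to a product of upper-triangular matrices, which forces $n'=1$; and conjugating the residue back across $\eta$ does not help, for exactly the reason you note. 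The source of the trouble is the sign with which you invoke the lemma: it should be applied to $n\eta\,\iota_4(n'^{-1})$, not to $n\eta\,\iota_4(n')$. Writing $n\eta\,\iota_4(n'^{-1})=t_1m_1\,\eta\,R$ with $R=\iota_4(c)\,\iota_2(d)$, $c\in N_4(\varpi\CO_F)T_4(\CO_F)$, $d\in \bar N_2(\varpi\CO_F)$, and taking the $H(\CO_F)$-element to be $(c\,n',d)$ (similitudes match since $l(n')=1$), the first component collapses to $t_1m_1\eta$ exactly and the second component is $n'(cn')^{-1}=c^{-1}\in T_4(\CO_F)N_4(\varpi\CO_F)$; no power of $\iota_4(n')$ survives. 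Note that this deduction --- and yours --- also silently uses that the factor $m_2t_2$ lies in $\iota_4(\GSp_4(\CO_F))$ rather than merely in $N_6(\varpi\CO_F)T_6(\CO_F)$ as the lemma literally states; this is true because every right multiplication performed in the proof of the flag-variety lemma is by an element of $H'(F)$, but it must be observed for the right-hand factor to be reassembled into $H(\CO_F)$ at all.

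Your second, Hensel-type route is not ``equivalent'' to the first: it is the one of your two arguments that actually works, and it is genuinely different from the intended deduction from the preceding lemma. To make it complete you need three facts: that the open cell is an open subscheme of $G$ over $\CO_F$ whose special fibre is again the open $B\times H$-orbit through the reduction of $(\eta,I_4)$ (this follows because the explicit open-orbit computation in this section is valid over any field of odd characteristic); that the scheme-theoretic stabilizer is the smooth group $Z_{G,H}\cong\GL_1$, so the orbit map $B\times^{Z_{G,H}}H\to\Omega$ is an isomorphism over $\CO_F$; and that $H^1(\CO_F,Z_{G,H})$ is trivial, so the resulting $\CO_F$-point of the quotient lifts to $B(\CO_F)\times H(\CO_F)$. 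Granting these, your final adjustment by $Z_{G,H}(\CO_F)$ to force $b\equiv I \bmod \varpi$ is correct and does place $b$ in $T(\CO_F)N(\varpi\CO_F)$. This approach buys uniformity (it would dispatch the analogous statement for the other reductive models with no new matrix computation), at the cost of the integral-model verifications above; the explicit route, once the sign is corrected, stays entirely within the elementary computations already carried out in this section.
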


\subsection{The computation}\label{sec:GSp-computation}
Let $\alpha_1=\varepsilon_1-\varepsilon_2,\alpha_2=\varepsilon_2-\varepsilon_3,\alpha_3=2\varepsilon_3$ be the simple roots of $\GSp_6$ and $\alpha_1'=\varepsilon_1'-\varepsilon_2',\alpha_2'=2\varepsilon_2'$ be the simple roots of $\GSp_4$. We want to compute the virtual weighted colors associated to these simple roots. Set
$$u_{-\alpha_1}(x)=\begin{pmatrix}1&0&0&0&0&0\\x&1&0&0&0&0\\0&0&1&0&0&0\\0&0&0&1&0&0 \\0&0&0&0&1&0\\0&0&0&0&-x&1  \end{pmatrix},\;u_{-\alpha_2}(x)=\begin{pmatrix}1&0&0&0&0&0\\0&1&0&0&0&0\\0&x&1&0&0&0\\0&0&0&1&0&0 \\0&0&0&-x&1&0\\0&0&0&0&0&1  \end{pmatrix},$$
$$u_{-\alpha_3}(x)=\begin{pmatrix}1&0&0&0&0&0\\0&1&0&0&0&0\\0&0&1&0&0&0\\0&0&x&1&0&0 \\0&0&0&0&1&0\\0&0&0&0&0&1  \end{pmatrix}.$$

We also let $\Theta$ the weights of the 32-dimensional representation $\Spin_7\otimes \Spin_5$ of $\GSpin_7(\BC)\times \GSpin_5(\BC)$. We can write it as
$$\Theta=\{\frac{\pm e_1 \pm e_2 \pm e_3}{2}\}+ \{\frac{\pm e_1' \pm e_2'}{2}\}.$$

For $\alpha_1$, we have
\begin{equation} 
(u_{-\alpha_1}(x)\eta,I_4)=(b,h^{-1})\cdot (\eta,I_4)\cdot (g,h),\; (b,h^{-1})\in B(F), (g,h)\in H(F)
\end{equation}
where
$$g=\begin{pmatrix}1&0&0&0&0&0\\0&\frac{1}{1+x}&\frac{-x}{1+x}&0&\frac{x}{1+x}&0\\0&0&1&0&0&0\\0&0&0&\frac{1}{1+x}&\frac{x}{1+x}&0 \\0&0&0&0&1&0\\\frac{x}{1+x}&0&0&0&0&\frac{1}{1+x}  \end{pmatrix},h=\begin{pmatrix}\frac{1}{1+x}&\frac{-x}{1+x}&0&\frac{x}{1+x}\\0&1&0&0\\0&0&\frac{1}{1+x}&\frac{x}{1+x}\\0&0&0&1 \end{pmatrix},$$
$$ b=\begin{pmatrix}1&0&0&0&0&0\\0&x+1&0&0&-x&0\\0&0&1&0&0&0\\0&0&0&x+1&0&0 \\0&0&0&0&1&0\\0&0&0&0&0&x+1  \end{pmatrix}.$$
This implies that (recall that $\beta_{\alpha_1}^{\vee}$ is defined by the equation \eqref{main identity}, also note that the representation has trivial central character)
$$\beta_{\alpha_1}^{\vee}=\frac{e_1-e_2+e_3}{2}+ \frac{-e_1'+e_2'}{2},\;\alpha_{1}^{\vee}-\beta_{\alpha_1}^{\vee}=\frac{e_1-e_2-e_3}{2}+ \frac{e_1'-e_2'}{2}.$$

For $\alpha_2$, we have
have
\begin{equation} 
(u_{-\alpha_2}(x)\eta,I_4)=(b,h^{-1})\cdot (\eta,I_4)\cdot (g,h),\;(b,h^{-1})\in B(F), (g,h)\in H(F)
\end{equation}
where
$$g=\begin{pmatrix}\frac{1}{1-x}&0&0&0&0&\frac{-x}{1-x}\\0&1&0&0&0&0\\0&0&1&\frac{x}{1-x}&0&0\\0&0&0&\frac{1}{1-x}&0&0 \\0&0&0&0&\frac{1}{1-x}&0\\0&0&0&0&0&1  \end{pmatrix},h=\begin{pmatrix}1&0&0&0\\0&1&\frac{x}{1-x}&0\\0&0&\frac{1}{1-x}&0\\0&0&0&\frac{1}{1-x} \end{pmatrix}, $$
$$b=\begin{pmatrix}1-x&x&0&0&0&x\\0&1&0&0&0&0\\0&0&1-x&-x&0&0\\0&0&0&1&0&0 \\0&0&0&0&1-x&-x\\0&0&0&0&0&1  \end{pmatrix}.$$
This implies that 
$$\beta_{\alpha_2}^{\vee}=\frac{-e_1+e_2-e_3}{2}+ \frac{e_1'+e_2'}{2},\;\alpha_{2}^{\vee}-\beta_{\alpha_2}^{\vee}=\frac{e_1+e_2-e_3}{2}+ \frac{-e_1'-e_2'}{2}.$$

For $\alpha_3$, we have
\begin{equation} 
(u_{-\alpha_3}(x)\eta,I_4)=(g,h)\cdot (\eta,I_4)\cdot (g^{-1},h^{-1}),\;(g,h)\in B(F)\cap H(F)
\end{equation}
where $h=diag(1-x,1,1-x,1)$ and $g=diag(1,1-x,1,1-x,1,1-x)$. This implies that 
$$\beta_{\alpha_3}^{\vee}=\frac{e_1-e_2+e_3}{2}+ \frac{-e_1'+e_2'}{2},\; \alpha_{3}^{\vee}-\beta_{\alpha_3}^{\vee}=\frac{-e_1+e_2+e_3}{2}+ \frac{e_1'-e_2'}{2}.$$

For $\alpha_1'$, we can reduce to the root $\alpha_2$ (because the open orbit is represented by the element $(\eta,I_4)$) but we need to change $u_{-\alpha_2}(x)\eta$ to $\eta u_{-\alpha_2}(x)^{-1}=\eta u_{-\alpha_2}(-x)$. We have
\begin{equation}
(\eta u_{-\alpha_2}(-x),I_4)=(b,h^{-1})\cdot (\eta,I_4)\cdot (g,h),\;(b,h^{-1})\in B(F), (g,h)\in H(F)
\end{equation}
where
$$g=\begin{pmatrix}\frac{1}{1-x}&0&0&0&0&\frac{-x}{1-x}\\0&1&0&0&0&0\\0&0&\frac{1}{1-x}&\frac{-x}{1-x}&0&0\\0&0&0&1&0&0 \\0&0&0&0&\frac{1}{1-x}&0\\0&0&0&0&0&1  \end{pmatrix},h=\begin{pmatrix}1&0&0&0\\0&\frac{1}{1-x}&\frac{-x}{1-x}&0\\0&0&1&0\\0&0&0&\frac{1}{1-x} \end{pmatrix}, $$
$$b=\begin{pmatrix}1-x&x&0&0&0&x\\0&1&0&0&0&0\\0&0&1&x&0&0\\0&0&0&1-x&0&0 \\0&0&0&0&1-x&-x\\0&0&0&0&0&1  \end{pmatrix}.$$
This implies that 
$$\beta_{\alpha_1'}^{\vee}=\frac{-e_1+e_2+e_3}{2}+ \frac{e_1'-e_2'}{2},\;\alpha_{1}'^{\vee}-\beta_{\alpha_1'}^{\vee}=\frac{e_1-e_2-e_3}{2}+ \frac{e_1'-e_2'}{2}.$$

For $\alpha_2'$, we can reduce to the root $\alpha_3$ but we need to change $u_{-\alpha_3}(x)\eta$ to $\eta u_{-\alpha_3}(x)^{-1}=\eta u_{-\alpha_3}(-x)$. We have
\begin{equation}
(\eta u_{-\alpha_3}(-x),I_4)=(g,h)\cdot (\eta,I_4)\cdot (g^{-1},h^{-1}),\;(g,h)\in B(F)\cap H(F)
\end{equation}
where $h=diag(1+x,1,1+x,1)$ and $g=diag(1,1+x,1,1+x,1,1+x)$. This implies that 
$$\beta_{\alpha_2'}^{\vee}=\frac{e_1-e_2+e_3}{2}+ \frac{-e_1'+e_2'}{2},\; \alpha_{2}'^{\vee}-\beta_{\alpha_2'}^{\vee}=\frac{-e_1+e_2-e_3}{2}+ \frac{e_1'+e_2'}{2}.$$

\begin{rmk}\label{Luna diagram}
In this remark, we explain how to use the Luna diagram to compute the virtual colors. We recall the following Luna diagram of the model $(\GSp_6\times \GSp_4,(\GSp_4\times \GSp_2)^0)$ in Case (48) of \cite{BP}:
\begin{center}
\includegraphics[width=0.2\textwidth]{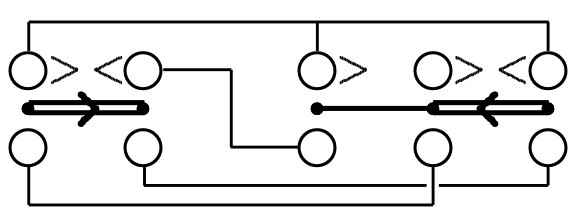} 
\end{center}
The middle row is the Dynkin diagram of $G$, from left to right we have the simple roots $\alpha_2',\alpha_1',\alpha_1,\alpha_2,\alpha_3$. For each simple root, there are two colors associated to it (represented by the two $\circ$ above and below the simple root). There is a line connecting two colors if and only if they are equal to each other. For $\alpha=\alpha_1,\alpha_3,\alpha_2'$ (resp. $\alpha=\alpha_2,\alpha_1'$), we use $\beta_{\alpha}^{\vee}$ to denote the color above (resp. below) $\alpha$ in the Luna diagram and we use $\alpha^\vee-\beta_{\alpha}^{\vee}$ to denote the color below (resp. above) $\alpha$ in the Luna diagram. The Luna diagram above implies that
$$\beta_{\alpha_1}^{\vee}=\beta_{\alpha_3}^{\vee}=\beta_{\alpha_2'}^{\vee},\;\beta_{\alpha_1'}^{\vee}=\alpha_{3}^{\vee}-\beta_{\alpha_3}^{\vee},\;\alpha_1'^\vee-\beta_{\alpha_1'}^{\vee}=\alpha_{1}^{\vee}-\beta_{\alpha_1}^{\vee},\;\alpha_{2}'^\vee-\beta_{\alpha_2'}^{\vee}=\beta_{\alpha_2}^{\vee}.$$
Combining the first three equations, we have
$$\alpha_1'^\vee=\beta_{\alpha_1'}^{\vee}+(\alpha_1'^\vee-\beta_{\alpha_1'}^{\vee})=(\alpha_{3}^{\vee}-\beta_{\alpha_3})+(\alpha_{1}^{\vee}-\beta_{\alpha_1}^{\vee})=\alpha_{1}^{\vee}+\alpha_{3}^{\vee}-2\beta_{\alpha_1}^{\vee}.$$
This implies that $$\beta_{\alpha_1}^{\vee}=\beta_{\alpha_3}^{\vee}=\beta_{\alpha_2'}^{\vee}=\frac{\alpha_{1}^{\vee}+\alpha_{3}^{\vee}-\alpha_1'^\vee}{2}=\frac{e_1-e_2+e_3}{2}+\frac{-e_1'+e_2'}{2}.$$ 
Combining with the last three equations, we have
$$\beta_{\alpha_1'}^{\vee}=\alpha_{3}^{\vee}-\beta_{\alpha_3}^{\vee}=\frac{-e_1+e_2+e_3}{2}+\frac{e_1'-e_2'}{2},$$
$$\alpha_1'^\vee-\beta_{\alpha_1'}^{\vee}=\alpha_{1}^{\vee}-\beta_{\alpha_1}^{\vee}=\frac{e_1-e_2-e_3}{2}+\frac{e_1'-e_2'}{2},$$
$$\alpha_{2}'^\vee-\beta_{\alpha_2'}^{\vee}=\beta_{\alpha_2}^{\vee}=\frac{-e_1+e_2-e_3}{2}+\frac{e_1'+e_2'}{2},$$
$$\alpha_{2}^{\vee}-\beta_{\alpha_2}^{\vee}=\frac{e_1+e_2-e_3}{2}+\frac{-e_1'-e_2'}{2}.$$
This recovers the above computation of colors using matrix identities.
\end{rmk}

\begin{prop}\label{theta proposition}
$\Theta^+$ is consisting of the following 16 elements:
\begin{equation}\label{gsp6 first line}
\frac{e_1+e_2\pm e_3}{2}+ \frac{\pm e_1' \pm e_2'}{2},\;\frac{e_1-e_2+e_3}{2}+ \frac{e_1'\pm e_2'}{2},\;\frac{e_1-e_2+e_3}{2}+ \frac{-e_1'+ e_2'}{2}, 
\end{equation}
$$\frac{\pm( e_1-e_2-e_3) }{2}+ \frac{e_1'\pm e_2'}{2},\;\frac{-e_1+e_2-e_3}{2}+ \frac{e_1'+e_2'}{2}.$$
\end{prop}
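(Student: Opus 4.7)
My plan is to verify three things in turn: (a) the 16 listed elements satisfy the defining reflection property $\Theta^+ - w_\alpha \Theta^+=\{\beta_{\alpha}^{\vee},\;\alpha^{\vee}-\beta_{\alpha}^{\vee}\}$ for each simple root $\alpha$; (b) this is the \emph{smallest} such subset of $\Theta$; and (c) the identity \eqref{theta plus} holds. Ingredient (c) is the cheapest: by direct inspection of the list, no proposed element is the negative of another, so the 16 elements select exactly one representative from each of the 16 pairs $\{\gamma^\vee,-\gamma^\vee\}$ in $\Theta$. This forces $\Theta^-=\Theta\setminus\Theta^+=-\Theta^+$, and \eqref{theta plus} then follows from the substitution $e^{\gamma^\vee}(\theta^{-1})=e^{-\gamma^\vee}(\theta)$.

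For (a), the simple reflections act concretely: $w_{\alpha_1},w_{\alpha_2}$ transpose coordinates within $\{e_1,e_2,e_3\}$, $w_{\alpha_3}$ negates $e_3$, $w_{\alpha_1'}$ swaps $e_1',e_2'$, and $w_{\alpha_2'}$ negates $e_2'$. For each simple root, I would check that $\beta_\alpha^\vee$ and $\alpha^\vee-\beta_\alpha^\vee$ (as computed in Section \ref{sec:GSp-computation}) lie in the list, that their $w_\alpha$-images do not, and that the remaining 14 elements are permuted as a set by $w_\alpha$. For instance, for $\alpha_1$: the 8 elements of the first line are fixed pointwise since $e_1+e_2$ is symmetric; the two elements of the second line are mapped to two elements in the fourth line (with the ``$-$'' sign choice); the one element of the fifth line is mapped to the remaining element of the fourth line (with the ``$+$'' sign); and the elements $\beta_{\alpha_1}^\vee$ (line 3) and $\alpha_1^\vee-\beta_{\alpha_1}^\vee$ (line 4, $+$ sign with $-e_2'$) map to weights outside the list, as required. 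The other four simple roots are handled by identical bookkeeping.

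For (b), I would observe that any subset $\Theta'\subseteq \Theta$ satisfying the defining property must contain the 10 colors $\{\beta_\alpha^\vee, \alpha^\vee-\beta_\alpha^\vee\}$ for $\alpha$ ranging over the five simple roots; after the coincidences recorded in Remark \ref{Luna diagram}, these reduce to 5 distinct weights. Moreover, for each simple root $\alpha$, the set $\Theta'\setminus\{\beta_\alpha^\vee,\alpha^\vee-\beta_\alpha^\vee\}$ is $w_\alpha$-stable. Starting from the 5 distinct colors and iteratively enforcing this stability condition, one generates new elements one reflection at a time; the closure stabilizes exactly at the 16 listed weights, producing the smallest admissible $\Theta^+$.

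The main obstacle is purely combinatorial: one must track the action of five reflections on 32 weights and verify, carefully, that the reflection property holds for each simple root while also ensuring minimality. No conceptual difficulty arises—each case reduces to straightforward inspection—but the coincidences among the 10 colors and the interaction between the $W(C_3)$ and $W(C_2)$ factors of the Weyl group call for attentive accounting.
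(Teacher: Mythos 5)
Your proposal is correct, and parts (a) and (c) match what the paper does (the paper asserts the verification of the six defining conditions as ``clear'' and treats \eqref{theta plus} in one line afterwards; your explicit bookkeeping for $w_{\alpha_1}$ is accurate, and your observation that the 16 elements contain no pair $\{\gamma^\vee,-\gamma^\vee\}$, hence $\Theta^-=-\Theta^+$, is exactly the right way to get \eqref{theta plus}). Where you genuinely diverge is the minimality step (b). You argue constructively: every admissible set contains the 5 distinct colors, each $\Theta'\setminus\{\beta_\alpha^\vee,\alpha^\vee-\beta_\alpha^\vee\}=\Theta'\cap w_\alpha\Theta'$ is $w_\alpha$-stable, and the reflection-closure of the colors is exactly the 16 listed weights. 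The paper instead proves outright uniqueness: if $\Theta^+$ and $\Theta'^+$ both satisfy the conditions, so does $\Theta^+\cap\Theta'^+$, whence $\Theta^+-(\Theta^+\cap\Theta'^+)$ and $\Theta'^+-(\Theta^+\cap\Theta'^+)$ are $W$-invariant subsets of $\Theta$; since $W=W(C_3)\times W(C_2)$ acts transitively on the 32 weights of $\mathrm{Spin}_7\otimes\mathrm{Spin}_5$, the only $W$-invariant subsets are $\emptyset$ and $\Theta$, forcing $\Theta^+=\Theta'^+$. The paper's trick is shorter, avoids executing the closure computation, and yields the stronger conclusion that the admissible subset is unique (so ``smallest'' is automatic); your closure argument costs more bookkeeping but is constructive and explains how the 16 weights are generated from the colors. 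Both are complete in principle; if you keep your route, make sure you actually carry the closure to saturation rather than asserting it stabilizes at 16.
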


\begin{proof}
By the computations of the virtual weighted colors above, we know that $\Theta^+$ is the smallest subset of $\Theta$ satisfies the following 6 conditions:
\begin{enumerate}
\item $\{\frac{e_1+e_2-e_3}{2}+ \frac{-e_1'-e_2'}{2}\;\frac{e_1-e_2+e_3}{2}+ \frac{-e_1'+e_2'}{2},\;\frac{-e_1+e_2+e_3}{2}+ \frac{e_1'-e_2'}{2},\;\frac{e_1-e_2-e_3}{2}+ \frac{e_1'-e_2'}{2},\;\frac{-e_1+e_2-e_3}{2}+ \frac{e_1'+e_2'}{2}\}\subset \Theta^+$.

\item $\Theta^+-(\Theta^+\cap w_{\alpha_1}\Theta^+)=\{ \frac{e_1-e_2+e_3}{2}+ \frac{-e_1'+e_2'}{2},\frac{e_1-e_2-e_3}{2}+ \frac{e_1'-e_2'}{2}\}$.

\item $\Theta^+-(\Theta^+\cap w_{\alpha_2}\Theta^+)=\{\frac{e_1+e_2-e_3}{2}+ \frac{-e_1'-e_2'}{2},\;\frac{-e_1+e_2-e_3}{2}+ \frac{e_1'+e_2'}{2}\}$.

\item $\Theta^+-(\Theta^+\cap w_{\alpha_3}\Theta^+)=\{\frac{e_1-e_2+e_3}{2}+ \frac{-e_1'+e_2'}{2},\; \frac{-e_1+e_2+e_3}{2}+ \frac{e_1'-e_2'}{2}\}$.

\item $\Theta^+-(\Theta^+\cap w_{\alpha_1'}\Theta^+)=\{\frac{-e_1+e_2+e_3}{2}+ \frac{e_1'-e_2'}{2},\;\frac{e_1-e_2-e_3}{2}+ \frac{e_1'-e_2'}{2}\}$.

\item $\Theta^+-(\Theta^+\cap w_{\alpha_2'}\Theta^+)=\{\frac{e_1-e_2+e_3}{2}+ \frac{-e_1'+e_2'}{2},\;\frac{-e_1+e_2-e_3}{2}+ \frac{e_1'+e_2'}{2}\}$.
\end{enumerate}

It is clear that the set in the statement satisfies these conditions. So we just need to show that the set is the unique subset of $\Theta$ satisfying these 6 conditions. Let $\Theta'^+$ be another subset of $\Theta$ satisfies these 6 conditions. Then the set $\Theta^+\cap \Theta'^+$ also satisfies these 6 conditions. This implies that $\Theta^+-(\Theta^+\cap \Theta'^+)$ and $\Theta'^+-(\Theta^+\cap \Theta'^+)$ are $W$-invariant subsets of $\Theta$ (recall that $W$ is the Weyl group of $\hat{G}$). But the only $W$-invariant subsets of $\Theta$ are $\Theta$ and the empty set. Hence we must have $\Theta^+=\Theta'^+$. This proves the proposition.
\end{proof}

It is clear that $\Theta^+$ satisfies \eqref{theta plus}. So the last thing remains  to prove Lemma \ref{lem constant reductive} for the current case. 

\begin{lem}
With the notation above, we have 
$$\sum_{w\in W}c_{WS}(w\theta)=\frac{1}{\Delta_{H/Z_{G,H}}(1)}=\frac{1}{\zeta(2)^2\zeta(4)}=(1-q^{-2})^2(1-q^{-4}).$$
Recall that $c_{WS}(\theta)=\frac{\prod_{\gamma^\vee\in \Theta^+}1-q^{-\frac{1}{2}}e^{\gamma^\vee}}
{\prod_{\alpha\in \Phi^+}1-e^{\alpha^\vee}}(\theta)$.
\end{lem}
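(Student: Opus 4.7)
Since the sum is already known to be independent of $\theta$ by the first part of Lemma \ref{lem constant reductive} (which invokes \cite[Theorem 7.2.1]{Sa}), the plan is to evaluate the constant at the specialization $\theta = \delta_B^{1/2}$ as suggested in Remark \ref{rmk constant reductive}. Under this specialization one has $e^{\beta^\vee}(w\theta) = q^{\langle\rho, w^{-1}\beta^\vee\rangle}$ with $\rho = 3\varepsilon_1 + 2\varepsilon_2 + \varepsilon_3 + 2\varepsilon_1' + \varepsilon_2'$, so both numerator and denominator factors of $c_{WS}(w\theta)$ become explicit powers of $q$ and the computation reduces to tabulating pairings against $\rho$.

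First I will compute $\langle\rho, \gamma^\vee\rangle$ for the $16$ elements of $\Theta^+$ listed in Proposition \ref{theta proposition}; these take values $9/2, 7/2, 5/2, 3/2, 1/2$ with multiplicities $1, 2, 3, 5, 5$ respectively. Similarly, for the $13$ positive coroots, the pairings $\langle\rho, \alpha^\vee\rangle$ are $1,1,1,2,2,3,3,4,5$ on the $\GSp_6$ side and $1,1,2,3$ on the $\GSp_4$ side. For $w = w_\ell$ (which acts as $-1$ on each symplectic factor), the numerator of $c_{WS}(w_\ell\theta)$ becomes $(1-q^{-5})(1-q^{-4})^2(1-q^{-3})^3(1-q^{-2})^5(1-q^{-1})^5$ and the denominator becomes $(1-q^{-1})^5(1-q^{-2})^3(1-q^{-3})^3(1-q^{-4})(1-q^{-5})$, whose quotient is precisely $(1-q^{-2})^2(1-q^{-4}) = 1/\Delta_{H/Z_{G,H}}(1)$ as desired.

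The remaining task is to show $c_{WS}(w\delta_B^{1/2}) = 0$ for every $w \neq w_\ell$. The denominator is never zero since $\rho$ is regular. A numerator factor $1 - q^{-1/2 + \langle\rho, w^{-1}\gamma^\vee\rangle}$ vanishes exactly when some $w^{-1}\gamma^\vee$ (for $\gamma^\vee \in \Theta^+$) lies in the five-element set $S \subset \Theta$ of coweights with $\langle\rho,\cdot\rangle = 1/2$. A direct enumeration from Proposition \ref{theta proposition} shows $S \subset \Theta^+$, so vanishing is equivalent to $w$ mapping some element of $S$ into $\Theta^+$, i.e.\ $\langle w^{-1}\rho, \gamma_i\rangle > 0$ for some $\gamma_i \in S$. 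Only $w_\ell$ (where $w_\ell^{-1}\rho = -\rho$ gives uniform pairing $-1/2$) avoids this. To see this cleanly, note that the differences $\gamma_i - \gamma_j$ span a $\mathbb{Q}$-subspace of codimension one inside the ambient coweight space whose annihilator is $\mathbb{Q}\rho$; hence any $v \in W\rho$ with all pairings $\langle v, \gamma_i\rangle$ equal must be $\pm\rho$, and an easy refinement (or a finite case check using the block decomposition $W = W(C_3)\times W(C_2)$) handles the remaining possibility that some but not all pairings are negative.

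The main obstacle will be this final combinatorial verification that no Weyl element other than $w_\ell$ sends the five pairing-$1/2$ coweights entirely into $\Theta^-$; once this is settled, the computation of the $w_\ell$ term and the identification with $1/\Delta_{H/Z_{G,H}}(1) = (1-q^{-2})^2(1-q^{-4})$ is routine bookkeeping.
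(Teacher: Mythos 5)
Your proposal takes the same route as the paper's proof: specialize to $\theta=\delta_B^{1/2}$, compute the contribution of the longest Weyl element, and show all other terms vanish. The $w_\ell$ computation is correct (the multiset of pairings of $\rho$ against $\Theta^+$ is indeed $9/2$ once, $7/2$ twice, $5/2$ three times, and $3/2$, $1/2$ five times each, and the quotient collapses to $(1-q^{-2})^2(1-q^{-4})$), and your reformulation of the vanishing condition is also correct and is a cleaner bookkeeping than the paper's: all sixteen elements of $\Theta^+$ pair positively with $\rho$, so $\Theta^+$ is exactly the $\rho$-positive half of $\Theta$; the five pairing-$1/2$ weights form precisely the set of virtual colors, lie in $\Theta^+$, and $c_{WS}(w\delta_B^{1/2})\neq 0$ iff $\langle w^{-1}\rho,\gamma\rangle<0$ for every $\gamma\in S$. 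The one point to repair is the final step. The observation that the differences $\gamma_i-\gamma_j$ span $\rho^{\perp}$ only controls the $W$-translates of $\rho$ that pair \emph{equally} with all of $S$, whereas the condition to be analyzed is that all five pairings are \emph{negative}; these are genuinely different (there are signed permutations of $\rho$ for which four of the five pairings are negative), so that lemma does no work and the whole burden falls on your deferred ``easy refinement.'' Fortunately the refinement is easy and should simply be written out: the pairwise sums of elements of $S$ include $e_3$, $e_2'$, $e_1'-e_2'$, $e_2-e_3$ and $e_1-e_3-e_2'$, so writing $w^{-1}\rho=(a_1,a_2,a_3;b_1,b_2)$ the five inequalities force $a_3<0$, $b_2<0$, $b_1<b_2$, $a_2<a_3$ and $a_1<a_3+b_2$, which pin down $(b_1,b_2)=(-2,-1)$ and $(a_1,a_2,a_3)=(-3,-2,-1)$, i.e.\ $w^{-1}\rho=-\rho$ and hence $w=w_\ell$ by regularity of $\rho$. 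With that sentence added, your argument is complete and slightly slicker than the paper's, which instead constrains $s$ and $s'$ directly by feeding successive subsets of $\Theta^+$ (first the eight weights $\frac{e_1+e_2\pm e_3}{2}+\frac{\pm e_1'\pm e_2'}{2}$, then the remaining ones) into the nonvanishing condition and eliminating cases one by one.
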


\begin{proof}
Since the summation is independent of $\theta$ (see Lemma \ref{lem constant reductive}), we set $\theta=\delta_{B}^{1/2}$. The lemma follows from the following two claims:
\begin{enumerate}
\item $c_{WS}(w\theta)$ is zero unless $w$ is the longest Weyl element.
\item If $w$ is the longest Weyl element, we have $c_{WS}(w\theta)=(1-q^{-2})^2(1-q^{-4})$.
\end{enumerate}
The second claim is easy to prove so we will focus on the first one. Let $w=(s,s')\in W$ so that $c_{WS}(w\theta)$ is nonzero. Here $s$ is a Weyl element of $\GSp_6$ and $s'$ is a Weyl element of $\GSp_4$. By abuse of language, we can also view $w=(s,s')$ as a Weyl element of the dual group $\GSpin_7(\BC)\times \GSpin_5(\BC)$. Then
\begin{equation}\label{GSp6 1}
e^{\gamma^\vee}(w\theta)=e^{w^{-1}\gamma^\vee}(\theta)\neq q^{1/2},\;\gamma^{\vee}\in \Theta^+.
\end{equation}

The values of the modular character $\delta_{B_6}^{1/2}$ on the weights 
$$\frac{e_1+e_2+e_3}{2},\frac{e_1+e_2-e_3}{2},\frac{e_1-e_2+e_3}{2},\frac{-e_1+e_2+e_3}{2},\frac{e_1-e_2-e_3}{2},$$
$$\frac{-e_1+e_2-e_3}{2},\frac{-e_1-e_2+e_3}{2},\frac{-e_1-e_2-e_3}{2}$$
are equal to
$$q^3,q^2,q,1,1,q^{-1},q^{-2},q^{-3}.$$
The values of the modular character $\delta_{B_4}^{1/2}$ on the weights 
$$\frac{e_1'+e_2'}{2},\frac{e_1'-e_2'}{2},\frac{-e_1'+e_2'}{2},\frac{-e_1'-e_2'}{2}$$
are equal to
$$q^{3/2},q^{1/2},q^{-1/2},q^{-3/2}.$$

Apply \eqref{GSp6 1} to the first eight weights in $\Theta^+$, we know that
$$s^{-1}(\frac{e_1+e_2\pm e_3}{2})\in\{\frac{e_1+e_2+e_3}{3},\frac{-e_1-e_2\pm e_3}{2}\}.$$
Since $s$ is a Weyl element, we must have
$s^{-1}(\frac{e_1+e_2\pm e_3}{2})=\{\frac{-e_1-e_2\pm e_3}{2}\}.$
This implies that
$$\{s^{-1}(e_1),s^{-1}(e_2)\}=\{-e_1,-e_2\},\;s(e_3)\in \{\pm e_3\}.$$

If $s^{-1}(e_1)=-e_2,s^{-1}(e_2)=-e_1$ and $s^{-1}(e_3)=e_3$, then $s^{-1}$ fixes $\frac{e_1-e_2+e_3}{2}$ and $\frac{\pm(e_1-e_2-e_3)}{2}$. Combining with \eqref{GSp6 1} and the fact that $\Theta^+$ contains the following 7 elements
$$\frac{e_1-e_2+e_3}{2}+ \frac{e_1'\pm e_2'}{2},\;\frac{e_1-e_2+e_3}{2}+ \frac{-e_1'+ e_2'}{2},\;\frac{\pm( e_1-e_2-e_3) }{2}+ \frac{e_1'\pm e_2'}{2},$$
we know that
$$s'^{-1}\{\frac{e_1'\pm e_2'}{2},\frac{-e_1'+e_2'}{2}\}= \{\frac{e_1'\pm e_2'}{2},\frac{-e_1'-e_2'}{2}\},$$
$$s'^{-1}(\frac{e_1'\pm e_2'}{2})\in \{\frac{\pm e_1'+ e_2'}{2},\frac{-e_1'-e_2'}{2}\}.$$ 
It is easy to see that such an $s'$ does not exist, so we get a contradiction. Similarly we can also get a contradiction when $s^{-1}(e_1)=-e_2,s^{-1}(e_2)=-e_1,s^{-1}(e_3)=-e_3$ or $s^{-1}(e_1)=-e_1,s^{-1}(e_2)=-e_2,s^{-1}(e_3)=e_3$.

Now the only case left is when $s^{-1}(e_1)=-e_1,s^{-1}(e_2)=-e_2$ and $s^{-1}(e_3)=-e_3$. In this case, we have $s^{-1}(\alpha^\vee)=-\alpha^{\vee}$ for all $\alpha^{\vee}\in \{\frac{\pm e_1\pm e_2\pm e_3}{2}\}$.
By the same argument as in the previous cases, we know that
$$s'^{-1}\{\frac{e_1'\pm e_2'}{2},\frac{-e_1'+e_2'}{2}\}= \{\frac{-e_1'\pm e_2'}{2},\frac{e_1'-e_2'}{2}\},$$
$$s'^{-1}(\frac{e_1'\pm e_2'}{2})\in \{\frac{\pm e_1'+ e_2'}{2},\frac{-e_1'-e_2'}{2}\}.$$ 
This implies that $s'^{-1}(e_1')=-e_1'$ and $s'^{-1}(e_2')=-e_2'$. In particular $w=(s,s')$ is the longest Weyl element. This proves the lemma.
\end{proof}

To sum up, we have proved that the local relative character is equal to 
$$\frac{\Delta_{G}(1)}{\Delta_{H/Z_{G,H}}(1)}\cdot \frac{L(1/2,\pi,\rho_X)}{L(1,\pi,\Ad)}=\zeta(1)^2\zeta(4)\zeta(6)\frac{L(1/2,\pi,\Spin_7\otimes \Spin_5)}{L(1,\pi,\Ad)}$$
where $\pi=\pi_1\otimes \pi_2$ is an unramified representation of $\GSp_6(F)\times \GSp_4(F)$.

\section{The model $(\GL_4\times \GL_2,\GL_2\times \GL_2)$}\label{sec:GL4}
In this section, we compute the local relative character for the model $(\GL_4\times \GL_2,\GL_2\times \GL_2)$. We again closely follow four steps in Section \ref{red summary}. 

Let $G=\GL_4\times \GL_2$ and $H=\GL_2\times \GL_2$ embed into $G$ via the map $(a,b)\mapsto (diag(a,b),b)$. Similarly, we can also define the quaternion version of this model. Let $D/F$ be a quaternion algebra, and let $G_D(F)=\GL_2(D)\times \GL_1(D)$ and $H_D(F)=\GL_1(D)\times \GL_1(D)$ embed into $G_D$ via the map $(a,b)\mapsto (diag(a,b),b)$.

We let $\Theta=\Theta_1\cup \Theta_2\cup \Theta_3$ with $\Theta_1$ being the weights of the representation $\wedge^2\otimes {\rm std}$ of $\GL_4(\BC)\times \GL_2(\BC)$, $\Theta_2$ being the weights of the standard representation of $\GL_4(\BC)$ and $\Theta_3$ being the weights of the dual of the standard representation of $\GL_4(\BC)$. We can write $\Theta_i$ as
$$\Theta_1=\{e_i+e_j+e_k'\mid 1\leq i<j\leq 4,1\leq k\leq 2\},$$
$$\Theta_2=\{e_i\mid 1\leq i\leq 4\},\;\Theta_3=\{-e_i\mid 1\leq i\leq 4\}.$$

Set $\eta=\begin{pmatrix}1&0&0&0\\0&1&0&0\\0&-1&1&0\\-1&1&-1&1\end{pmatrix}$ and $\eta^{-1}=\begin{pmatrix}1&0&0&0\\0&1&0&0\\0&1&1&0\\1&0&1&1\end{pmatrix}$. 
The proofs of the following two lemmas are similar to the $(\GSp_6\times \GSp_4,(\GSp_4\times \GSp_2)^0)$ case, and we will skip them here.

\begin{lem}
The double cosets $B(F)\back G(F)/H(F)$ contain a unique open orbit $B(F)(\eta, I_2) H(F)$. Here $B(F)=B_4(F)\times B_2(F)$ is the upper triangular Borel subgroup.
\end{lem}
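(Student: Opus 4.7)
The plan is to mimic the strategy used for the $(\GSp_6\times \GSp_4,(\GSp_4\times \GSp_2)^0)$ case treated just above. First, I would reduce the double coset problem to a single general linear group by defining
$$H'(F) := \{(h_1, h_2) \in H(F) : h_2 \in B_2(F)\},$$
viewed as a subgroup of $\GL_4(F)$ via $(h_1, h_2)\mapsto \diag(h_1, h_2)$. Given $(g_1, g_2) \in G(F)$, the $B_2(F)$-action on the left of $g_2$ combined with the second $\GL_2(F)$-factor of $H(F)$ acting on the right allows one to normalize $g_2 = I_2$. The resulting stabilizer forces $h_2 = b_2 \in B_2(F)$, and the residual action on $g_1$ is exactly the $B_4(F) \times H'(F)$ biaction, producing a bijection
$$B(F)\back G(F)/H(F) \;\longleftrightarrow\; B_4(F)\back \GL_4(F)/H'(F),$$
under which $B(F)(\eta, I_2) H(F)$ corresponds to $B_4(F)\eta H'(F)$.

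Next, I would interpret this quotient as the $H'(F)$-action on the flag variety $X(F) = \GL_4(F)/B_4(F)$, parameterized by triples of linearly independent vectors $(v_1, v_2, v_3)$ giving the flag $\mathrm{span}\{v_1\}\subset \mathrm{span}\{v_1, v_2\}\subset \mathrm{span}\{v_1, v_2, v_3\}$. The subgroup $H'(F)$ preserves the decomposition $F^4 = W_1 \oplus W_2$ with $W_1 = \mathrm{span}\{e_1, e_2\}$ and $W_2 = \mathrm{span}\{e_3, e_4\}$, together with the line $\mathrm{span}\{e_3\} \subset W_2$ stabilized by the upper-triangular factor $B_2(F)$. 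The Zariski open subset of $X(F)$ would be cut out by the generic-position conditions: $v_1$ has nonzero projections to both $W_1$ and $W_2$ and its $W_2$-projection is not contained in $\mathrm{span}\{e_3\}$, while $v_2$ and $v_3$ satisfy analogous genericity conditions relative to the $v_j$ with $j < i$ and the invariant subspaces above. A direct check confirms that the columns $v_i = \eta e_i$ satisfy all these conditions, so $\eta B_4(F)$ lies in the open subset.

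For uniqueness, the standard normalization procedure applies: use the $\GL_2$-factor on $W_1$ to fix the $W_1$-component of $v_1$, then use $B_2$ on $W_2$ together with overall scaling to fix the $W_2$-component, and iterate for $v_2$ and $v_3$. A dimension count confirms that the predicted one-dimensional residual torus $\{\delta I_4 : \delta \in F^\times\}$, matching $\dim B + \dim H - \dim G = 13+8-20 = 1$, is exactly the stabilizer of the resulting normal form. Uniqueness of the open $(B\times H)$-orbit alternatively follows immediately from the irreducibility of $G$, since any two open orbits would be disjoint dense open subsets. The only potentially delicate step is carrying out the explicit sequence of column operations in $H'(F)$, but these calculations are entirely parallel to the matrix identities worked out in the $\GSp$ case and present no essential difficulty.
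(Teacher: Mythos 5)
Your main argument is correct and is essentially the paper's own proof: the paper explicitly skips this lemma, saying it is "similar to the $(\GSp_6\times \GSp_4,(\GSp_4\times \GSp_2)^0)$ case," and your reduction to the $B_4(F)\times H'(F)$-biaction on $\GL_4(F)$ with $H'(F)=\{(h_1,h_2)\in H(F): h_2\in B_2(F)\}$, followed by an explicit normalization of a generic flag relative to the $H'$-invariant structures $W_1\oplus W_2\supset \Span\{e_3\}$, is exactly the intended analogue. Two caveats. First, a minor convention slip: under the identification $B_4gH'\leftrightarrow H'\cdot(g^{-1}B_4)$ used in the $\GSp$ case (where the paper computes $\eta^{-1}\cdot(w,w_1,w_2)$), the flag to test is spanned by the columns of $\eta^{-1}$ (namely $e_1+e_4$, $e_2+e_3$, $e_3+e_4$), not $\eta e_i$; both happen to be generic here, but you should check the former. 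Second, and more substantively, your "alternative" uniqueness argument via irreducibility of $G$ only proves uniqueness of the open orbit over $\bar{F}$; it does not show that the Zariski-open orbit is a single $B(F)\times H(F)$-orbit on $F$-points, which is the actual content of the lemma (and which genuinely fails for, e.g., $(\Sp_6\times\Sp_4,\Sp_4\times\Sp_2)$, as the paper's Remark 1.2 points out). The rational uniqueness is supplied only by your explicit normalization, which must be (and here can be) carried out using divisions by nonzero elements of $F$ alone, with no root extractions; so that step cannot be bypassed.
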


\begin{lem}
For all $n\in \bar{N}_4(\varpi\CO_F)$ and $n'\in \bar{N}_{2}(\varpi\CO_F)$, we have
$$(n,n')(\eta,I_2)\in T(\CO_F)N(\varpi\CO_F)(\eta,I_2)H(\CO_F)$$
with $B=TN$.
\end{lem}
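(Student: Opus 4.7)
The plan is to mirror the argument used for the $(\GSp_6\times \GSp_4,(\GSp_4\times \GSp_2)^0)$ case in the previous section, which proceeds in three stages. The setting here is in fact simpler, since the flag varieties involved are much smaller.

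First, I would establish the intermediate statement that
$$(n,n')(\eta, I_2) \in B(F)\,(\eta, I_2)\, N(\varpi\CO_F)\, T(\CO_F)\, H(\CO_F).$$
This is the analogue of the first auxiliary lemma in the $\GSp_6 \times \GSp_4$ treatment, and it follows from the orbit analysis underlying the previous lemma: the point $(\eta, I_2) H(F)$ sits in the unique open $B(F)$-orbit on $G(F)/H(F)$, and perturbing by $(n,n') \in \bar N_4(\varpi\CO_F) \times \bar N_2(\varpi\CO_F)$ moves us within an $H(\CO_F)$-neighborhood whose $B$-coordinate is close to the identity, so the perturbation can be absorbed into $N(\varpi\CO_F) T(\CO_F) H(\CO_F)$ on the right. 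In this case one can simply carry out the explicit $4\times 4$ (resp.\ $2\times 2$) Gauss-type decomposition, since the row operations needed to clear the sub-diagonal entries of $n\eta$ using columns of $\eta H(F)$ involve only entries in $\varpi\CO_F$.

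Second, I would upgrade the $B(F)$-factor on the left to $T(\CO_F) N(\CO_F)$. This is purely a reduction modulo $\varpi$ statement: since $n, n', \eta \in K = G(\CO_F)$, the whole product lies in $K$, and Iwahori-type integrality then forces the $B(F)$-factor extracted above to lie in $T(\CO_F) N(\CO_F)$. This upgrades the statement to
$$(n,n')(\eta, I_2) \in T(\CO_F)\, N(\CO_F)\, (\eta, I_2)\, N(\varpi\CO_F)\, T(\CO_F)\, H(\CO_F).$$

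The main remaining step — and the genuine content of the lemma — is to show that the extracted $N(\CO_F)$-factor actually lies in $N(\varpi\CO_F)$, which is the analogue of the final step in the $\GSp_6\times \GSp_4$ argument. Following that model, I would identify subsets $V_i \subset F^4 \oplus F^2$ of the form $\{a_i \in \CO_F^\times,\ a_j \in \varpi\CO_F \text{ for } j\neq i\}$ (taken within the appropriate standard-basis components), and verify that $\eta$, $\eta^{-1}$, $T(\CO_F)$, $\bar N_4(\varpi\CO_F) \bar N_2(\varpi\CO_F)$, $N(\varpi\CO_F)$, and the right-hand $H(\CO_F)$-factor all preserve each $V_i$. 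Since the product $(n,n')(\eta, I_2)$ preserves these sets, the $N(\CO_F)$-factor produced by the decomposition must also preserve them, which forces it into $N(\varpi\CO_F)$. The main obstacle is just the bookkeeping of choosing the correct $V_i$ and checking preservation for the $H(\CO_F)$-action under the embedding $(a,b) \mapsto (\mathrm{diag}(a,b), b)$; this is a finite matrix verification and no conceptual difficulty should arise beyond that.
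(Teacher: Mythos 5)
Your three-stage outline is indeed the approach the paper intends: the paper gives no proof here, deferring to the parallel argument for $(\GSp_6\times \GSp_4,(\GSp_4\times \GSp_2)^0)$, and your stages match that argument's two auxiliary lemmas and final proposition. The lemma is true and the strategy is viable, but the last step fails as you have written it. The test sets $V_i=\{a_i\in \CO_F^{\times},\ a_j\in\varpi\CO_F \text{ for } j\neq i\}$ work in the $\GSp_6$ case only because there the strictly sub-diagonal support of $\eta$ is confined to the block (rows $4$--$6$)$\times$(columns $1$--$3$), so $\eta$ preserves $V_4,V_5,V_6$. For the present $\eta=\left(\begin{smallmatrix}1&0&0&0\\0&1&0&0\\0&-1&1&0\\-1&1&-1&1\end{smallmatrix}\right)$ the entries $\eta_{32}$ and $\eta_{43}$ are units: for $v\in V_2$ one gets $(\eta v)_3=-v_2+v_3\in\CO_F^{\times}$, and for $v\in V_3$ one gets $(\eta v)_4\in\CO_F^{\times}$, so $\eta$ preserves $V_4$ but not $V_2$ or $V_3$. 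Moreover an arbitrary element of $H(\CO_F)$, i.e.\ $\diag(a,b)$ with $a,b\in\GL_2(\CO_F)$, preserves none of the $V_i$ (a unit entry $a_{12}$ already destroys $V_2$), so ``checking preservation for the $H(\CO_F)$-factor'' cannot succeed for the full group; in the $\GSp$ argument the right-hand factor is kept in the intermediate triangular form $N(\varpi\CO_F)T(\CO_F)\bar N'(\varpi\CO_F)$ precisely for this reason.

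The gap is repairable in two ways. Either keep the right-hand factor in its triangular form coming out of the open-orbit computation and replace the naive $V_i$ by test sets adapted to this $\eta$ (for instance suitable $\eta$-stable lattices); or, more simply, note that your stage-one remark already contains a complete proof: the explicit matrix identities for $x_{-\alpha_i}(x)\eta$ in this section exhibit the $B$- and $H$-components of the decomposition as rational functions with coefficients in $\CO_F$ and denominators $1\pm x$, and carrying out the same explicit elimination for a general $(n,n')\in \bar N_4(\varpi\CO_F)\times\bar N_2(\varpi\CO_F)$ shows all denominators are units and the resulting $B$-factor is congruent to the identity modulo $\varpi$, which gives the assertion directly with no $V_i$ argument at all.
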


Then we compute the colors for this case. 
Let $\alpha_1=\varepsilon_1-\varepsilon_2,\alpha_2=\varepsilon_2-\varepsilon_3,\alpha_3=\varepsilon_3-\varepsilon_4$ be the simple roots of $\GL_4$ and $\alpha'=\varepsilon_1'-\varepsilon_2'$ be the simple root of $\GL_2$. Set
$$u_{-\alpha_1}(x)=\begin{pmatrix}1&0&0&0\\x&1&0&0\\0&0&1&0\\0&0&0&1\end{pmatrix},\;u_{-\alpha_2}(x)=\begin{pmatrix}1&0&0&0\\0&1&0&0\\0&x&1&0\\0&0&0&1\end{pmatrix},$$
$$u_{-\alpha_3}(x)=\begin{pmatrix}1&0&0&0\\0&1&0&0\\0&0&1&0\\0&0&x&1\end{pmatrix}.$$

We first study $\alpha_1$, we have
\begin{equation}\label{alpha 1} 
(u_{-\alpha_1}(x)\eta,I_2)=(b,h^{-1})\cdot (\eta,I_2)\cdot (g,h),\;(b,h^{-1})\in B(F), (g,h)\in H(F)
\end{equation}
where
$$g=\begin{pmatrix}1&0&0&0\\\frac{x}{x+1}&\frac{1}{x+1}&0&0\\0&0&\frac{1}{x+1}&\frac{x}{x+1}\\0&0&0&1\end{pmatrix},h=\begin{pmatrix}\frac{1}{x+1}&\frac{x}{x+1}\\0&1 \end{pmatrix},$$ $$b=\begin{pmatrix}1&0&0&0\\0&x+1&0&0\\0&0&1&-x\\0&0&0&x+1\end{pmatrix}.$$
This implies that $\beta_{\alpha_1}^{\vee}=e_1+e_3+e_2'$ and $\alpha_{1}^{\vee}-\beta_{\alpha_1}^{\vee}=e_1+e_4+e_1'$ (note that the representation has trivial central character).

For $\alpha_2$, we have
\begin{equation}\label{alpha 2} (u_{-\alpha_2}(x)\eta,I_2)=(b,h^{-1})\cdot (\eta,I_2)\cdot (g,h),\;(b,h^{-1})\in B(F), (g,h)\in H(F)
\end{equation}
where
$$g=\begin{pmatrix}\frac{1}{1-x}&\frac{-x}{1-x}&0&0\\0&1&0&0\\0&0&\frac{1}{1-x}&0\\0&0&0&\frac{1}{1-x}\end{pmatrix},h=\begin{pmatrix}\frac{1}{1-x}&0\\0&\frac{1}{1-x} \end{pmatrix},$$
$$ b=\begin{pmatrix}1-x&x&0&0\\0&1&0&0\\0&0&1-x&0\\0&0&0&1-x\end{pmatrix}.
$$
This implies that $\beta_{\alpha_2}^{\vee}=e_2$ and $\alpha_{2}^{\vee}-\beta_{\alpha_2}^{\vee}=-e_3$.

For $\alpha_3$, we have
\begin{equation}\label{alpha 3} (u_{-\alpha_3}(x)\eta,I_2)=(b,h^{-1})\cdot (\eta,I_2)\cdot (g,h),\;(b,h^{-1})\in B(F), (g,h)\in H(F)
\end{equation}
where
$$g=\begin{pmatrix}\frac{1}{1-x}&0&0&0\\0&1&0&0\\0&0&1&0\\0&0&0&\frac{1}{1-x}\end{pmatrix},h=\begin{pmatrix}1&0\\0&\frac{1}{1-x} \end{pmatrix}, b=\begin{pmatrix}1-x&0&0&0\\0&1&0&0\\0&0&1&0\\0&0&0&1-x\end{pmatrix}.
$$
This implies that $\beta_{\alpha_3}^{\vee}=e_2+e_3+e_1'$ and $\alpha_{3}^{\vee}-\beta_{\alpha_3}^{\vee}=e_1+e_3+e_2'$.

For the root $\alpha'$ on $\GL_2$, we can reduce to the root $\alpha_3$ on $\GL_4$ but we need to  change $u_{-\alpha_3}(x)\eta$ to $\eta u_{-\alpha_3}(-x)$. We have
\begin{equation}\label{beta} (\eta u_{-\alpha_3}(-x),I_2)=(b,h^{-1})\cdot (\eta,I_2)\cdot (g,h),\;(b,h^{-1})\in B(F), (g,h)\in H(F)
\end{equation}
where
$$g=\begin{pmatrix}\frac{1}{1+x}&\frac{x}{1+x}&0&0\\0&1&0&0\\0&0&1&0\\0&0&0&\frac{1}{1+x}\end{pmatrix},h=\begin{pmatrix}1&0\\0&\frac{1}{1+x} \end{pmatrix}, b=\begin{pmatrix}1+x&-x&0&0\\0&1&0&0\\0&0&1&0\\0&0&0&1+x\end{pmatrix}.
$$
This implies that $\beta_{\alpha'}^{\vee}=e_2+e_3+e_1'$ and $\alpha'^{\vee}-\beta_{\alpha'}^{\vee}=e_1+e_4+e_1'$.

\begin{prop}\label{theta for GL(4)xGL(2)}
$\Theta^+$ is consisting of the following 10 elements:
$$e_1+e_i+e_j',2\leq i\leq 3,1\leq j\leq 2;\;e_1+e_4+e_1',e_2+e_3+e_1';\;e_1,e_2,-e_3,-e_4.$$
\end{prop}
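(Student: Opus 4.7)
The plan is to follow the same strategy used in the proof of Proposition 3.2.2 for the $(\GSp_6\times \GSp_4,(\GSp_4\times \GSp_2)^0)$ model. From the color computations \eqref{alpha 1}, \eqref{alpha 2}, \eqref{alpha 3}, and \eqref{beta}, I would first extract the four conditions that characterize $\Theta^+$ as the smallest subset of $\Theta$ satisfying
\begin{itemize}
\item $\Theta^+-(\Theta^+\cap w_{\alpha_1}\Theta^+)=\{e_1+e_3+e_2',\,e_1+e_4+e_1'\}$,
\item $\Theta^+-(\Theta^+\cap w_{\alpha_2}\Theta^+)=\{e_2,\,-e_3\}$,
\item $\Theta^+-(\Theta^+\cap w_{\alpha_3}\Theta^+)=\{e_2+e_3+e_1',\,e_1+e_3+e_2'\}$,
\item $\Theta^+-(\Theta^+\cap w_{\alpha'}\Theta^+)=\{e_2+e_3+e_1',\,e_1+e_4+e_1'\}$,
\end{itemize}
where $w_{\alpha_i}$ are the adjacent transpositions of the $e_j$'s and $w_{\alpha'}$ swaps $e_1'\leftrightarrow e_2'$.

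Next I would verify directly that the proposed 10 weights satisfy all four conditions. This is a straightforward case check: apply each simple reflection to the listed set and confirm that exactly the two prescribed weights fail to lie in the image, while the other eight weights are permuted within the set. For example, $w_{\alpha_2}$ fixes the elements involving $e_1,e_4$ and interchanges the pairs $\{e_1+e_2+e_j',\,e_1+e_3+e_j'\}$ for $j=1,2$, leaving only $e_2$ and $-e_3$ unmatched (since their images $e_3$ and $-e_2$ are not in the list). The analogous check for $w_{\alpha_1}, w_{\alpha_3}, w_{\alpha'}$ is similarly routine.

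For uniqueness I would reuse the intersection argument from Proposition 3.2.2. If $\Theta'^+$ is any other subset satisfying the four conditions, then so does $\Theta^+\cap \Theta'^+$, and one shows as before that $\Theta^+-(\Theta^+\cap \Theta'^+)$ is stable under every simple reflection, hence $W$-invariant. The new combinatorial point, compared with the symplectic case, is that $\Theta=\Theta_1\sqcup\Theta_2\sqcup\Theta_3$ breaks into three $W=S_4\times S_2$ orbits of sizes $12,4,4$, whereas $\Theta^+$ intersects them in $6,2,2$ elements respectively. Since no nonempty union of these orbits fits inside $\Theta^+$, one concludes $\Theta^+-(\Theta^+\cap \Theta'^+)=\emptyset$, and by the symmetric argument $\Theta^+=\Theta'^+$.

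The main (minor) obstacle is just the bookkeeping in the verification step, since there are $10\times 4=40$ reflection actions to track; conceptually the proof is parallel to the $\GSp_6\times \GSp_4$ case, with the only genuine novelty being the three-orbit combinatorial count needed at the uniqueness stage.
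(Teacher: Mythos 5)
Your proposal is correct and follows the same route as the paper: extract the four conditions $\Theta^{+}-w_{\alpha}\Theta^{+}=\{\beta_{\alpha}^{\vee},\alpha^{\vee}-\beta_{\alpha}^{\vee}\}$ from the color computations, check the listed $10$ weights satisfy them, and run the intersection argument to show any other solution contains this set. Your extra observation at the uniqueness step is in fact needed and correct: the paper defers to the $(\GSp_6\times\GSp_4,(\GSp_4\times\GSp_2)^0)$ case, where $\Theta$ is a single $W$-orbit, whereas here the $W$-invariant subsets are unions of the three orbits of sizes $12,4,4$, and one must note (as you do) that $\Theta^+$ meets them in only $6,2,2$ elements so that no nonempty invariant subset lies inside $\Theta^+$.
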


\begin{proof}
By the computation of colors above, we know that $\Theta^+$ is the smallest subset of $\Theta$ satisfying the following 5 conditions:
\begin{enumerate}
\item $e_1+e_3+e_2',e_1+e_4+e_1',e_2+e_3+e_1',e_2,-e_3\in \Theta^{+}$.
\item $\Theta^{+}-(\Theta^+\cap w_{\alpha_1}\Theta^+)=\{e_1+e_3+e_2',e_1+e_4+e_1'\}$.
\item $\Theta^{+}-(\Theta^+\cap w_{\alpha_2}\Theta^+)=\{e_2,-e_3\}$.
\item $\Theta^{+}-(\Theta^+\cap w_{\alpha_3}\Theta^+)=\{e_1+e_3+e_2',e_2+e_3+e_1'\}$.
\item $\Theta^{+}-(\Theta^+\cap w_{\alpha_1'}\Theta^+)=\{e_2+e_3+e_1',e_1+e_4+e_1'\}$.
\end{enumerate}
It is clear that the set in the statement satisfies these conditions. So we just need to show that the set is the unique subset of $\Theta$ satisfying these conditions. The argument is exactly the same as the case $(\GSp_6\times \GSp_4, (\GSp_4\times \GSp_2)^0)$ in Proposition \ref{theta proposition}. We will skip it here.
\end{proof}

It is clear that $\Theta^+$ satisfies \eqref{theta plus}. The last thing remains to prove Lemma \ref{lem constant reductive} for the current case. 

\begin{lem}
With the notation above, we have 
$$\sum_{w\in W}c_{WS}(w\theta)=\frac{1}{\Delta_{H/Z_{G,H}}(1)}=\frac{1}{\zeta(1)\zeta(2)^2}=(1-q^{-1})(1-q^{-2})^2.$$
\end{lem}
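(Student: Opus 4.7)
Since the sum $\sum_{w\in W}c_{WS}(w\theta)$ is independent of $\theta$—this is the first assertion of the lemma, which, as in Section \ref{sec:S-theta}, follows from Theorem 7.2.1 of \cite{Sa} applied to the reductive spherical variety $(G,H)$—the plan is to evaluate the sum at $\theta=\delta_B^{1/2}$, following Remark \ref{rmk constant reductive}. I will show that (i) only the term indexed by the longest Weyl element $w_\ell$ of $W=S_4\times S_2$ is nonzero, and (ii) this surviving term equals $(1-q^{-1})(1-q^{-2})^2$.

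For (i), at $\theta=\delta_B^{1/2}$ the numerator factor $1-q^{-1/2}e^{\gamma^\vee}(w\theta)=1-q^{-1/2}\delta_B^{1/2}(w^{-1}\gamma^\vee)$ vanishes precisely when $w^{-1}\gamma^\vee$ lies in the ``bad'' set $B:=\{\beta^\vee\in\Theta:\delta_B^{1/2}(\beta^\vee)=q^{1/2}\}$. Using $\rho_{\GL_4}=(3/2,1/2,-1/2,-3/2)$ and $\rho_{\GL_2}=(1/2,-1/2)$, a direct enumeration of $\delta_B^{1/2}$-values on $\Theta=\Theta_1\cup\Theta_2\cup\Theta_3$ gives
$$B=\{e_1+e_3+e_2',\ e_1+e_4+e_1',\ e_2+e_3+e_1',\ e_2,\ -e_3\},$$
and one checks $B\subset\Theta^+$; more strongly, $\Theta^+$ coincides with the subset of $\Theta$ of weights of $\delta_B^{1/2}$-value in $\{q^{1/2},q^{3/2},q^{5/2}\}$, while $\Theta^-$ is the subset of weights of $\delta_B^{1/2}$-value in $\{q^{-1/2},q^{-3/2},q^{-5/2}\}$. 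Hence $c_{WS}(w\delta_B^{1/2})$ is nonzero only when $w(B)\subset\Theta^-$.

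Writing $w=(\sigma,\tau)\in S_4\times S_2$, the two rank-one constraints $w(e_2)\in\Theta^-\cap\Theta_2=\{e_3,e_4\}$ and $w(-e_3)\in\Theta^-\cap\Theta_3=\{-e_1,-e_2\}$ force $\sigma(2)\in\{3,4\}$ and $\sigma(3)\in\{1,2\}$, leaving at most sixteen candidates for $w$. Imposing in addition the three constraints $w(e_1+e_3+e_2'),\ w(e_1+e_4+e_1'),\ w(e_2+e_3+e_1')\in\Theta^-\cap\Theta_1$ will eliminate every candidate except $w_\ell$; this is a mechanical case check, in close parallel to the one carried out in Section \ref{sec:GSp6-model} for the $(\GSp_6\times\GSp_4,(\GSp_4\times\GSp_2)^0)$-model. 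The principal obstacle is this finite enumeration, but no conceptual difficulty is anticipated.

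For (ii), since $w_\ell$ sends $\rho$ to $-\rho$, we have $w_\ell\delta_B^{1/2}=\delta_B^{-1/2}$, so
$$c_{WS}(w_\ell\delta_B^{1/2})=\frac{\prod_{\gamma^\vee\in\Theta^+}(1-q^{-1/2}\cdot q^{-\langle\rho,\gamma^\vee\rangle})}{\prod_{\alpha\in\Phi^+}(1-q^{-\langle\rho,\alpha^\vee\rangle})}.$$
The positive coroots have $\rho$-heights $\{1,1,1,2,2,3\}\cup\{1\}$ (six from $\GL_4$ and one from $\GL_2$), giving a denominator $(1-q^{-1})^4(1-q^{-2})^2(1-q^{-3})$. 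The ten elements of $\Theta^+$ have $\rho$-heights $\{5/2,3/2,3/2,1/2,1/2,1/2,3/2,1/2,1/2,3/2\}$, giving a numerator $(1-q^{-1})^5(1-q^{-2})^4(1-q^{-3})$. The quotient is $(1-q^{-1})(1-q^{-2})^2=1/\Delta_{H/Z_{G,H}}(1)$, completing the proof.
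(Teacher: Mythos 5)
Your proposal is correct and follows essentially the same route as the paper: specialize to $\theta=\delta_B^{1/2}$, show that only the longest Weyl element contributes, and evaluate that surviving term. The only difference is that the paper carries out the elimination of the non-longest elements explicitly (a sequential argument using the same rank-one factors $e_2,-e_3$ and the tensor-type factors), whereas you defer it as a finite check; I confirm that your sixteen-candidate enumeration does collapse to $w_\ell$ alone and that your height computations for the numerator and denominator are accurate, so nothing essential is missing.
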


\begin{proof}
Since the summation is independent of $\theta$, we set $\theta=\delta_{B}^{1/2}$. The lemma follows from the following two claims:
\begin{enumerate}
\item $c_{WS}(w\theta)$ is zero unless $w$ is the longest Weyl element.
\item If $w$ is the longest Weyl element, we have $c_{WS}(w\theta)=(1-q^{-1})(1-q^{-2})^2$.
\end{enumerate}
The second claim is easy to prove so we will focus on the first one. For $w=(s,s')\in W=S_4\times S_2$, we know that $c_{WS}(w\theta)$ is nonzero if and only if
\begin{equation}\label{GL4 1}
1-q^{-1/2} \theta_{s(i)},\;1-q^{-1/2} \theta_{s(1)}\theta_{s(j)}\theta_{s'(k)}',
\end{equation}
$$1-q^{-1/2} \theta_{s(1)}\theta_{s(4)} \theta_{s'(1)}',\;1-q^{-1/2} \theta_{s(2)}\theta_{s(3)} \theta_{s'(1)}'$$
are nonzero for $1\leq i\leq 4,2\leq j\leq 3,1\leq k\leq 2$ where $\theta_1=q^{3/2},\theta_2=\theta_1'=q^{1/2},\theta_3=\theta_2'=q^{-1/2},\theta_4=q^{-3/2}$. 

Using the four terms $1-q^{-1/2} \theta_{s(i)}$ in \eqref{GL4 1}, we have $s(1),s(2)\in \{1,3,4\},\;s(3),s(4)\in \{1,2,4\}$. This implies that $\{s(1),s(2)\}$ is equal to $\{1,3\}$ or $\{3,4\}$. If it is equal to $\{1,3\}$, then $\theta_{s(1)}\theta_{s(2)}=q$. Hence $\theta_{s(1)}\theta_{s(2)}\theta_{s'(1)}'$ or $\theta_{s(1)}\theta_{s(2)}\theta_{s'(2)}'$ is equal to $q^{1/2}$. This is a contradiction. So we must have $\{s(1),s(2)\}=\{3,4\}$.

If $s(1)=3$, then $\theta_{s(1)}\theta_{s(3)}$ is equal to $1$ or $q$ (depends on whether $s(3)=2$ or $s(3)=1$). In both cases, we have $\theta_{s(1)}\theta_{s(3)} \theta_{s'(1)}'$ or $\theta_{s(1)}\theta_{s(3)} \theta_{s'(2)}'$ is equal to $q^{1/2}$. This is a contradiction. So we must have $s(1)=4$ and $s(2)=3$.

Now if $s(3)=1$, then $\theta_{s(1)}\theta_{s(3)}=1$, which implies that $\theta_{s(1)}\theta_{s(3)} \theta_{s'(1)}'$ or $\theta_{s(1)}\theta_{s(3)} \theta_{s'(2)}'$ is equal to $q^{1/2}$. This is a contradiction. So we must have $s(3)=2$ and $s(4)=1$.

Finally, using the fact that $1-q^{-1/2} \theta_{s(1)}\theta_{s(4)} \theta_{s'(1)}'\neq 0$ we know that $s'(1)=2$ and $s'(2)=1$. Hence $w$ is the longest Weyl element. This proves the lemma.
\end{proof}

To sum up, we have proved that the local relative character is equal to 
$$\zeta(1)\zeta(3)\zeta(4)\frac{L(1/2,\pi,\wedge^2\otimes {\rm std}_2)L(1/2,\pi_1,{\rm std}_4)L(1/2,\pi_1,{\rm std}_{4}^\vee)}{L(1,\pi,\Ad)}$$
where $\pi=\pi_1\otimes \pi_2$ is an unramified representation of $\GL_4(F)\times \GL_2(F)$.

\section{The model $(\GL_6,\GL_2\ltimes U)$}\label{sec:GL6}
In this section, we compute the local relative character for the model $(\GL_6,\GL_2\ltimes U)$. We closely follow the six steps in Section \ref{sec:6-steps}.

Let $G=\GL_6$, $H=H_0\ltimes U$ with 
$$H_0=\{diag(h,h,h) \mid h\in \GL_2\},$$
$$U=\{u(X,Y,X)=\begin{pmatrix} I_2&X&Z\\0&I_2&Y\\0&0&I_2 \end{pmatrix} \mid X,Y,Z\in Mat_{2\times 2}\}.$$
Let $P=LU$ be the parabolic subgroup of $G$ with $L=\{(h_1,h_2,h_3) \mid h_i\in \GL_2\}$. We define a generic character $\xi$ on $U(F)$ to be $\xi(u(X,Y,Z))=\psi(\lambda(u(X,Y,Z)))$ where $\lambda(u(X,Y,Z))=\tr(X)+\tr(Y)$. It is easy to see that $H_0$ is the stabilizer of this character and $(G,H)$ is the Whittaker induction of the trilinear $\GL_2$ model $(L,H_0,\xi)$. The model $(G,H,\xi)$ is the so called Ginzburg--Rallis model introduced by Ginzburg and Rallis in \cite{GR}.

We can also define the quaternion version of this model. Let $D/F$ be a quaternion algebra, and let $G_D(F)=\GL_3(D)$, $H_D=H_{0,D}\ltimes U_D$ with 
$$H_{0,D}(F)=\{diag(h,h,h)\mid h\in \GL_1(D)\},$$
$$U_D(F)=\begin{pmatrix} 1&X&Z\\0&1&Y\\0&0&1 \end{pmatrix}\mid X,Y,Z\in D\}.$$
Like the split case, we can define the character $\xi_D$ on $U_D(F)$ by replacing the trace map of $Mat_{2\times 2}$ by the trace map of $D$.

Let $w_0=\begin{pmatrix}0&0&I_2\\0&I_2&0\\I_2&0&0 \end{pmatrix}$ be the Weyl element that sends $U$ to its opposite. It is clear that the $w_0$-conjugation map stabilizes $L$ and fixes $H_0$. We define the map $a:\GL_1\rightarrow Z_L$ to be
$$a(t)=(tI_2,I_2,t^{-1}I_2).$$
This clearly satisfies \eqref{the map a}. For the open Borel orbit, let
$$\eta_0=diag(I_2,\begin{pmatrix}0&1\\1&0 \end{pmatrix},\begin{pmatrix}0&1\\1&0 \end{pmatrix}\begin{pmatrix}1&1\\0&1 \end{pmatrix})$$ 
be the representative of the open Borel orbit for the model $(L,H_0)$ as in Section \ref{section triple product}, and $\eta=\eta_0 w_0$. The relation \eqref{eta0 relation} has already been verified in Section \ref{section triple product}. This finishes the first three steps in Section \ref{sec:6-steps}.

Now we compute the set of colors and also the set $\Theta^+$. Let $\Theta$ be the weights of the exterior cube representation of $\GL_6(\BC)$. We can write it as 
$$\Theta=\{e_i+e_j+e_k \mid 1\leq i<j<k\leq 6\}.$$

Let $\alpha_i=\varepsilon_i-\varepsilon_{i+1}$ be the simple roots for $1\leq i\leq 5$. By the computation of the trilinear $\GL_2$-model in Section \ref{section triple product} and the discussion in Section \ref{sec non-red strategy 2} (in particular, Remark \ref{color in non-red}), we get the set of colors for this case:
$$\beta_{\alpha_1}^{\vee}=e_1+e_4+e_5,\;\alpha_{1}^{\vee}-\beta_{\alpha_1}^{\vee}=e_1+e_3+e_6,$$ $$\beta_{\alpha_3}^{\vee}=e_2+e_3+e_5,\;\alpha_{3}^{\vee}-\beta_{\alpha_3}^{\vee}=e_1+e_3+e_6,$$
$$\beta_{\alpha_5}^{\vee}=e_2+e_3+e_5,\;\alpha_{5}^{\vee}-\beta_{\alpha_3}^{\vee}=e_1+e_4+e_5.$$
Then we verify \eqref{nonred root} for $\alpha_2$ and $\alpha_4$.

For $\alpha_2$, let $u_{-\alpha_2}(a)=(x_{ij})_{1\leq i,j\leq 6}$ with $x_{ii}=1,\;x_{32}=a$ and $x_{ij}=0$ for all the other $(i,j)$. We have
$$u_{-\alpha_2}(a)\eta=\eta \begin{pmatrix}I_2&0&0\\0&I_2&X\\0&0&I_2\end{pmatrix},\;X=\begin{pmatrix}0&0\\0&a \end{pmatrix}.$$
This proves \eqref{nonred root} for $\alpha_2$.

For $\alpha_4$, let $u_{-\alpha_4}(a)=(x_{ij})_{1\leq i,j\leq 6}$ with $x_{ii}=1,\;x_{54}=a$ and $x_{ij}=0$ for all the other $(i,j)$. We have
$$u_{-\alpha_4}(a)\eta=\eta \begin{pmatrix}I_2&X&0\\0&I_2&0\\0&0&I_2\end{pmatrix},\;X=\begin{pmatrix}-a&0\\a&0 \end{pmatrix}.$$
This proves \eqref{nonred root} for $\alpha_4$. Next, we compute the set $\Theta^+$.

\begin{prop}\label{prop theta GL(6)}
$\Theta^+$ is consisting of the following 10 elements:
$$e_1+e_2+e_i,e_1+e_3+e_j,e_1+e_4+e_5,e_2+e_3+e_4,e_2+e_3+e_5$$
where $3\leq i\leq 6$ and $4\leq j\leq 6.$
\end{prop}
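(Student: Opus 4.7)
The plan is to follow the strategy of Propositions \ref{theta proposition} and \ref{theta for GL(4)xGL(2)}. By the color computation above and the definition of $\Theta^+$ in Section \ref{sec non-red strategy 2}, $\Theta^+$ is characterized as the smallest subset of $\Theta$ satisfying the following five conditions:
\begin{enumerate}
\item $\{e_1+e_4+e_5,\ e_1+e_3+e_6,\ e_2+e_3+e_5\}\subset \Theta^+$;
\item $\Theta^+-w_{\alpha_1}\Theta^+=\{e_1+e_4+e_5,\ e_1+e_3+e_6\}$;
\item $\Theta^+-w_{\alpha_3}\Theta^+=\{e_2+e_3+e_5,\ e_1+e_3+e_6\}$;
\item $\Theta^+-w_{\alpha_5}\Theta^+=\{e_2+e_3+e_5,\ e_1+e_4+e_5\}$;
\item $\Theta^+$ is stable under $w_{\alpha_2}$ and $w_{\alpha_4}$.
\end{enumerate}

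First I would verify directly that the proposed $10$-element set satisfies (1)--(5). Since each $w_{\alpha_i}$ transposes $e_i$ and $e_{i+1}$, this is a case-by-case check. For (2): the four weights $e_1+e_2+e_i$ are fixed by $w_{\alpha_1}$; the pairs $\{e_1+e_3+e_k, e_2+e_3+e_k\}$ for $k=4,5$ are exchanged within $\Theta^+$; and precisely $e_1+e_3+e_6$ and $e_1+e_4+e_5$ get sent out of $\Theta^+$, to $e_2+e_3+e_6$ and $e_2+e_4+e_5$ respectively, yielding the required two-element difference. Conditions (3), (4) are analogous, and (5) follows since $w_{\alpha_2}$ swaps the triples $(e_1{+}e_2{+}e_j)$ and $(e_1{+}e_3{+}e_j)$ for $j=4,5,6$ while fixing everything else in the set, and $w_{\alpha_4}$ only permutes elements inside the set in pairs.

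For uniqueness, let $\Theta'^+$ be any other subset of $\Theta$ satisfying (1)--(5). One verifies that $\Theta^+\cap \Theta'^+$ still satisfies all five conditions; the essential input is that for each $\alpha\in \{\alpha_1,\alpha_3,\alpha_5\}$ we have $w_\alpha \beta_\alpha^\vee\notin \Theta^+$ and $w_\alpha(\alpha^\vee-\beta_\alpha^\vee)\notin \Theta^+$, which follows directly from $\{\beta_\alpha^\vee,\alpha^\vee-\beta_\alpha^\vee\}\subset \Theta^+-w_\alpha\Theta^+$. Consequently $\Theta^+-(\Theta^+\cap \Theta'^+)$ and $\Theta'^+-(\Theta^+\cap \Theta'^+)$ are $W$-invariant subsets of $\Theta$. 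Since $W=S_6$ acts transitively on the $20$ weights of $\wedge^3\BC^6$, the only $W$-invariant subsets of $\Theta$ are $\emptyset$ and $\Theta$; neither difference can equal $\Theta$ (otherwise one of $\Theta^+,\Theta'^+$ would be all of $\Theta$, contradicting (2)), so both are empty and $\Theta^+=\Theta'^+$.

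The main obstacle is the verification that $\Theta^+\cap \Theta'^+$ satisfies the set-difference conditions (2)--(4). The inclusion $\supseteq$ uses that the colors lie in $\Theta^+\cap \Theta'^+$ while their $w_\alpha$-images do not. For the inclusion $\subseteq$, given $x\in (\Theta^+\cap \Theta'^+)-w_\alpha(\Theta^+\cap \Theta'^+)$ we have $w_\alpha x\notin \Theta^+$ or $w_\alpha x\notin \Theta'^+$, and in either case the hypothesis forces $x\in \{\beta_\alpha^\vee,\alpha^\vee-\beta_\alpha^\vee\}$. Once this is in place, the $W$-invariance of the differences and the transitivity of $W$ on $\Theta$ finish the argument.
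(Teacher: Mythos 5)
Your proof is correct and follows essentially the same route as the paper: characterize $\Theta^+$ by the five conditions coming from the color computation, check the explicit $10$-element set satisfies them, and prove uniqueness by showing $\Theta^+\cap\Theta'^+$ still satisfies the conditions so that the differences are $W$-invariant, hence empty since $S_6$ acts transitively on $\Theta$. The only difference is that you spell out the verification and the intersection argument, which the paper leaves implicit by referring back to Proposition \ref{theta proposition}.
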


\begin{proof}
By the computations above, we know that $\Theta^+$ is the smallest subset of $\Theta$ satisfying the following 5 conditions:
\begin{enumerate}
\item $e_1+e_4+e_5,e_1+e_3+e_6,e_2+e_3+e_5\in \Theta^{+}$.
\item $\Theta^{+}-(\Theta^+\cap w_{\alpha_1}\Theta^+)=\{e_1+e_4+e_5,e_1+e_3+e_6\}$.
\item $\Theta^{+}-(\Theta^+\cap w_{\alpha_3}\Theta^+)=\{e_2+e_3+e_5,e_1+e_3+e_6\}$.
\item $\Theta^{+}-(\Theta^+\cap w_{\alpha_5}\Theta^+)=\{e_2+e_3+e_5,e_1+e_4+e_5\}$.
\item $\Theta^{+}$ is stable under $w_{\alpha_2}$ and $w_{\alpha_4}$.
\end{enumerate}
It is clear that the set in the statement satisfies these conditions. So we just need to show that the set is the unique subset of $\Theta$ satisfying these conditions. The argument is exactly the same as the case $(\GSp_6\times \GSp_4, (\GSp_4\times \GSp_2)^0)$ in Proposition \ref{theta proposition}. We will skip it here.
\end{proof}

It is clear that $\Theta^+$ satisfies \eqref{theta plus}. The last thing remains  to prove Lemma \ref{lem constant nonreductive} for the current case. 

\begin{lem}\label{GR case constant lem}
With the notation above, we have 
$$\sum_{w\in W}c_{WS}(w\theta)=\frac{1}{\Delta_{H_0/Z_{G,H}}(1)}=\frac{1}{\zeta(2)}=(1-q^{-2}).$$
\end{lem}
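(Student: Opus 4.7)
The plan is to exploit the representation-theoretic branching
$\wedge^3(\mathrm{std}_6)\big|_{\GL_4\times\GL_2}=\wedge^3(\mathrm{std}_4)\oplus(\wedge^2(\mathrm{std}_4)\otimes\mathrm{std}_2)\oplus(\mathrm{std}_4\otimes\wedge^2(\mathrm{std}_2))$,
which after twisting by the determinant character matches
$\mathrm{std}_4\oplus(\wedge^2\otimes\mathrm{std}_2)\oplus\mathrm{std}_4^{\vee}=\rho_X$
of the $(\GL_4\times\GL_2,\GL_2\times\GL_2)$ model. This will allow us to reduce the identity to the corresponding (already verified) identity for that reductive model. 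As suggested in Section~\ref{sec:6-steps}, the reduction is by proving an auxiliary identity rather than by a direct specialization of $\theta$.

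I would proceed in three steps. First, I would show that $F(\theta):=\sum_{w\in W}c_{WS}(w\theta)$ is a constant in $\theta$. The function $F$ is manifestly $W$-invariant; the apparent poles along $e^{\alpha^\vee}(\theta)=1$ for $\alpha\in\Phi^+$ cancel upon symmetrization because each simple reflection $w_\alpha$ pairs $w$ with $ww_\alpha$, and the combined numerator contribution on the pole $e^{\alpha^\vee}=1$ vanishes thanks to the symmetry of $\Theta^+$ recorded in Proposition~\ref{prop theta GL(6)} (this is essentially the non-reductive analogue of Sakellaridis' cancellation argument in \cite{Sa}, and can be verified directly from the 10 explicit weights). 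Once $F$ is regular, a weight count then shows it must be a scalar.

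Second, I would perform the reduction to $\GL_4\times\GL_2$. Writing the 10 elements of $\Theta^+$ in terms of the splitting $\{e_1,\dots,e_4\}\sqcup\{e_5,e_6\}$, one sees that the weights split as $\{e_2+e_3+e_4\}$ (a pure $\GL_4$ weight), $\{e_1+e_4+e_5,e_1+e_3+e_6,e_2+e_3+e_5\}\cup\{e_1+e_2+e_j\}_{j=3,4}$ (weights of $\wedge^2(\mathrm{std}_4)\otimes\mathrm{std}_2$), and $\{e_1+e_2+e_j,e_1+e_3+e_j\}_{j=5,6}$ (weights of $\mathrm{std}_4\otimes\wedge^2(\mathrm{std}_2)$). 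Since $F(\theta)$ is constant, I can specialize $\theta_5\theta_6=1$ (forcing the $\GL_2$-block to have trivial determinant, so that $\wedge^2(\mathrm{std}_2)$ becomes the trivial character); under this specialization, several weights become characters of the $\GL_4\times\GL_2$ model, and the $S_6$-sum regroups as an $S_4\times S_2$-sum over cosets, each inner sum matching exactly the $c_{WS}$ expression of Section~\ref{sec:GL4}.

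Third, having reduced to the $(\GL_4\times\GL_2,\GL_2\times\GL_2)$ identity, I would invoke the already established Lemma in that section to conclude $F(\theta)=1/\Delta_{H_0/Z_{G,H}}(1)=\zeta(2)^{-1}$, which is consistent since $H_0=\GL_2$ and $Z_{G,H}=Z_G\cap H=\GL_1$ give $H_0/Z_{G,H}\cong\PGL_2$ with $\Delta_{\PGL_2}(1)=\zeta(2)$. The main obstacle is the bookkeeping in Step~1: verifying that the ten explicit weights in $\Theta^+$ really do produce the pole cancellations for all five simple reflections of $\GL_6$ at once (not just the reductive ones $\alpha_1,\alpha_3,\alpha_5$) requires a careful case analysis exploiting the fact that $\Theta^+$ is $w_{\alpha_2}$- and $w_{\alpha_4}$-stable. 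Once this is in place, the residue matching of Step~2 is essentially a combinatorial identity that follows from counting orbits of $S_2^3\subset S_6$ on $\Theta^+$.
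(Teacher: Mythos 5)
Your overall strategy---factor the sum over $S_6$ through a subgroup $S_4\times S_2$ and reduce to a smaller, already-known identity---is the right one, and is indeed what the paper does. But the specific reduction you propose does not work as stated, and the two hardest points are left as gaps.

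First, your decomposition of $\Theta^+$ under $\GL_4\times\GL_2$ is wrong. Sorting the ten weights of Proposition \ref{prop theta GL(6)} by how many indices lie in $\{5,6\}$, one finds all four weights $e_i+e_j+e_k$ with $1\le i<j<k\le 4$ (the full weight set of $\wedge^3(\mathrm{std}_4)$), six weights with exactly one index in $\{5,6\}$ (half of $\wedge^2(\mathrm{std}_4)\otimes\mathrm{std}_2$), and \emph{no} weights of the form $e_i+e_5+e_6$; in particular $e_1+e_2+e_5$ and $e_1+e_3+e_6$ are weights of $\wedge^2(\mathrm{std}_4)\otimes\mathrm{std}_2$, not of $\mathrm{std}_4\otimes\wedge^2(\mathrm{std}_2)$ as you assert. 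Consequently the specialization $\theta_5\theta_6=1$ does not convert the inner $S_4\times S_2$-sums into the $c_{WS}$ of Section \ref{sec:GL4}: that reductive model's $\Theta^+$ contains the weights $e_1,e_2,-e_3,-e_4$ of $\mathrm{std}_4\oplus\mathrm{std}_4^\vee$, which have no counterpart here, and its denominator lacks the eight cross-roots $e_i-e_j$ ($i\le 4<j$). The paper instead isolates a six-element subset $\Theta_0^+$ and proves a bespoke auxiliary identity over $S_4\times S_2$ (Lemma \ref{constant for GL(4)xGL(2) non-red}), which it deduces from the trilinear $\GL_2$ identity of Section \ref{section triple product} plus a short direct computation---not from the reductive model of Section \ref{sec:GL4}.

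Second, your Step 1 is a genuine gap, and the paper explicitly warns about it: Theorem 7.2.1 of \cite{Sa}, which gives constancy in the reductive (affine) case, does not apply to this Whittaker-induced model. Pole cancellation alone would only show $F(\theta)$ is a $W$-invariant Laurent polynomial, not a constant; the "weight count" you invoke would have to be a genuine growth estimate, which you do not supply. The paper avoids the issue entirely by proving the identity directly as an identity of rational functions in arbitrary variables $\theta_i$ (with $\prod\theta_i=1$), so constancy is a byproduct rather than an input; in particular nothing licenses your subsequent specialization $\theta_5\theta_6=1$ before the identity is established. Finally, after factoring out $(1-q^{-2})$, the remaining coset sum over $S_6/(S_4\times S_2)$, with numerator running over $\{e_i+e_j+e_k:1\le i<j<k\le 4\}$ and the cross-roots in the denominator, still has to be shown to equal $1$; this is the technical core of the paper's proof (an $(S_6,\sgn)$-antisymmetrization and a coefficient-by-coefficient vanishing argument for $q^{-1/2},\dots,q^{-2}$), and your proposal does not address it beyond an appeal to "counting orbits," which is not a proof.
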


\begin{proof}
Recall that $W=S_6$ is the permutation group of 6 variables. The goal is to show that
$$\sum_{s\in S_6} \frac{\Pi_{e_i+e_j+e_k\in \Theta^+} (1-q^{-1/2}\theta_{s(i)}\theta_{s(j)}\theta_{s(k)})}{\Pi_{1\leq i<j\leq 6} (1-\theta_{s(i)}/\theta_{s(j)})}=1-q^{-2}.$$
Here $\theta_i$ are arbitrary variables satisfying the equation $\Pi_{i=1}^{6}\theta_i=1$. We define the subset $\Theta_{0}^+$ of $\Theta^+$ to be
$$\Theta_{0}^+=\{e_1+e_i+e_j,e_1+e_4+e_5,e_2+e_3+e_5 \mid 2\leq i\leq 3,5\leq j\leq 6\}.$$
It contains those weights in the $\wedge^2\otimes {\rm std}_2$ representation of $\GL_4\times \GL_2$. We need a lemma.

\begin{lem}\label{constant for GL(4)xGL(2) non-red}
We embed $S_4\times S_2$ into $S_6$ by letting $S_4$ act  on the first four elements and $S_2$ acts on the last two elements. Then
$$\sum_{s\in S_4\times S_2} \frac{\Pi_{e_i+e_j+e_k\in \Theta_{0}^+} (1-q^{-1/2}\theta_{s(i)}\theta_{s(j)}\theta_{s(k)})}{\Pi_{\{(i,j) \mid 1\leq i<j\leq 4\;\text{or}\;5\leq i<j\leq 6\}} (1-\theta_{s(i)}/\theta_{s(j)})}=1-q^{-2}.$$
\end{lem}

\begin{proof}
By the identity for the triple product in Section \ref{section triple product}, we have (here we embed $S_2\times S_2$ into $S_4$ by letting the first $S_2$-copy act  on the first two elements and the second $S_2$-copy act on the last two elements)
$$\sum_{s\in S_2\times S_2\times S_2} \frac{\Pi_{e_i+e_j+e_k\in \Theta_{1}^+} (1-q^{-1/2}\theta_{s(i)}\theta_{s(j)}\theta_{s(k)})}{ (1-\theta_{s(1)}/\theta_{s(2)})(1-\theta_{s(3)}/\theta_{s(4)})(1-\theta_{s(5)}/\theta_{s(6)})}=1-q^{-2},$$
where $\Theta_{1}^+=\{e_1+e_3+e_5,e_1+e_3+e_6,e_1+e_4+e_5,e_2+e_3+e_5\}.$ Hence in order to prove the lemma, it is enough to show that 
$$\sum_{s\in S_4/S_2\times S_2} \frac{(1-q^{-1/2}\theta_{s(1)}\theta_{s(2)}\theta_{5})(1-q^{-1/2}\theta_{s(1)}\theta_{s(2)}\theta_{6})}{\Pi_{1\leq i\leq 2, 3\leq j\leq 4} (1-\theta_{s(i)}/\theta_{s(j)})}=1.$$
This follows from an easy computation.
\end{proof}

By the lemma above, we have
$\sum_{s\in S_6} \frac{\Pi_{e_i+e_j+e_k\in \Theta^+} (1-q^{-1/2}\theta_{s(i)}\theta_{s(j)}\theta_{s(k)})}{\Pi_{1\leq i<j\leq 6} (1-\theta_{s(i)}/\theta_{s(j)})}$
is equal to
$$(1-q^{-2})\cdot \sum_{s\in S_6/S_4\times S_2} \frac{\Pi_{e_i+e_j+e_k\in \Theta^+-\Theta_{0}^{+}} (1-q^{-1/2}\theta_{s(i)}\theta_{s(j)}\theta_{s(k)})}{\Pi_{1\leq i\leq 4, 5\leq j\leq 6} (1-\theta_{s(i)}/\theta_{s(j)})}.$$
So it is enough to show that 
\begin{equation}\label{GL(6) constant 1}
\sum_{s\in S_6/S_4\times S_2} \frac{\Pi_{e_i+e_j+e_k\in \Theta^+-\Theta_{0}^{+}} (1-q^{-1/2}\theta_{s(i)}\theta_{s(j)}\theta_{s(k)})}{\Pi_{1\leq i\leq 4, 5\leq j\leq 6} (1-\theta_{s(i)}/\theta_{s(j)})}=1.
\end{equation}
The set $\Theta^+-\Theta_{0}^{+}$ is equal to $\{e_i+e_j+e_k \mid 1\leq i<j<k\leq 4\}$. It is easy to see that the constant coefficient of the left hand side of \eqref{GL(6) constant 1} is equal to 1. So we just need to show that the $q^{-1/2},q^{-1},q^{-3/2},q^{-2}$-coefficients are equal to 0. For this, we can replace the summation over $S_6/S_4\times S_2$ by the summation over $S_6$, and we need to show that the $q^{-1/2},q^{-1},q^{-3/2},q^{-2}$-coefficients of 
$$\sum_{s\in S_6} \frac{\Pi_{1\leq i<j<k\leq 4} (1-q^{-1/2}\theta_{s(i)}\theta_{s(j)}\theta_{s(k)})}{\Pi_{1\leq i\leq 4, 5\leq j\leq 6} (1-\theta_{s(i)}/\theta_{s(j)})}$$
are equal to 0. We can rewrite the function in the summation as
\begin{eqnarray*}
 &&\frac{\Pi_{1\leq i<j<k\leq 4}  (1-q^{-1/2}\theta_{s(i)}\theta_{s(j)}\theta_{s(k)})}{\Pi_{1\leq i\leq 4, 5\leq j\leq 6} (1-\theta_{s(i)}/\theta_{s(j)})}\\
 &=&\frac{\theta_{s(5)}^4\theta_{s(6)}^4\cdot \Pi_{1\leq i<j<k\leq 4} (1-q^{-1/2}\theta_{s(i)}\theta_{s(j)}\theta_{s(k)})}{\Pi_{1\leq i\leq 4, 5\leq j\leq 6}  (\theta_{s(j)}-\theta_{s(i)})}\\
&=&\frac{\theta_{s(5)}^4\theta_{s(6)}^4\cdot \Pi_{\{(i,j)\mid 1\leq i<j\leq 4\;\text{or}\;5\leq i<j\leq 6\}} (\theta_{s(j)}-\theta_{s(i)}) }{\Pi_{1\leq i< j\leq 6}  (\theta_{s(j)}-\theta_{s(i)})}\\
&&\cdot \Pi_{1\leq i<j<k\leq 4} (1-q^{-1/2}\theta_{s(i)}\theta_{s(j)}\theta_{s(k)}).
\end{eqnarray*}
Since the denominator is $(S_6,\sgn)$-invariant ($\sgn$ is the sign character of $S_6$), we just need to show that the $(S_6,\sgn)$-summation of the $q^{-1/2},q^{-1},q^{-3/2},q^{-2}$-coefficients of
\begin{equation}\label{GL(6) constant 2}
\theta_{5}^4\theta_{6}^4\cdot \Pi_{\{(i,j)|\;1\leq i<j\leq 4\;\text{or}\;5\leq i<j\leq 6\}} (\theta_{j}-\theta_{i}) \cdot \Pi_{1\leq i<j<k\leq 4} (1-q^{-1/2}\theta_{i}\theta_{j}\theta_{k})
\end{equation}
are equal to 0. A direct computation shows that $\theta_{5}^4\theta_{6}^4\cdot \Pi_{\{(i,j)|\;1\leq i<j\leq 4\;\text{or}\;5\leq i<j\leq 6\}} (\theta_{j}-\theta_{i})$ is consisting of elements of the form
$$\Pi_i \theta_{i}^{a_i},\;\{a_1,a_2,a_3,a_4\}=\{0,1,2,3\},\;\{a_5,a_6\}=\{4,5\}.$$
Then any term $\Pi_i \theta_{i}^{b_i}$ appeared in the $q^{-1/2}$-coefficient of \eqref{GL(6) constant 2} must satisfy the condition $\{b_5,b_6\}=\{4,5\}$, and also satisfies at least one of the following two conditions
\begin{itemize}
\item $b_i=4$ for some $1\leq i\leq 4$;
\item $b_i=b_j$ for some $1\leq i<j\leq 4$.
\end{itemize}
In both cases, we have $b_i=b_j$ for some $i\neq j$. This implies that the $(S_6,\sgn)$-summation of the $q^{-1/2}$-coefficient is equal to 0.

Moreover, any term $\Pi_i \theta_{i}^{b_i}$ appeared in the $q^{-1},q^{-3/2},q^{-2}$-coefficients of \eqref{GL(6) constant 2} must satisfy the following two conditions 
\begin{itemize}
\item $b_i\in \{4,5\}$ for some $1\leq i\leq 4$;
\item $\{b_5,b_6\}=\{4,5\}$.
\end{itemize}
This implies that $b_i=b_j$ for some $i\neq j$. This implies that the $(S_6,\sgn)$-summation of the $q^{-1},q^{-3/2},q^{-2}$-coefficients are equal to 0. This finishes the proof of the lemma.
\end{proof}

To sum up, we have proved that the local relative character is equal to 
$$\zeta(1)\zeta(3)\zeta(4)\zeta(5)\zeta(6)\frac{L(1/2,\pi,\wedge^3)}{L(1,\pi,\Ad)}$$
where $\pi$ is an unramified representation of $\GL_6(F)$.

\section{The models $(\GU_6,\GU_2\ltimes U)$ and $(\GU_4\times \GU_2,(\GU_2\times \GU_2)^0)$}\label{sec GU}
\subsection{The models}
Let $E=F(\sqrt{\eps})$ be a quadratic extension of $F$, $\eta_{E/F}$ be the quadratic character associated to $E$,  $N_{E/F}$ (resp. $\tr_{E/F}$) be the norm map (resp. trace map), and $x\rightarrow \bar{x}$ be the Galois action on $E$. Denote  $w_{n}$ to  be the longest Weyl element of $\GL_n$. Define the quasi-split even unitary similitude group $\GU_{n,n}(F)$ to be
\begin{equation}\label{eq:GU-n}
\GU_{n,n}(F)=\{ g\in \GL_{2n}(E)\colon  {}^{t}\bar{g}w_{2n}g=l(g) w_{2n}\}	
\end{equation}
where $l(g)\in F^{\times}$ is the similitude factor of $g$. 

We first define the model $(\GU_6,\GU_2\ltimes U)$. Let $G=\GU_{3,3}$, and $P=LU$ be the standard parabolic subgroup of $G$ with ($g^*=w_2{}^t\bar{g}^{-1}w_2$)
\begin{align*}
L(F)=&\{m(g,h)=\begin{pmatrix}
g&&\\&h&\\&&l(h)g^*	\end{pmatrix} \mid g\in \GL_2(E),\; ~h\in\GU_{1,1}(F)\},
\end{align*}
$$U(F)=\{u(X,Y)=\begin{pmatrix}
I_2&X&Y\\&I_{2}&X'\\&&I_2	\end{pmatrix} \mid  X,Y\in Mat_{2\times 2}(E),$$
$$X'=-w_2{}^t\!Xw_2,w_2Y + {}^t\!Yw_2+{}^t\!X'w_2X'=0\}.$$
Let $\xi$ be a generic character of $U(F)$ given by
$$
\xi(u(X,Y))=\psi(\lambda(u(X,Y))),\;\lambda(u(X,Y))=\tr_{E/F}(\tr(X)).
$$
Then the stabilizer of $\xi$ under the adjoint action of $L(F)$ is
$$
H_{0}(F):=\{m(h,h)\mid h\in \GU_{1,1}(F)\}=\{diag(h,h,h) \mid h\in \GU_{1,1}(F)\}.
$$
Let $H=H_0\ltimes U$ and we extend the character $\xi$ to $H$ by making it trivial on $H_0$. The model $(G,H,\xi)$ is the analogue of the Ginzburg--Rallis model in the previous section for unitary similitude group. We can also define the quaternion (non quasi-split) version of this model by letting $G_D$ be the non quasi-split unitary similitude group (in the archimedean case $G_D=\GU_{4,2}$).

Now we define the model $(\GU_4\times \GU_2,(\GU_2\times \GU_2)^0)$.  
Let $G=\GU_{2,2}\times \GU_{1,1}$ and $H=(\GU_{1,1}\times \GU_{1,1})^0=\{(h_1,h_2)\in \GU_{1,1}\times \GU_{1,1}\mid l(h_1)=l(h_2)\}$. We can embed  H into $G$ via the map
$$(h_1,h_2)\in H\mapsto \begin{pmatrix}a&0&b\\0&h_1&0\\c&0&d \end{pmatrix} ,h_1)\in G,\;h_2=\begin{pmatrix}a&b\\c&d\end{pmatrix}.$$
For the pure inner forms of this model, we use $\GU_{2,0}=\GU_{0,2}$ to denote the non quasi-split unitary similitude group of rank 2, and we use $\GU_{3,1}$ to denote the non quasi-split unitary similitude group of rank 4 and split rank 1 (we use these notation in order to be compatible with the standard notation in the archimedean case). In the $p$-adic case, the pure inner forms are $(\GU_{2,2}\times \GU_{2,0},(\GU_{2,0}\times \GU_{0,2})^0),\;(\GU_{3,1}\times \GU_{1,1},(\GU_{1,1}\times \GU_{2,0})^0),\;(\GU_{3,1}\times \GU_{2,0},(\GU_{2,0}\times \GU_{1,1})^0)$. In the archimedean case, there is an extra compact pure inner form $(\GU_{4,0}\times \GU_{2,0},(\GU_{2,0}\times \GU_{2,0})^0)$.

The goal of this section is to compute the local relative character $I(\phi_\theta)$ for these two models. As we mentioned in Section \ref{sec:strategy}, the difference between these models and all the other models is that since $G$ is not split, the root space maybe two-dimensional. In the next subsection, we will prove two identities that will be used in our computation. Then  we will compute the relative character in the last two subsections. From now on, we assume that $E/F$ is unramified and $\eps\in \CO_{F}^{\times}$.

\subsection{Two identities}\label{sec:two identities}

\begin{lem}\label{rank one quadratic}
Let $\eta$ (resp. $\sigma$) be a unitary unramified character of $E^\times$ (resp. $F^\times$). We have
\begin{equation}\label{GU 1}
1+q^2\int_{\CO_F^2}\sig(x^2-\eps y^2-x)\eta(x+y\sqrt{\eps})\ud x\ud y=\frac{q^{2}(1-q^{-1})(1-q^{-4}\sig^2\eta(\varpi))}{(1-q^{-1}\sig(\varpi))(1-q^{-2}\sig\eta(\varpi))}.
\end{equation}
\end{lem}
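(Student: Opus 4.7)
My strategy is to convert the 2-dimensional integral over $\CO_F\times\CO_F$ into an integral over $\CO_E$, stratify by $E$- and $F$-valuations, and sum the resulting geometric series.

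First, substituting $u = x + y\sqrt{\eps}$ identifies $x^2 - \eps y^2 = N_{E/F}(u)$ and $x = \Re u$; then the affine shift $w = 2u - 1$ together with the identity $N(2u-1) = 4(N(u) - \Re u) + 1$, the unramifiedness of $\sig$ and $\eta$, and $|2|_E = 1$ (since $p \neq 2$) casts the left-hand side as $1 + q^2 \int_{\CO_E}\sig(N(w)-1)\eta(w+1)\,dw$. Stratifying $\CO_E$ by $m := v_E(w+1) \geq 0$ and writing $w = -1 + \varpi^m z$ with $z \in \CO_E^\times$, one computes $N(w) - 1 = \varpi^m(\varpi^m N(z) - \tr z)$, $\eta(w+1) = \eta(\varpi)^m$, and $dw = q^{-2m}\,dz$, so the left-hand side becomes
\[
1 + q^2 \sum_{m \geq 0}\bigl(q^{-2}\sig\eta(\varpi)\bigr)^m L_m, \qquad L_m := \int_{\CO_E^\times}\sig(\varpi^m N(z) - \tr z)\,dz.
\]

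Next, I would evaluate each $L_m$ by stratifying $\CO_E^\times$ according to $\mu := v_F(\tr z)$, with volumes $\vol(V_0) = 1-q^{-1}$ and $\vol(V_\mu) = q^{-\mu}(1-q^{-1})^2$ for $\mu \geq 1$. Since $N(z) \in \CO_F^\times$ for $z \in \CO_E^\times$, one has $v_F(\varpi^m N(z) - \tr z) = \min(m,\mu)$ on the non-resonance strata $\mu \neq m$, yielding clean contributions $\sig(\varpi)^{\min(m,\mu)}\vol(V_\mu)$. In the resonance stratum $\mu = m$, writing $z = \varpi^m a' + b\sqrt{\eps}$ with $a',b \in \CO_F^\times$ (when $m \geq 1$; the $m=0$ case is analogous), one analyzes $v_F(N(z) - \tr(z)/\varpi^m) = v_F(\varpi^{2m}a'^2 - \eps b^2 - 2a')$, whose sub-strata indexed by $v_F(\eps b^2 + 2a')$ form a further convergent geometric subseries in $\sig(\varpi)$.

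Finally, I would assemble and simplify the resulting double series. The two denominator factors $(1-q^{-1}\sig(\varpi))$ and $(1-q^{-2}\sig\eta(\varpi))$ emerge from the two main geometric summations, and the numerator factor $(1-q^{-4}\sig^2\eta(\varpi))$ appears once the resonance subseries are combined across all $m$. \emph{The main obstacle} is precisely this last combinatorial step: ensuring that the resonance contributions collapse cleanly into the single numerator factor of the right-hand side. As sanity checks, both sides reduce to $q^2+1$ when $\sig$ and $\eta$ are trivial, and (using the factorization $1-q^{-4}\sig(\varpi)^2 = (1-q^{-2}\sig(\varpi))(1+q^{-2}\sig(\varpi))$) both sides reduce to $(q-1)(q^2+\sig(\varpi))/(q-\sig(\varpi))$ when $\eta$ is trivial.
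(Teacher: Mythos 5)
Your outer reduction is correct, and it is in fact the \emph{same} stratification the paper uses: with $u=x+y\sqrt{\eps}$, your stratum $v_E(w+1)=m$ is exactly $v_E(u)=m$, i.e.\ $\min(v_F(x),v_F(y))=m$, which is the paper's decomposition $\CO_F^2=\bigcup_{k\ge0}\varpi^kX$ with $X=\CO_F\times\CO_F^\times\cup\CO_F^\times\times\CO_F$; and your $L_m$ coincides (via the unimodular substitution $z=2(x'+y'\sqrt{\eps})$ and unramifiedness of $\sigma$) with the paper's inner integral $\int_X\sigma(\varpi^mx'^2-\varpi^m\eps y'^2-x')\,dx'\,dy'$. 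The two arguments diverge only in how this inner integral is evaluated. The paper applies Igusa's Theorem 10.2.1 of \cite{I00}, i.e.\ the formula $\int\sigma(f)=q^{-2}\bigl(N_0+N_1\tfrac{(1-q^{-1})\sigma(\varpi)}{1-q^{-1}\sigma(\varpi)}\bigr)$ in terms of point counts of $\bar f=0$ on the special fibre; this shows at once that $L_m$ is the same for all $m\ge1$ (the reduction of the polynomial is then just $-x'$), and only $L_0$ requires the conic count $|U_1(\BF_q)|=q+1$, which is the paper's preliminary lemma. After that a single geometric series in $(q^{-2}\sigma\eta(\varpi))^m$ remains.

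The gap in your proposal sits precisely in the elementary evaluation of $L_m$ that you defer. On the resonance stratum $\mu=m\ge1$ the sub-strata $v_F(\eps b^2+2a')=j$ do \emph{not} give a clean geometric series: the valuation of $\varpi^{2m}a'^2-\eps b^2-2a'$ equals $j$ only for $j<2m$, a second resonance occurs at $j=2m$ because of the term $\varpi^{2m}a'^2$, and the $j=0$ stratum has volume $(1-q^{-1})(1-2q^{-1})$ rather than $(1-q^{-1})^2$. One must prove separately that the total contribution of $\{v_F(\eps b^2+2a')\ge2m\}$ equals the naive tail $(1-q^{-1})^2\sum_{j\ge2m}(q^{-1}\sigma(\varpi))^j$ — this is true, but needs its own one-variable argument — before $L_m$ is seen to be independent of $m\ge1$. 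Likewise the $m=0$ resonance is not ``analogous'': there $a^2-\eps b^2-2a\equiv0$ is a nondegenerate conic over $\BF_q$ and you need exactly the count the paper isolates. Your degeneration checks of the closed form are correct and the plan is completable, but the decisive computation is missing; the shortest repair is the paper's, namely to apply Igusa's point-count formula to each $L_m$ instead of iterating the valuation stratification.
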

This integral is an analogy of \eqref{rank one integral}. We need this identity when the root space is two dimensional.
To compute it, we need the following lemma.

\begin{lem}
The equation $x^2-\eps y^2-x=0$ has $q$ nonzero solutions in $\BF_q\times \BF_q$.
\end{lem}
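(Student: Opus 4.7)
The plan is to transform the equation into a standard norm form over a finite field, and then count solutions using the well-known order of the norm-one subgroup of $\mathbb{F}_{q^2}^\times$.

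First, I would complete the square: multiplying through and rearranging, $x^2-\epsilon y^2 - x = 0$ becomes $(2x-1)^2 - 4\epsilon y^2 = 1$. Setting $u = 2x-1$ (a bijective substitution since $p\neq 2$), counting $(x,y)\in \mathbb{F}_q^2$ satisfying the original equation is equivalent to counting $(u,y)\in \mathbb{F}_q^2$ with $u^2 - 4\epsilon y^2 = 1$.

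Next I would invoke the hypothesis that $E/F$ is the unramified quadratic extension and $\epsilon\in \mathcal{O}_F^\times$. Reducing modulo $\varpi$, the class $\bar{\epsilon}\in \mathbb{F}_q^\times$ must be a non-square (else $E/F$ would be split, not a field extension). Since $p\neq 2$, the element $4\epsilon$ is likewise a non-square unit, so $\mathbb{F}_q(\sqrt{4\epsilon}) = \mathbb{F}_{q^2}$, and $u^2 - 4\epsilon y^2$ is precisely the norm form $N_{\mathbb{F}_{q^2}/\mathbb{F}_q}(u + y\sqrt{4\epsilon})$. Therefore the number of $(u,y)\in \mathbb{F}_q^2$ with $u^2-4\epsilon y^2 = 1$ equals the order of the kernel of the norm map $N \colon \mathbb{F}_{q^2}^\times \to \mathbb{F}_q^\times$, which is $q+1$ since the norm map is surjective with cyclic domain of order $q^2 - 1$.

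Thus the original equation has exactly $q+1$ solutions in $\mathbb{F}_q\times \mathbb{F}_q$. Since $(0,0)$ is clearly one of them, subtracting this single solution yields $q$ nonzero solutions, proving the lemma. There is no real obstacle here; the only point to be careful about is ensuring that the substitution $u = 2x-1$ is a bijection (guaranteed by $p\neq 2$) and that the non-square property of $\epsilon$ passes cleanly to the residue field (guaranteed by the unramified hypothesis on $E/F$).
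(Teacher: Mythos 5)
Your proof is correct and follows essentially the same route as the paper: complete the square to rewrite the equation as $(2x-1)^2-\eps(2y)^2=1$, identify the solution count with the order of the norm-one subgroup $|U_1(\BF_q)|=q+1$ (using that $\eps$ reduces to a non-square since $E/F$ is unramified), and subtract the solution $(0,0)$. The paper's proof is just a terser version of yours.
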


\begin{proof}
The equation is equivalent to $(2x-1)^2-\eps(2y)^2=1$. So the number of solutions is equal to $|U_1(\BF_q)|=q+1$. In particular, there are $q$ nonzero solutions.
\end{proof}

Now we prove Lemma \ref{rank one quadratic}. Set $X=\CO_F\times \CO_{F}^{\times}\cup \CO_{F}^{\times}\times \CO_F$. Then for $k\geq 0$, we have
$$\varpi^k X=\{(x,y)\in \CO_{F}^{2}|\; max\{|x|,|y|\}=q^{-k}\}.$$
This implies that $\CO_F\times \CO_F$ is a disjoint union of $\varpi^k X$ for $k\geq 0$. Also for $(x,y)\in \varpi^k X$, we have
$$x+y\sqrt{\eps}\in \varpi^k\CO_{E}^{\times}.$$
As a result, the left hand side of \eqref{GU 1} is equal to
$$1+\sum_{k\geq 0}q^2\int_{\varpi^k X}\sig(x^2-\eps y^2-x)\eta(x+y\sqrt{\eps})\ud x\ud y $$
$$=1+\sum_{k\geq 0}q^{2-2k}\int_{X}\sig(\varpi^{2k}x^2-\varpi^{2k}\eps y^2-\varpi^{k} x)\eta^k(\varpi) \ud x\ud y$$
$$=1+\sum_{k\geq 0}q^{2-2k}\eta^k\sig^k(\varpi)\int_{X}\sig(\varpi^{k}x^2-\varpi^{k}\eps y^2- x) \ud x\ud y.$$

Now we study the integral $\int_{X}\sig(\varpi^{k}x^2-\varpi^{k}\eps y^2- x) \ud x\ud y$. When $k>0$, by Theorem 10.2.1 in \cite{I00}, the integral is equal to
$$q^{-2}(((q^2-1)-(q-1))+(q-1)\frac{(1-q^{-1})\sigma(\varpi)}{1-q^{-1}\sigma(\varpi)})$$
$$=q^{-2}((q^2-q)+(q-1)\frac{(1-q^{-1})\sigma(\varpi)}{1-q^{-1}\sigma(\varpi)}).$$
When $k=0$, by Theorem 10.2.1 in \cite{I00} and the lemma above, the integral is equal to
$$q^{-2}(((q^2-1)-(q))+q\frac{(1-q^{-1})\sigma(\varpi)}{1-q^{-1}\sigma(\varpi)})$$
$$=q^{-2}((q^2-q)+(q-1)\frac{(1-q^{-1})\sigma(\varpi)}{1-q^{-1}\sigma(\varpi)})-q^{-2}+q^{-2}\frac{(1-q^{-1})\sigma(\varpi)}{1-q^{-1}\sigma(\varpi)}.$$
This implies that the left hand side of \eqref{GU 1} is equal to
\begin{align*}
 &\frac{(1-q^{-1})\sigma(\varpi)}{1-q^{-1}\sigma(\varpi)}+\sum_{k\geq 0}\eta^k\sig^k(\varpi)q^{-2k}\cdot ((q^2-q)+(q-1)\frac{(1-q^{-1})\sigma(\varpi)}{1-q^{-1}\sigma(\varpi)})\\
=&\frac{(1-q^{-1})\sigma(\varpi)}{1-q^{-1}\sigma(\varpi)}+\frac{1}{1-\eta\sig(\varpi) q^{-2}}((q^2-q)+(q-1)\frac{(1-q^{-1})\sigma(\varpi)}{1-q^{-1}\sigma(\varpi)})\\
=&\frac{q^{2}(1-q^{-1})(1-q^{-4}\sig^2\eta(\varpi))}{(1-q^{-1}\sig(\varpi))(1-q^{-2}\sig\eta(\varpi))}.
\end{align*} 
This proves Lemma \ref{rank one quadratic}. We also need the following identity.

\begin{lem}\label{rank one quadratic 2}
We have (recall that $\varphi=\varphi_0=1_{\CO_F}-\frac{1}{q-1}\cdot 1_{\varpi^{-1}\CO_{F}^{\times}}$)
\[
1+q^2\int_{\CO_{F}^{2}} \eta(x+\sqrt{\varepsilon}y)\cdot |x^2-\varepsilon y^2|^{-1}\cdot \varphi(\frac{2x}{x^2-\varepsilon y^2})\ud x\ud y=q^2(1-q^{-2}\eta(\varpi)).
\]
\end{lem}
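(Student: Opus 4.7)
The plan is to compute the integral by stratifying the domain according to the valuation of $N(z) := x^2 - \varepsilon y^2$, where $z := x + \sqrt{\varepsilon}\,y \in \CO_E$. Since $E/F$ is unramified and $\varepsilon \in \CO_F^\times$, the pair $(1,\sqrt{\varepsilon})$ is an $\CO_F$-basis of $\CO_E$, $\varpi$ remains a uniformizer of $E$, and $v_E(z) = m$ implies $v_F(N(z)) = 2m$. Writing $z = \varpi^m u$ with $u = u_1 + \sqrt{\varepsilon}\,u_2 \in \CO_E^\times$, the change of variables gives $\ud x\,\ud y = q^{-2m}\,\ud u_1\,\ud u_2$, $|N(z)|^{-1} = q^{2m}$, and $\eta(z) = \eta(\varpi)^m$ (using that $\eta$ is unramified, so $\eta|_{\CO_E^\times} = 1$). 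The factors of $q^{\pm 2m}$ cancel, and the integral reduces to
\[
\sum_{m \geq 0} \eta(\varpi)^m \int_{\CO_E^\times} \varphi\!\left(\frac{2u_1}{\varpi^m N(u)}\right)\ud u_1\,\ud u_2.
\]

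Using $\varphi = 1_{\CO_F} - \tfrac{1}{q-1}\,1_{\varpi^{-1}\CO_F^\times}$ together with $N(u),\,2 \in \CO_F^\times$, the inner integrand equals $1$ when $v(u_1) \geq m$, equals $-\tfrac{1}{q-1}$ when $v(u_1) = m-1$, and vanishes otherwise. Thus each term reduces to evaluating the two volumes
\[
A_m := \vol\{u \in \CO_E^\times : v(u_1) \geq m\}, \qquad B_m := \vol\{u \in \CO_E^\times : v(u_1) = m-1\},
\]
carefully tracking how the unit condition $u \in \CO_E^\times$, i.e.\ $(u_1,u_2) \not\equiv (0,0) \bmod \varpi$, interacts with each constraint on $v(u_1)$.

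The contributions split into three cases. For $m = 0$ the condition is vacuous and one obtains $\vol(\CO_E^\times) = 1 - q^{-2}$. For $m \geq 2$, both $v(u_1) \geq m$ and $v(u_1) = m-1$ force $u_1 \in \varpi\CO_F$, hence the unit condition forces $u_2 \in \CO_F^\times$; a direct calculation, using the simplification $(1-q^{-1})/(q-1) = q^{-1}$, shows that $A_m - B_m/(q-1) = 0$. The only remaining contribution is $m = 1$, which is where the main subtlety lies: here $v(u_1) = 0$ leaves $u_2$ unconstrained in $\CO_F$, whereas $v(u_1) \geq 1$ forces $u_2 \in \CO_F^\times$, and this asymmetry---absent from the rank-one computation of Lemma \ref{rank one quadratic}---produces the nonzero total $-q^{-2}\eta(\varpi)$. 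Summing the surviving contributions, multiplying by $q^2$, and adding $1$ yields $q^2 - \eta(\varpi) = q^2(1 - q^{-2}\eta(\varpi))$, as desired. The main obstacle to watch is the $m=1$ bookkeeping, since the correct volumes there are what break the cancellation pattern of $m \geq 2$ and deliver the right-hand side.
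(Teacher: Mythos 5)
Your proof is correct and follows essentially the same route as the paper: the paper also decomposes $\CO_F^2$ into the shells $\varpi^m X$ with $X=\CO_F\times\CO_F^\times\cup\CO_F^\times\times\CO_F$ (i.e.\ by the valuation of $z=x+\sqrt{\varepsilon}y$), finds that the $m=0$ shell contributes $1-q^{-2}$, the $m=1$ shell contributes $-q^{-2}\eta(\varpi)$, and all shells with $m\geq 2$ vanish. Your volume computations for $A_m$, $B_m$ and the $m=1$ asymmetry check out, so the argument is complete.
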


\begin{proof}
A direct computation shows that 
$$\int_{X} \eta(x+\sqrt{\varepsilon}y)\cdot |x^2-\varepsilon y^2|^{-1}\cdot \varphi(\frac{2x}{x^2-\varepsilon y^2})\ud x\ud y=\frac{q^2-1}{q^2},$$
$$\int_{\varpi X} \eta(x+\sqrt{\varepsilon}y)\cdot |x^2-\varepsilon y^2|^{-1}\cdot \varphi(\frac{2x}{x^2-\varepsilon y^2})\ud x\ud y=-\frac{\eta(\varpi)}{q^2},$$
$$\int_{\varpi^k X} \eta(x+\sqrt{\varepsilon}y)\cdot |x^2-\varepsilon y^2|^{-1}\cdot \varphi(\frac{2x}{x^2-\varepsilon y^2})\ud x\ud y=0,\;k\geq 2.$$
This proves the lemma.
\end{proof}

\subsection{The computation for $(\GU_6,\GU_2\ltimes U)$} 
In this subsection, we compute the local relative character for the model $(\GU_6,\GU_2\ltimes U)$. First, all the arguments in Section \ref{sec non-red strategy 1} still work for the current case, the only exception is that the equation \eqref{decomposition of measure} will become
\begin{equation}
\int_{G(F)}\Phi(g)\ud g=\frac{\Delta_{G}(1)}{\Delta_{H_0/Z_{G,H}}(1)}\zeta_E(1)^{-3}\zeta(1)^{-1}
\end{equation}
$$\cdot \int_{H(F)/Z_{G,H}(F)}\int_{B(F)}\Phi(b\eta h)\ud b\ud h.$$
This is because in the split case, $|T(\BF_q)|=(q-1)^{\dim(T)}$; for our current model, $|T(\BF_q)|=(q^2-1)^3(q-1)$. This implies that 
$$I(\phi_\theta)=\frac{\Delta_{H_0/Z_{G,H}}(1)}{\Delta_{G}(1)}\zeta_E(1)^{3}\zeta(1)\cdot \int_{K}^{\ast} \CY_{\theta^{-1},\xi}(k)\ud k\cdot \int_{K}^{\ast} \CY_{\theta,\xi^{-1}}(k)\ud k.$$

Now we compute the integral $\int_{K}^{\ast} \CY_{\theta,\xi}(k)\ud k$. 
Let $\alpha_1=\varepsilon_1-\varepsilon_2,\;\alpha_2=\varepsilon_2-\varepsilon_3$ and $\alpha_3=2\varepsilon_3$ be the simple roots of $G(F)$. Note that the root spaces of $\alpha_1$ and $\alpha_2$ are two dimensional and the root space of $\alpha_3$ is one dimensional. Let $w_0=\begin{pmatrix}0&0&I_2\\0&I_2&0\\I_2&0&0 \end{pmatrix}$ be the Weyl element that sends $U$ to its opposite. It is clear that the $w_0$-conjugation map stabilizes $L$ and fixes $H_0$. We define the map $a:\GL_1\rightarrow Z_L$ to be
$a(t)=(tI_2,I_2,t^{-1}I_2).$
This clearly satisfies \eqref{the map a}.

For the open Borel orbit, let
$$\eta_0=diag(\begin{pmatrix}0&1\\1&0 \end{pmatrix}\begin{pmatrix}1&1\\0&1 \end{pmatrix},I_2,\begin{pmatrix}1&0\\-1&1 \end{pmatrix}\begin{pmatrix}0&1\\1&0 \end{pmatrix})$$ 
be the representative of the open Borel orbit for the model $(L,H_0)$, and $\eta=\eta_0 w_0$. 
The relation \eqref{eta0 relation} can be easily verified as in the trilinear $\GL_2$-model case in Section \ref{section triple product}.

Now we compute the colors. Let $\Theta$ be the weights of the exterior cube representation of $\hat{G}(\BC)$. We can write it as 
$$\Theta=\{e_i,-e_i,\frac{\pm e_1\pm e_2\pm e_3}{2} \mid 1\leq i\leq 3\}.$$
The weight spaces of $e_i,-e_i$ are two dimensional and the weight spaces of $\frac{\pm e_1\pm e_2\pm e_3}{2}$ are one dimensional.
More precisely, the exterior representation of the $L$-group ${}^L\GU_6$ of $\GU_6$ is explicated in Section 3.1 \cite{Z}. 
More details on  the exterior cubic $L$-function of $\GU_{6}$ are also given there. 

For $\alpha_1$, as in the split case, we let
$I_{\alpha_1}(\theta)=vol(\CI)^{-1}\int_{G(F)} \CY_{\theta,\xi}^{0}(x\eta )(\Phi_1(x)+\Phi_{w_{\alpha_1}}(x)) \ud x.$
Since the root space is two dimensional, the same argument in the split case implies that
\begin{eqnarray*}
I_{\alpha_1}(\theta)&=&1+q^2\int_{\CO_{F}^{2}} (\theta^{-1}\delta^{1/2})(e^{\alpha_{1}^{\vee}}((x+y\sqrt{\varepsilon})^{-1})) \\
&&\CY_{\theta,\xi}^{0}(u_{-\alpha_1}((x+y\sqrt{\varepsilon})^{-1})\eta) \ud x\ud y.
\end{eqnarray*}
Here $u_{-\alpha_1}(a)=diag(\begin{pmatrix}1&0\\a&1 \end{pmatrix},I_2,\begin{pmatrix}1&0\\-\bar{a}&1 \end{pmatrix})$. Meanwhile, a direct computation shows that $u_{-\alpha_1}(x+y\sqrt{\eps})\eta$ is equal to
$$diag(\begin{pmatrix}\frac{1}{x+1}&0\\0&1\end{pmatrix},\begin{pmatrix}1&\frac{-y\sqrt{\eps}}{x+1}\\0&\frac{1}{x+1}\end{pmatrix},\begin{pmatrix}\frac{1}{x+1}&0\\0&1\end{pmatrix})\cdot \eta $$
$$\cdot diag(\begin{pmatrix}1&y\sqrt{\eps}\\0&x+1\end{pmatrix},\begin{pmatrix}1&y\sqrt{\eps}\\0&x+1\end{pmatrix},\begin{pmatrix}1&y\sqrt{\eps}\\0&x+1\end{pmatrix}).$$
Since $\frac{1}{x+y\sqrt{\eps}}=\frac{x}{x^2-y^2\eps}-\frac{y\sqrt{\eps}}{x^2-y^2\eps}$ and $1+\frac{x}{x^2-y^2\eps}=\frac{x+x^2-y^2\eps}{x^2-y^2\eps}$, we have
$$I_{\alpha_1}(\theta)=1+q^2\int_{\CO_{F}^2} \sigma(x^2-\varepsilon y^2-x)\eta(x+y\sqrt{\varepsilon}) \ud x\ud y$$
where $\eta=\theta(e^{\alpha_{1}^{\vee}})\cdot (|\;|^{-1}\cdot \sigma^{-1})\circ N_{E/F}$ and $\sigma=\theta(e^{\beta_{\alpha^{\vee}}})|_{F^{\times}}\cdot |\;|^{-1/2}$ with $\beta_{\alpha_1}^{\vee}=\frac{e_1-e_2-e_3}{2}$. Combing with Lemma \ref{rank one quadratic}, we have
$$I_{\alpha_1}(\theta)=q^2(1-q^{-1})\cdot \frac{1-q^{-2}e^{\alpha_{1}^{\vee}}(\theta)}{(1-q^{-1/2}e^{\beta_{\alpha_1}^{\vee}}(\theta))(1-q^{-1/2}e^{\alpha_{1}^{\vee}-\beta_{\alpha_1}^{\vee}}(\theta))}$$
with
$\beta_{\alpha_1}^{\vee}=\frac{e_1-e_2-e_3}{2},\;\alpha_{1}^{\vee}-\beta_{\alpha_1}^{\vee}=\frac{e_1-e_2+e_3}{2}.$

For $\alpha_2$, as in the Ginzburg--Rallis model case, it is easy to see that 
$$\CY_{\theta,\xi}^{0}(u_{-\alpha_2}(a)\eta)=\varphi(a+\bar{a}), u_{-\alpha_2}(a)=diag(1, \begin{pmatrix}1&0\\a&1 \end{pmatrix},\begin{pmatrix}1&0\\-\bar{a}&1 \end{pmatrix},1).$$
Then we have (note that the root space in this case is also 2-dimensional)
$$I_{\alpha_2}(\theta)=1+q^2\int_{\CO_{F}^{2}}\theta(e^{\alpha_{2}^{\vee}}(x+\sqrt{\varepsilon} y))\cdot |x^2-\varepsilon y^2|^{-1} \cdot \varphi(\frac{2x}{x^2-\varepsilon y^2})\ud x\ud y.$$
By Lemma \ref{rank one quadratic 2}, we know that
$$I_{\alpha_2}(\theta)=q^2\cdot (1-q^{-2}e^{\alpha_{2}^{\vee}}(\theta)).$$

For $\alpha_3$, the root space is one dimensional, so we have the identity
$$I_{\alpha_3}(\theta)=1+q\int_{\CO_F} (\theta^{-1}\delta^{1/2})(e^{\alpha_{3}^{\vee}}(a^{-1})) \CY_{\theta,\xi}^{0}(u_{-\alpha_3}(a^{-1})\eta)\ud a$$
where $u_{-\alpha_3}(x)=diag(I_2,\begin{pmatrix}1&0\\x\sqrt{\eps}&1 \end{pmatrix},I_2)$. On the other hand, $u_{-\alpha_3}(x)\eta$ is equal to
\begin{align*}
&diag(\begin{pmatrix}1+x\sqrt{\eps}&-x\sqrt{\eps}\\0&1-x\sqrt{\eps} \end{pmatrix},\begin{pmatrix}1& -x\sqrt{\eps}\\ 0&1-x^2\eps \end{pmatrix},\begin{pmatrix}1-x\sqrt{\eps}&-x\sqrt{\eps}\\0&1+x\sqrt{\eps} \end{pmatrix})\cdot \eta\\ 
&\times diag(\begin{pmatrix}\frac{1}{1-x^2\eps}&\frac{x\sqrt{\eps}}{1-x^2\eps}\\ \frac{x\sqrt{\eps}}{1-x^2\eps}&\frac{1}{1-x^2\eps} \end{pmatrix},\begin{pmatrix}\frac{1}{1-x^2\eps}&\frac{x\sqrt{\eps}}{1-x^2\eps}\\ \frac{x\sqrt{\eps}}{1-x^2\eps}&\frac{1}{1-x^2\eps} \end{pmatrix},\begin{pmatrix}\frac{1}{1-x^2\eps}&\frac{x\sqrt{\eps}}{1-x^2\eps}\\ \frac{x\sqrt{\eps}}{1-x^2\eps}&\frac{1}{1-x^2\eps} \end{pmatrix}).
\end{align*}
This implies that (note that all the characters are unramified and hence their values at $a+\sqrt{\varepsilon},\;a-\sqrt{\varepsilon}$ are equal to 1 for all $a\in \CO_F$)
$$I_{\alpha_3}(\theta)=q+1=(q+1)\cdot \frac{1-q^{-1}e^{\alpha_{3}^{\vee}}(\theta)}{1-q^{-1}e^{\beta_{\alpha_3}^{\vee}}(\theta)},$$
with $\beta_{\alpha_3}^{\vee}=\alpha_{3}^{\vee}-\beta_{\alpha_3}^{\vee}=e_3.$ Then we compute the set $\Theta^+$.

\begin{lem}
Let $W=S_3\ltimes (\BZ/2\BZ)^3$ be the Weyl group of $G$ and let $\Theta^+$ be the smallest subset of $\Theta$ satisfying the following two conditions:
\begin{enumerate}
\item $\frac{e_1-e_2\pm e_3}{2},e_3\in \Theta^+$.
\item $\Theta^+-(\Theta^+\cap w_{\alpha_1}\Theta^+)=\{\frac{e_1-e_2\pm e_3}{2}\}$, $\Theta^+= w_{\alpha_2}\Theta^+,$ $\Theta^+-(\Theta^+\cap w_{\alpha_3}\Theta^+)=\{e_3\}$.
\end{enumerate}
Then we have $\Theta^+=\{e_1,e_2,e_3,\frac{e_1\pm e_2\pm e_3}{2}\}$.
\end{lem}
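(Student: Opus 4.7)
The plan is to follow the template established in Proposition \ref{theta proposition} for the $(\GSp_6\times\GSp_4,(\GSp_4\times\GSp_2)^0)$ case: first verify that the explicit set $S:=\{e_1,e_2,e_3,\tfrac{e_1\pm e_2\pm e_3}{2}\}$ satisfies the two listed conditions, and then deduce minimality via the intersection-of-solutions argument.

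For the first part I would directly compute the action of each simple reflection. Recall $w_{\alpha_1}$ swaps $e_1\leftrightarrow e_2$, $w_{\alpha_2}$ swaps $e_2\leftrightarrow e_3$, and $w_{\alpha_3}$ sends $e_3\mapsto -e_3$. Since $w_{\alpha_2}$ permutes $\{e_1,e_2,e_3\}$ and preserves $\{\tfrac{e_1\pm e_2\pm e_3}{2}\}$ setwise, one has $w_{\alpha_2}S=S$. For $w_{\alpha_1}$, the elements $e_3$ and $\tfrac{e_1+e_2\pm e_3}{2}$ are fixed, $\{e_1,e_2\}$ is swapped, and $\tfrac{e_1-e_2\pm e_3}{2}\mapsto\tfrac{-e_1+e_2\pm e_3}{2}\notin S$, so $S-w_{\alpha_1}S=\{\tfrac{e_1-e_2\pm e_3}{2}\}$. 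For $w_{\alpha_3}$, the elements $e_1,e_2$ are fixed and $w_{\alpha_3}$ preserves $\{\tfrac{e_1\pm e_2\pm e_3}{2}\}$ setwise, so the only element moved out of $S$ is $e_3$, giving $S-w_{\alpha_3}S=\{e_3\}$. The mandatory inclusion $\{\tfrac{e_1-e_2\pm e_3}{2},e_3\}\subset S$ is clear.

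For minimality, let $S'\subset\Theta$ be any other subset satisfying conditions (1) and (2). I would argue that $S''=S\cap S'$ also satisfies both conditions: it contains $\{\tfrac{e_1-e_2\pm e_3}{2},e_3\}$ by (1), and for each simple reflection the required difference set $S''-w_\alpha S''$ is contained in the prescribed set, because any $\gamma\in S''-w_\alpha S''$ must lie in $S\cap S'$ with $w_\alpha^{-1}\gamma$ missing from at least one of $S,S'$, forcing $\gamma\in S-w_\alpha S$ or $\gamma\in S'-w_\alpha S'$; the reverse containment is immediate since the mandatory set lies in $S''$ and its $w_\alpha$-image lies outside $S$. Consequently $S-S''$ is $W$-invariant (the differences of conditions (2) vanish after passing to $S''$), and since the only $W$-invariant subsets of $\Theta$ are $\emptyset$ and $\Theta$ itself, and $S\subsetneq\Theta$, we conclude $S=S''$, hence $S\subset S'$. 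By the minimality built into the definition of $\Theta^+$, this gives $\Theta^+=S$.

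The main thing to verify carefully is the intersection step, especially the case analysis for the $w_{\alpha_1}$ and $w_{\alpha_3}$ difference conditions applied to $S''$. This is a short but fiddly bookkeeping exercise; no genuinely new ingredient is needed beyond what was used in Proposition \ref{theta proposition} and Proposition \ref{prop theta GL(6)}. Everything else reduces to a direct check, and the set $\Theta^+$ of size $7$ is the smallest solution, consistent with the denominator structure of $\beta(\theta)$ needed for the relative character formula.
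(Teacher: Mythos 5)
Your proposal follows exactly the route the paper takes (it defers to the argument of Proposition \ref{theta proposition}): verify the explicit $7$-element set satisfies conditions (1)--(2) by computing the action of $w_{\alpha_1},w_{\alpha_2},w_{\alpha_3}$, then run the intersection argument to get minimality. Your explicit verification and the check that $S''=S\cap S'$ again satisfies the conditions are both correct.

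One intermediate claim is literally false as stated, both in your write-up and in the paper's template: for this model $\Theta=\{\pm e_i\}\cup\{\tfrac{\pm e_1\pm e_2\pm e_3}{2}\}$ is \emph{not} a single $W$-orbit under $W=S_3\ltimes(\BZ/2\BZ)^3$ --- it splits into the two orbits $\{\pm e_i\}$ and $\{\tfrac{\pm e_1\pm e_2\pm e_3}{2}\}$, so there are four $W$-invariant subsets, not two. The conclusion survives because $S-S''$ is a $W$-invariant subset \emph{contained in} $S$, and $S$ meets each orbit in a proper nonempty subset (it contains $3$ of the $6$ elements $\pm e_i$ and $4$ of the $8$ half-sum weights), so it contains no complete orbit and $S-S''$ must be empty. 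You should insert that one sentence; otherwise the deduction "$S-S''$ is $W$-invariant and $S\subsetneq\Theta$, hence $S-S''=\emptyset$" does not follow from what you wrote.
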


\begin{proof}
It is clear that the set $\{e_1,e_2,e_3,\frac{e_1\pm e_2\pm e_3}{2}\}$ satisfies both conditions. So we just need to show that the set is the unique subset of $\Theta$ satisfying these conditions. The argument is exactly the same as the case $(\GSp_6\times \GSp_4, (\GSp_4\times \GSp_2)^0)$ in Proposition \ref{theta proposition}. We will skip it here.
\end{proof}

Now we decompose $\Theta$ as $\Theta_1\cup \Theta_2$ and $\Phi=\Phi_1\cup \Phi_2$ where $\Theta_i,\Phi_i$ contain the weights/roots whose weight spaces/root spaces are $i$ dimensional. More specifically, 
$$\Phi_1=\{\pm 2e_i\},\;\Phi_2=\{\pm e_i\pm e_j\},\;\Theta_1=\{\frac{\pm e_1\pm e_2\pm e_3}{2}\},$$
$$\Theta_2=\{\pm e_i\},\;1\leq i,j\leq 3,i\neq j.$$
Similarly, we can define $\Phi_{i}^{+}$ and $\Theta_{i}^{+}$ for $i=1,2$. Set
$$\beta(\theta)=\frac{\Pi_{i\in \{1,2\}}\Pi_{\alpha\in \Phi_{i}^{+}}1-q^{-i}e^{\alpha^{\vee}} }{\Pi_{i\in \{1,2\}}\Pi_{\gamma^{\vee}\in \Theta_{i}^{+}}1-q^{-i/2}e^{\gamma^{\vee}}}.$$
Then it is clear that 
$$\zeta(1)^{-1}\zeta_{E}^{-3}(1)\beta(\theta)\beta(\theta^{-1})=\frac{L(1/2,\pi,\wedge^3)}{L(1,\pi,\Ad)}.$$
The next lemma is an analogue of Lemma \ref{lem constant nonreductive} for the current case.

\begin{lem}
Set $c_{WS}(\theta)=\frac{\Pi_{i\in \{1,2\}}\Pi_{\gamma^{\vee}\in \Theta_{i}^{+}}1-q^{-i/2}e^{\gamma^{\vee}}}{\Pi_{i\in \{1,2\}}\Pi_{\alpha\in \Phi_{i}^{+}}1-e^{\alpha^{\vee}} } (\theta)$. Then $$\sum_{w\in W}c_{WS}(w\theta)$$ 
is independent of $\theta$ and is equal to $\frac{1}{\Delta_{H_0/Z_{G,H}}(1)}=\zeta(2)^{-1}=1-q^{-2}$.
\end{lem}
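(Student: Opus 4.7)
The plan is to mirror the proof of Lemma~\ref{GR case constant lem} for the split Ginzburg--Rallis model, but with the combinatorial input adapted to the Weyl group $W=S_3\ltimes(\mathbb{Z}/2\mathbb{Z})^3$ and the doubled weight/root multiplicities coming from the non-split torus. First I would observe that the problem has two clean pieces: showing the sum is independent of $\theta$, and evaluating it. For the former, one checks directly that the total expression
\[
f(\theta)=\sum_{w\in W} c_{WS}(w\theta)
\]
is $W$-invariant and that the denominator poles at $e^{\alpha^\vee}(\theta)=1$ (for each positive root $\alpha$) cancel when summed over the pair $\{w,w_\alpha w\}$, exactly as in the standard Weyl-denominator trick; this plus a boundedness argument at the toric boundary forces $f(\theta)$ to be a regular $W$-invariant function on the torus, hence a constant.

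To evaluate the constant I would introduce coordinates $\theta_i=e^{e_i}(\theta)$ for $i=1,2,3$ (treating the central parameter as fixed, so $c_{WS}$ depends only on the semisimple part) and split the sum over $W=S_3\ltimes\{\pm1\}^3$ into an inner sum over the subgroup $W'=S_2\ltimes\{\pm1\}^2$ stabilizing $e_3$ and an outer sum over the six-element coset space. The inner sum contains exactly the factors of $c_{WS}$ indexed by $\Theta_2^+\cap\{e_1,e_2\}$, $\Theta_1^+$ (the four half-integral weights), $\Phi_1^+\cap\{2e_1,2e_2\}$, and the length-two roots $\{e_1\pm e_2\}$, so it should be recognizable as the analogue of Lemma~\ref{constant for GL(4)xGL(2) non-red} for $\GU_{2,2}$; concretely, one expects the identity
\[
\sum_{w\in W'} \frac{(1-q^{-1}\theta_1\theta_2^{\pm 1})\prod_{\epsilon_1,\epsilon_2}(1-q^{-1/2}\theta_1^{\epsilon_1}\theta_2^{\epsilon_2}\theta_3^{1/2}\cdot(\text{center}))}{\prod_{\pm}(1-\theta_1^{\pm 1}\theta_2^{\pm 1})\prod_i(1-\theta_i^{\pm 2})}=1-q^{-2}.
\]
This inner identity can be checked by expanding the numerator, applying the Weyl-antisymmetrization trick over $W'$, and matching against a geometric-series collapse; the $q^{-2}$ drop comes from the single weight $e_3$ paired with $2e_3$ just as in the unitary base-change pattern.

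The outer sum over the six cosets should then evaluate to $1$ via direct cancellation. Following the strategy after equation~\eqref{GL(6) constant 1} in the $\GL_6$ proof, I would rewrite the outer sum as an $(S_3,\operatorname{sgn})$-antisymmetric sum multiplied by the Weyl denominator in the $\theta_i$, and then verify that every monomial in the resulting polynomial that could contribute a nontrivial $q^{-k}$-coefficient has two equal exponents among the three $\theta_i$'s, forcing the antisymmetrization to vanish. The constant-coefficient match is immediate.

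The main obstacle will be the outer identity at Step~3: because of the 2-dimensional root spaces the factors $(1-q^{-2}e^{\alpha^\vee})$ replace $(1-q^{-1}e^{\alpha^\vee})$ for the long positive roots, and because of the 2-dimensional weight spaces $(1-q^{-1}e^{\gamma^\vee})$ replaces $(1-q^{-1/2}e^{\gamma^\vee})$ for $\gamma\in\Theta_2^+$; tracking these exponents carefully through the antisymmetrization is where the proof is most delicate, and one likely has to invoke the specific arithmetic identities Lemma~\ref{rank one quadratic} and Lemma~\ref{rank one quadratic 2} in reverse to collapse the outer polynomial. Should this direct route prove intractable, I would fall back on the evaluation by residues: compute the residue of $f(\theta)$ along the singular line $\theta_3=q^{1/2}$ using the explicit formulas for $I_{\alpha_i}(\theta)$ derived earlier in this section, observe that only finitely many Weyl translates contribute nonzero residues, and match against the known residue of the constant $(1-q^{-2})$.
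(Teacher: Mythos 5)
Your evaluation strategy has a structural flaw that I do not think can be repaired. You propose to factor the sum over $W=S_3\ltimes(\BZ/2\BZ)^3$ through the subgroup $W'=S_2\ltimes(\BZ/2\BZ)^2$ stabilizing $e_3$, with a constant inner sum equal to $1-q^{-2}$ and an outer coset sum equal to $1$. For such a factorization, the factors of $c_{WS}$ assigned to the outer sum must form $W'$-stable subsets of $\Theta^+$ and $\Phi^+$; but the only $W'$-stable subsets are $\{e_3\}\subset\Theta^+$ and $\{2e_3\}\subset\Phi^+$ (every other weight or root has a $W'$-orbit meeting the complement — e.g.\ the sign change on $e_1$ sends $e_1-e_2$ to $-(e_1+e_2)$ and sends $\frac{e_1+e_2+e_3}{2}$ to $\frac{-e_1+e_2+e_3}{2}\notin\Theta^+$). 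So the outer factor is forced to be $\frac{1-q^{-1}e^{e_3}}{1-e^{2e_3}}$, whose sum over the six cosets $e_3\mapsto\pm e_i$ equals $3$ (each pair $\{e_i,-e_i\}$ contributes exactly $1$), not $1$; and the inner sum, which still carries all four half-integral weights $\frac{e_1\pm e_2\pm e_3}{2}$, genuinely depends on $\theta_3$ and on the coset representative, so it cannot be the constant you claim. Your proposed inner identity also has no counterpart in the paper: the constant for the reductive model $(\GU_4\times \GU_2,(\GU_2\times \GU_2)^0)$ is $(1-q^{-2})^2(1+q^{-1})$, not $1-q^{-2}$, and its Weyl group contains the sign change on the third coordinate, which your $W'$ excludes. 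The coset-decomposition trick succeeds for the $\GL_6$, $\GSp_{10}$, $\GSO_{12}$ and $E_7$ models precisely because there $\Theta^+$ splits into a piece carried by a genuine Levi sub-model (whose own constant lemma makes the inner sum independent of the coset) together with a piece stable under the remaining simple reflections; no such sub-model exists for $\GU_6$, which is why the paper does not argue this way.

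The paper's actual proof is a single direct antisymmetrization over all of $W$: multiply numerator and denominator of $c_{WS}$ by $\theta_1^{-5/2}\theta_2^{-3/2}\theta_3^{-1/2}$ so that the denominator becomes $(W,\sgn)$-anti-invariant, then check that the $(W,\sgn)$-antisymmetrization of the $q^{-k/2}$-coefficient of the numerator vanishes for every $1\le k\le 10$ except $k=4$, where it yields $-q^{-2}$ times the antisymmetrized denominator (the $k=0$ coefficient giving the denominator itself); this establishes both the constancy and the value $1-q^{-2}$ simultaneously. Your fallback residue computation and the suggestion to invoke Lemmas \ref{rank one quadratic} and \ref{rank one quadratic 2} ``in reverse'' are not substitutes for this: those lemmas are $p$-adic integral identities used to compute $I_\alpha(\theta)$ and play no role in the purely combinatorial constant. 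Finally, your separate pole-cancellation argument for independence of $\theta$ becomes unnecessary once the direct evaluation is carried out, and as stated it is incomplete, since you would also need to control the poles along $e^{2e_i}(\theta)=1$ and the behavior at the boundary of the torus.
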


\begin{proof}
Our goal is to show that ($\theta_i$ are arbitrary variables)
$$\sum_{w\in W}w\big(\frac{\Pi_{\varepsilon_1,\varepsilon_2\in \{\pm 1\}}(1-q^{-1/2}\sqrt{\theta_1\theta_{2}^{\varepsilon_1}\theta_{3}^{\varepsilon_2}})\cdot \Pi_{i=1}^{3}(1-q^{-1}\theta_i)}{(1-\frac{\theta_1}{\theta_2})(1-\frac{\theta_1}{\theta_3})(1-\frac{\theta_2}{\theta_3})(1-\theta_1\theta_2)(1-\theta_1\theta_3)(1-\theta_2\theta_3)}$$
$$\cdot \frac{1}{(1-\theta_1)(1-\theta_2)(1-\theta_3)}\big)$$
is equal to $\zeta(2)^{-1}=1-q^{-2}$. 
Multiplying both the denominator and the numerator by $\theta_{1}^{-5/2}\theta_{2}^{-3/2}\theta_{3}^{-1/2}$, the denominator will be $(W,\sgn)$-invariant. Hence it is enough to show that
$$\sum_{w\in W}\sgn(w)\cdot w\big(\theta_{1}^{-5/2}\theta_{2}^{-3/2}\theta_{3}^{-1/2}\cdot \Pi_{i=1}^{3}(1-q^{-1}\theta_i)$$
$$\cdot \Pi_{\varepsilon_1,\varepsilon_2\in \{\pm 1\}}(1-q^{-1/2}\sqrt{\theta_1\theta_{2}^{\varepsilon_1}\theta_{3}^{\varepsilon_2}})\big)$$
is equal to $1-q^{-2}$ times
\begin{equation}\label{unitary GR constant}
\theta_{1}^{-5/2}\theta_{2}^{-3/2}\theta_{3}^{-1/2}(1-\frac{\theta_1}{\theta_2})(1-\frac{\theta_1}{\theta_3})(1-\frac{\theta_2}{\theta_3})
\end{equation}
$$(1-\theta_1\theta_2)(1-\theta_1\theta_3)(1-\theta_2\theta_3)(1-\theta_1)(1-\theta_2)(1-\theta_3).$$
We need to study the $q^{-k/2}$-coefficients ($0\leq q\leq 10$) of
$$\theta_{1}^{-5/2}\theta_{2}^{-3/2}\theta_{3}^{-1/2}\cdot \Pi_{\varepsilon_1,\varepsilon_2\in \{\pm 1\}}(1-q^{-1/2}\sqrt{\theta_1\theta_{2}^{\varepsilon_1}\theta_{3}^{\varepsilon_2}})\cdot \Pi_{i=1}^{3}(1-q^{-1}\theta_i).$$

For $k=1,3,5,7,9$, the $q^{-k/2}$-coefficients are combinations of 
$$\theta_{1}^{a_1}\theta_{2}^{a_2}\theta_{3}^{a_3},\;a_1,a_2\in\{0,-1,-2\},a_3\in \{1,0,-1\}. $$
For any such triple $(a_1,a_2,a_3)$, we either have $a_i=\pm a_j$ for some $i\neq j$ or we have $a_i=0$ for some $i$. Hence the $(W,\sgn)$-summation of the $q^{-k/2}$-coefficients are all equal to 0 for $k=1,3,5,7,9$.

The $q^0$-coefficient is equal to $\theta_{1}^{-5/2}\theta_{2}^{-3/2}\theta_{3}^{-1/2}$, 
and the $(W,\sgn)$-summation of it is equal to the denominator \eqref{unitary GR constant}.

The $q^{-5}$-coefficient is equal to $-\theta_{1}^{3}\theta_{2}\theta_{3}$,
and the $(W,\sgn)$-summation of it is equal to zero since the powers of $\theta_2$ and $\theta_3$ are equal.

The $q^{-1}$-coefficient is equal to
$$\theta_{1}^{-3/2}\theta_{2}^{-1/2}\theta_{3}^{-1/2}+\theta_{1}^{-3/2}\theta_{2}^{-3/2}\theta_{3}^{1/2}+\theta_{1}^{-3/2}\theta_{2}^{-3/2}\theta_{3}^{-1/2}+\theta_{1}^{-3/2}\theta_{2}^{-3/2}\theta_{3}^{-3/2}$$
$$-\theta_{1}^{-5/2}\theta_{2}^{-1/2}\theta_{3}^{-1/2}+\theta_{1}^{-3/2}\theta_{2}^{-5/2}\theta_{3}^{-1/2}-\theta_{1}^{-5/2}\theta_{2}^{-3/2}\theta_{3}^{1/2}.$$
The $(W,\sgn)$-summation of all the terms except the last two are equal to zero because either two of the powers are equal to other or two of the powers are opposite to each other. The $(W,\sgn)$-summation of $\theta_{1}^{-3/2}\theta_{2}^{-5/2}\theta_{3}^{-1/2}$ and $\theta_{1}^{-5/2}\theta_{2}^{-3/2}\theta_{3}^{1/2}$ are both equal to $-1$ times the denominator \eqref{unitary GR constant}. As a result, the $(W,\sgn)$-summation of the $q^{-1}$-coefficient is equal to 0.

The $q^{-4}$-coefficient is equal to $$\theta_{1}^{1/2}\theta_{2}^{-1/2}\theta_{3}^{-1/2}+\theta_{1}^{1/2}\theta_{2}^{-3/2}\theta_{3}^{1/2}-\theta_{1}^{-1/2}\theta_{2}^{-3/2}\theta_{3}^{1/2}-\theta_{1}^{-1/2}\theta_{2}^{-1/2}\theta_{3}^{-1/2}$$
$$-\theta_{1}^{-1/2}\theta_{2}^{-1/2}\theta_{3}^{1/2}-\theta_{1}^{-1/2}\theta_{2}^{-1/2}\theta_{3}^{3/2}-\theta_{1}^{-1/2}\theta_{2}^{1/2}\theta_{3}^{1/2}.$$
The $(W,\sgn)$-summation of all the terms is equal to zero because either two of the powers are equal to other or two of the powers are opposite to each other. As a result, the $(W,\sgn)$-summation of the $q^{-4}$-coefficient is equal to 0.

The $q^{-2}$-coefficient is equal to $$-\theta_{1}^{-1/2}\theta_{2}^{-1/2}\theta_{3}^{-1/2}-\theta_{1}^{-1/2}\theta_{2}^{-3/2}\theta_{3}^{1/2}-\theta_{1}^{-3/2}\theta_{2}^{1/2}\theta_{3}^{-1/2}-\theta_{1}^{-3/2}\theta_{2}^{-3/2}\theta_{3}^{3/2}$$
$$-2\theta_{1}^{-3/2}\theta_{2}^{-1/2}\theta_{3}^{1/2}-\theta_{1}^{-3/2}\theta_{2}^{-1/2}\theta_{3}^{-1/2}-\theta_{1}^{-3/2}\theta_{2}^{-3/2}\theta_{3}^{1/2}+ \theta_{1}^{-5/2}\theta_{2}^{-1/2}\theta_{3}^{1/2}$$
$$- \theta_{1}^{-1/2}\theta_{2}^{-3/2}\theta_{3}^{-1/2}-2\theta_{1}^{-3/2}\theta_{2}^{-3/2}\theta_{3}^{-1/2}-\theta_{1}^{-1/2}\theta_{2}^{-5/2}\theta_{3}^{-1/2}$$
$$-\theta_{1}^{-1/2}\theta_{2}^{-3/2}\theta_{3}^{-3/2}-\theta_{1}^{-3/2}\theta_{2}^{-1/2}\theta_{3}^{-3/2}-\theta_{1}^{-3/2}\theta_{2}^{-5/2}\theta_{3}^{1/2} .$$
The $(W,\sgn)$-summation of all the terms except the last term is equal to zero because either two of the powers are equal to other or two of the powers are opposite to each other. 
The $(W,\sgn)$-summation of $\theta_{1}^{-3/2}\theta_{2}^{-5/2}\theta_{3}^{1/2}$ is equal to  the denominator \eqref{unitary GR constant}. 

The $q^{-3}$-coefficient is equal to $$\theta_{1}^{-3/2}\theta_{2}^{-3/2}\theta_{3}^{1/2}+\theta_{1}^{-3/2}\theta_{2}^{-1/2}\theta_{3}^{-1/2}+\theta_{1}^{-1/2}\theta_{2}^{-5/2}\theta_{3}^{1/2}+\theta_{1}^{-1/2}\theta_{2}^{-1/2}\theta_{3}^{-3/2}$$
$$+2\theta_{1}^{-1/2}\theta_{2}^{-3/2}\theta_{3}^{-1/2}+\theta_{1}^{-1/2}\theta_{2}^{-3/2}\theta_{3}^{1/2}+\theta_{1}^{-1/2}\theta_{2}^{-1/2}\theta_{3}^{-1/2}- \theta_{1}^{1/2}\theta_{2}^{-3/2}\theta_{3}^{-1/2}$$
$$+ \theta_{1}^{-3/2}\theta_{2}^{-1/2}\theta_{3}^{1/2}+2\theta_{1}^{-1/2}\theta_{2}^{-1/2}\theta_{3}^{1/2}+\theta_{1}^{-3/2}\theta_{2}^{1/2}\theta_{3}^{1/2}$$
$$+\theta_{1}^{-3/2}\theta_{2}^{-1/2}\theta_{3}^{3/2}+\theta_{1}^{-1/2}\theta_{2}^{1/2}\theta_{3}^{-1/2}+\theta_{1}^{-1/2}\theta_{2}^{-3/2}\theta_{3}^{3/2} .$$
The $(W,\sgn)$-summation of all the terms is equal to zero because either two of the powers are equal to other or two of the powers are opposite to each other. 
Hence the $(W,\sgn)$-summation of the $q^{-3}$-coefficient is equal to $0$.

This finishes the proof of the lemma.
\end{proof}

Now by a very similar argument as in Section \ref{sec:S-theta}, our computation of the colors and the lemma above implies that
$$\int_{K}^{\ast} \CY_\theta(k)\ud k=\frac{\Delta_G(1)}{\Delta_{H_0/Z_{G,H}}} \zeta(1)^{-1}\zeta_E(1)^{-3}\cdot \beta(\theta).$$
There are only two differences 
\begin{itemize}
\item The $c$-function function for $\GU_6$ is defined to be 
$$c_{\alpha}(\theta)=\frac{1-q^{-1}e^{\alpha^\vee}}{1-e^{\alpha^{\vee}}}(\theta)$$ 
if the root space of $\alpha$ is one dimensional and is defined to be $$c_{\alpha}(\theta)=\frac{1-q^{-2}e^{\alpha^\vee}}{1-e^{\alpha^{\vee}}}(\theta)$$ 
if the root space of $\alpha$ is two dimensional. This matches our definition of $\beta(\theta)$ and $c_{WS}(\theta)$ for this case.
\item The volume of Iwahori subgroup of $\GU_6$ is equal to $$\Delta_G(1)\zeta(1)^{-1}\zeta_{E}(1)^{-3}\cdot q^{-l(W)}.$$ 
This is why we get $\zeta(1)^{-1}\zeta_E(1)^{-3}$ instead of $\zeta(1)^{-rk(G)}$ for this case.
\end{itemize}

This implies that 
$$I(\phi_\theta)=\frac{\Delta_G(1)}{\Delta_{H_0/Z_{G,H}}(1)}\zeta(1)^{-1}\zeta_E(1)^{-3}\cdot \beta(\theta)\cdot \beta(\theta^{-1})$$
$$=\frac{\Delta_G(1)}{\Delta_{H_0/Z_{G,H}}(1)}\cdot \frac{L(1/2,\pi,\wedge^{3})}{L(1,\pi,\Ad)}.$$

\subsection{The computation for $(\GU_4\times \GU_2,(\GU_2\times \GU_2)^0)$}
In this subsection, we compute the local relative character for the model $(\GU_4\times \GU_2,(\GU_2\times \GU_2)^0)$. We first study the open Borel orbit. Let $B_{2n}$ be the upper triangular Borel subgroup of $\GU_{n,n}$ and $B=B_4\times B_2$ be a Borel subgroup of $G$. We write $B=TN$ and let $\bar{B}=T\bar{N}$ be the opposite Borel subgroup.

Set $\eta^{-1}=\begin{pmatrix}1&0&0&0\\-1&1&0&0\\1&0&1&0\\1&-1&1&1\end{pmatrix}$ and $\eta=\begin{pmatrix}1&0&0&0\\1&1&0&0\\-1&0&1&0\\1&1&-1&1\end{pmatrix}$. 
The proofs of the following two lemmas are similar to the $(\GSp_6\times \GSp_4,(\GSp_4\times \GSp_2)^0)$ case, and we will skip them here.

\begin{lem}
The double cosets $B(F)\back G(F)/H(F)$ contain a unique open orbit $B(F)(\eta, I_2) H(F)$.
\end{lem}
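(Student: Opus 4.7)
The plan is to follow the exact same strategy as in the proof of the analogous statement for the $(\GSp_6\times \GSp_4,(\GSp_4\times \GSp_2)^0)$ case. The first step is to reduce the double coset problem on $G$ to a single-sided orbit problem on the flag variety of $\GU_{2,2}$. Concretely, for $(g_1,g_2)\in G(F)$, I would use the right $H(F)$-action on the second factor (via $(h_1,h_2)\mapsto (\ldots,g_2 h_1)$) to send $g_2$ to $I_2$; this forces $h_1 = g_2^{-1}$, after which the residual $H(F)$-action, combined with the left $B(F)$-action and the similitude constraint, leaves precisely the subgroup
\[
H'(F) := \{\iota(b,h) : b\in B_2(F),\; h\in \GU_{1,1}(F),\; l(b)=l(h)\}
\]
acting on the right on $g_1\in \GU_{2,2}(F)$. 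Hence $B(F)\backslash G(F)/H(F)$ is in bijection with the $H'(F)$-orbits on the flag variety $X(F) = \GU_{2,2}(F)/B_4(F)$, and the question reduces to showing that $X(F)$ contains a unique open $H'(F)$-orbit represented by $\eta^{-1}$ times the standard flag. A dimension count ($\dim X = 6 = \dim H'$) confirms that a unique open orbit is the expected outcome.

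Next, I would identify $X(F)$ with the variety of isotropic flags $(V_1,V_2)$ in the Hermitian space $W_4 = E\langle w, w_1, w_1^{\perp}, w^{\perp}\rangle$ defining $\GU_{2,2}$ (with $\dim_E V_1 = 1$ and $V_2$ Lagrangian). A direct matrix computation would then display the flag $\eta^{-1}\cdot(\mathrm{Span}\{w\},\mathrm{Span}\{w,w_1\})$ as an explicit isotropic flag with non-trivial projections to all coordinate subspaces. The core step, parallel to the lemma used in the $\GSp_6\times \GSp_4$ case, is to prove that the $H'(F)$-action has a unique open orbit on $X(F)$ via a coordinate-by-coordinate normalization: on a Zariski-open subset of $X(F)$ defined by the non-vanishing of certain projections of $(v_1,v_2)$, one first normalizes $v_1$ using the embedded $\GU_{1,1}$-factor, then kills the remaining coordinates of $v_2$ using unipotent elements of $B_2$ together with the stabilizer of $v_1$ in $H'$, and finally scales via a diagonal element matching similitudes.

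The main technical obstacle, relative to the split $\GSp_6\times \GSp_4$ case, is that scalars now live in $E$ rather than $F$, so each normalization must be compatible with the Hermitian form and with the constraint $l(b)=l(h)\in F^{\times}$. In particular, one can normalize certain leading coefficients only up to $N_{E/F}(E^{\times})$, and one must check that the coupling of $B_2$ and $\GU_{1,1}$ through the similitude condition still provides enough freedom to reach a single canonical flag. I expect this bookkeeping to be the chief source of friction, though the structure of the argument mirrors the $\GSp$ case block by block. Once the uniqueness of the open orbit is established, the companion integral-structure statement (the analogue of Lemma \ref{lem eta}) follows by the same argument as in the $\GSp$ case, namely by tracking which $T(\CO_F)$--$N(\varpi\CO_F)$-cosets are preserved by $\eta$, $\eta^{-1}$, and elements of $\bar{N}_4(\varpi\CO_F)\cdot \bar{N}_2(\varpi\CO_F)$.
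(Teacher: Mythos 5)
Your proposal follows the same route the paper takes: the paper omits the proof of this lemma entirely, saying only that it is ``similar to the $(\GSp_6\times \GSp_4,(\GSp_4\times \GSp_2)^0)$ case,'' and your outline is exactly the adaptation of that argument --- reduce to the orbits of the subgroup $H'$ (a Borel of one $\GU_{1,1}$-factor coupled to the other $\GU_{1,1}$ through the similitude) on the flag variety of $\GU_{2,2}$, realize the flag variety as isotropic flags $V_1\subset V_2$, and normalize coordinate by coordinate on a Zariski-open locus. You also correctly isolate the only genuinely new verification relative to the symplectic case, namely that the $E^{\times}$-scaling available from the $\GU_{1,1}$-tori together with the similitude factor suffices to normalize coefficients exactly rather than only up to $N_{E/F}(E^{\times})$, which is precisely the point (cf.\ Remark \ref{rmk one open borel orbit}) that makes the similitude version have a single open orbit.
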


\begin{lem}
For all $n\in \bar{N}(\varpi\CO_F)$, we have
$$n(\eta,I_2)\in T(\CO_F)N(\varpi\CO_F)(\eta,I_2)H(\CO_F).$$
\end{lem}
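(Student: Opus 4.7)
The plan is to mimic, over the unramified quadratic extension $E/F$, the two-step strategy carried out for the split model $(\GSp_6\times \GSp_4,(\GSp_4\times \GSp_2)^0)$ in Section \ref{sec:GSp6-model}. The first step will produce a weak decomposition with the middle unipotent factor only in $N(\CO_F)$, and the second step will upgrade it to $N(\varpi\CO_F)$ by a set-invariance argument.

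For the first step, I would write $n=(n_1,n_2)$ with $n_1\in\bar N_4(\varpi\CO_F)\subset\GU_{2,2}(F)$ and $n_2\in\bar N_2(\varpi\CO_F)\subset\GU_{1,1}(F)$. Working on the flag variety of $\GU_{2,2}(F)$ exactly as in the symplectic case, the orbit analysis behind the previous lemma (which identified $(\eta,I_2)$ as the representative of the unique open $B\times H$-double coset) can be rerun with integrality tracked: because the coefficients used to normalize the three basis vectors $(v_1,v_2,v_3)$ all came out to be of the form $1+\varpi(\cdot)$, $1$, or the inverse of such a quantity, every step of the reduction stays in $T(\CO_F)$, $N(\CO_F)$, $H(\CO_F)$. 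This yields $n(\eta,I_2)\in T(\CO_F)N(\CO_F)(\eta,I_2)H(\CO_F)$.

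The second step is to promote the middle factor from $N(\CO_F)$ to $N(\varpi\CO_F)$. Following the symplectic case, I would introduce the sets
\[
V_i(\CO_E)=\{(a_1,a_2,a_3,a_4)^t \mid a_i\in\CO_E^\times,\ a_j\in\varpi\CO_E\ \text{for }j\ne i\}\subset E^4
\]
for $i=2,3,4$ (so $V_i\subset\GU_{2,2}(F)\cdot v$ for appropriate standard basis vectors), and check by direct matrix computation that each of $\eta^{\pm 1}$, $T(\CO_F)$, $\bar N(\varpi\CO_F)$, $N(\varpi\CO_F)$, and the projection of $H(\CO_F)$ to the first factor preserves each $V_i$. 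Writing the identity from step one as $n(\eta,I_2)=t\,n'(\eta,I_2)h$ with $n'\in N(\CO_F)$ and then comparing actions of both sides on elements of $V_i$, the set $V_i$-invariance of every factor except possibly $n'$ will force $n'\in N(\varpi\CO_F)$.

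The main obstacle I anticipate is bookkeeping of the hermitian similitude condition: because $H=(\GU_{1,1}\times\GU_{1,1})^0$ is cut out by equal similitude factors, in the first step one needs to arrange the element of $H$ used to normalize $v_3$ (the analog of the $h_0=\mathrm{diag}(1,c^{-1},\dots)$ in the symplectic computation) so that its similitude factor matches the one used in the $\GU_{1,1}$ factor. In the unramified setting this is automatic because all the scalar adjustments will sit in $\CO_F^\times$ and be of the form $1+\varpi(\cdot)$, but the verification that such an $h\in H(\CO_F)$ exists (rather than merely in $H(F)$) is the genuinely model-specific input. Once this is done, the $V_i$-invariance computation in the second step is routine and parallel to the symplectic case.
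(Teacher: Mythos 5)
Your proposal is correct and is essentially the argument the paper intends: the authors explicitly skip the proof here, saying it is "similar to the $(\GSp_6\times \GSp_4,(\GSp_4\times \GSp_2)^0)$ case," and your two-step scheme (first the orbit computation with integrality tracked to land in $T(\CO_F)N(\CO_F)(\eta,I_2)H(\CO_F)$, then the upgrade of the middle factor to $N(\varpi\CO_F)$ via invariance of the sets $V_i$ under $\eta^{\pm1}$, $T(\CO_F)$, $\bar N(\varpi\CO_F)$, $N(\varpi\CO_F)$ and $H(\CO_F)$) is exactly the symplectic argument transported to $E/F$. Your remark about matching similitude factors inside $H(\CO_F)$ is the right model-specific point to check, and it goes through as you describe since the normalizing scalars lie in $1+\varpi\CO_E$.
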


Now all the arguments in Section \ref{sec:reductive-case} still work for the current case, the only exception is that the equation in Lemma \ref{lem Haar measure} will become
\begin{equation}
\int_{G(F)}\Phi(g)\ud g=\frac{\Delta_{G}(1)}{\Delta_{H/Z_{G,H}}(1)}\zeta_E(1)^{-3}\zeta(1)^{-2}
\end{equation}
$$\cdot \int_{H(F)/Z_{G,H}(F)}\int_{B(F)}\Phi(b\eta h)\ud b\ud h.$$
This implies that 
$$I(\phi_\theta)=\frac{\Delta_{H/Z_{G,H}}(1)}{\Delta_{G}(1)}\zeta_E(1)^{3}\zeta(1)^2\cdot \int_{K} \CY_{\theta^{-1}}(k)\ud k\cdot \int_{K} \CY_{\theta}(k)\ud k.$$

Next we compute the colors. For this model, since the representation $\pi$ of $G(F)$ is of trivial central character, the associated $L$-parameter factors through the L-group of $\GU_4\times \GU_2/(Res_{E/F}\GL_1)^{diag}$, which is a subgroup of the L-group of $\GU_6$. The 20-dimensional representation $\rho_X=\wedge^2\otimes {\rm std}_2\oplus {\rm std}_4\oplus  {\rm std}_{4}^{\vee}$ in this case is the restriction of the 20 dimensional exterior cube representation of ${}^L\GU_6$ to ${}^L(\GU_4\times \GU_2/(Res_{E/F}\GL_1)^{diag})$. Let $\Theta$ be the weights of the representation $\wedge^2\otimes {\rm std}_2\oplus {\rm std}_4\oplus {\rm std}_{4}^{\vee}$. We can write it as $$\Theta=\{\frac{\pm e_1\pm e_2\pm e_1'}{2},\;\pm e_1',\;\pm e_i \mid 1\leq i\leq 2\}.$$ 
The weight spaces of $\pm e_1',\;\pm e_i$ are two dimensional and the weight spaces of $\frac{\pm e_1\pm e_2\pm 2e_1'}{2}$ are one dimensional.

Let $\alpha_1=\varepsilon_1-\varepsilon_2,\alpha_2=2\varepsilon_2$ and $\alpha'=2\varepsilon_1'$ be the simple roots of $G$. We can define $I_{\alpha_1},I_{\alpha_2}$ and $I_{\alpha'}$ as in the previous case. For $\alpha_1$, the root space is two dimensional and we have the matrix identity
\begin{equation}
(u_{-\alpha_1}(x+y\sqrt{\varepsilon})\eta,I_2)=(b,h^{-1})\cdot (\eta,I_2)\cdot (g,h)
\end{equation}
with $(b,h^{-1})\in B(F), (g,h)\in H(F)$ where
$$u_{-\alpha_1}(x+y\sqrt{\varepsilon})=\begin{pmatrix}1&0&0&0\\x+y\sqrt{\varepsilon}&1&0&0\\0&0&1&0\\0&0&-x+y\sqrt{\varepsilon}&1\end{pmatrix},\;h=\begin{pmatrix}1&y\sqrt{\varepsilon}\\0&1+x \end{pmatrix},$$
$$g=\begin{pmatrix}1+x&0&0&0\\0&1&y\sqrt{\varepsilon}&0\\0&0&1+x&0\\ -y\sqrt{\varepsilon}&0&0&1\end{pmatrix},\; b=\begin{pmatrix}\frac{1}{1+x}&0&0&0\\0&1&\frac{-y\sqrt{\varepsilon}}{1+x}&0\\0&0&\frac{1}{1+x}&0\\0&0&0&1\end{pmatrix}.$$
By the same argument as in the $\alpha_1$ case in the previous subsection, we have
$$I_{\alpha_1}(\theta)=q^2(1-q^{-1})\cdot \frac{1-q^{-2}e^{\alpha_{1}^{\vee}}(\theta)}{(1-q^{-1/2}e^{\beta_{\alpha_1}^{\vee}}(\theta))(1-q^{-1/2}e^{\alpha_{1}^{\vee}-\beta_{\alpha_1}^{\vee}}(\theta))}$$
with $\beta_{\alpha_1}^{\vee}=\frac{e_1-e_2-e_1'}{2}$ and $\alpha_{1}^{\vee}-\beta_{\alpha_1}^{\vee}=\frac{e_1-e_2+e_1'}{2}$.

For $\alpha_2$, the root space is one dimensional and we have the matrix identity
\begin{equation}
(u_{-\alpha_2}(x)\eta,I_2)=(b,h^{-1})\cdot (\eta,I_2)\cdot (g,h)
\end{equation}
with $(b,h^{-1})\in B(F), (g,h)\in H(F)$ where
$$u_{-\alpha_2}(x)=\begin{pmatrix}1&0&0&0\\0&1&0&0\\0&x\sqrt{\varepsilon}&1&0\\0&0&0&1\end{pmatrix},\;g=\begin{pmatrix}1&0&0&x\sqrt{\varepsilon}\\0&1+x\sqrt{\varepsilon}&0&0\\0&0&1+x\sqrt{\varepsilon}&0\\ x\sqrt{\varepsilon}&0&0&1\end{pmatrix}, $$
$$h=(1+x\sqrt{\varepsilon})I_2,\;b=\frac{1}{1-x^2\varepsilon}\begin{pmatrix}1-x\sqrt{\varepsilon}&x\sqrt{\varepsilon}&-x\sqrt{\varepsilon}&-x\sqrt{\varepsilon}\\0&1&-x\sqrt{\varepsilon}&-x\sqrt{\varepsilon}\\0&0&1-x^2\varepsilon&-x^2\varepsilon+x\sqrt{\varepsilon}\\0&0&0&1-x\sqrt{\varepsilon}\end{pmatrix}.$$
By the same argument as in the $\alpha_3$ case in the previous subsection (use the fact that all the unramified characters have value 1 at $a\pm \sqrt{\varepsilon}$ for $a\in \CO_F$), we have
$$I_{\alpha_2}(\theta)=q+1=(q+1)\cdot \frac{1-q^{-1}e^{\alpha_{2}^{\vee}}(\theta)}{1-q^{-1}e^{\beta_{\alpha_2}^{\vee}}(\theta)},$$
with $\beta_{\alpha_2}^{\vee}=\alpha_{2}^{\vee}-\beta_{\alpha_2}^{\vee}=e_2.$

For $\alpha'$, the root space is one dimensional and it can be reduced to $\alpha_2$ but we need to change $u_{-\alpha_2}(x)\eta$ to $\eta u_{-\alpha_2}(-x)$. We have the matrix identity
\begin{equation}
(\eta u_{-\alpha_2}(-x),I_2)=(b,h^{-1})\cdot (\eta,I_2)\cdot (g,h)
\end{equation}
with $(b,h^{-1})\in B(F), (g,h)\in H(F)$ where
$$g=\begin{pmatrix}\frac{1}{1-x\sqrt{\varepsilon}}&0&0&\frac{-x\sqrt{\varepsilon}}{1-x\sqrt{\varepsilon}}\\0&1+x\sqrt{\varepsilon}&\frac{x\sqrt{\varepsilon}}{1-x\sqrt{\varepsilon}}&0\\0&0&\frac{1}{1-x\sqrt{\varepsilon}}&0\\ \frac{-x\sqrt{\varepsilon}}{1-x\sqrt{\varepsilon}}&0&0&\frac{1}{1-x\sqrt{\varepsilon}}\end{pmatrix},h=\begin{pmatrix}1-x\sqrt{\varepsilon}&\frac{-x\sqrt{\varepsilon}}{1+x\sqrt{\varepsilon}}\\0&\frac{1}{1+x\sqrt{\varepsilon}} \end{pmatrix}, $$
$$b=\begin{pmatrix}1&\frac{-x\sqrt{\varepsilon}}{1+x\sqrt{\varepsilon}}&\frac{x\sqrt{\varepsilon}}{1+x\sqrt{\varepsilon}}&\frac{x\sqrt{\varepsilon}}{1+x\sqrt{\varepsilon}}\\0&\frac{1-x\sqrt{\varepsilon}}{1+x\sqrt{\varepsilon}}&0&\frac{x\sqrt{\varepsilon}}{1+x\sqrt{\varepsilon}}\\0&0&\frac{1-x\sqrt{\varepsilon}}{1+x\sqrt{\varepsilon}}&\frac{-x\sqrt{\varepsilon}}{1+x\sqrt{\varepsilon}}\\0&0&0&1\end{pmatrix}.$$
By the same argument as in the $\alpha_3$ case in the previous subsection, we have 
$$I_{\alpha'}(\theta)=q+1=(q+1)\cdot \frac{1-q^{-1}e^{\alpha'^{\vee}}(\theta)}{1-q^{-1}e^{\beta_{\alpha'}^{\vee}}(\theta)},$$
with $\beta_{\alpha'}^{\vee}=\alpha'^{\vee}-\beta_{\alpha'}^{\vee}=e_1'.$ Then we compute the set $\Theta^+$.

\begin{lem}
Let $W=(S_2\ltimes (\BZ/2\BZ)^2)\times (\BZ/2\BZ)$ be the Weyl group of $G$ and let $\Theta^+$ be the smallest subset of $\Theta$ satisfying the following two conditions:
\begin{enumerate}
\item $\frac{e_1-e_2\pm e_1'}{2},e_2,e_1'\in \Theta^+$.
\item $\Theta^+-(\Theta^+\cap w_{\alpha_1}\Theta^+)=\{\frac{e_1-e_2\pm e_1'}{2}\}$, $\Theta^+-(\Theta^+\cap w_{\alpha_2}\Theta^+)=\{e_2\},$ $\Theta^+-(\Theta^+\cap w_{\alpha'}\Theta^+)=\{e_1'\}$.
\end{enumerate}
Then we have $\Theta^+=\{e_1,e_2,e_1',\frac{e_1\pm e_2\pm e_1'}{2}\}$.
\end{lem}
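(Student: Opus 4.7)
The plan is to follow the strategy used in Propositions~\ref{theta proposition}, \ref{theta for GL(4)xGL(2)}, and \ref{prop theta GL(6)}: first check that the proposed seven-element set $\{e_1,e_2,e_1',\frac{e_1\pm e_2\pm e_1'}{2}\}$ satisfies conditions~(1) and~(2), and then show it is the minimal such subset of $\Theta$ by generating its elements from the seeds of condition~(1) via forced applications of the simple reflections.

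For the verification, I would tabulate the action of each simple reflection on the proposed $\Theta^+$. The reflection $w_{\alpha_1}$ swaps $e_1\leftrightarrow e_2$, fixes $e_1'$ and the two weights $\frac{e_1+e_2\pm e_1'}{2}$, and maps $\frac{e_1-e_2\pm e_1'}{2}$ to $\frac{-e_1+e_2\pm e_1'}{2}\notin\Theta^+$, so the failure set for $w_{\alpha_1}$ is exactly $\{\frac{e_1-e_2\pm e_1'}{2}\}$. The reflection $w_{\alpha_2}$ negates $e_2$: it interchanges $\frac{e_1+e_2\pm e_1'}{2}$ with $\frac{e_1-e_2\pm e_1'}{2}$, fixes $e_1$ and $e_1'$, and sends $e_2$ outside $\Theta^+$, giving failure set $\{e_2\}$. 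Similarly $w_{\alpha'}$ negates $e_1'$, pairing up weights differing only in the sign of $e_1'$ and sending $e_1'$ alone outside $\Theta^+$. Condition~(1) is immediate.

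For minimality, I would start from the mandated seeds $\frac{e_1-e_2\pm e_1'}{2},\,e_2,\,e_1'$ of condition~(1) and close under the rule ``if $v\in\Theta^+$ lies outside the designated failure set for $w_\alpha$, then $w_\alpha v\in\Theta^+$.'' Applying $w_{\alpha_2}$ to $\frac{e_1-e_2\pm e_1'}{2}$ (which is not the special element $e_2$ for $w_{\alpha_2}$) forces $\frac{e_1+e_2\pm e_1'}{2}\in\Theta^+$; applying $w_{\alpha_1}$ to $e_2$ (which is not among $\{\frac{e_1-e_2\pm e_1'}{2}\}$) forces $e_1\in\Theta^+$. Any further application of a simple reflection to an element now in the set either fixes it, sends it to another member already listed, or is permitted to leave $\Theta^+$ precisely when the source is one of the three specified special elements. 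Thus the seven elements enumerated are both necessary and sufficient, yielding minimality.

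I do not anticipate any real obstacle: the argument is a finite orbit-expansion check, identical in flavor to the reductive cases already handled. The only point requiring care is the correct identification of the Weyl-group action as $(S_2\ltimes(\BZ/2)^2)\times(\BZ/2)$ permuting $e_1,e_2$ and independently flipping the signs of $e_1$, $e_2$, $e_1'$, together with the observation that, unlike the ambient $\Theta$, the set $\Theta^+$ is not $W$-stable (which is what makes conditions~(1)--(2) capable of pinning it down uniquely).
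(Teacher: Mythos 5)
Your proposal is correct: the verification that $\{e_1,e_2,e_1',\frac{e_1\pm e_2\pm e_1'}{2}\}$ satisfies conditions (1)--(2) is exactly as you describe, and your closure rule --- if $v\in\Theta^+$ and $v$ is not in the failure set $S_\alpha$ for $w_\alpha$, then $w_\alpha v\in\Theta^+$ --- is precisely what condition (2) forces, so generating from the seeds ($w_{\alpha_2}$ applied to $\frac{e_1-e_2\pm e_1'}{2}$ yields $\frac{e_1+e_2\pm e_1'}{2}$, and $w_{\alpha_1}$ applied to $e_2$ yields $e_1$) does establish that every admissible subset contains the seven listed weights, hence minimality. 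Your route for the minimality step is, however, different from the paper's. The paper instead argues (by reference to the $(\GSp_6\times\GSp_4,(\GSp_4\times\GSp_2)^0)$ case) that if $\Theta'^+$ is another solution then $\Theta^+\cap\Theta'^+$ is again a solution and $\Theta^+-(\Theta^+\cap\Theta'^+)$ is $W$-invariant, concluding because the only $W$-invariant subsets of $\Theta$ are $\emptyset$ and $\Theta$. Note that this last assertion needs a small adjustment in the present case: here $\Theta$ splits into three $W$-orbits ($\{\frac{\pm e_1\pm e_2\pm e_1'}{2}\}$, $\{\pm e_1,\pm e_2\}$, $\{\pm e_1'\}$), so one must instead observe that the claimed $\Theta^+$ contains no complete $W$-orbit and hence the $W$-invariant difference is empty. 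Your generation argument sidesteps this orbit-counting issue entirely and directly proves the ``smallest'' assertion of the lemma, at the cost of an explicit (but very short) orbit-expansion computation; the paper's argument, when it applies cleanly, gives uniqueness of the solution with no computation at all. The only cosmetic slip is your phrase ``three specified special elements'': the failure sets contain four weights in total, distributed over three reflections.
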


\begin{proof}
It is clear that the set  $\{e_1,e_2,e_1',\frac{e_1\pm e_2\pm e_1'}{2}\}$ satisfies the two conditions. So we just need to show that the set is the unique subset of $\Theta$ satisfying these conditions. The argument is exactly the same as the case $(\GSp_6\times \GSp_4, (\GSp_4\times \GSp_2)^0)$ in Proposition \ref{theta proposition}. We will skip it here.
\end{proof}

Now as in the previous case, we decompose $\Theta$ as $\Theta_1\cup \Theta_2$ and $\Phi=\Phi_1\cup \Phi_2$ where $\Theta_i,\Phi_i$ contain the weights/roots whose weight spaces/root spaces are $i$ dimensional: 
$$\Phi_1=\{\pm 2e_i,\pm 2e_1'\},\;\Phi_2=\{\pm e_1 \pm e_2\},$$
$$\Theta_1=\{\frac{\pm e_1\pm e_2\pm e_1'}{2}\},\;\Theta_2=\{\pm e_i,\pm e_1'\},\;1\leq i\leq 2.$$
Similarly, we can define $\Phi_{i}^{+}$ and $\Theta_{i}^{+}$ for $i=1,2$. Set
$$\beta(\theta)=\frac{\Pi_{i\in \{1,2\}}\Pi_{\alpha\in \Phi_{i}^{+}}1-q^{-i}e^{\alpha^{\vee}} }{\Pi_{i\in \{1,2\}}\Pi_{\gamma^{\vee}\in \Theta_{i}^{+}}1-q^{-i/2}e^{\gamma^{\vee}}}.$$
Then it is clear that 
$$\zeta(1)^{-2}\zeta_{E}^{-3}(1)\beta(\theta)\beta(\theta^{-1})=\frac{L(1/2,\pi,\rho_X)}{L(1,\pi,\Ad)}.$$
The next lemma is an analogue of Lemma \ref{lem constant reductive} for the current case.

\begin{lem}
Set $c_{WS}(\theta)=\frac{\Pi_{i\in \{1,2\}}\Pi_{\gamma^{\vee}\in \Theta_{i}^{+}}1-q^{-i/2}e^{\gamma^{\vee}}}{\Pi_{i\in \{1,2\}}\Pi_{\alpha\in \Phi_{i}^{+}}1-e^{\alpha^{\vee}} }(\theta)$. Then 
$$\sum_{w\in W}c_{WS}(w\theta)$$ 
is independent of $\theta$ and is equal to 
$$\frac{1}{\Delta_{H/Z_{G,H}}(1)}=\zeta(2)^{-2}L(1,\eta_{E/F})^{-1}=(1-q^{-2})^2(1+q^{-1}).$$
\end{lem}

\begin{proof}
Since $H$ is reductive, Theorem 7.2.1 of \cite{Sa} implies that the summation is independent of $\theta$. Now we let $\theta=\delta_{B}^{1/2}$. The lemma follows from the following two claims:
\begin{enumerate}
\item $c_{WS}(w\theta)$ is zero unless $w$ is the longest Weyl element.
\item If $w$ is the longest Weyl element, we have $c_{WS}(w\theta)=(1-q^{-2})^2(1+q^{-1})$.
\end{enumerate}
The second claim is easy to prove so we will focus on the first one. Let $w=(s,s')\in W$ with $s\in S_2\ltimes (\BZ/2\BZ)^2$ and $s'\in \BZ/2\BZ$ so that $c_{WS}(w\theta)$ is nonzero. 

The factor $1-q^{-1}e^{e_1'}(w\theta)$ in the numerator of $c_{WS}(w\theta)$ forces $s'$ to be the longest Weyl element of $\GU_{1,1}$. The factors $1-q^{-1}e^{e_i}(w\theta),\;i=1,2$ in the numerator force  $s(e_1),s(e_2)\in \{\pm e_1,-e_2\}$. Hence there are four possibilities of $s$: $s(e_1)=\pm e_1, s(e_2)=-e_2$ or $s(e_2)=\pm e_1,s(e_1)=-e_2$. If $s(e_2)=\pm e_1,s(e_1)=-e_2$ or $s(e_1)=e_1,s(e_2)=-e_2$, one of the factors $1-q^{-1/2}e^{e_1\pm e_2 + e_1'}(w\theta)$ in the numerator is equal to 0. Hence we must have $s(e_1)=-e_1, s(e_2)=-e_2$, i.e. $w$ is the longest Weyl element. This proves the lemma.
\end{proof}

As in the previous case, our computation of the colors and the lemma above imply that
$$\int_{K} \CY_\theta(k)\ud k=\frac{\Delta_G(1)}{\Delta_{H/Z_{G,H}}} \zeta(1)^{-2}\zeta_E(1)^{-3}\cdot \beta(\theta).$$
This implies that 
$$I(\phi_\theta)=\frac{\Delta_G(1)}{\Delta_{H/Z_{G,H}}(1)}\zeta(1)^{-2}\zeta_E(1)^{-3}\cdot \beta(\theta)\cdot \beta(\theta^{-1})$$
$$=\frac{\Delta_G(1)}{\Delta_{H/Z_{G,H}}(1)}\cdot \frac{L(1/2,\pi,\rho_X)}{L(1,\pi,\Ad)}.$$

\section{The model $(E_7,\PGL_2\ltimes U)$}\label{sec:E7}
 In this section, we compute the local relative character of the model $(E_7,\PGL_2\ltimes U)$. We closely follow the six steps in Section \ref{sec:6-steps}.

To define this model, we recall a description of the adjoint group  of type $E_{7}$, following notation in \cite{P20}.  
Let $H_3(\BH)$ be the degree three central simple Jordan algebra over $k$.
Here $\BH$ is a quaternion algebra over $k$ and denote by $N$ its norm map, $\tr$ the trace, and $x\mapsto x^*$ its conjugation. 
More precisely, one may realize $H_3(\BH)$ as  the vector space of all $3\times 3$ Hermitian symmetric matrices over $\BH$, which are of form
\begin{equation}\label{eq:J}
J=\begin{pmatrix}
a&z&y^*\\
z^*&b&x\\
y&x^*&c 	
\end{pmatrix},
\end{equation}
where $x,y,z\in \BH$ and $a,b,c\in k$.
The Jordan algebra on $H_3(\BH)$ is defined by the composition $J_1\circ J_2:=\frac{1}{2}(J_1J_2+J_2J_1)$ for $J_1,J_2\in H_3(\BH)$,
where $J_1J_2$ and $J_2J_1$ are under the matrix multiplications.
The cubic norm $\det$ on $H_3(\BH)$ is defined by
\begin{equation} \label{eq:cubic}
 \det(J):=abc-aN(x)-bN(y)-cN(z)+\tr(xyz),  
\end{equation}
and the adjoint map $\sharp$ is
\[
J^{\sharp}:=\begin{pmatrix}
bc-N(x)&y^*x^*-cz&zx-by^*\\
xy-cz^*&ac-N(y)&z^*y^*-ax\\
x^*z^*-by&yz-ax^*&ab-N(z)
\end{pmatrix}.
\]
Denote by $(\cdot,\cdot,\cdot)$ the symmetric trilinear form corresponding to the cubic norm $\det$ with $(A,A,A)=\det(A)$ for $A\in H_3(\BH)$.

In \cite{R97}, Rumelhart constructed the Lie algebra $\Fg(H_3(\BH))$ through a $\BZ_3$-grading. (Here we following the notation in \cite[Section 4.2]{P20}.)
More precisely, define
\begin{equation}\label{eq:Fg-E7}
\Fg={\mathfrak {sl}}_3\oplus \Fm^0\oplus V_3\otimes H_3(\BH)\oplus V_3^\vee\otimes H_3(\BH)^\vee 	
\end{equation}
where $V_3$ and $V_3^\vee$ are the standard representation of ${\mathfrak {sl}}_3$ and its dual representation, respectively. 
Here let $\Fm^0$ be the Lie algebra consisting of all linear transformations $\phi$ on $H_3(\BH)$ such that 
 \[
(\phi(z_1),z_2,z_3)+(z_1,\phi(z_2),z_3)+(z_1,z_2,\phi(z_3))=0 \quad 
 \] 
for all  $z_1, z_2, z_3\in H_3(\BH)$. And we refer the reader to Section 4.2.1 in \cite{P20} for the description of the Lie bracket on $\Fg(H_3(\BH))$.

Now, let us consider the identity component of   the automorphism group $\Aut(\Fg(H_3(\BH)))$, which is the quaternionic adjoint group of type $E_7$.
In particular, if $\BH$ is split, then it is the split adjoint group of $E_7$, denoted by $G$.
If $\BH$ is not split, then we denote it by $G_D$, which is of type $E_{7,4}$ and of $k$-rank 4.

Next, let us explicate this model for the split case.
In this case, the quaternion $\BH$ is split and take $\BH=M_{2\times 2}(F)$ with
 \[
x^*=\begin{pmatrix}d&-b\\-c&a \end{pmatrix},\quad
{\rm tr}(x)=a+d,\quad
N(x)=\det(x)=ad-bc,
\]
for $x=\begin{pmatrix}a&b\\c&d \end{pmatrix}\in\BH$.
We may identify $H_3(\BH)$ to $\{A\in M_{6\times 6}(F)\colon A=\Gamma A^{t}\Gamma^{-1}\}$ as follows
\begin{align*}
\begin{pmatrix}
a&z&y^*\\
z^*&b&x\\
y&x^*&c 	
\end{pmatrix}\mapsto  
\begin{pmatrix}
aI_2&z&y^*\\
z^*&bI_2&x\\
y&x^*&cI_2 	
\end{pmatrix}\in M_{6\times 6}(F),
\end{align*}
where  
$\Gamma=\diag\{\begin{pmatrix}0&1\\-1&0	\end{pmatrix},\begin{pmatrix}0&1\\-1&0	\end{pmatrix},\begin{pmatrix}0&1\\-1&0	\end{pmatrix}\}$.
Then the cubic norm $\det$ in \eqref{eq:cubic} on $H_3(\BH)$ is given by 
$\det(A)={\rm Pf}(\Gamma A)$
where ${\rm Pf}$ is the Pfaffian of the skew-symmetric matrices.

The Lie algebra $\Fm^0(F)$ is isomorphic to ${\frak {sl}}_6(F)$ via the action of ${\frak {sl}}_6(F)$ on $H_3(\BH)$ given by $A\cdot X:=AX+XA^*$ for $X\in H_3(\BH)$
where $A^*=\Gamma {}^{t}\!A\Gamma^{-1}$.
Consider $V_3$ and $V_3^\vee$ in \eqref{eq:Fg-E7} as the 3-dimensional vector spaces of column vectors. 
The action of ${\mathfrak {sl}}_3(F)$ on $V_3$ and $V_3^\vee$ are given by: for $v\in V_3$, $\delta\in V_3^\vee$, and $\phi\in {\mathfrak {sl}}_3(F)$,
\begin{equation}\label{eq:sl3-V}
\phi(v)=\phi v \text{ and } \phi(\delta)=-{}^t\phi\delta	
\end{equation}
where the products in the right hand sides are the matrix multiplications.

For $A\in\GL_6(F)$, define $\Phi_A\in End(H_3(\BH))$ by 
\[
\Phi_A(X):=AXA^* \text{ and }\Phi_A^\vee(X):=(A^*)^{-1}XA^{-1}.
\]
Write
\[
(\GL_3\times\GL_6\times \GL_1)^0=\{(a,g,\lambda)\in \GL_3\times\GL_6\times \GL_1\mid \lambda^3{\rm Det}(g){\rm Det}(a)=1 \}
\]
where ${\rm Det}$ is the usual determinant of $\GL_n$.
Define the map $\iota$ from $(\GL_3\times\GL_6\times \GL_1)^0$ to $\GL(\Fg)$ as
\begin{align*}
\iota\colon &\phi\mapsto a\phi a^{-1}\\
&A\mapsto g A g^{-1}\\
&v\otimes X\mapsto ( a v)\otimes \lambda\Phi_A(X)\\
&\delta\otimes \gamma\mapsto   (a^{t})^{-1} \delta \otimes \lambda^{-1}\Phi_A^\vee(\gamma). 
\end{align*}
By a straightforward computation, we have the image of $\iota$ lies in $G$.
Moreover, the kernel of $\iota$ is $\ker \iota=\{(wI_{3},zI_{6},(wz^2)^{-1})\mid w,z\in F^\times\}\cong F^\times\times F^\times$.

We take the unipotent subgroup $U$ of Lie algebra $\Fu$ consisting of elements
\[
\{\begin{pmatrix}0&v_1&v_3\\&0&v_2\\&&0\end{pmatrix}\in {\frak {sl}}_3\}
\oplus \{\begin{pmatrix}0_{2\times 2}&x&y\\&0_{2\times 2}&z\\&&0_{2\times 2} 	\end{pmatrix}\in {\frak {sl}}_6\}\]
\[\oplus Fw_1\otimes H_3(\BH)\oplus Fw_2\otimes H_3(\BH)\oplus Fw_3\otimes H_3(\BH)^\vee,
\]
where $\{w_1,w_2,w_3\}$ is the standard basis of $F^3$.
Then its corresponding Levi subgroup  $L$ is given by the image
\[
\iota(\{(\begin{pmatrix} a&&\\&b&\\&&c 	\end{pmatrix}, 
  \begin{pmatrix}
    g_1&&\\& g_2&\\&& g_3\\  	
   \end{pmatrix},\lambda)\mid {\rm Det}(g_1){\rm Det}(g_2){\rm Det}(g_3)abc=\lambda^{-3} \}).
\]
For $u\in\Fu$, define the character $\xi$ of $U$ by
\[
\xi(\exp(u))=\psi(v_1+{\rm Tr}(x)+{\rm Tr}(z)+e)
\]
where $e$ is the entry corresponding to the simple root $\alpha_3=e_2-e_1$, i.e. the coefficient of $w_2\otimes (E_{5,5}+E_{6,6})$.
($E_{i,j}$ are the elementary matrices in $M_{6\times 6}(F)$.)
The stabilizer $H_0$ of $\xi$ is given by the image
\[
\iota(\{(\begin{pmatrix} a&&\\&a&\\&&c 	\end{pmatrix}, 
  \begin{pmatrix}
    g&&\\& g&\\&& g\\  	
   \end{pmatrix},\lambda)\mid a\lambda {\rm Det}(g)=1,~ {\rm Det}(g)^3a^2c=\lambda^{-3}\})\]
\[=\iota(\{(aI_3, 
  \begin{pmatrix}
    g&&\\& g&\\&& g\\  	
   \end{pmatrix},\lambda)\mid a\lambda {\rm Det}(g)=1\}),
\]
which is isomorphic to $\PGL_2(F)$.
Let $H=H_0\ltimes U$ and we extend the character $\xi$ to $H$ by making it trivial on $H_0$. 
The model $(G,H,\xi)$ is the Whittaker induction of the trilinear $\GL_2$ model $(L,H_0,\xi)$.
We can also define the quaternion (non-split) version of this model by letting $G_D$ be of type $E_{7,4}$. 
In the non-split case, $L_D\rtimes U_D$ is a minimal parabolic subgroup of $G_D$ defined over $F$ and $\xi_D$ is a generic character of $U_D$. 
Then the stabilizer $H_{0,D}$ of $\xi_D$ in $L_D$ is isomorphic to $PD^\times$.
Thus we obtain the quaternion (non-split) version $(G_D,H_D,\xi_D)$ with $H_D=H_{0,D}\rtimes U_D$.

Define the Weyl element $w_0$ of $E_7$ by
\begin{align*}
w_0\colon & \phi\in {\frak {sl}}_3\mapsto -\phi^t\in {\frak {sl}}_3\\
& A\in \mathfrak{sl}_6\mapsto -A^*\in \mathfrak{sl}_6\\
&v\otimes X\in V_3\otimes H_6\mapsto v\otimes X\in V^\vee_3\otimes H^\vee_6\\
&\delta\otimes\gamma\in V^\vee_3\otimes H^\vee_6\mapsto \delta\otimes\gamma\in V_3\otimes H_6.
\end{align*}
Then $w_0^2=1$ and $w_0$ sends $U$ to its opposite.
It is clear that the $w_0$-conjugation map stabilizes $L$ and fixes $H_0$.
We define the map $a:\GL_1\rightarrow Z_L$ to be
\[
a(t)=\iota(\begin{pmatrix}t&&\\&1&\\&&t^{-4}  \end{pmatrix}, \begin{pmatrix}tI_2&&\\&I_2&\\&&t^{-1} I_2 \end{pmatrix}, t).
\]
This clearly satisfies \eqref{the map a}. For the open Borel orbit, let $\eta_0=w'\gamma_0$ where
\[
w'=\iota(I_3, 
 \begin{pmatrix}
	I_2&&&&\\&0&1&&\\&1&0&&\\&&&0&1\\&&&1&0  	
	\end{pmatrix},1),\;
\gamma_0=\iota(I_3, 
 \begin{pmatrix}
 	I_4&&\\&1&1\\&0&1\\	
 	\end{pmatrix},1),
\]
be the representative of the open Borel orbit for the model $(L,H_0)$ as in Section \ref{section triple product}, 
and $\eta=\eta_0 w_0$. The relation \eqref{eta0 relation} has already been verified in Section \ref{section triple product}. This finishes the first three steps in Section \ref{sec:6-steps}.

Now we compute the set of colors and also the set $\Theta^+$. 
Following the notation in \cite{B02}, let $\alpha_1=\frac{1}{2}(\varepsilon_1+\varepsilon_8)-\frac{1}{2}\sum_{i=2}^7\varepsilon_i$, $\alpha_2=\varepsilon_1+\varepsilon_2$ and
 $\alpha_{i+1}=\varepsilon_{i}-\varepsilon_{i-1}$ for $3\leq i \leq 6$ be the simple roots.
Let $\Theta$ be the weights of the 56-dimensional irreducible representation of $E_7(\BC)$, corresponding to the 7-th fundamental weight $\omega_7$,
where $\omega_7=e_6+\frac{1}{2}(e_8-e_7)$.
We can write it as 
\[
\Theta=\{\pm e_i \pm \frac{1}{2}(e_8 -e_7 )\mid 1\leq i\leq 6\}
\]
\[
\cup\{\frac{1}{2}\sum_{i=1}^{6} a_i e_i \mid \#\{i\colon a_i=1\}
\text{ is  even and $a_i=\pm 1$}\}.
\]

By the computation of the trilinear $\GL_2$-model in Section \ref{section triple product} and the discussion in Section \ref{sec non-red strategy 2} (in particular, Remark \ref{color in non-red}), we get the set of colors for this case:
$$\beta_{\alpha_7}^{\vee}=\frac{e_1+e_2+e_3-e_4-e_5+e_6}{2},$$
$$\alpha_{7}^{\vee}-\beta_{\alpha_1}^{\vee}=\frac{-e_1-e_2-e_3+e_4-e_5+e_6}{2},$$
$$\beta_{\alpha_5}^{\vee}=\frac{e_1+e_2-e_3+e_4+e_5-e_6}{2}$$
$$\alpha_{5}^{\vee}-\beta_{\alpha_3}^{\vee}=\frac{-e_1-e_2-e_3+e_4-e_5+e_6}{2},$$
$$\beta_{\alpha_2}^{\vee}=\frac{e_1+e_2-e_3+e_4+e_5-e_6}{2},$$
$$\alpha_{2}^{\vee}-\beta_{\alpha_5}^{\vee}=\frac{e_1+e_2+e_3-e_4-e_5+e_6}{2}.$$
Then we verify \eqref{nonred root} for $\alpha_1$, $\alpha_3$, $\alpha_4$ and $\alpha_6$. Let
$$u_{-\alpha_1}(a)=\iota(\begin{pmatrix}
1&&\\a&1&\\&&1	
\end{pmatrix},I_6,1),\;u_{-\alpha_3}(a)=Id+a\cdot {\rm ad}_{w_2\otimes (E_{5,5}+E_{6,6})^\vee},$$
$$u_{-\alpha_4}(a)=\iota(I_3,I_6+aE_{5,4},1),\;u_{-\alpha_6}(a)=\iota(I_3,I_6+aE_{3,2},1).$$
We have the following 4 identities
$$u_{-\alpha_1}(a)\eta=\eta u_{\alpha_1}(a),\;u_{-\alpha_3}(a)\eta=\eta u_{\alpha_3}(a),$$
$$u_{-\alpha_4}(a)\eta=\eta u_{\alpha_4}(-a)u_{e_3+e_1}(-a),\;u_{-\alpha_6}(a)\eta=\eta u_{e_{6}-e_{4}}(-a).$$
This proves \eqref{nonred root} for $\alpha_1$, $\alpha_3$, $\alpha_4$ and $\alpha_6$. In addition, we label the type of each simple root in the following weighted Dynkin Diagram:
\begin{figure}[h!]
\begin{tikzpicture}[inner sep=1mm,scale=1.75]
\node [circle,draw,label=above:$0$,label=below:${\alpha_7,T}$] (1)  at ( 0,0)  {}; 
\node [circle,draw,label=above:$2$,label=below:${\alpha_6,(U,\psi)}$] (3) at ( 1,0) {};
\node [circle,draw,label=above:$0$,label=below:${\alpha_5,T}$] (4) at ( 2,0) {};
\node [circle,draw,label=above:$2$,label=below:${\alpha_4,(U,\psi)}$] (5) at ( 3,0) {}; 
\node [circle,draw,label=above:$2$,label=below:${\alpha_3,(U,\psi)}$] (6)  at (4,0) {};
\node [circle,draw,label=above:$2$,label=below:${\alpha_1,(U,\psi)}$] (7) at ( 5,0) {};
\node [circle,draw,label=left:$0$,label=right:${\alpha_2,T}$] (2)  at ( 3,1) {}; 
\draw  (1) -- (3);
\draw  (3) -- (4);
\draw  (2) -- (5);
\draw  (4) -- (5);
\draw  (5) -- (6);
\draw  (6) -- (7);
\end{tikzpicture}
\caption{Weighted Dynkin Diagram of $E_7$}\label{digram:E7}
\end{figure}
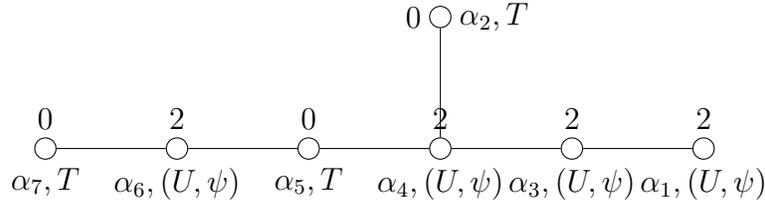

Note that this weighted Dynkin Diagram is associated to the special nilpotent stable orbit of Balar-Carter label $E_6$.
Its corresponding unipotent stable orbit is the maximal unipotent orbit with a non-empty intersection with the unipotent subgroup $U$. 


Next, we compute the set $\Theta^+$.
\begin{prop}
$\Theta^+$ is consisting of the following 28 elements:
\begin{align}
&\frac{\sum_{l=1}^{6}e_l-2e_i-2e_j}{2},\frac{-\sum_{l=1}^{6}e_l+2e_{i'}+2e_{j'}}{2},\label{eq:E7-Theta-S0-1}\\
&\frac{e_1+e_2+e_3+e_4+e_5+e_6}{2}, \frac{-e_1+e_2+e_3+e_4+e_5+e_6-2e_k}{2},  \label{eq:E7-Theta-S0-3}\\ 
&\pm e_{m}+\frac{e_8-e_7}{2}, \text{ for $1\leq m\leq 6$},\label{eq:E7-Theta-S0-c}
\end{align}
where $(i,j)\in\{(23),(24),(34),(25),(35),(45),(26),(36)\}$, $2\leq k\leq 6$ and $(i',j')\in\{(56),(46)\}$.
\end{prop}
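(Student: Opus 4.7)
The approach is to mirror the proofs of Propositions \ref{theta proposition} and \ref{prop theta GL(6)}: first I would exhibit the 28 listed elements as a subset of $\Theta$ satisfying the defining conditions of $\Theta^+$, and then deduce uniqueness via a Weyl-invariance argument.

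By the color computations just preceding the proposition, together with the type assignments recorded in Figure \ref{digram:E7} (namely $\alpha_2,\alpha_5,\alpha_7$ are of type $T$ and $\alpha_1,\alpha_3,\alpha_4,\alpha_6$ are of type $(U,\psi)$), the discussion in Section \ref{sec non-red strategy 2} characterizes $\Theta^+$ as the smallest subset of $\Theta$ satisfying:
\begin{enumerate}
\item $\beta_{\alpha_j}^\vee,\ \alpha_j^\vee-\beta_{\alpha_j}^\vee\in\Theta^+$ for $j\in\{2,5,7\}$;
\item $\Theta^+-(\Theta^+\cap w_{\alpha_j}\Theta^+)=\{\beta_{\alpha_j}^\vee,\alpha_j^\vee-\beta_{\alpha_j}^\vee\}$ for $j\in\{2,5,7\}$;
\item $w_{\alpha_j}(\Theta^+)=\Theta^+$ for $j\in\{1,3,4,6\}$.
\end{enumerate}
The first step is to verify directly that the 28 elements in \eqref{eq:E7-Theta-S0-1}--\eqref{eq:E7-Theta-S0-c} satisfy these three conditions. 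Condition (1) is immediate by inspection. For conditions (2) and (3), for each simple reflection I would compute its action on the three families \eqref{eq:E7-Theta-S0-1}, \eqref{eq:E7-Theta-S0-3}, \eqref{eq:E7-Theta-S0-c} and match orbits. For $j\ge 2$, the reflection $w_{\alpha_j}$ acts on the coordinates $(\varepsilon_1,\ldots,\varepsilon_6)$ either by a simple transposition (for $j\ge 3$) or by sending $(\varepsilon_1,\varepsilon_2)$ to $(-\varepsilon_2,-\varepsilon_1)$ (for $j=2$), and fixes $\tfrac{1}{2}(\varepsilon_8-\varepsilon_7)$ in both cases; the verification for these six reflections reduces to straightforward bookkeeping on index patterns within each family.

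For the uniqueness, I argue exactly as in Proposition \ref{theta proposition}: if $\Theta'^+\subseteq\Theta$ is any other subset satisfying (1)--(3), then $\Theta^+\cap\Theta'^+$ again satisfies them, whence both complementary sets $\Theta^+-(\Theta^+\cap\Theta'^+)$ and $\Theta'^+-(\Theta^+\cap\Theta'^+)$ are stable under every simple reflection, hence are $W(E_7)$-invariant subsets of $\Theta$. Since the 56-dimensional representation $\omega_7$ of $E_7(\BC)$ is minuscule, $\Theta$ is a single $W$-orbit, so its only $W$-invariant subsets are $\Theta$ and $\emptyset$; condition (1) rules out the full $\Theta$, and hence both complementary sets are empty, giving $\Theta^+=\Theta'^+$.

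The main obstacle will be the verification of condition (3) for the reflection $w_{\alpha_1}$. Since $\alpha_1=\tfrac{1}{2}(\varepsilon_1+\varepsilon_8)-\tfrac{1}{2}\sum_{i=2}^{7}\varepsilon_i$ mixes all seven coordinates $\varepsilon_1,\ldots,\varepsilon_7,\varepsilon_8$ nontrivially through the half-sum, $w_{\alpha_1}$ does not preserve the splitting of $\Theta$ into the 24 weights of the form $\pm e_m\pm\tfrac{1}{2}(e_8-e_7)$ and the 32 half-sum weights. Consequently, checking that $\Theta^+$ is $w_{\alpha_1}$-stable requires carefully tracking how the 12 elements of \eqref{eq:E7-Theta-S0-c} are interchanged with the 16 half-sum elements of \eqref{eq:E7-Theta-S0-1}--\eqref{eq:E7-Theta-S0-3} under $w_{\alpha_1}$. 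This is the only non-routine part of the verification, and it is precisely what pins down the specific list of 28 elements given in the proposition.
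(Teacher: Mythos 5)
Your proposal is correct and follows essentially the same route as the paper: reduce to the characterization of $\Theta^+$ via the colors of the Type $T$ roots $\alpha_2,\alpha_5,\alpha_7$ and $w_{\alpha_j}$-stability for the Type $(U,\psi)$ roots $j\in\{1,3,4,6\}$, verify the 28 listed weights satisfy these conditions, and deduce uniqueness by the same Weyl-invariance argument as in the $(\GSp_6\times\GSp_4,(\GSp_4\times\GSp_2)^0)$ case. Your explicit justification that $\Theta$ is a single $W$-orbit (minusculity of $\omega_7$) and your flagging of the $w_{\alpha_1}$-check as the only step mixing the families $\pm e_m+\tfrac{1}{2}(e_8-e_7)$ with the half-sum weights are accurate refinements of what the paper leaves implicit.
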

\begin{proof}
By the computation of the colors, we know that $\Theta^+$ is the smallest subset of $\Theta$ satisfying the following 5 conditions:
\begin{enumerate}
\item $\frac{1}{2}(e_1+e_2+e_3-e_4-e_5+e_6),\;
\frac{1}{2}(-e_1-e_2-e_3+e_4-e_5+e_6),\;
\frac{1}{2}(e_1+e_2-e_3+e_4+e_5-e_6) \in \Theta^{+}$.
\item $\Theta^{+}-(\Theta^+\cap w_{\alpha_7}\Theta^+)=\{\frac{1}{2}(e_1+e_2+e_3-e_4-e_5+e_6),\;
\frac{1}{2}(-e_1-e_2-e_3+e_4-e_5+e_6)\}$.
\item $\Theta^{+}-(\Theta^+\cap w_{\alpha_5}\Theta^+)=\{\frac{1}{2}(e_1+e_2-e_3+e_4+e_5-e_6),\;
\frac{1}{2}(-e_1-e_2-e_3+e_4-e_5+e_6)\}$.
\item $\Theta^{+}-(\Theta^+\cap w_{\alpha_2}\Theta^+)=\{
\frac{1}{2}(e_1+e_2-e_3+e_4+e_5-e_6),\;
\frac{1}{2}(e_1+e_2+e_3-e_4-e_5+e_6)\}$.
\item $\Theta^{+}$ is stable under $w_{\alpha_6},\;w_{\alpha_4},\; w_{\alpha_3}$ and $w_{\alpha_1}$.
\end{enumerate}
It is clear that the set in the statement satisfies these conditions. So we just need to show that the set is the unique subset of $\Theta$ satisfying these conditions. The argument is exactly the same as the case $(\GSp_6\times \GSp_4, (\GSp_4\times \GSp_2)^0)$ in Proposition \ref{theta proposition}. We will skip it here.
\end{proof}

It is clear that $\Theta^+$ satisfies \eqref{theta plus}. The last thing remains is to prove Lemma \ref{lem constant nonreductive} for the current case. 
Denote by $\Theta_{1}^+$ the subset of $\Theta^+$ consisting of the 12 weights in \eqref{eq:E7-Theta-S0-c} and $\Theta_2^+$ the complement of $\Theta_1^+$ in $\Theta^+$, that is, consisting of the 16 weights in \eqref{eq:E7-Theta-S0-1} and \eqref{eq:E7-Theta-S0-3}.
Then $\Theta_{2}^{+}$ corresponds to the weights of the $\GSO_{12}$  model in Proposition \ref{prop:theta-GSO12}. 
We also decompose the set of positive roots $\Phi^+$ as $\Phi^+_1\cup \Phi_2^+$ where
 $\Phi_{2}^{+}=\{e_j\pm e_i\mid 1\leq i<j\leq 6\}$ is the set of  the roots contained in $\GSO_{12}$,
and  $\Phi_{1}^+$ consists of the remaining positive roots, that is,
\[
e_8-e_7,\; \frac{1}{2}(e_8-e_7+\sum_{i=1}^6(-1)^{a_i}e_i) \text{ with $\sum^6_{i=1}a_i$ odd.}
\]
Denote by $W(D_6)$ the Wely group of the Levi subgroup of type $D_6$, generated by the simple reflections $w_{\alpha_i}$ for $i\ne 1$. We embed $W(D_6)$ into the Weyl group $W$.

\begin{lem}
With the notation above, we have 
$$\sum_{w\in W}c_{WS}(w\theta)=\frac{1}{\Delta_{H_0}(1)}=\frac{1}{\zeta(2)}=(1-q^{-2}).$$
\end{lem}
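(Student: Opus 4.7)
The plan is to mirror the structural reduction used in Section \ref{sec:remaining}: reduce the $E_7$-summation to the $\GSO_{12}$-summation, which in turn reduces to the Ginzburg--Rallis identity of Lemma \ref{GR case constant lem}. Factor
\[
c_{WS}(\theta)=c^{(1)}_{WS}(\theta)\,c^{(2)}_{WS}(\theta), \qquad c^{(i)}_{WS}(\theta):=\frac{\prod_{\gamma^\vee\in\Theta_i^+}(1-q^{-1/2}e^{\gamma^\vee})}{\prod_{\alpha\in\Phi_i^+}(1-e^{\alpha^\vee})}(\theta),
\]
so that $c^{(2)}_{WS}$ is exactly the Whittaker--Shintani $c$-function of the $(\GSO_{12},\GL_2\ltimes U)$ model: $\Theta_2^+$ consists of the 16 weights of the half-Spin representation $\HSpin_{12}$ (listed in \eqref{eq:E7-Theta-S0-1}--\eqref{eq:E7-Theta-S0-3}), and $\Phi_2^+$ is the set of positive roots of the $D_6$ Levi obtained from $E_7$ by deleting $\alpha_1$.

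A direct check shows $\Theta_1^+$ and $\Phi_1^+$ are both $W(D_6)$-stable: $W(D_6)$ fixes $e_8-e_7$ and acts on $\{\pm e_1,\ldots,\pm e_6\}$ by signed permutations with an even number of sign changes, which stabilizes the 12-element set $\Theta_1^+=\{\pm e_i+\tfrac{1}{2}(e_8-e_7):1\le i\le 6\}$ and preserves the parity condition cutting out the half-integer members of $\Phi_1^+$. Consequently $c^{(1)}_{WS}$ is $W(D_6)$-invariant. Let $R\subset W(E_7)$ be a set of minimal-length coset representatives for $W(E_7)/W(D_6)$ (of size $126$). Applying the $\GSO_{12}$ identity to the inner sum gives
\[
\sum_{w\in W(E_7)}c_{WS}(w\theta)=\sum_{s\in R}c^{(1)}_{WS}(s\theta)\sum_{w'\in W(D_6)}c^{(2)}_{WS}(w'(s\theta))=(1-q^{-2})\sum_{s\in R}c^{(1)}_{WS}(s\theta).
\]
The task thus reduces to the residual identity $\sum_{s\in R}c^{(1)}_{WS}(s\theta)=1$, or equivalently $|W(D_6)|^{-1}\sum_{w\in W(E_7)}c^{(1)}_{WS}(w\theta)=1$.

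To prove this residual identity I would first show the sum is independent of $\theta$ by the antisymmetrization argument used in Lemma \ref{GR case constant lem}: clear the denominator to rewrite the sum as a $(W(E_7),\sgn)$-alternating expression in the coordinates $e^{e_i}(\theta)$ and $e^{(e_8-e_7)/2}(\theta)$, and verify that each monomial in the numerator has a repeated exponent under the sign changes/permutations supplied by $W(E_7)$, forcing the antisymmetrization of all non-constant terms to vanish. Then evaluate the (now constant) sum at a convenient specialization of $\theta$. The main obstacle will be this final evaluation: because $|\Theta_1^+|=12$ is significantly smaller than $|\Phi_1^+|=33$, the specialization $\theta=\delta_B^{1/2}$ will not isolate a single surviving Weyl element as in the reductive cases of Remark \ref{rmk constant reductive}, so one probably needs an intermediate identity analogous to Lemma \ref{constant for GL(4)xGL(2) non-red}, perhaps obtained by factoring the outer sum $W(E_7)/W(D_6)$ further through an $E_6$ or $A_6$ Levi sitting between $D_6$ and $E_7$, before the $126$-term sum becomes tractable.
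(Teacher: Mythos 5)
Your reduction is exactly the paper's: the same splitting $\Theta^+=\Theta_1^+\cup\Theta_2^+$, $\Phi^+=\Phi_1^+\cup\Phi_2^+$, the observation that $c^{(1)}_{WS}$ is $W(D_6)$-invariant, and the application of Lemma \ref{lm:GSO12-constant-lem} to extract the factor $(1-q^{-2})$ and reduce to $\sum_{w\in W/W(D_6)}c^{(1)}_{WS}(w\theta)=1$. Up to that point the proposal is correct and matches the paper.

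The gap is in the residual identity, which is the actual technical content of the lemma. Your proposed mechanism --- ``each monomial in the numerator has a repeated exponent under the sign changes/permutations supplied by $W(E_7)$'' --- is the criterion that works for $W(D_6)$ (signed permutations), but it fails here. After clearing denominators via the $D_6$ Weyl denominator formula, the $q^{-i/2}$-coefficients contain monomials $\prod_i\theta_i^{b_i}$ whose exponents $|b_1|,\dots,|b_6|$ are pairwise distinct, e.g.\ $\{6,4,3,2,1,0\}$ in the $q^{-1/2}$-coefficient; no signed permutation fixes such a monomial, so the ``repeated exponent'' argument says nothing about it. What kills these terms is precisely the exceptional part of $W(E_7)$: one must move the monomial by an explicit Weyl element (involving $w_{\alpha_1}$, $w_{\alpha_3}$, \dots) until the resulting weight is \emph{orthogonal to some coroot}, whence the corresponding reflection fixes it and the $(W,\sgn)$-antisymmetrization vanishes. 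The paper carries out this case analysis coefficient by coefficient for $1\le i\le 6$ (the remaining $i$ by symmetry); without it the argument does not close. Also note that once all non-constant $q^{-i/2}$-coefficients are shown to vanish, the sum \emph{is} its constant coefficient, which is visibly $1$; there is no separate ``evaluation at a special $\theta$'' step, and no intermediate $E_6$ or $A_6$ Levi is needed or used --- that suggested detour would not obviously help, since the difficulty is not the size of the $126$-term sum but the non-permutation reflections of $W(E_7)$.
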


\begin{proof}
By the identity for the $\GSO_{12}$ model case proved in Lemma \ref{lm:GSO12-constant-lem}, we have 
$$\sum_{w\in W}c_{WS}(w\theta)=\sum_{w\in W}\frac{\prod_{\gamma^\vee\in \Theta^+}1-q^{-\frac{1}{2}}e^{\gamma^\vee}}
{\prod_{\alpha\in \Phi^+}1-e^{\alpha^\vee}}(w\theta)$$
$$=(1-q^{-2})\cdot \sum_{w\in W/W(D_6)}\frac{\prod_{\gamma^\vee\in \Theta_{1}^+}1-q^{-\frac{1}{2}}e^{\gamma^\vee}}
{\prod_{\alpha\in \Phi_{1}^+}1-e^{\alpha^\vee}}(w\theta).$$
Hence it is enough to show that 
$$\sum_{w\in W/W(D_6)}\frac{\prod_{\gamma^\vee\in \Theta_{1}^+}1-q^{-\frac{1}{2}}e^{\gamma^\vee}}
{\prod_{\alpha\in \Phi_{1}^+}1-e^{\alpha^\vee}}(w\theta)=1.$$
It is easy to see that the constant coefficient of the above summation is equal to 1, so it is enough to show that all the $q^{-i/2}$-coefficients are equal to 0 for $1\leq i\leq 12$. 
We can replace the summation on $W/W(D_6)$ by the summation on $W$ and rewrite the function inside the summation ($\theta_i$ are arbitrary variables):
\begin{eqnarray*}
&&\frac{\prod_{\gamma^\vee\in \Theta_{1}^+}1-q^{-\frac{1}{2}}e^{\gamma^\vee}}
{\prod_{\alpha\in \Phi_{1}^+}1-e^{\alpha^\vee}}(w\theta)\\
&=&\frac{e^{-\rho^\vee}\prod_{\alpha\in \Phi^+_2}1-e^{\alpha^\vee}\cdot\prod_{\gamma^\vee\in \Theta_{1}^+}1-q^{-\frac{1}{2}}e^{\gamma^\vee}}
{e^{-\rho^\vee}\prod_{\alpha\in \Phi^+}1-e^{\alpha^\vee}}(w\theta)\\
&=&w\big(\frac{\theta_7^{-\frac{17}{2}}\prod_{i=2}^6\theta_i^{-i+1}\prod_{1\leq i<j\leq 6}(1-\theta_j\theta^{-1}_i)(1+\theta_j\theta_i)
}
{\displaystyle \theta_7^{-\frac{17}{2}}\prod_{i=2}^6\theta_i^{-i+1}\cdot (1-\theta_7)
\prod_{\sum a_i \text{ odd}}(1-\theta_7^{\frac{1}{2}}\prod_{i=1}^6\theta_i^{(-\frac{1}{2})^{a_i}})}\\
&&\cdot \frac{\prod_{i=1}^{6} (1-q^{-1/2}\cdot \theta_i\theta_7^{\frac{1}{2}})(1-q^{-1/2}\cdot \theta^{-1}_i\theta_7^{\frac{1}{2}})}{\prod_{1\leq i<j\leq 6}(1-\theta_j\theta^{-1}_i)(1+\theta_j\theta_i)} \big)
\end{eqnarray*}
where $e^{\rho^\vee}(\theta)=\theta_{7}^{\frac{17}{2}}\prod_{i=2}^6\theta_{i}^{i-1}$.

Then the denominator becomes $(W,\sgn)$-invariant, so it is enough to show that the $(W,\sgn)$-summation of the $q^{-i/2}$-coefficient of 
\begin{equation}\label{E7 constant 1}
\displaystyle \theta_7^{-\frac{17}{2}}\prod_{i=2}^6\theta_i^{-i+1}\prod_{1\leq i<j\leq 6}(1-\theta_j\theta^{-1}_i)(1+\theta_j\theta_i)
\end{equation}
$$\cdot \prod_{i=1}^{6} (1-q^{-1/2}\cdot  \theta_i\theta_7^{\frac{1}{2}} )(1-q^{-1/2}\cdot   \theta^{-1}_i\theta_7^{\frac{1}{2}})$$
is equal to 0 for $1\leq i\leq 12$. We need the following claim which follows from the Weyl Denominator formula of type $D_6$.

\begin{itemize} 
\item[\textbf{Claim}]: the product $$\prod_{i=2}^6\theta_i^{-i+1}(1-\theta_j\theta^{-1}_i)(1+\theta_j\theta_i)=\sum_{w\in W(D_6)}sgn(w)w(\prod_{i=2}^6\theta_i^{i-1})$$
is consisting of terms  of the form
$$\prod_{i=1}^{6} \theta_{i}^{a_i},\;\{|a_1|,|a_2|,|a_3|,|a_4|,|a_5|,|a_6|\}=\{ 5,  4, 3,  2, 1, 0\}.$$
\end{itemize}

Now we can study the coefficients of $q^{-i/2}$. For the $q^{-1/2}$-coefficient, the above claim implies that any term $\prod_{i=1}^{7} \theta_{i}^{b_i}$ appears in the $q^{-1/2}$-coefficient of \eqref{E7 constant 1} satisfies $b_7=-8$ and one of the following two conditions
\begin{itemize}
\item $b_i=\pm b_j$ for some $1\leq i\neq j\leq 6$;
\item $\{|b_1|,|b_2|,|b_3|,|b_4|,|b_5|,|b_6|\}=\{ 6,  4, 3,  2, 1, 0\}$.
\end{itemize}

In the first case, by using a simple reflection in the Weyl group of $D_6$, we know that the $(W,\sgn)$-summation of the term is equal to 0. In the second case, up to a Weyl element $w_0$ action, we may assume that the term is of the form 
$$\theta_7^{-8}\theta_6^6\prod_{i=2}^5\theta_i^{i-1}=e^{-8(e_8-e_7)+\sum^{5}_{i=2}(i-1)e_i+6e_6}(\theta).$$
However by changing variable $\theta_7^{-1}$ to $\theta_7$, 
 the weight 
 $$8(e_8-e_7)+\sum^{5}_{i=2}(i-1)e_i+6e_6$$ 
 is orthogonal to $\alpha_1^\vee$. 
This implies that the $(W,\sgn)$-summation of the $q^{-1/2}$-coefficient is equal to 0.

For the $q^{-1}$-coefficient, the above claim implies that any term $\prod_{i=1}^{7} \theta_{i}^{b_i}$ appears in the $q^{-1}$-coefficient of \eqref{E7 constant 1} satisfies $b_7=-\frac{15}{2}$ and one of the following two conditions
\begin{itemize}
\item $b_i=\pm b_j$ for some $1\leq i\neq j\leq 6$;
\item $\{|b_1|,|b_2|,|b_3|,|b_4|,|b_5|,|b_6|\}=\{ 6,  5, 3,  2, 1, 0\}$ or 
$\{ 5,  4, 3,  2, 1, 0\}$.
\end{itemize}

In the first case, by using a simple reflection in the Weyl group of $D_6$, we know that the $(W,\sgn)$-summation of the term is equal to 0. In the second case, up to  a Weyl element $w_0$ action, we may assume that the term is of the form 
$$e^{-\frac{15}{2}(e_8-e_7)+\sum^{6}_{i=2}(i-1)e_i}(\theta)$$
or
$$e^{-\frac{15}{2}(e_8-e_7)+\sum^{4}_{i=2}(i-1)e_i+5\theta_5+6e_6}(\theta).$$
By changing variable $\theta_7^{-1}\to\theta_7$, 
the weight 
$$\frac{15}{2}(e_8-e_7)+\sum^{6}_{i=2}(i-1)e_i$$ 
is orthogonal to $\alpha_1^\vee$. 
And the weight 
$$w_{\alpha_1}(\frac{15}{2}(e_8-e_7)+\sum^{4}_{i=2}(i-1)e_i+5\theta_5+6e_6)$$
$$=8(e_8-e_7)+\frac{1}{2}(e_1+e_2+3e_3+5e_4+9e_5+11e_6)$$ 
is orthogonal to $\alpha_3^\vee$.
This implies that the $(W,\sgn)$-summation of the $q^{-1}$-coefficient is equal to 0.

For the $q^{-3/2}$-coefficient, the above claim implies that any term $\prod_{i=1}^{7} \theta_{i}^{b_i}$ appears in the $q^{-1/2}$-coefficient of \eqref{E7 constant 1} satisfies $b_7=-7$ and one of the following two conditions
\begin{itemize}
\item $b_i=\pm b_j$ for some $1\leq i\neq j\leq 6$;
\item $\{|b_1|,|b_2|,|b_3|,|b_4|,|b_5|,|b_6|\}=\{ 6,  4, 3,  2, 1, 0\}$ or $\{ 6,  5, 4,  2, 1, 0\}$.
\end{itemize}

In the first case, by using a simple reflection in the Weyl group of $D_6$, we know that the $(W,\sgn)$-summation of the term is equal to 0. In the second case, up to  a Weyl element $w_0$ action, we may assume that the term is of the form 
$$e^{-7(e_8-e_7)+\sum^{5}_{i=2}(i-1)e_i+6e_6}(\theta)$$ 
or
$$e^{-7(e_8-e_7)+\sum^{3}_{i=2}(i-1)e_i+\sum_{i=4}^5ie_i}(\theta).$$
By changing variable $\theta_7^{-1}$ to $\theta_7$, 
the weight 
$$w_{\alpha_1}(7(e_8-e_7)+\sum^{5}_{i=2}(i-1)e_i+6e_6)$$
$$=\frac{15}{2}(e_8-e_7)+\frac{1}{2}(e_1+e_2+3e_3+5e_4+7e_5+11e_6) $$
is orthogonal to $e_2-e_1$; and the weight 
$$w_{\alpha_3}w_{\alpha_1}(7(e_8-e_7)+\sum^{3}_{i=2}(i-1)e_i+\sum_{i=4}^5ie_i)$$
$$=8(e_8-e_7)+ e_2+e_3+3e_4+4e_5+5e_6$$ 
is orthogonal to $e_3-e_2$.
This implies that the $(W,\sgn)$-summation of the $q^{-3/2}$-coefficient is equal to 0.

For the $q^{-2}$-coefficient, the above claim implies that any term $\prod_{i=1}^{7} \theta_{i}^{b_i}$ appears in the $q^{-1/2}$-coefficient of \eqref{E7 constant 1} satisfies $b_7=-\frac{13}{2}$ and one of the following two conditions
\begin{itemize}
\item $b_i=\pm b_j$ for some $1\leq i\neq j\leq 6$;
\item $\{|b_1|,|b_2|,|b_3|,|b_4|,|b_5|,|b_6|\}=\{ 5,  4, 3,  2, 1, 0\}$, $\{ 6,  5, 3,  2, 1, 0\}$ or $\{ 6,  5, 4,  3, 1, 0\}$.
\end{itemize}

In the first case, by using a simple reflection in the Weyl group of $D_6$, we know that the $(W,\sgn)$-summation of the term is equal to 0.
In the second case, up to  a Weyl element $w_0$ action, we may assume that the term is of the form 
$$e^{-\frac{13}{2}(e_8-e_7)+\sum^{6}_{i=2}(i-1)e_i}(\theta),$$
$$e^{-\frac{13}{2}(e_8-e_7)+\sum^{4}_{i=2}(i-1)e_i+\sum^{6}_{i=5}ie_i}(\theta),$$
or
$$e^{-\frac{13}{2}(e_8-e_7)+e_2+\sum^{6}_{i=3}ie_i}(\theta).$$
By changing variable $\theta_7^{-1}$  to $\theta_7$, 
the weight 
$$w_{\alpha_1}(-\frac{13}{2}(e_8-e_7)+\sum^{6}_{i=2}(i-1)e_i)$$
$$=7(e_8-e_7)+\frac{1}{2}(e_1+e_2+3e_3+5e_4+7e_5+9e_6)$$
is orthogonal to $e_2-e_1$; the weight 
$$w_{\alpha_3}w_{\alpha_1}(\frac{13}{2}(e_8-e_7)+\sum^{4}_{i=2}(i-1)e_i+\sum^{6}_{i=5}ie_i)$$
$$=\frac{15}{2}(e_8-e_7)+ e_2+e_3+2e_4+4e_5+5e_6$$ 
is orthogonal to $e_3-e_2$; the weight 
$$w_{\alpha_3}w_{\alpha_1}(\frac{13}{2}(e_8-e_7)+e_2+\sum^{6}_{i=3}ie_i)$$
$$=8(e_8-e_7)+\frac{1}{2}(-e_1+3 e_2+3e_3+5e_4+7e_5+9e_6$$ 
is orthogonal to $e_3-e_2$. This implies that the $(W,\sgn)$-summation of the $q^{-2}$-coefficient is equal to 0.

For the $q^{-5/2}$-coefficient, the above claim implies that any term $\prod_{i=1}^{7} \theta_{i}^{b_i}$ appears in the $q^{-1/2}$-coefficient of \eqref{E7 constant 1} satisfies $b_7=-6$ and one of the following two conditions
\begin{itemize}
\item $b_i=\pm b_j$ for some $1\leq i\neq j\leq 6$;
\item $\{|b_1|,|b_2|,|b_3|,|b_4|,|b_5|,|b_6|\}=\{ 6,  4, 3,  2, 1, 0\}$, $\{ 6,  5, 4,  2, 1, 0\}$, \\
or $\{ 6,  5, 4,  3, 2, 0\}$.
\end{itemize}

In the first case, by using a simple reflection in the Weyl group of $D_6$, we know that the $(W,\sgn)$-summation of the term is equal to 0.
In the second case, up to  a Weyl element $w_0$ action, we may assume that the term is of the form 
$$ e^{-6(e_8-e_7)+\sum^{5}_{i=2}(i-1)e_i+6e_6}(\theta),$$
$$e^{-6(e_8-e_7)+\sum^{3}_{i=2}(i-1)e_i+\sum_{i=4}^6 ie_i}(\theta)$$
or
$$e^{-6(e_8-e_7)+ \sum_{i=2}^6i e_i}(\theta).$$
By changing variable $\theta_7^{-1}$ to $\theta_7$, under the action of $w_{\alpha_1}$, 
we have
\begin{align*}
 &w_{\alpha_1}(0,1,2,3,4,6,-6,6)=(1,0,1,2,3,5,-7,7)\\
 &w_{\alpha_1}(0,1,2,4,5,6,-6,6)=(\frac{3}{2},-\frac{1}{2},\frac{1}{2},\frac{5}{2},\frac{7}{2},\frac{9}{2},-\frac{15}{2},\frac{15}{2})\\
 &w_{\alpha_1}(0,2,3,4,5,6,-6,6)=(2,0,1,2,3,4,-8,8).   
\end{align*}
Here $(b_1,b_2,\dots,b_8)$ corresponds the weight $\sum^{8}_{i=1}b_i e_i$.
In particular we have $b_i=\pm b_j$ for some $1\leq i\ne j\leq 6$ which is just the first case.
This implies that the $(W,\sgn)$-summation of the $q^{-5/2}$-coefficient is equal to 0.

For the $q^{-3}$-coefficient, the above claim implies that any term $\prod_{i=1}^{7} \theta_{i}^{b_i}$ appears in the $q^{-1/2}$-coefficient of \eqref{E7 constant 1} satisfies $b_7=-\frac{11}{2}$ and one of the following two conditions
\begin{itemize}
\item $b_i=\pm b_j$ for some $1\leq i\neq j\leq 6$.
\item $\{|b_1|,|b_2|,|b_3|,|b_4|,|b_5|,|b_6|\}$ is equal to 
\[\{ 5,  4, 3,  2, 1, 0\},~ \{ 6,  5, 3,  2, 1, 0\},~ \{ 6,  5, 4,  3, 1, 0\}, 
\text{ or }
\{ 6,  5, 4,  3, 2,  1\}.
\]
\end{itemize}

In the first case, by using a simple reflection in the Weyl group of $D_6$, we know that the $(W,\sgn)$-summation of the term is equal to 0.
In the second case, up to  a Weyl element $w_0$ action, we may assume that the term is of the form  
$$e^{-\frac{11}{2}(e_8-e_7)+\sum^{6}_{i=2}(i-1)e_i}(\theta),$$
$$e^{-\frac{11}{2}(e_8-e_7)+\sum^{4}_{i=2}(i-1) e_i+\sum_{i=5}^6ie_i}(\theta),$$
$$e^{-\frac{11}{2}(e_8-e_7)+ e_2+\sum_{i=3}^6i e_i}(\theta),$$
or
$$e^{-\frac{11}{2}(e_8-e_7)+\sum^{6}_{i=2}i e_i}(\theta).$$
By changing variable $\theta_7^{-1}$ to $\theta_7$, under the action of $w_{\alpha_1}$, 
we have
\begin{align*}
 &w_{\alpha_1}(0,1,2,3,4,5,-\frac{11}{2},\frac{11}{2})=(1,0,1,2,3,4,-\frac{13}{2},\frac{13}{2})\\
 &w_{\alpha_1}(0,1,2,3,5,6,-\frac{11}{2},\frac{11}{2})=(\frac{3}{2},-\frac{1}{2},\frac{1}{2},\frac{3}{2},\frac{7}{2},\frac{9}{2},-7,7)\\
 &w_{\alpha_1}(0,1,3,4,5,6,-\frac{11}{2},\frac{11}{2})=(2,-1,1,2,3,4,-\frac{15}{2},\frac{15}{2})\\
 &w_{\alpha_1}(1,2,3,4,5,6,-\frac{11}{2},\frac{11}{2})=(3,0,1,2,3,4,-\frac{15}{2},\frac{15}{2}).   
\end{align*} 
After the action of $w_{\alpha_1}$, we have $b_i=\pm b_j$ for some $1\leq i\ne j\leq 6$ which is just the first case.
This implies that the $(W,\sgn)$-summation of the $q^{-3}$-coefficient is equal to 0.

Due to symmetry, the remaining $q^{-i/2}$-coefficients for $7\leq i\leq 12$ are vanishing by similar arguments and we omit the details here.  
This finishes the proof of the lemma.
\end{proof}

To sum up, we have proved that the local relative character is equal to
\[
\zeta(6)\zeta(8)\zeta(10)\zeta(12)\zeta(14)\zeta(18)\frac{L(1/2,\pi,\omega_7)}{L(1,\pi,\Ad)}
\]
where $\pi$ is an unramified representation of $E_7(F)$.

\section{The remaining models}\label{sec:remaining}
In this section, we will compute the local relative characters for the remaining 4 models in Table \ref{fig:1}.
The computations are very similar to the cases in the previous sections.

\subsection{The model $(\GSp_{10},\GL_2\ltimes U)$}
In this subsection, we compute the local relative character for the model $(\GSp_{10},\GL_2\ltimes U)$. 
For simplicity, 
define
$$\GSp_{2n}=\{g\in \GL_{2n} \mid {}^t gJ_{2n}'g =l(g)J_{2n}'\},
\text{ where }J_{2n}'=\begin{pmatrix}0&J_{2n-2}'\\J_2&0 \end{pmatrix}.
$$
Note that  the skew-symmetric matrix $J'_{2n}$ is different with $J_{2n}$ when $n>1$ in Section \ref{sec:GSp-model} and $J_{2}=J'_2$. We use $J_{2n}'$ here  to simplify the definition and computation.
Let $G=\GSp_{10}$, $H=H_0\ltimes U$ with 
$$H_0=\{diag(h,h,h,\det(h)h^{\ast},\det(h)h^{\ast})\mid h\in \GL_2,\;h^\ast=J_2'{}^th^{-1} (J_2')^{-1}\}$$
$$=\{diag(h,h,h,h,h)|\;h\in \GL_2\}$$
and $U$ be the unipotent radical of the standard parabolic subgroup $P=LU$ of $G$ where 
$$L=\{(h_1,h_2, h_3,\det(h_3)h_{2}^{\ast},\det(h_3)h_{1}^{\ast})\mid  h_i\in \GL_2\}.$$ 
We define a generic character $\xi$ on $U(F)$ to be $\xi(u)=\psi(\lambda(u))$ where 
$$\lambda(u)=\tr(X)+\tr(Y),\;u=\begin{pmatrix}I_2&X&\ast &\ast &\ast \\ 0&I_2&Y&\ast&\ast \\ 0&0&I_2& \ast & \ast \\ 0&0&0&I_2 &\ast \\ 0&0&0&0&I_2 \end{pmatrix}.$$ 
It is easy to see that $H_0$ is the stabilizer of this character and $(G,H)$ is the Whittaker induction of the trilinear $\GL_2$-model $(L,H_0,\xi)$.

We can also define the quaternion version of this model. Let $D/F$ be a quaternion algebra, and let $G_D(F)=\GSp_5(D)$ (the group $\GSp_n(D)$ has been defined in Section 3.1), $H_D=H_{0,D}\ltimes U_D$  with 
$$H_{0,D}(F)=\{diag(h,h,h,h,h) \mid h\in \GL_1(D),\;h^\ast=\bar{h}^{-1} \}$$
and $U_D$ is the unipotent radical of the standard parabolic subgroup $P_D=L_DU_D$ of $G_D$ where 
$$L_D(F)=\{(h_1,h_2, h_3,N_{D/F}(h_3)h_{2}^{\ast},N_{D/F}(h_3)h_{1}^{\ast}) \mid h_i\in \GL_1(D)\}.$$ 
Here $N_{D/F}:\GL_1(D)\rightarrow F^{\times}$ is the norm map  and $x\rightarrow \bar{x}$ is the conjugation map on the quaternion algebra. Like the split case, we can define the character $\xi_D$ on $U_D(F)$ by replacing the trace map of $Mat_{2\times 2}$ by the trace map of $D$.

Let $w_0=\begin{pmatrix}0&0&0&0&I_2\\ 0&0&0&I_2&0\\ 0&0&I_2&0&0\\ 0&I_2&0&0&0\\ I_2&0&0&0&0 \end{pmatrix}$ be the Weyl element that sends $U$ to its opposite. It is clear that the $w_0$-conjugation map stabilizes $L$ and fixes $H_0$. We define the map $a:\GL_1\rightarrow Z_L$ to be
$$a(t)=diag(t^2I_2,tI_2,I_2,t^{-1}I_2,t^{-2}I_2).$$
This clearly satisfies the equation \eqref{the map a}. For the open Borel orbit, let
$$\eta_0=diag(I_2,\begin{pmatrix}0&1\\1&0 \end{pmatrix}, \begin{pmatrix}0&1\\1&0 \end{pmatrix}\begin{pmatrix}1&1\\0&1 \end{pmatrix}, \begin{pmatrix}0&1\\1&0 \end{pmatrix},-I_2)$$ 
be the representative of the open Borel orbit for the model $(L,H_0)$ as in Section \ref{section triple product}, and $\eta=\eta_0 w_0$. The relation \eqref{eta0 relation} has already been verified in Section \ref{section triple product}. This finishes the first three steps in Section \ref{sec:6-steps}.

Now we compute the set of colors and also the set $\Theta^+$. Let $\Theta$ be the weights of the 32-dimensional representation $\Spin_{11}$ of $\GSpin_{11}(\BC)$. We can write it as 
$$\Theta=\{\frac{\pm e_1\pm e_2\pm e_3\pm e_4\pm e_5}{2}\}.$$

Let $\alpha_i=\varepsilon_i-\varepsilon_{i+1}$, $1\leq i\leq 4$ and $\alpha_5=2\varepsilon_5$ be the simple roots of $\GSp_{10}$. By the computation of the trilinear $\GL_2$-model in Section \ref{section triple product} and the discussion in Section \ref{sec non-red strategy 2} (in particular, Remark \ref{color in non-red}), we have
$$\beta_{\alpha_1}^{\vee}=\frac{e_1-e_2-e_3+e_4+e_5}{2},\;\alpha_{1}^{\vee}-\beta_{\alpha_1}^{\vee}=\frac{e_1-e_2+e_3-e_4-e_5}{2},$$ $$\beta_{\alpha_3}^{\vee}=\frac{-e_1+e_2+e_3-e_4+e_5}{2},\;\alpha_{3}^{\vee}-\beta_{\alpha_3}^{\vee}=\frac{e_1-e_2+e_3-e_4-e_5}{2},$$
$$\beta_{\alpha_5}^{\vee}=\frac{-e_1+e_2+e_3-e_4+e_5}{2},\;\alpha_{5}^{\vee}-\beta_{\alpha_3}^{\vee}=\frac{e_1-e_2-e_3+e_4+e_5}{2}.$$
By a similar argument as in the Ginzburg--Rallis model case in Section 5, we can also verify \eqref{nonred root} for the roots $\alpha_2$ and $\alpha_4$. Next, we compute the set $\Theta^+$. 

\begin{prop}
$\Theta^+$ is consisting of the following 16 elements:
$$\frac{e_1+e_2\pm e_3\pm e_4\pm e_5}{2},\frac{e_1-e_2+ e_3\pm e_4\pm e_5}{2},\frac{e_1-e_2-e_3+ e_4+ e_5}{2},$$
$$\frac{-e_1+e_2+ e_3+ e_4\pm e_5}{2},\frac{-e_1+e_2+ e_3- e_4+ e_5}{2}.$$
\end{prop}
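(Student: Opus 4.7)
The plan is to follow verbatim the strategy used in Proposition \ref{theta proposition} for the $(\GSp_6\times\GSp_4,(\GSp_4\times\GSp_2)^0)$ model and in Proposition \ref{prop theta GL(6)} for the Ginzburg--Rallis model. Combining the color computations for the three type-$T$ simple roots $\alpha_1,\alpha_3,\alpha_5$ presented just before the statement with the identities \eqref{nonred root} checked for the two type-$(U,\psi)$ simple roots $\alpha_2,\alpha_4$, one deduces that $\Theta^+$ is the smallest subset of $\Theta$ satisfying the following five conditions:
\begin{itemize}
\item the three color pairs $\{\beta_{\alpha_i}^\vee,\ \alpha_i^\vee-\beta_{\alpha_i}^\vee\}$ for $i=1,3,5$ lie in $\Theta^+$;
\item $\Theta^+-(\Theta^+\cap w_{\alpha_i}\Theta^+)=\{\beta_{\alpha_i}^\vee,\ \alpha_i^\vee-\beta_{\alpha_i}^\vee\}$ for $i=1,3,5$;
\item $\Theta^+$ is stable under $w_{\alpha_2}$ and $w_{\alpha_4}$.
\end{itemize}

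First, I would check by direct inspection that the 16-element set displayed in the statement satisfies all of the conditions above. Since $w_{\alpha_i}$ for $1\le i\le 4$ swaps $e_i$ and $e_{i+1}$ while $w_{\alpha_5}$ sends $e_5$ to $-e_5$, the verification is a straightforward organization of the listed weights by the sign patterns of their coordinates; one just reads off which two weights in $\Theta^+$ are sent outside by each reflection $w_{\alpha_1},w_{\alpha_3},w_{\alpha_5}$, and confirms these coincide with the two colors computed for that root.

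For uniqueness, suppose $\Theta'^+\subset\Theta$ is another subset meeting all five conditions. Then the intersection $\Theta^+\cap\Theta'^+$ also meets them, so both $\Theta^+-(\Theta^+\cap\Theta'^+)$ and $\Theta'^+-(\Theta^+\cap\Theta'^+)$ are $W$-invariant subsets of $\Theta$. The Weyl group $W=S_5\ltimes(\BZ/2\BZ)^5$ of $\GSpin_{11}$ acts transitively on the 32 weights $\tfrac{1}{2}(\pm e_1\pm\cdots\pm e_5)$ of the Spin representation, so the only $W$-invariant subsets of $\Theta$ are $\emptyset$ and $\Theta$ itself. Both differences must therefore be empty (as the intersection is nonempty, containing the starting colors), yielding $\Theta^+=\Theta'^+$.

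The only mildly delicate step is the second bullet: one needs to check that exactly two of the 16 proposed weights are moved out of $\Theta^+$ by each of $w_{\alpha_1},w_{\alpha_3},w_{\alpha_5}$, and that the two displaced weights are precisely the color pair for the corresponding root. Since the reflections are explicit and the coordinates of the 16 weights are given, this is a short and purely combinatorial check, exactly parallel to the $\GSp_6\times\GSp_4$ case treated earlier. Verification of \eqref{theta plus} afterwards is immediate from the symmetry $\gamma^\vee\mapsto -\gamma^\vee$ between $\Theta^+$ and its complement $\Theta^-$ in $\Theta$.
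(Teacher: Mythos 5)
Your proposal is correct and follows essentially the same route as the paper: the paper likewise reduces the claim to the five conditions coming from the color pairs for $\alpha_1,\alpha_3,\alpha_5$ and $w_{\alpha_2},w_{\alpha_4}$-stability, checks the displayed $16$-element set satisfies them, and proves uniqueness by observing that the symmetric differences with any other solution are $W$-invariant while the only $W$-invariant subsets of $\Theta$ are $\emptyset$ and $\Theta$ (the paper defers this last step to its earlier $(\GSp_6\times\GSp_4,(\GSp_4\times\GSp_2)^0)$ argument). Your explicit appeal to the transitivity of $W=S_5\ltimes(\BZ/2\BZ)^5$ on the $32$ spin weights is exactly the point the paper leaves implicit.
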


\begin{proof}
By the computation of the colors, we know that $\Theta^+$ is the smallest subset of $\Theta$ satisfying the following 5 conditions:
\begin{enumerate}
\item $\frac{e_1-e_2-e_3+e_4+e_5}{2},\;\frac{e_1-e_2+e_3-e_4-e_5}{2},\; \frac{-e_1+e_2+e_3-e_4+e_5}{2}\in \Theta^{+}$.
\item $\Theta^{+}-(\Theta^+\cap w_{\alpha_1}\Theta^+)=\{\frac{e_1-e_2-e_3+e_4+e_5}{2},\;\frac{e_1-e_2+e_3-e_4-e_5}{2}\}$.
\item $\Theta^{+}-(\Theta^+\cap w_{\alpha_3}\Theta^+)=\{\frac{e_1-e_2+e_3-e_4-e_5}{2},\; \frac{-e_1+e_2+e_3-e_4+e_5}{2}\}$.
\item $\Theta^{+}-(\Theta^+\cap w_{\alpha_5}\Theta^+)=\{\frac{e_1-e_2-e_3+e_4+e_5}{2},\; \frac{-e_1+e_2+e_3-e_4+e_5}{2}\}$.
\item $\Theta^{+}$ is stable under $w_{\alpha_2}$ and $w_{\alpha_4}$.
\end{enumerate}
It is clear that the set in the statement satisfies these conditions. So we just need to show that the set is the unique subset of $\Theta$ satisfying these conditions. The argument is exactly the same as the case $(\GSp_6\times \GSp_4, (\GSp_4\times \GSp_2)^0)$ in Proposition \ref{theta proposition}. We will skip it here.
\end{proof}

It is clear that $\Theta^+$ satisfies \eqref{theta plus}. The last thing remains is to prove Lemma \ref{lem constant nonreductive} for the current case. For $i=1,2$, we decompose $\Theta^+$ as $\Theta_{1}^{+}\cup \Theta_{2}^{+}$ with $\Theta_{1}^{+}$ consisting of the following 10 elements:
$$\frac{e_1+e_2+e_3\pm e_4\pm e_5}{2},\frac{e_1+e_2+e_3+ e_4\pm e_5-2e_i}{2},\;1\leq i\leq 3$$
and $\Theta_{2}^+$ consisting of the remaining 6 elements.
Then $\Theta_{2}^{+}$ corresponds to the weights in Lemma \ref{constant for GL(4)xGL(2) non-red}
(here we view $\GL_4\times \GL_2\simeq \GL_4\times \GSp_2$ as a standard Levi subgroup of $\GSp_{10}$). 
Decompose the set of positive roots $\Phi^+$ as $\Phi_{1}^+\cup \Phi_{2}^+$ where $\Phi_{2}^{+}=\{e_i-e_j,2e_5 \mid 1\leq i<j\leq 4\}$ is the set of the positive roots contained in $\GL_4\times \GSp_2$ and $\Phi_{1}^+$ contains the remaining positive roots. 
We also embed the Weyl group $S_4\times S_2$ of $\GL_4\times \GL_2$ into  $W$.

\begin{lem}
With the notation above, we have 
$$\sum_{w\in W}c_{WS}(w\theta)=\frac{1}{\Delta_{H_0/Z_{G,H}}(1)}=\frac{1}{\zeta(2)}=(1-q^{-2}).$$
\end{lem}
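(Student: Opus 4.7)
The plan is to mimic the reduction used in the Ginzburg--Rallis case (Lemma \ref{GR case constant lem}): apply Lemma \ref{constant for GL(4)xGL(2) non-red} to perform the partial summation over the Weyl subgroup $S_4\times S_2$, which factors out the desired $1-q^{-2}$, and then reduce the problem to an auxiliary identity on the quotient $W/(S_4\times S_2)$. By construction, the decomposition $\Theta^+=\Theta_1^+\cup\Theta_2^+$ and $\Phi^+=\Phi_1^+\cup\Phi_2^+$ is tailored so that $\Theta_2^+$ and $\Phi_2^+$ match the data appearing in Lemma \ref{constant for GL(4)xGL(2) non-red}. Consequently, a rewriting exactly parallel to the one in the proof of Lemma \ref{GR case constant lem} will give
\[
\sum_{w\in W}c_{WS}(w\theta)=(1-q^{-2})\cdot \sum_{w\in W/(S_4\times S_2)}\frac{\prod_{\gamma^\vee\in \Theta_{1}^+}(1-q^{-1/2}e^{\gamma^\vee})}{\prod_{\alpha\in \Phi_{1}^+}(1-e^{\alpha^\vee})}(w\theta),
\]
so it remains to show that the latter sum is identically $1$.

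For this residual identity, I would first verify that the constant term (in the $q^{-1/2}$-expansion) equals $1$, which is a direct computation. For the higher-order terms, I would multiply numerator and denominator inside the sum by $e^{-\rho^\vee}\prod_{\alpha\in\Phi_2^+}(1-e^{\alpha^\vee})$ so that the new denominator becomes $(W,\sgn)$-antisymmetric. The sum over $W/(S_4\times S_2)$ can then be replaced by the corresponding $(W,\sgn)$-summation, and it suffices to prove that the $(W,\sgn)$-average of each $q^{-k/2}$-coefficient of the resulting Laurent polynomial vanishes for $1\leq k\leq 10$.

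The vanishing in each degree will be checked using the Weyl denominator identity for the $A_3\times C_1$ factor: the product $\prod_{\alpha\in\Phi_2^+}(1-e^{\alpha^\vee})$ expands as an antisymmetric sum of monomials $\prod_i\theta_i^{a_i}$ with prescribed multisets $\{|a_i|\}$. Combined with the contributions from the $e^{-\rho^\vee}$ normalization and from the ten-fold product $\prod_{\gamma^\vee\in\Theta_1^+}(1-q^{-1/2}e^{\gamma^\vee})$, each monomial contributing to a $q^{-k/2}$-coefficient will either have two exponents satisfying $a_i=\pm a_j$ for some $i\neq j$ (in which case its $(W,\sgn)$-average is automatically zero), or it will lie in a sporadic orbit which, after a single simple reflection $w_\alpha$, produces a weight orthogonal to the coroot $\alpha^\vee$, so that its $(W,\sgn)$-average is again zero.

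The main obstacle, as in the $\GSO_{12}$ and $E_7$ cases treated earlier in the paper, will be the combinatorial bookkeeping in the last step: enumerating the sporadic exponent patterns for each $q^{-k/2}$-coefficient and exhibiting an explicit simple reflection fixing each resulting weight. Fortunately, the self-duality $\Theta^+\leftrightarrow -\Theta^+$ under the longest Weyl element (which is the content of \eqref{theta plus}) halves the work, so in practice only the cases $k\leq 5$ require direct analysis, with $k=6,\dots,10$ following by symmetry.
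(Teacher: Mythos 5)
Your proposal is correct and follows essentially the same route as the paper's proof of this lemma for the $(\GSp_{10},\GL_2\ltimes U)$ model: the paper likewise invokes Lemma \ref{constant for GL(4)xGL(2) non-red} to extract the factor $1-q^{-2}$ and reduce to a sum over $W/(S_4\times S_2)$, then antisymmetrizes the denominator and shows each $q^{-k/2}$-coefficient has vanishing $(W,\sgn)$-average. The only minor difference is that here no ``sporadic reflection'' step is needed — every relevant monomial already has $b_i=\pm b_j$ or $b_i=0$ directly — and the $k=10$ coefficient must be checked on its own rather than by duality with $k=0$ (whose coefficient is nonzero).
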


\begin{proof}
By the identity in Lemma \ref{constant for GL(4)xGL(2) non-red}, we have 
$$\sum_{w\in W}c_{WS}(w\theta)=\sum_{w\in W}\frac{\prod_{\gamma^\vee\in \Theta^+}1-q^{-\frac{1}{2}}e^{\gamma^\vee}}
{\prod_{\alpha\in \Phi^+}1-e^{\alpha^\vee}}(w\theta)$$
$$=(1-q^{-2})\cdot \sum_{w\in W/S_4\times S_2}\frac{\prod_{\gamma^\vee\in \Theta_{1}^+}1-q^{-\frac{1}{2}}e^{\gamma^\vee}}
{\prod_{\alpha\in \Phi_{1}^+}1-e^{\alpha^\vee}}(w\theta).$$
Hence it is enough to show that 
$$\sum_{w\in W/S_4\times S_2}\frac{\prod_{\gamma^\vee\in \Theta_{1}^+}1-q^{-\frac{1}{2}}e^{\gamma^\vee}}
{\prod_{\alpha\in \Phi_{1}^+}1-e^{\alpha^\vee}}(w\theta)=1.$$
It is easy to see that the constant coefficient of the above summation is equal to 1, so it is enough to show that all the $q^{-i/2}$-coefficients are equal to 0 for $1\leq i\leq 10$. Like in the previous cases, we can replace the summation on $W/S_4\times S_2$ by the summation on $W$. We also need to rewrite the function inside the summation $\frac{\prod_{\gamma^\vee\in \Theta_{1}^+}1-q^{-\frac{1}{2}}e^{\gamma^\vee}}
{\prod_{\alpha\in \Phi_{1}^+}1-e^{\alpha^\vee}}(w\theta)$ as (here $\theta_i$ are arbitrary variables):
$$w\big(\frac{(1-q^{-1/2}\cdot \sqrt{\theta_1\theta_2\theta_3\theta_4\theta_5})\cdot \Pi_{i=1}^{5} (1-q^{-1/2}\cdot \frac{\sqrt{\theta_1\theta_2\theta_3\theta_4\theta_5}}{\theta_i}) }
{\Pi_{1\leq i<j\leq 5} (1-\theta_i\theta_j)\cdot \Pi_{i=1}^{4}(1-\theta_i) \Pi_{i=1}^{4}(1-\theta_i/\theta_5)}
$$
$$\cdot \Pi_{1\leq i\leq 4} (1-q^{-1/2}\cdot \frac{\sqrt{\theta_1\theta_2\theta_3\theta_4\theta_5}}{\theta_i\theta_5})\big)$$
$$=w\big(\frac{\ast \cdot (1-q^{-1/2}\cdot \sqrt{\theta_1\theta_2\theta_3\theta_4\theta_5})\cdot \Pi_{i=1}^{5} (1-q^{-1/2}\cdot \frac{\sqrt{\theta_1\theta_2\theta_3\theta_4\theta_5}}{\theta_i}) }
{\Pi_{1\leq i<j\leq 5} (\theta_{i}^{-1}-\theta_i-\theta_{j}^{-1}+\theta_j)\cdot \Pi_{i=1}^{5}(\theta_{i}^{-1/2}-\theta_{i}^{1/2})}$$
$$\cdot \Pi_{1\leq i\leq 4} (1-q^{-1/2}\cdot \frac{\sqrt{\theta_1\theta_2\theta_3\theta_4\theta_5}}{\theta_i\theta_5})\big)$$
where
$$\ast=\theta_{1}^{-3/2}\theta_{2}^{-3/2}\theta_{3}^{-3/2}\theta_{4}^{-3/2}(\theta_{5}^{-1/2}-\theta_{5}^{1/2})\cdot \Pi_{1\leq i<j\leq 4} (\theta_{i}^{-1}-\theta_{j}^{-1}).$$
Then the denominator becomes $(W,\sgn)$-invariant, so it is enough to show that the $(W,\sgn)$-summation of the $q^{-i/2}$-coefficient of 
\begin{equation}\label{Sp(10) constant 1}
\ast\cdot (1-q^{-1/2}\cdot \sqrt{\theta_1\theta_2\theta_3\theta_4\theta_5})\cdot \Pi_{i=1}^{5} (1-q^{-1/2}\cdot \frac{\sqrt{\theta_1\theta_2\theta_3\theta_4\theta_5}}{\theta_i}) 
\end{equation}
$$\cdot \Pi_{1\leq i\leq 4} (1-q^{-1/2}\cdot \frac{\sqrt{\theta_1\theta_2\theta_3\theta_4\theta_5}}{\theta_i\theta_5})$$
is equal to 0 for $1\leq i\leq 10$. The product $\ast$ consists of terms of the form
$$\Pi_{i=1}^{5} \theta_{i}^{a_i},\;\{a_1,a_2,a_3,a_4\}=\{-9/2,-7/2,-5/2,-3/2\},a_5=\pm1/2.$$
Then the $q^{-5}$-coefficients consisting of terms of the form 
$$\Pi_{i=1}^{5} \theta_{i}^{b_i},\;\{b_1,b_2,b_3,b_4\}=\{-3/2,-1/2,1/2,3/2\},\;b_5=\pm 1/2.$$
The $(W,\sgn)$-summation of these terms is equal to 0 since $b_i=\pm b_j$ for some $i\neq j$.

For the $q^{-1/2}$-coefficient, any term $\Pi_{i=1}^{5} \theta_{i}^{b_i}$ appearing in it 
must satisfy one of the following two conditions
\begin{itemize}
\item $b_i=b_j$ for some $1\leq i< j\leq 4$.
\item $\{b_1,b_2,b_3,b_4\}=\{-5,-3,-2,-1\}$ or $\{-4,-3,-2,-1\}$ and $b_5\in \{-1,1,0\}$.
\end{itemize}
In either case, we have $b_i=\pm b_j$ for some $i\neq j$ or $b_5=0$. This implies that the $(W,\sgn)$-summation of the $q^{-1/2}$-coefficient is equal to 0. 
Similarly, we can also show that  the $(W,\sgn)$-summation of the $q^{-9/2}$-coefficient is equal to 0.

For the $q^{-1}$-coefficient, any term $\Pi_{i=1}^{5} \theta_{i}^{b_i}$ appearing in it 
must satisfy one of the following two conditions
\begin{itemize}
\item $b_i=b_j$ for some $1\leq i< j\leq 4$.
\item $\{b_1,b_2,b_3,b_4\}$ is equal to $\{-11/2,-5/2,-3/2,-1/2\},$
\\
$\{-9/2,-7/2,-3/2,-1/2\},$ $\{-9/2,-5/2,$ $-3/2,-1/2\}$ \\
or $\{-7/2,-5/2,-3/2,-1/2\}$, and $b_5\in \{\pm 3/2,\pm 1/2\}$.
\end{itemize}
In either case, we have $b_i=\pm b_j$ for some $i\neq j$. This implies that the $(W,\sgn)$-summation of the $q^{-1}$-coefficient is equal to 0. Similarly, we can also show that  the $(W,\sgn)$-summation of the $q^{-4}$-coefficient is equal to 0.

For the $q^{-3/2}$-coefficient, any term $\Pi_{i=1}^{5} \theta_{i}^{b_i}$ appearing in it 
must satisfy one of the following two conditions
\begin{itemize}
\item $b_i=b_j$ for some $1\leq i< j\leq 4$.
\item $b_i=0$ for some $1\leq i\leq 4$.
\end{itemize}
This implies that the $(W,\sgn)$-summation of the $q^{-3/2}$-coefficient is equal to 0. Similarly, we can also show that  the $(W,\sgn)$-summation of the $q^{-7/2}$-coefficient is equal to 0.

For the $q^{-2}$-coefficient, any term $\Pi_{i=1}^{5} \theta_{i}^{b_i}$ appearing in it 
must satisfy one of the following two conditions
\begin{itemize}
\item $b_i=\pm b_j$ for some $1\leq i< j\leq 4$.
\item $\{b_1,b_2,b_3,b_4\}$ is equal to $\{-7/2,-5/2,-3/2,-1/2\},$
\\
$\{-9/2,-5/2,-3/2,1/2\},$ or $\{-7/2,-5/2,$ $-3/2,1/2\}$, and $b_5\in \{\pm 5/2,\pm 3/2,\pm 1/2\}$.
\end{itemize}
In either case, we have $b_i=\pm b_j$ for some $i\neq j$. This implies that the $(W,\sgn)$-summation of the $q^{-2}$-coefficient is equal to 0. 
Similarly, we can also show that  the $(W,\sgn)$-summation of the $q^{-3}$-coefficient is equal to 0.

For the $q^{-5/2}$-coefficient, any term $\Pi_{i=1}^{5} \theta_{i}^{b_i}$ appearing in it 
must satisfy one of the following two conditions
\begin{itemize}
\item $b_i=\pm b_j$ for some $1\leq i< j\leq 4$.
\item $b_i=0$ for some $1\leq i\leq 4$.
\end{itemize}
This implies that the $(W,\sgn)$-summation of the $q^{-5/2}$-coefficient is equal to 0. This finishes the proof of the lemma.
\end{proof}

To sum up, we have proved that the local relative character is equal to 
$$\zeta(1)\zeta(4)\zeta(6)\zeta(8)\zeta(10)\frac{L(1/2,\pi,\Spin_{11})}{L(1,\pi,\Ad)}$$
where $\pi$ is an unramified representation of $\GSp_{10}(F)$.

\subsection{The model $(\GSp_{6}\times \GL_2,\GL_2\ltimes U)$}
In this subsection, we compute the local relative character for the model $(\GSp_{6}\times \GL_2,\GL_2\ltimes U)$. Let $G=\GSp_{6}\times \GL_2$, $H=H_0\ltimes U$ with 
$$H_0=\{diag(h,h,h)\times h \mid h\in \GL_2\}$$
and $U$ be the unipotent radical of the standard parabolic subgroup $P=LU$ of $\GSp_6$ embedded into $G$ via the map $u\mapsto (u,I_2)$ where 
$$L=\{(h_1,h_2, \det(h_2)h_{1}^{\ast})|\;h_i\in \GL_2\}.$$ 
We define a generic character $\xi$ on $U(F)$ to be $\xi(u)=\psi(\lambda(u))$ where 
$$\lambda(u)=\tr(X),\;u=\begin{pmatrix}I_2&X&\ast  \\ 0&I_2&\ast \\ 0&0&I_2 \end{pmatrix}.$$ 
The model $(G,H)$ is the Whittaker induction of the trilinear $\GL_2$-model $(L\times \GL_2,H_0,\xi)$. As in the previous case, we can also define the quaternion version of this model.

Let $w_0=\begin{pmatrix}0&0&I_2\\ 0&I_2&0\\ I_2&0&0 \end{pmatrix}$ be the Weyl element that sends $U$ to its opposite. It is clear that the $w_0$-conjugation map stabilizes $L$ and fixes $H_0$. We define the map $a:\GL_1\rightarrow Z_L$ to be
$$a(t)=diag(tI_2,I_2,t^{-1}I_2)\times I_2.$$
This clearly satisfies the equation \eqref{the map a}. For the open Borel orbit, let
$$\eta_0=diag(I_2,\begin{pmatrix}0&1\\1&0 \end{pmatrix}, I_2)\times \begin{pmatrix}0&1\\1&0 \end{pmatrix}\begin{pmatrix}1&1\\0&1 \end{pmatrix}$$ 
be the representative of the open Borel orbit for the model $(L\times \GL_2,H_0)$ as in Section \ref{section triple product}, and $\eta=\eta_0 w_0$. The relation \eqref{eta0 relation} has already been verified in Section \ref{section triple product}. This finishes the first three steps in Section \ref{sec:6-steps}.

Let $\Theta$ be the weights of the 16-dimensional representation $\Spin_{7}\times {\rm std}_2$ of $\GSpin_{7}(\BC)\times \GL_2(\BC)$. We can write it as 
$$\Theta=\{\frac{\pm e_1\pm e_2\pm e_3}{2}+e_i' \mid 1\leq i\leq 2\}.$$

Let $\alpha_i=\varepsilon_i-\varepsilon_{i+1},\;1\leq i\leq 2$ and $\alpha_3=2\varepsilon_3$ be the simple roots of $\GSp_{6}$ and $\alpha'=\varepsilon_1'-\varepsilon_2'$ be the simple root of $\GL_2$. By the computation of the trilinear $\GL_2$-model in Section \ref{section triple product} and the discussion in Section \ref{sec non-red strategy 2} (in particular, Remark \ref{color in non-red}), we have
$$\beta_{\alpha_1}^{\vee}=\frac{e_1-e_2-e_3}{2}+e_1',\;\alpha_{1}^{\vee}-\beta_{\alpha_1}^{\vee}=\frac{e_1-e_2+e_3}{2}+e_2',$$
$$\beta_{\alpha_3}^{\vee}=\frac{-e_1+e_2+e_3}{2}+e_1',\;\alpha_{3}^{\vee}-\beta_{\alpha_3}^{\vee}=\frac{e_1-e_2+e_3}{2}+e_2',$$
$$\beta_{\alpha'}^{\vee}=\frac{-e_1+e_2+e_3}{2}+e_1',\;\alpha'^{\vee}-\beta_{\alpha'}^{\vee}=\frac{e_1-e_2-e_3}{2}+e_1'.$$
By a similar argument as in the Ginzburg--Rallis model case in Section 5, we can also verify \eqref{nonred root} for the root $\alpha_2$. The proof of the following proposition follows from a similar but easier argument as the model $(\GSp_{10},\GL_2\ltimes U)$ in the previous subsection. The only difference is to replace the identity in Lemma \ref{constant for GL(4)xGL(2) non-red} by the identity in Section  \ref{section triple product} for the trilinear $\GL_2$-model. We will skip it here.

\begin{prop}
$\Theta^+$ is consisting of the following 8 elements:
$$\frac{e_1+e_2\pm e_3}{2}+e_i', \frac{e_1-e_2+ e_3}{2}+e_i',\frac{\pm(e_1-e_2-e_3)}{2}+e_1',\;1\leq i\leq 2.$$
The set $\Theta^+$ satisfies \eqref{theta plus}. Moreover, we have
$$\sum_{w\in W}c_{WS}(w\theta)=\frac{1}{\Delta_{H_0/Z_{G,H}}(1)}=\frac{1}{\zeta(2)}=(1-q^{-2}).$$
\end{prop}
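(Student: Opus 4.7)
The plan is to follow the same three-step strategy as in the $(\GSp_{10},\GL_2\ltimes U)$ case of the preceding subsection, exploiting that the present model is considerably simpler because the unipotent radical $U$ contributes only one simple root of Type $(U,\psi)$, namely $\alpha_2$.

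For the description of $\Theta^+$, I would first record the five defining conditions coming from the displayed colors: namely
\begin{itemize}
\item $\tfrac{e_1-e_2-e_3}{2}+e_1',\;\tfrac{e_1-e_2+e_3}{2}+e_2',\;\tfrac{-e_1+e_2+e_3}{2}+e_1'\in\Theta^+$;
\item $\Theta^+-(\Theta^+\cap w_{\alpha_1}\Theta^+)=\{\tfrac{e_1-e_2-e_3}{2}+e_1',\tfrac{e_1-e_2+e_3}{2}+e_2'\}$, and the analogous relations for $w_{\alpha_3}$ and $w_{\alpha'}$;
\item $\Theta^+$ is stable under $w_{\alpha_2}$.
\end{itemize}
A direct check shows that the eight weights listed in the statement satisfy all of these. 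For uniqueness I would argue exactly as in the proof of Proposition~\ref{theta proposition}: if $\Theta'^+$ is any other such subset, then $\Theta^+\cap\Theta'^+$ also satisfies the conditions, so $\Theta^+\smin(\Theta^+\cap\Theta'^+)$ and $\Theta'^+\smin(\Theta^+\cap\Theta'^+)$ are both $W$-invariant subsets of $\Theta$. Since $\Theta$ is $W$-irreducible (the weights of $\Spin_7\otimes{\rm std}_2$), these must both be empty, giving $\Theta^+=\Theta'^+$.

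The verification of \eqref{theta plus} is immediate from the explicit list: the complementary set $\Theta^-$ is obtained from $\Theta^+$ by sending $(e_1,e_2,e_3,e_1',e_2')\mapsto(-e_1,-e_2,-e_3,-e_2',-e_1')$, which matches the required $\theta\leftrightarrow\theta^{-1}$ symmetry.

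The computation of $\sum_{w\in W}c_{WS}(w\theta)$ is the main work. I would decompose $\Theta^+=\Theta_1^+\cup\Theta_2^+$, where $\Theta_2^+$ consists of the four weights $\tfrac{\pm(e_1-e_2-e_3)}{2}+e_1'$, $\tfrac{e_1-e_2+e_3}{2}+e_1'$, $\tfrac{-e_1+e_2+e_3}{2}+e_1'$ associated with the trilinear $\GL_2$-model sitting inside $\GSp_6\times\GL_2$ as $\GL_2\times\GL_2\times\GL_2$ (via the Levi $L\times\GL_2$), and $\Theta_1^+$ is the remaining four weights. Correspondingly I decompose the set of positive roots as $\Phi^+=\Phi_1^+\cup\Phi_2^+$, with $\Phi_2^+$ the positive roots of this $\GL_2^3$ Levi and $\Phi_1^+$ the rest. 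Applying the trilinear identity established in Section~\ref{section triple product} to the inner summation over the Weyl group $W(\GL_2^3)=(\BZ/2)^3$ yields the factor $(1-q^{-2})$, and reduces the problem to showing
\[
\sum_{w\in W/W(\GL_2^3)}\frac{\prod_{\gamma^\vee\in\Theta_1^+}1-q^{-1/2}e^{\gamma^\vee}}{\prod_{\alpha\in\Phi_1^+}1-e^{\alpha^\vee}}(w\theta)=1.
\]
Multiplying numerator and denominator by $e^{-\rho_1^\vee}\prod_{\alpha\in\Phi_2^+}(1-e^{\alpha^\vee})$ to make the denominator $(W,\sgn)$-antisymmetric, the proof reduces to the vanishing of the $(W,\sgn)$-summation of the $q^{-k/2}$-coefficients of a finite Laurent polynomial in $\theta_1,\theta_2,\theta_3,\theta_1',\theta_2'$ for $1\le k\le 8$, together with the verification that the $q^0$-coefficient matches the Weyl denominator exactly. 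I expect the hard step here to be this combinatorial analysis of the intermediate $q^{-k/2}$-coefficients; as in the $(\GSp_{10},\GL_2\ltimes U)$ case I anticipate that each offending monomial will either contain two exponents that are equal or opposite (so it vanishes under $(W,\sgn)$-summation) or be $W$-conjugate to a weight orthogonal to some simple coroot (so it vanishes after applying the corresponding reflection). Because the rank here is smaller and $\Theta_1^+$ has only four elements rather than ten, this case by case check should be substantially shorter than the one carried out for $(\GSp_{10},\GL_2\ltimes U)$.
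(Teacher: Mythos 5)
Your proposal is correct and follows exactly the route the paper indicates for this proposition: the paper omits the proof entirely, stating only that it is a ``similar but easier'' argument than the $(\GSp_{10},\GL_2\ltimes U)$ case with Lemma \ref{constant for GL(4)xGL(2) non-red} replaced by the trilinear $\GL_2$ identity of Section \ref{section triple product}, which is precisely your factorization through the $\GL_2^3$ Levi. Two small slips to fix in the write-up: your list of $\Theta_2^+$ repeats $\tfrac{-e_1+e_2+e_3}{2}+e_1'$ and omits the fourth trilinear weight $\tfrac{e_1-e_2+e_3}{2}+e_2'$, and since $\Theta_1^+$ has four elements the coefficients to be checked are those of $q^{-k/2}$ for $1\le k\le 4$, not $1\le k\le 8$.
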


To sum up, we have proved that the local relative character is equal to 
$$\zeta(1)\zeta(2)\zeta(4)\zeta(6)\frac{L(1/2,\pi,\Spin_{7}\times std_2)}{L(1,\pi,\Ad)}$$
where $\pi$ is an unramified representation of $\GSp_{6}(F)\times \GL_2(F)$.

\subsection{The model $(\GSO_{12},\GL_2\ltimes U)$}\label{sec:GSO12}
In this subsection, we compute the local relative character for the model $(\GSO_{12},\GL_2\ltimes U)$. There are two models in this case (corresponding to the two Siegel parabolic subgroups) and they are differed by the outer automorphism of $\GSO_{12}$. Each of them corresponds to one of the Half-Spin $L$-function of $\GSpin_{12}(\BC)$. 
We will only compute the local relative character of one of the models, the other one can be computed just by applying the outer automorphism to the first one. Let $J_2'=\begin{pmatrix}0&-1\\1&0\end{pmatrix}$. Set $L_{4}=\begin{pmatrix}0&J_2'\\-J_2'&0 \end{pmatrix}$ and
$L_{4n}=\begin{pmatrix}0&0&J_2'\\0&L_{4n-4}&0\\-J_2'&0&0 \end{pmatrix}$. Define
$$\GSO_{4n}=\{g\in \GL_{4n} \mid g^tL_{4n}g =l(g)L_{4n}\}.$$
Let $G=\GSO_{12}$, $H=H_0\ltimes U$ with 
$$H_0=diag(h,h,h,h,h,h) \mid h\in \GL_2\}$$
and $U$ be the unipotent radical of the standard parabolic subgroup $P=LU$ of $G$ with ($h^\ast=J_2'{}^th^{-1} (J_2')^{-1}$)
$$L=\{diag(h_1,h_2, h_3,t h_{3}^{\ast},th_{2}^{\ast}, th_{1}^{\ast}) \mid h_i\in \GL_2,t\in \GL_1\}.$$ 
We define a generic character $\xi$ on $U(F)$ to be $\xi(u)=\psi(\lambda(u))$ where 
$$\lambda(u)=\tr(X)+\tr(Y)+\tr(Z),\;u=\begin{pmatrix}I_2&X&\ast &\ast &\ast&\ast \\ 0&I_2&Y&\ast&\ast &\ast\\ 0&0&I_2& Z & \ast&\ast \\ 0&0&0&I_2 &\ast&\ast \\ 0&0&0&0&I_2&\ast  \\ 0&0&0&0&0&I_2\end{pmatrix}.$$
It is easy to see that $H_0$ is the stabilizer of this character and $(G,H)$ is the Whittaker induction of the trilinear $\GL_2$-model $(L,H_0,\xi)$.

We can also define the quaternion version of this model. Let $D/F$ be a quaternion algebra, and let 
$$\GSO_{2n}(D)=\{g\in \GL_{2n}(D) \mid {}^t\bar{g} J_{2n'}g=l(g)J_{2n}'\}.$$
Let $G_D(F)=\GSO_6(D)$, $H_D=H_{0,D}\ltimes U_D$  with 
$$H_{0,D}(F)=\{diag(h,h,h,h,h,h) \mid h\in \GL_1(D) \}$$
and $U_D$ be the unipotent radical of the standard parabolic subgroup $P_D=L_DU_D$ of $G_D$ where ($h^\ast=\bar{h}^{-1}$)
$$L_D(F)=\{(h_1,h_2, h_3,t h_{3}^{\ast},t h_{2}^{\ast},t h_{1}^{\ast})|\;h_i\in \GL_1(D),t\in \GL_1(F)\}.$$ 
Here $x\rightarrow \bar{x}$ is the conjugation map on the quaternion algebra. Like the split case, we can define the character $\xi_D$ on $U_D(F)$ by replacing the trace map of $Mat_{2\times 2}$ by the trace map of $D$.

Let $w_0=\begin{pmatrix}0&0&0&0&0&I_2\\ 0&0&0&0&I_2&0\\ 0&0&0&I_2&0&0\\ 0&0&I_2&0&0&0\\ 0&I_2&0&0&0&0\\I_2&0&0&0&0&0 \end{pmatrix}$ be the Weyl element that sends $U$ to its opposite. It is clear that the $w_0$-conjugation map stabilizes $L$ and fixes $H_0$. We define the map $a:\GL_1\rightarrow Z_L$ to be
$$a(t)=diag(t^3I_2,t^2I_2,tI_2,I_2,t^{-1}I_2,t^{-2}I_2).$$
This clearly satisfies the second identity of the equation \eqref{the map a}. 
Although it does not satisfy the first equation of \eqref{the map a}, but the difference between $a(t)^{-1}$ and $w_{0}^{-1}a(t)w_0$ belongs to the center 
so all the arguments in Section \ref{sec non-red strategy 1} still work
(because all the characters are unramified). 
For the open Borel orbit, let
$$\eta_0=diag(I_2,\begin{pmatrix}0&1\\1&0 \end{pmatrix}, \begin{pmatrix}0&1\\1&0 \end{pmatrix}\begin{pmatrix}1&1\\0&1 \end{pmatrix}, \begin{pmatrix}0&-1\\-1&-1 \end{pmatrix},-\begin{pmatrix}0&1\\1&0 \end{pmatrix},I_2)$$ 
be the representative of the open Borel orbit for the model $(L,H_0)$ as in Section \ref{section triple product}, and $\eta=\eta_0 w_0$. The relation \eqref{eta0 relation} has already been verified in Section \ref{section triple product}. This finishes the first three steps in Section \ref{sec:6-steps}.

Now we compute the set of colors and also the set $\Theta^+$. Let $\Theta$ be the weights of the 32-dimensional Half-Spin representation $\HSpin_{12}$ of $\GSpin_{12}(\BC)$ given by
$$\Theta=\{\frac{\pm e_1\pm e_2\pm e_3\pm e_4\pm e_5\pm e_6}{6} \mid -\;\text{appears odd times}\}.$$

Let $\alpha_i=\varepsilon_i-\varepsilon_{i+1},\;1\leq i\leq 5$ and $\alpha_6=\varepsilon_5+\varepsilon_6$ be the simple roots of $\GSO_{12}$. By the computation of the trilinear $\GL_2$-model in Section \ref{section triple product} and the discussion in Section \ref{sec non-red strategy 2} (in particular, Remark \ref{color in non-red}), we have
$$\beta_{\alpha_1}^{\vee}=\frac{e_1-e_2-e_3+e_4+e_5-e_6}{2},$$
$$\alpha_{1}^{\vee}-\beta_{\alpha_1}^{\vee}=\frac{e_1-e_2+e_3-e_4-e_5+e_6}{2},$$ 
$$\beta_{\alpha_3}^{\vee}=\frac{-e_1+e_2+e_3-e_4+e_5-e_6}{2},$$
$$\alpha_{3}^{\vee}-\beta_{\alpha_3}^{\vee}=\frac{e_1-e_2+e_3-e_4-e_5+e_6}{2},$$
$$\beta_{\alpha_5}^{\vee}=\frac{-e_1+e_2+e_3-e_4+e_5-e_6}{2},$$
$$\alpha_{5}^{\vee}-\beta_{\alpha_5}^{\vee}=\frac{e_1-e_2-e_3+e_4+e_5-e_6}{2}.$$

By a similar argument as in the Ginzburg--Rallis model case in Section 5, we can also verify \eqref{nonred root} for the roots $\alpha_2$, $\alpha_4$ and $\alpha_6$. Next, we compute the set $\Theta^+$. 

\begin{prop}\label{prop:theta-GSO12}
$\Theta^+$ is consisting of the following 16 elements:
$$\frac{e_1+e_2+e_3+e_4+e_5+e_6-2e_l}{2},$$
$$\frac{-e_1-e_2-e_3-e_4-e_5-e_6+2e_i+2e_j+2e_k}{2}$$
with $1\leq l\leq 6$ and $(i,j,k)$ belongs to the set
$$\{(123),(124),(125),(126),(134),(135),(136),(145),(234),(235)\}.$$
\end{prop}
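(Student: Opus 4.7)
The plan is to follow the same strategy used for Propositions \ref{theta proposition}, \ref{theta for GL(4)xGL(2)}, and \ref{prop theta GL(6)}. First I will translate the color computations that immediately precede the statement into a list of explicit conditions that characterize $\Theta^+$ as the smallest subset of $\Theta$ satisfying them, namely:
(i) it contains the five colors $\beta_{\alpha_1}^\vee,\,\alpha_1^\vee-\beta_{\alpha_1}^\vee,\,\beta_{\alpha_3}^\vee,\,\alpha_3^\vee-\beta_{\alpha_3}^\vee,\,\beta_{\alpha_5}^\vee,\,\alpha_5^\vee-\beta_{\alpha_5}^\vee$;
(ii) for $i\in\{1,3,5\}$ one has $\Theta^+-(\Theta^+\cap w_{\alpha_i}\Theta^+)=\{\beta_{\alpha_i}^\vee,\alpha_i^\vee-\beta_{\alpha_i}^\vee\}$;
(iii) $\Theta^+$ is stable under $w_{\alpha_2}$, $w_{\alpha_4}$ and $w_{\alpha_6}$. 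Note that the $\alpha_2,\alpha_4,\alpha_6$ conditions come from the verification of \eqref{nonred root} which mirrors the Ginzburg--Rallis computation in Section \ref{sec:GL6}.

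Second, I will verify directly that the explicit $16$-element set in the statement satisfies the three conditions above. Reorganize the proposed set using the parameter $S=\{i,j,k\}$ (for the second family) or $S=\{l\}^c$ (for the first family, writing $\frac{e_1+\cdots+e_6-2e_l}{2}$ as the weight whose minus signs sit outside $S=\{l\}^c$); then membership in $\Theta^+$ is controlled by the condition $S\in\mathcal S$, where $\mathcal S$ is the list of ten triples together with the six singletons given in the statement. Condition (iii) then becomes the combinatorial statement that $\mathcal S$ is stable under the transpositions $(2\,3)$, $(4\,5)$, and the simultaneous sign flip coming from $\alpha_6$ (which acts on $S$ by $S\mapsto S\triangle\{5,6\}$ followed by complementation if necessary); this is immediate from the explicit list. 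Condition (ii) reduces to checking that exactly the two colors $\beta_{\alpha_i}^\vee,\,\alpha_i^\vee-\beta_{\alpha_i}^\vee$ are carried by $w_{\alpha_i}$ outside the set, which is also a short enumeration.

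Third, I will argue uniqueness in the standard way: if $\Theta'^+\subset\Theta$ is another subset satisfying (i)--(iii), then $\Theta^+\cap\Theta'^+$ satisfies them as well, so both $\Theta^+-(\Theta^+\cap\Theta'^+)$ and $\Theta'^+-(\Theta^+\cap\Theta'^+)$ are $W$-stable subsets of $\Theta$. Since the half-Spin representation $\HSpin_{12}$ of $\GSpin_{12}(\BC)$ is irreducible, the Weyl group $W(D_6)$ acts transitively on $\Theta$, so the only $W$-invariant subsets are $\emptyset$ and $\Theta$; as both differences are proper subsets of $\Theta^+$ (respectively $\Theta'^+$), both must be empty, giving $\Theta^+=\Theta'^+$.

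The main obstacle, and really the only one, is the bookkeeping in step two: verifying condition (ii) for $i=1,3,5$ requires checking how $w_{\alpha_i}$ permutes the $16$ proposed weights and confirming that precisely two of them leave the set. The symmetry of the list $\mathcal S$ under $(2\,3)$, $(4\,5)$ (from (iii)) should cut the casework, and the verification for $\alpha_3$ and $\alpha_5$ can be reduced to $\alpha_1$ by applying those already-established stabilities. Finally, the easy check that $\Theta^+$ satisfies \eqref{theta plus} is immediate: the map $\gamma^\vee\mapsto -\gamma^\vee$ gives a bijection between the six ``1-minus'' weights and the six weights with five minuses (the complements of singletons), and a bijection between the ten ``3-minus'' weights indexed by $\mathcal S$ and their negatives indexed by the complementary triples, so $-\Theta^+=\Theta^-:=\Theta\setminus\Theta^+$ as required.
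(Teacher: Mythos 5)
Your proposal follows the paper's proof essentially verbatim: the same five conditions extracted from the color computations (containment of the colors, the prescribed differences $\Theta^+-(\Theta^+\cap w_{\alpha_i}\Theta^+)$ for $i=1,3,5$, and stability under $w_{\alpha_2},w_{\alpha_4},w_{\alpha_6}$), the same direct verification that the stated $16$-element set satisfies them, and the same uniqueness argument via the fact that the only $W$-stable subsets of $\Theta$ (a single Weyl orbit, since $\HSpin_{12}$ is minuscule) are $\emptyset$ and $\Theta$. One minor slip in your bookkeeping: $w_{\alpha_6}$ sends the minus-sign set $S$ to $S\triangle\{5,6\}$ only when $|S\cap\{5,6\}|$ is even and fixes the weight otherwise, rather than your ``complementation if necessary'' rule — but the stability of the stated list under the correct action does check out, so the argument is unaffected.
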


\begin{proof}
By the computation of the colors, we know that $\Theta^+$ is the smallest subset of $\Theta$ satisfying the following 5 conditions:
\begin{enumerate}
\item $\frac{e_1-e_2-e_3+e_4+e_5-e_6}{2},\frac{-e_1+e_2+e_3-e_4+e_5-e_6}{2},\frac{e_1-e_2+e_3-e_4-e_5+e_6}{2}\in \Theta^{+}$.
\item $\Theta^{+}-(\Theta^+\cap w_{\alpha_1}\Theta^+)=\{\frac{e_1-e_2-e_3+e_4+e_5-e_6}{2},\frac{e_1-e_2+e_3-e_4-e_5+e_6}{2}\}$.
\item $\Theta^{+}-(\Theta^+\cap w_{\alpha_3}\Theta^+)=\{\frac{e_1-e_2+e_3-e_4-e_5+e_6}{2},\frac{-e_1+e_2+e_3-e_4+e_5-e_6}{2}\}$.
\item $\Theta^{+}-(\Theta^+\cap w_{\alpha_5}\Theta^+)=\{\frac{e_1-e_2-e_3+e_4+e_5-e_6}{2},\frac{-e_1+e_2+e_3-e_4+e_5-e_6}{2}\}$.
\item $\Theta^{+}$ is stable under $w_{\alpha_2},\;w_{\alpha_4}$ and $w_{\alpha_6}$.
\end{enumerate}
It is clear that the set in the proposition satisfies these conditions. So we just need to show that the set is the unique subset of $\Theta$ satisfying these conditions. The argument is exactly the same as the case $(\GSp_6\times \GSp_4, (\GSp_4\times \GSp_2)^0)$ in Proposition \ref{theta proposition}. We will skip it here.
\end{proof}

It is clear that $\Theta^+$ satisfies \eqref{theta plus}. The last thing remains  to  prove Lemma \ref{lem constant nonreductive} for the current case. Let $\Theta_{1}^{+}$ (resp. $\Theta_{2}^{+}$) be the subset of $\Theta^+$ consisting of elements of the form 
$$\frac{e_1+e_2+e_3+e_4+e_5+e_6-2e_l}{2}$$
and let $\Theta_{2}^{+}$ be the subset of $\Theta^+$ consisting of elements of the form
$$\frac{e_1+e_2+e_3+e_4+e_5+e_6-2e_i-2e_j-2e_k}{2}.$$
Then $\Theta_{2}^{+}$ corresponds to the weights of the Ginzburg--Rallis model discussed in Section \ref{sec:GL6}. 
We also decompose the set of positive roots $\Phi^+$ as $\Phi_{1}^+\cup \Phi_{2}^+$ where 
$$\Phi_{2}^{+}=\{e_i-e_j|\;1\leq i<j\leq 6\}$$ 
is the set of the roots contained in $\GL_6$ and $\Phi_{1}^+$ contains the remaining positive roots. We also embed the Weyl group $S_6$ of $\GL_6$ into the Weyl group $W$.

\begin{lem}\label{lm:GSO12-constant-lem}
With the notation above, we have 
$$\sum_{w\in W}c_{WS}(w\theta)=\frac{1}{\Delta_{H_0/Z_{G,H}}(1)}=\frac{1}{\zeta(2)}=(1-q^{-2}).$$
\end{lem}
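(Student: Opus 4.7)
The plan is to reduce the identity to the Ginzburg--Rallis constant lemma (Lemma 6.3) via a splitting of the Weyl group $W=W(D_6)$, in a manner parallel to the strategy used in the proof of Lemma 9.4 for $E_7$. The group $W$ contains $S_6$ (permuting $e_1,\dots,e_6$) as a subgroup of index $32$, whose coset representatives can be taken to be the even sign changes.

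Setting $\theta_i := e^{e_i}(\theta)$ and $\tau := e^{(e_1+\cdots+e_6)/2}(\theta)$ (well-defined as a coordinate on the torus of $\GSpin_{12}(\BC)$), the decomposition $\Theta^+=\Theta_1^+\cup \Theta_2^+$ and $\Phi^+=\Phi_1^+\cup\Phi_2^+$ allows one to write
$$c_{WS}(\theta) = \frac{\prod_{l=1}^{6}\bigl(1-q^{-1/2}\tau/\theta_l\bigr)\cdot \prod_{(i,j,k)}\bigl(1-q^{-1/2}\theta_i\theta_j\theta_k/\tau\bigr)}{\prod_{1\le i<j\le 6}\bigl(1-\theta_i/\theta_j\bigr)\cdot \prod_{1\le i<j\le 6}\bigl(1-\theta_i\theta_j\bigr)},$$
where the second numerator product runs over the $10$ Ginzburg--Rallis triples listed in Proposition 6.1. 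Splitting $\sum_{w\in W}=\sum_{w\in W/S_6}\sum_{s\in S_6}$ and using that $\prod_l(1-q^{-1/2}\tau/\theta_l)$ and $\prod_{i<j}(1-\theta_i\theta_j)$ are $S_6$-invariant, I would factor these out of the inner sum. After the substitution $\phi_i:=\theta_i\tau^{-1/3}$ (so that $\prod_i \phi_i=1$), the remaining inner $S_6$-sum is precisely the Ginzburg--Rallis sum in Lemma 6.3 and therefore equals $1-q^{-2}$.

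Consequently the lemma reduces to the residual identity
$$\sum_{w\in W/S_6}\frac{\prod_{l=1}^{6}(1-q^{-1/2}\tau/\theta_l)}{\prod_{1\le i<j\le 6}(1-\theta_i\theta_j)}\bigg|_{w\theta}=1.$$
I would lift this to a full $W$-sum (normalized by $1/|S_6|$) and multiply numerator and denominator by $\prod_{i<j}(1-\theta_i/\theta_j)$, so that the denominator becomes the Weyl denominator $\prod_{\alpha\in \Phi^+}(1-e^{\alpha^\vee})$. Expanding the numerator in powers of $q^{-1/2}$, the $q^{-k/2}$-coefficient for $1\le k\le 6$ is (up to sign) a sum of monomials $\tau^k\prod_{l\in S}\theta_l^{-1}$ with $|S|=k$, each having exponent vector $\sum_{l}(k/2-\mathbf{1}_{l\in S})e_l$; for every such $S$ this vector has at least two equal coordinates, hence is fixed by some simple reflection $w_{e_i-e_j}\in W(D_6)$, so its $(W,\sgn)$-summation vanishes. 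The $q^0$-contribution is the classical identity $\sum_{\epsilon}1/\prod_{i<j}(1-\theta_i^{\epsilon_i}\theta_j^{\epsilon_j})=1$ (summed over even sign changes), which one can verify directly for $D_2,D_3$ and which follows in general from the Weyl denominator formula.

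The main obstacle is executing the $(W,\sgn)$-vanishing argument carefully and handling the $q^0$-identity rigorously. Structurally this mirrors the coefficient-by-coefficient vanishing carried out for $E_7$ in Lemma 9.4, but the uniform shape of the weights in $\Theta_1^+$ makes the combinatorial bookkeeping substantially simpler: the monomials in each $q^{-k/2}$-coefficient are precisely those of the elementary symmetric polynomial $e_k(\tau/\theta_1,\ldots,\tau/\theta_6)$, whose exponent vectors always contain the required repeated coordinate, so the case analysis for each $k\in\{1,\dots,6\}$ reduces to a one-line observation rather than the lengthy sub-case analysis needed in the $E_7$ setting.
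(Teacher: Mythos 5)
Your reduction to the Ginzburg--Rallis constant (the splitting $\Theta^+=\Theta_1^+\cup\Theta_2^+$, $\Phi^+=\Phi_1^+\cup\Phi_2^+$, factoring the $W$-sum through $W/S_6$ and applying Lemma \ref{GR case constant lem} to the inner $S_6$-sum) is exactly the paper's argument, and your treatment of the $q^0$-coefficient is acceptable. The gap is in the vanishing of the $q^{-k/2}$-coefficients for $1\le k\le 6$. Once the denominator is made $(W,\sgn)$-equivariant, the object whose $(W,\sgn)$-antisymmetrization must vanish is the product
$$\prod_{1\le i<j\le 6}(\theta_i^{-1}-\theta_j^{-1})\cdot e_k(\tau/\theta_1,\ldots,\tau/\theta_6),$$
not $e_k(\tau/\theta_\bullet)$ alone. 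The monomials of the Vandermonde-type factor have pairwise distinct exponents $\{0,-1,\ldots,-5\}$, so adding to them the exponent vector $\sum_l(k/2-\mathbf{1}_{l\in S})e_l$ of an $e_k$-monomial produces a vector $(b_1,\ldots,b_6)$ that need not have two equal coordinates; and being fixed by a transposition is not inherited by the product in any useful way (the $S_6$-antisymmetrization of $V\cdot m$ with $m$ fixed by a transposition equals $V$ times the $S_6$-symmetrization of $m$, which is nonzero). Concretely, for $k=1$ one obtains the multiset $\{-\tfrac{11}{2},-\tfrac72,-\tfrac52,-\tfrac32,-\tfrac12,\tfrac12\}$, which has no repeated entry; its antisymmetrization vanishes only because it contains the pair $\pm\tfrac12$, i.e.\ the monomial is fixed by a type-$D$ reflection $s_{e_i+e_j}$, which your argument (restricted to reflections $w_{e_i-e_j}$) does not see.

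So the combinatorial heart of the lemma is exactly the case-by-case verification, for each $k\in\{1,\dots,6\}$, that every combined exponent multiset satisfies $b_i=b_j$ or $b_i=-b_j$ for some $i\ne j$; this cannot be replaced by the one-line pigeonhole observation on $e_k$ alone, and it is where the actual work lies. A secondary, fixable imprecision: $\prod_{i<j}(1-\theta_i/\theta_j)(1-\theta_i\theta_j)$ is not itself $(W,\sgn)$-equivariant but is $e^{\rho^\vee}$ times the equivariant product $\prod_{i<j}(\theta_i+\theta_i^{-1}-\theta_j-\theta_j^{-1})$; you must either carry the $e^{-\rho^\vee}$ along, as is done in the $E_7$ computation, or multiply by $\prod_{i<j}(\theta_i^{-1}-\theta_j^{-1})$ instead, as the paper does here.
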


\begin{proof}
By the identity for the Ginzburg--Rallis model case proved in Lemma \ref{GR case constant lem}, we have 
$$\sum_{w\in W}c_{WS}(w\theta)=\sum_{w\in W}\frac{\prod_{\gamma^\vee\in \Theta^+}1-q^{-\frac{1}{2}}e^{\gamma^\vee}}
{\prod_{\alpha\in \Phi^+}1-e^{\alpha^\vee}}(w\theta)$$
$$=(1-q^{-2})\cdot \sum_{w\in W/S_6}\frac{\prod_{\gamma^\vee\in \Theta_{1}^+}1-q^{-\frac{1}{2}}e^{\gamma^\vee}}
{\prod_{\alpha\in \Phi_{1}^+}1-e^{\alpha^\vee}}(w\theta).$$
Hence it is enough to show that 
$$\sum_{w\in W/S_6}\frac{\prod_{\gamma^\vee\in \Theta_{1}^+}1-q^{-\frac{1}{2}}e^{\gamma^\vee}}
{\prod_{\alpha\in \Phi_{1}^+}1-e^{\alpha^\vee}}(w\theta)=1.$$
It is easy to see that the constant coefficient of the above summation is equal to 1, so it is enough to show that all the $q^{-i/2}$-coefficients are equal to 0 for $1\leq i\leq 6$. Like in the previous cases, we can replace the summation on $W/S_6$ by the summation on $W$. We also need to rewrite the function inside the summation ($\theta_i$ are arbitrary variables):
\begin{align*}
 &\frac{\prod_{\gamma^\vee\in \Theta_{1}^+}1-q^{-\frac{1}{2}}e^{\gamma^\vee}}
{\prod_{\alpha\in \Phi_{1}^+}1-e^{\alpha^\vee}}(w\theta)=\frac{\Pi_{i=1}^{6} (1-q^{-1/2}\cdot w(\frac{\sqrt{\theta_1\theta_2\theta_3\theta_4\theta_5\theta_6}}{\theta_i}))}{\Pi_{1\leq i<j\leq 6} (1-w(\theta_i\theta_j))}\\
=&\frac{\Pi_{1\leq i<j\leq 6} w(\theta_{i}^{-1}-\theta_{j}^{-1})\cdot \Pi_{i=1}^{6} (1-q^{-1/2}\cdot w(\frac{\sqrt{\theta_1\theta_2\theta_3\theta_4\theta_5\theta_6}}{\theta_i}))}{\Pi_{1\leq i<j\leq 6} w(\theta_{i}^{-1}-\theta_i-\theta_{j}^{-1}+\theta_j) }. 
\end{align*}
Then the denominator becomes $(W,\sgn)$-invariant, so it is enough to show that the $(W,\sgn)$-summation of the $q^{-i/2}$-coefficient of 
\begin{equation}\label{SO(12) constant 1}
\Pi_{1\leq i<j\leq 6} (\theta_{i}^{-1}-\theta_{j}^{-1})\cdot \Pi_{i=1}^{6} (1-q^{-1/2}\cdot \frac{\sqrt{\theta_1\theta_2\theta_3\theta_4\theta_5\theta_6}}{\theta_i})
\end{equation}
is equal to 0 for $1\leq i\leq 6$. The product $\Pi_{1\leq i<j\leq 6} (\theta_{i}^{-1}-\theta_{j}^{-1})$ consists of terms of the form
$$\Pi_{i=1}^{6} \theta_{i}^{a_i},\;\{a_1,a_2,a_3,a_4,a_5,a_6\}=\{-5,-4,-3,-2,-1,0\}.$$

Then any term $\Pi_{i=1}^{6} \theta_{i}^{b_i}$ appearing in the $q^{-1/2}$-coefficient of \eqref{SO(12) constant 1} must satisfy one of the following two conditions
\begin{itemize}
\item $b_i=b_j$ for some $i\neq j$.
\item $\{b_1,b_2,b_3,b_4,b_5,b_6\}=\{-11/2,-7/2,-5/2,-3/2,-1/2,1/2\}$.
\end{itemize}
In either case, we have $b_i=\pm b_j$ for some $i\neq j$. This implies that the $(W,\sgn)$-summation of the $q^{-1/2}$-coefficient is equal to 0.

For the $q^{-1}$-coefficient, any term $\Pi_{i=1}^{6} \theta_{i}^{b_i}$ appearing in it 
must satisfy one of the following two conditions
\begin{itemize}
\item $b_i=b_j$ for some $i\neq j$.
\item $\{b_1,b_2,b_3,b_4,b_5,b_6\}=\{-5,-4,-2,-1,0,1\}$.
\end{itemize}
In either case, we have $b_i=\pm b_j$ for some $i\neq j$. This implies that the $(W,\sgn)$-summation of the $q^{-1}$-coefficient is equal to 0.

For the $q^{-3/2}$-coefficient, any term $\Pi_{i=1}^{6} \theta_{i}^{b_i}$ appearing in it 
must satisfy one of the following two conditions
\begin{itemize}
\item $b_i=b_j$ for some $i\neq j$.
\item $\{b_1,b_2,b_3,b_4,b_5,b_6\}=\{-9/2,-7/2,-5/2,-1/2,1/2,3/2\}$.
\end{itemize}
In either case, we have $b_i=\pm b_j$ for some $i\neq j$. This implies that the $(W,\sgn)$-summation of the $q^{-3/2}$-coefficient is equal to 0.

For the $q^{-2}$-coefficient, any term $\Pi_{i=1}^{6} \theta_{i}^{b_i}$ appearing in it 
must satisfy one of the following two conditions
\begin{itemize}
\item $b_i=b_j$ for some $i\neq j$.
\item $\{b_1,b_2,b_3,b_4,b_5,b_6\}=\{-4,-3,-2,-1,1,2\}$.
\end{itemize}
In either case, we have $b_i=\pm b_j$ for some $i\neq j$. This implies that the $(W,\sgn)$-summation of the $q^{-2}$-coefficient is equal to 0.

For the $q^{-5/2}$-coefficient, any term $\Pi_{i=1}^{6} \theta_{i}^{b_i}$ appearing in it must satisfy one of the following two conditions
\begin{itemize}
\item $b_i=b_j$ for some $i\neq j$.
\item $\{b_1,b_2,b_3,b_4,b_5,b_6\}=\{-7/2,-5/2,-3/2,-1/2,1/2,5/2\}$.
\end{itemize}
In either case, we have $b_i=\pm b_j$ for some $i\neq j$. This implies that the $(W,\sgn)$-summation of the $q^{-3/2}$-coefficient is equal to 0.

Finally, the $q^{-3}$-coefficients consisting of terms of the form 
$$\Pi_{i=1}^{6} \theta_{i}^{b_i},\;\{b_1,b_2,b_3,b_4,b_5,b_6\}=\{-3,-2,-1,0,1,2\}.$$
The $(W,\sgn)$-summation of these terms is equal to 0. This finishes the proof of the lemma.
\end{proof}

To sum up, we have proved that the local relative character is equal to 
$$\zeta(1)\zeta(4)\zeta(6)^2\zeta(8)\zeta(10)\frac{L(1/2,\pi,\HSpin_{12})}{L(1,\pi,\Ad)}$$
where $\pi$ is an unramified representation of $\GSO_{12}(F)$.

\subsection{The model $(\GSO_8\times \GL_2,\GL_2\ltimes U)$}\label{sec:GSO8}
In this subsection, we compute the local relative character for the model $(\GSO_8\times \GL_2,\GL_2\ltimes U)$. Like the previous case, there are two models in this case and they are differed by the outer automorphism of $\GSO_{8}$. Each of them corresponds to one of the Half-Spin L-function of $\GSpin_{8}(\BC)$. We will only compute the local relative character of one of the models, the other one can be computed just by applying the outer automorphism to the first one.

Let $G=\GSO_{8}\times \GL_2$, $H=H_0\ltimes U$ with 
$$H_0=\{diag(h,h,h,h)\times h \mid h\in \GL_2\}$$
and $U$ be the unipotent radical of the standard parabolic subgroup $P=LU$ of $\GSO_8$ (we embed $U$ into $G$ via the map $u\mapsto u\times I_2$) where 
$$L=\{diag(h_1,h_2, th_{2}^{\ast}, th_{1}^{\ast})|\;h_i\in \GL_2,t\in \GL_1\}.$$ 
We define a generic character $\xi$ on $U(F)$ to be $\xi(u)=\psi(\lambda(u))$ where 
$$\lambda(u)=\tr(X)+\tr(Y),\;u=\begin{pmatrix}I_2&X&\ast &\ast  \\ 0&I_2&Y&\ast\\ 0&0&I_2& \ast \\ 0&0&0&I_2\end{pmatrix}.$$ 
The model $(G,H)$ is the Whittaker induction of the trilinear $\GL_2$-model $(L\times \GL_2,H_0,\xi)$. Similarly we can also define the quaternion algebra version of this model.

Let $w_0=\begin{pmatrix}0&0&0&I_2\\ 0&0&I_2&0\\ 0&I_2&0&0\\ I_2&0&0&0\end{pmatrix}\times I_2$ be the Weyl element that sends $U$ to its opposite. It is clear that the $w_0$-conjugation map stabilizes $L$ and fixes $H_0$. We define the map $a:\GL_1\rightarrow Z_L$ as
$$a(t)=diag(t^2I_2,tI_2,I_2,t^{-1}I_2)\times I_2.$$
This clearly satisfies the second identity of the equation \eqref{the map a}. 
Although it does not satisfy the first equation of \eqref{the map a}, but the difference between $a(t)^{-1}$ and $w_{0}^{-1}a(t)w_0$ belongs   to the center so all the arguments in Section \ref{sec non-red strategy 1} still work. For the open Borel orbit, let
$$\eta_0=diag(I_2,\begin{pmatrix}0&1\\1&0 \end{pmatrix}, -\begin{pmatrix}0&1\\1&0 \end{pmatrix},I_2)\times \begin{pmatrix}0&1\\1&0 \end{pmatrix}\begin{pmatrix}1&1\\0&1 \end{pmatrix}$$ 
be the representative of the open Borel orbit for the model $(L,H_0)$ as in Section \ref{section triple product}, and $\eta=\eta_0 w_0$. The relation \eqref{eta0 relation} has already been verified in Section \ref{section triple product}. This finishes the first three steps in Section \ref{sec:6-steps}.

Now we compute the set of colors and also the set $\Theta^+$. Let $\Theta$ be the weights of the 16-dimensional Half-Spin representation $\HSpin_{12}$ of $\GSpin_{12}(\BC)$ given by
$$\Theta=\{\frac{\pm e_1\pm e_2\pm e_3\pm e_4}{2}+e_i' \mid -\;\text{appears even times},i\in \{1,2\}\}.$$

Let $\alpha_i=\varepsilon_i-\varepsilon_{i+1},\;1\leq i\leq 3$ and $\alpha_4=\varepsilon_3+\varepsilon_4$ be the simple roots of $\GSO_{8}$ and $\alpha'=\varepsilon_1'-\varepsilon_2'$ be the simple root of $\GL_2$. By the computation of the trilinear $\GL_2$-model in Section \ref{section triple product} and the discussion in Section \ref{sec non-red strategy 2} (in particular, Remark \ref{color in non-red}), we have
$$\beta_{\alpha_1}^{\vee}=\frac{e_1-e_2-e_3+e_4}{2}+e_1',\;\alpha_{1}^{\vee}-\beta_{\alpha_1}^{\vee}=\frac{e_1-e_2+e_3-e_4}{2}+e_2',$$ $$\beta_{\alpha_3}^{\vee}=\frac{-e_1+e_2+e_3-e_4}{2}+e_1',\;
\alpha_{3}^{\vee}-\beta_{\alpha_3}^{\vee}=\frac{e_1-e_2+e_3-e_4}{2}+e_2',$$
$$\beta_{\alpha'}^{\vee}=\frac{-e_1+e_2+e_3-e_4}{2}+e_1',\;\alpha'^{\vee}-\beta_{\alpha'}^{\vee}=\frac{e_1-e_2-e_3+e_4}{2}+e_1'.$$
By a similar argument as in the Ginzburg--Rallis model case in Section 5, we can also verify \eqref{nonred root} for the roots $\alpha_2$ and $\alpha_4$. The next proposition computes the set $\Theta^+$ and proves Lemma \ref{lem constant nonreductive} for the current case.

\begin{prop}
$\Theta^+$ is consisting of the following 8 elements:
$$\frac{e_1+e_2\pm(e_3+e_4)}{2}+e_i',\frac{e_1-e_2+e_3-e_4}{2}+e_i',$$
$$\frac{\pm(e_1-e_2-e_3+e_4)}{2}+e_1',\;1\leq i\leq 2.$$
Moreover, we have
$$\sum_{w\in W}c_{WS}(w\theta)=\frac{1}{\Delta_{H_0/Z_{G,H}}(1)}=\frac{1}{\zeta(2)}=(1-q^{-2}).$$
\end{prop}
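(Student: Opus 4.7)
The plan is to follow the same four-step strategy employed in Sections \ref{sec:GL6}, \ref{sec:GSO12}, and in the $(\GSp_6\times \GL_2,\GL_2\ltimes U)$ case, namely: (i) verify that the listed $\Theta^+$ satisfies the defining inductive conditions; (ii) establish uniqueness; (iii) reduce the constant identity to a smaller one via the trilinear $\GL_2$ identity from Section \ref{section triple product}; and (iv) dispatch a residual sum by a $(W,\sgn)$-antisymmetrization argument on the low-power coefficients of $q^{-1/2}$.

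For the description of $\Theta^+$, the color computation preceding the proposition imposes five conditions: three ``symmetric difference'' conditions from the type-$T$ simple roots $\alpha_1,\alpha_3,\alpha'$, and two stability conditions from the type-$(U,\psi)$ simple roots $\alpha_2,\alpha_4$. A direct check shows the proposed 8-element set satisfies all five. Uniqueness follows from the standard argument in Proposition \ref{theta proposition}: $\HSpin_8\otimes {\rm std}_2$ is irreducible so $\Theta$ contains no proper nonempty $W$-invariant subset, and therefore the intersection of any two minimal solutions of the five conditions already coincides with each of them.

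For the constant identity, I would decompose $\Theta^+=\Theta_1^+\sqcup \Theta_2^+$, where
\[
\Theta_2^+=\{\tfrac{e_1-e_2+e_3-e_4}{2}+e_i',\ \tfrac{\pm(e_1-e_2-e_3+e_4)}{2}+e_1'\}_{i=1,2}
\]
is the set of 4 weights naturally identified with the trilinear $\GL_2$ weights for the sub-Levi of type $A_1\times A_1\times A_1$ generated by $\alpha_1,\alpha_3,\alpha'$, and $\Theta_1^+=\{\tfrac{e_1+e_2\pm(e_3+e_4)}{2}+e_i'\}_{i=1,2}$ is the complementary 4 weights. Correspondingly split $\Phi^+$ as $\Phi_1^+\cup \Phi_2^+$ with $\Phi_2^+=\{e_1-e_2,\ e_3-e_4,\ e_1'-e_2'\}$. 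Applying the trilinear $\GL_2$ identity of Section \ref{section triple product} to the $\Phi_2^+,\Theta_2^+$ factors extracts the desired factor $1-q^{-2}$ and reduces the assertion to
\[
\sum_{w\in W/(S_2^3)} \frac{\prod_{\gamma^\vee\in\Theta_1^+}(1-q^{-1/2}e^{\gamma^\vee})}{\prod_{\alpha\in\Phi_1^+}(1-e^{\alpha^\vee})}(w\theta)=1.
\]

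Expanding to a sum over all of $W$ and multiplying numerator and denominator by a suitable Weyl denominator to make the denominator $(W,\sgn)$-invariant, it suffices to show that the $(W,\sgn)$-antisymmetrization of each $q^{-i/2}$-coefficient (for $1\leq i\leq 8$) of the resulting Laurent polynomial in $\theta_1,\ldots,\theta_4,\theta'_1$ vanishes, while the constant term contributes $1$. The main technical obstacle is the bookkeeping: enumerating the monomials appearing at each order and verifying that in every monomial two exponents are forced to be equal or opposite (so that the antisymmetrization annihilates it). This combinatorial step is entirely parallel to the arguments in Lemmas \ref{GR case constant lem} and \ref{lm:GSO12-constant-lem}, and since fewer variables are involved here the casework should be noticeably shorter than in the $\GSO_{12}$ case.
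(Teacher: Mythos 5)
Your description of $\Theta^+$ and its uniqueness argument are exactly the paper's, but your proof of the constant identity takes a genuinely different reduction. The paper's (sketched) route factors through the Levi subgroup $\GL_4\times\GL_2$ of $\GSO_8\times\GL_2$: it takes $\Theta_2^+$ to be the six weights of Lemma \ref{constant for GL(4)xGL(2) non-red} and $\Phi_2^+$ the seven positive roots $\{e_i-e_j\}\cup\{e_1'-e_2'\}$, extracts $1-q^{-2}$ from that lemma, and is left with a residual sum over the $8$-element set $W/(S_4\times S_2)$ whose numerator involves only the two weights $\frac{e_1+e_2+e_3+e_4}{2}+e_i'$ and whose denominator involves only the six roots $e_i+e_j$, so that only the $q^{-1/2}$- and $q^{-1}$-coefficients need to be antisymmetrized away. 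You instead factor through the $A_1^3$ Levi generated by $\alpha_1,\alpha_3,\alpha'$ and invoke the trilinear identity of Section \ref{section triple product} directly; this is legitimate — your $\Theta_2^+$ is precisely the positive half of the tensor-product weights up to a central shift common to all four elements, and your $\Theta_1^+$ and $\Phi_1^+$ are $S_2^3$-stable, so the sum factors — but it leaves a residual sum over the $48$-element set $W/S_2^3$ with a degree-$4$ numerator, hence coefficients up to $q^{-2}$ (your range ``$1\leq i\leq 8$'' should read ``$1\leq i\leq 4$''). The residual casework does close: in every monomial of each nonconstant coefficient two of the $e_i$-exponents are equal or opposite, so the $(W,\sgn)$-antisymmetrization kills it. In short, the paper's choice buys a shorter endgame by reusing Lemma \ref{constant for GL(4)xGL(2) non-red} (itself proved via the trilinear identity), while yours is more self-contained at the cost of slightly more enumeration.
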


\begin{proof}
The proof follows from a similar but  easier argument as the $(\GSO_{12},\GL_2\ltimes U)$ model case in the previous subsection. The only difference is that we need to use  Lemma \ref{constant for GL(4)xGL(2) non-red}  instead of Lemma \ref{GR case constant lem}. We will skip the details here.
\end{proof}

To sum up, we have proved that the local relative character is equal to 
$$\zeta(1)^2\zeta(2)\zeta(4)^2\zeta(6)\frac{L(1/2,\pi,\HSpin_{8}\times std_2)}{L(1,\pi,Ad)}$$
where $\pi$ is an unramified representation of $\GSO_{8}(F)\times \GL_2(F)$.

\section{Local multiplicity}\label{sec multiplicity}
In this section we will study the multiplicity for the models in Table \ref{fig:1}. Let $F$ be a local field of characteristic 0, $(G,H)$ be one of the models in Table \ref{fig:1}, and $\xi$ be the character of $H(F)$ defined in the previous sections (note that $\xi$ is trivial in the reductive case). Let $\pi$ be an irreducible admissible representation of $G(F)$ whose central character is trivial on $Z_{G,H}(F)$. Recall that the multiplicity is defined by
$$m(\pi)=\dim \Hom_{H(F)}(\pi,\xi).$$

Similarly, if $F\neq \BC$, let $D/F$ be the unique quaternion algebra (or $D\in H^1(F,H/Z_{G,H})$  if we are in the case of Model 2 of Table \ref{fig:1}), and let $(G_D,H_D,\xi_D)$ be the pure inner form of the model $(G,H,\xi)$ defined in the previous sections. Let $\pi_D$ be an irreducible representation of $G_D(F)$ whose central character is trivial on $Z_{G_D,H_D}(F)$. We can also define the multiplicity $m(\pi_D)=\dim(\Hom_{H_D(F)}(\pi_D,\xi_D)).$

In this section, we will prove a geometric multiplicity formula of $m(\pi)$ and $m(\pi_D)$ in terms of the Harish-Chandra character. 
Then by using the geometric multiplicity formula, together with the character identity in the local Langlands correspondence, we will show that for all the tempered $L$-packets, the summation of the multiplicities is equal to 1 and the unique distinguished element in the packet corresponds to a character of the component group.  The proof of all the results in this section is very similar to the Gan--Gross--Prasad model case (\cite{W10}, \cite{W12}, \cite{B15}) and the Ginzburg--Rallis model case (\cite{Wan15}, \cite{Wan16}, \cite{Wan17}, \cite{WZ}) 
since similar to the Gan--Gross--Prasad model and the Ginzburg--Rallis model, all the models in Table \ref{fig:1} are strongly tempered without Type $N$ root and has a unique open Borel orbit.

In Section \ref{sec:character-identity}, we will recall the local Langlands conjecture. In Section \ref{sec:reductive} we will study the reductive models in Table \ref{fig:1} and in Section \ref{sec:non-reductive} we will study the non-reductive models.

\subsection{The local Langlands conjecture}\label{sec:character-identity}
In this subsection we recall the local Langlands conjecture in Conjecture E of \cite{K}. Let $G$ be a quasi-split reductive group defined over $F$ and let $\{G_\alpha|\;\alpha\in H^1(F,G)\}$ be the set of pure inner forms of $G$. Let $\Pi_{irr,temp}(G_{\alpha})$ be the set of irreducible tempered representations of $G_{\alpha}(F)$. The local Langlands conjecture states that $\cup_{\alpha\in H^1(F,G)}\Pi_{irr,temp}(G_\alpha)$ is a disjoint union of finite sets (i.e. the local tempered Vogan $L$-packets)
$$\cup_{\phi} \Pi_{\phi}$$
where $\phi$ runs over all the tempered $L$-parameters of $G$ and $\Pi_{\phi}=\cup_{\alpha\in H^1(F,G)} \Pi_{\phi}(G_\alpha)$ consists of a finite number of tempered representations with $\Pi_{\phi}(G_\alpha)\subset \Pi_{irr,temp}(G_{\alpha})$ such that the following conditions hold.

\begin{itemize}
\item There is a unique generic element in $\Pi_\phi(G)$ with respect to any Whittaker datum of $G$.
\item For given Whittaker datum, there is a bijection between $\hat{S_\phi}$, the set of irreducible representations of the component group $S_\phi=Z_\phi/Z_{\phi}^{\circ}$ ($Z_\phi$ is the centralizer of $Im(\phi)$ in $\hat{G}$) of the Langlands parameter $\phi$, and $\Pi_\phi$ (denoted by $\pi\leftrightarrow \chi_\pi$) such that 
\begin{itemize}
\item the trivial character of $S_\phi$ corresponds to the unique generic element of $\Pi_\phi(G)$ with respect to the given Whittaker datum.
\item for $\alpha\in H^1(F,G)$, the distribution character $\theta_{\Pi_{\phi}(G_\alpha)}=\sum_{\pi\in \Pi_\phi(G_{\alpha})}\dim(\chi_\pi)\theta_\pi$ is stable. Moreover,  $\iota(G_{\alpha})\theta_{\Pi_\phi(G_\alpha)}$ is the transfer of $\theta_{\Pi_\phi(G)}$ where $\iota(G_\alpha)$ is the Kottwitz sign.
\item endoscopic identity.
\end{itemize}
\end{itemize}

We will not discuss the endoscopic identity of the local Langlands conjecture here since we don't need to use it in this paper, we refer the reader to \cite{K} for more details. In order to prove the multiplicity one of the L-packet for the models in Table \ref{fig:1}, we need to assume that the local Langlands conjecture holds for the groups associated to the models. Note that for the group $G$ in Model 3 and Model 6-10 of Table \ref{fig:1}, the component group $S_\phi$ is not necessarily abelian.

\subsection{The reductive case}\label{sec:reductive}
In this subsection we assume that $H$ is reductive. The model $(\GL_4\times \GL_2,\GL_2\times \GL_2)$ has already been considered in the previous paper \cite{PWZ19}, so we will focus on the models $(G,H)=(\GSp_6\times \GSp_4,(\GSp_4\times \GSp_2)^0)$ and $(G,H)=(\GU_4\times \GU_2,(\GU_2\times \GU_2)^0)$. 

Let $\pi$ be an irreducible representation of $G(F)$ with trivial central character and $\theta_\pi$ be its Harish-Chandra character. For a semisimple element $x\in G(F)$, we let $c_\pi(x)$ be the average of the regular germs of $\theta_\pi$ at $x$. We refer the reader to Section 4.5 of \cite{B15} for the definition of regular germs. We want to emphasize that $c_\pi(x)$ is zero if the centralizer $G_x$ is not quasi-split. We also let $\CT_{ell}(G)$ be a set of representatives of maximal elliptic tori of $G(F)$.

\subsubsection{The model $(\GSp_6\times \GSp_4,(\GSp_4\times \GSp_2)^0)$}
We first consider the case $(G,H)=(\GSp_6\times \GSp_4,(\GSp_4\times \GSp_2)^0)$. For $T\in \CT_{ell}(\GSp_2)$, let
$$T^{n,0}=\{(t_1,\cdots,t_n)\in T^n|\;\det(t_i)=\det(t_j)\;\text{for all}\;1\leq i,j\leq n\}.$$
We use $\iota_n$ to denote the diagonal embedding from $T$ to $T^{n,0}$. We can view $T^{n,0}$ as a maximal elliptic torus of $\GSp_{2n}$. Moreover, up to $\GSp_{2n}$-conjugation, there are $2^{n-1}$-many different embeddings from $T^{n,0}$ to $\GSp_{2n}$.

When $n=2$, there are two embeddings $\nu_2,\nu_2'$ from $T^{2,0}$ to $\GSp_{4}$ and the centralizer of the image of $ \nu_2\circ\iota_2$ (resp. $\nu_2'\circ\iota_2$) in $\GSp_4$ is the quasi-split (resp. non quasi-split) unitary similitude group of 3 variables. Meanwhile, there are four embeddings from $T^{3,0}$ to $(\GSp_{4}\times \GSp_2)^0$ and there are two of them whose projection to $\GSp_4$ coincide with $\nu_2$. Compose with the embedding from $(\GSp_{4}\times \GSp_2)^0$ to $\GSp_6$, we get  two embeddings $\nu_{31},\nu_{32}$ from $T^{3,0}$ to $\GSp_6$. The centralizers of the image of $ \nu_{3i}\circ \iota_3$ ($i=1,2$) in $\GSp_6$ are the two  unitary similitude groups of 3 variables (both of them are quasi-split). We use $\nu_{T,i}=(\nu_{3i} \circ \iota_3)\times ( \nu_2\circ \iota_2)$ to denote the two embeddings from $T$ to $G$ (both factor through $H$). It is easy to see that these two embeddings are conjugated to each other in $H$ and we will use $\nu_T$ to denote one of it.

Meanwhile, let $\iota_{1,2}$ be the embedding from $T^{2,0}$ to $T^{3,0}$ given by 
$$(t_1,t_2)\mapsto (t_1,t_2,t_2).$$ 
Among the four embeddings from $T^{3,0}$ to $\GSp_{6}$, there are two of them (denoted by $\nu_3,\nu_3'$) such that the centralizers in $\GSp_{6}$ of the image of $\nu_3\circ \iota_{1,2}$ and $\nu_3'\circ \iota_{1,2}$ are quasi-split (the centralizer is the quasi-split unitary similitude group of 2 variables times an abelian group). Up to conjugation we may assume that $\nu_3,\nu_3'$ factor  through $(\GSp_{4}\times \GSp_2)^0$ and the projection to $\GSp_4$ of $\nu_3\circ \iota_{1,2}$ (resp. $\nu_3'\circ \iota_{1,2}$) is equal to $\nu_2$ (resp. $\nu_2'$). We use 
$$\nu_{T^{2,0},1}=(\nu_{3}\circ \iota_{1,2})\times \nu_2,\; \nu_{T^{2,0},2}=(\nu_{3}'\circ \iota_{1,2})\times \nu_2'$$ 
to denote the two embeddings from $T^{2,0}$ to $G$. Both of them factor through $H$.

Finally, for $T_1,T_2\in \CT_{ell}(\GSp_2)$ with $T_1\neq T_2$ (this will not happen in the archimedean case), let
$$(T_1\times T_2)^0=\{(t_1,t_2)\in T_1\times T_2|\;\det(t_1)=\det(t_2)\}.$$
Similarly, we can define $(T_1\times T_2\times T_2)^0$. Up to conjugation, there is only one embedding from $(T_1\times T_2)^0$ to $\GSp_4$ and there are two embeddings from $(T_1\times T_2\times T_2)^0$ to $\GSp_6$. The two embeddings induce  two embeddings from $(T_1\times T_2)^0$ to $\GSp_6$ (we first map $T_2$ diagonally into $(T_2\times T_2)^{0}$). We let $\nu$ be the embedding such that the centralizer of its image is quasi-split (the centralizer of the other embedding is not quasi-split). Up to conjugation we may assume that $\nu$ factors through $(\GSp_4\times \GSp_2)^0$ and its projection to $\GSp_4$ is equal to the embedding from $(T_1\times T_2)^0$ to $\GSp_4$. This gives us an embedding $\nu_{T_1,T_2}$ from $(T_1\times T_2)^{0}$ to $G$ that factors through $H$. 

Define the geometric multiplicity to be ($D^H(\cdot)$ is the Weyl determinant)
\begin{eqnarray*}
m_{geom}(\pi)&=&c_{\pi}(1)+\sum_{T\in \CT_{ell}(H)}|W(H,T)|^{-1}\int_{T(F)/Z_{G,H}(F)}^{\ast} D^H(t)\theta_{\pi}(t)\ud t\\
&&+\frac{1}{2} \sum_{T\in \CT_{ell}(\GSp_2)}\big(\int_{T(F)/Z_{\GL_2}(F)}^{\ast} D^H(\nu_{T}(t))c_{\pi}(\nu_{T}(t))\ud t\\
&&+\sum_{i\in \{1,2\}}\int_{T^{2,0}(F)/Z_{\GL_2}(F)}^{\ast} D^H(\nu_{T^{2,0},i}(t))c_{\pi}(\nu_{T^{2,0},i}(t))\ud t \big)\\
&&+\frac{1}{4} \sum_{T_1,T_2\in \CT_{ell}(\GSp_2),T_1\neq T_2}\int_{(T_1\times T_2)^0(F)/Z_{\GL_2}(F)^{diag}}^{\ast}\\ &&D^H(\nu_{T_1,T_2}(t)) c_{\pi}(\nu_{T_1,T_2}(t))\ud t.
\end{eqnarray*}
Here $1$ always stands for the identity element of $G(F)$, $W(H,T)$ is the Weyl group, all the Haar measures are chosen so that the total volume is equal to 1 (note that all the integral domains are compact), and the factors $\frac{1}{2},\frac{1}{4}$ come from the cardinality of the Weyl groups $W(\GSp_2,T),W(\GSp_2,T_i)$. Note that if $F=\BC$, then $\CT_{ell}(H)$ and $\CT_{ell}(\GL_2)$ are empty. Hence we have $m_{geom}(\pi)=c_{\pi}(1)$. When $F=\BR$, $\CT_{ell}(\GSp_2)$ only contains one element and the term associated to 
$$T_1,T_2\in \CT_{ell}(\GSp_2),T_1\neq T_2$$ 
will not appear. We leave it as an excise for the reader to check that our definition of $m_{geom}(\pi)$ matches the definition in \cite{Wan} for general spherical varieties. We refer the reader to \cite{Wan} for a detailed discussion of the geometric multiplicity for general spherical varieties.

\begin{rmk}\label{local convergence}
Like the unitary Gan--Gross--Prasad model case (Proposition 11.2.1 of \cite{B15}), the integrals defining the geometric multiplicity are not necessarily absolutely convergent and they need to be regularized (this is why we write the integral as $\int^\ast$). The regularization is the same as the unitary Gan--Gross--Prasad model case. To be specific, one replace the Weyl determinant $D^{H}$ in the integrand by $(D^G)^{1/2}\cdot (\frac{(D^H)^2}{D^G})^{s-1/2}$. By a very similar argument as in Proposition 11.2.1 of \cite{B15}, we know that the integral is absolutely convergent when $s>0$ and has a limit as $s\rightarrow 0^+$. Then we can define the regularized integral to be this limit. This remark also applies to the model $(\GU_4\times \GU_2,(\GU_2\times \GU)^0)$ in the next subsection.
\end{rmk}

Similarly, if $F\neq \BC$, for the quaternion version of the model, we can also define the embeddings $\nu_{T_D,i},\nu_{T_{D}^{2,0},i},\nu_{T_{1,D},T_{2,D}}$ for $T_D,T_{1,D},T_{2,D}\in \CT_{ell}(\GSp_1(D))=\CT_{ell}(\GSp_2)$ with $T_{1,D}\neq T_{2,D}$. We can define the geometric multiplicity $m_{geom}(\pi_D)$ to be
\begin{eqnarray*}
&&\sum_{T_D\in \CT_{ell}(H_D)}|W(H_D,T_D)|^{-1}\int_{T_D(F)/Z_{G_D,H_D}(F)}^{\ast} D^{H_D}(t)\theta_{\pi_D}(t)\ud t\\
&&+\frac{1}{2}\sum_{T_D\in \CT_{ell}(\GSp_1(D))}\big(\int_{T_D(F)/Z_{\GL_1(D)}(F)}^{\ast} D^{H_D}(\nu_{T_D}(t))c_{\pi_D}(\nu_{T_D}(t))\ud t\\
&&+\sum_{i\in \{1,2\}}\int_{T_{D}^{2,0}(F)/Z_{\GL_1(D)}(F)}^{\ast} D^{H_D}(\nu_{T_{D}^{2,0},i}(t))c_{\pi_D}(\nu_{T_{D}^{2,0},i}(t))\ud t \big)\\
&&+\frac{1}{4}\sum_{T_{1,D},T_{2,D}\in \CT_{ell}(\GSp_1(D)),T_{1,D}\neq T_{2,D}}\int_{(T_{1,D}\times T_{2,D})^0(F)/Z_{\GL_1(D)}(F)^{diag}}^{\ast} \\
&&D^{H_D}(\nu_{T_{1,D},T_{2,D}}(t)) c_{\pi_D}(\nu_{T_{1,D},T_{2,D}}(t))\ud t.
\end{eqnarray*}
The only difference between $m_{geom}(\pi)$ and $m_{geom}(\pi_D)$ is that $m_{geom}(\pi)$ contains the germ at 1 (since $G(F)$ is quasi-split) while $m_{geom}(\pi_D)$ dose not. The following theorem gives a geometric multiplicity formula for the model.

\begin{thm}
For all tempered representations $\pi$ of $G(F)$ (resp. $\pi_D$ of $G_D(F)$) whose central character is trivial on $Z_{G,H}(F)$ (resp. $Z_{G_D,H_D}(F)$), we have
$$m(\pi)=m_{geom}(\pi),\;m(\pi_D)=m_{geom}(\pi_D).$$
\end{thm}

\begin{proof}
This follows from a similar but easier argument as in the Gan--Gross--Prasad model case (\cite{W10}, \cite{W12},  \cite{B15}) and the Ginzburg--Rallis model case (\cite{Wan15}, \cite{Wan16}, \cite{Wan17}). The argument is easier for this model because it is reductive and hence there is no need to regularize the integral over $H$. The only difference is that the proofs in the above papers used the Gelfand pair condition (i.e. $m(\pi)\leq 1$ for all irreducible representation $\pi$ of $G(F)$) which is not known for this model. But this can be solved by the same argument as the unitary Ginzburg--Rallis model case in our previous paper (Section 6 and Appendix A of \cite{WZ}). We will skip the proof.
\end{proof}

If $F=\BC$, then any tempered representation of $G(F)$ is generic and we have $m_{geom}(\pi)=c_{\pi}(1)=1$ by the result for Whittaker model in \cite{Mat}. Hence the above theorem implies that $m(\pi)=1$ for all tempered representations $\pi$ of $G(F)$ with trivial central character (note that the $L$-packet only contains one element in the complex case).

If $F\neq \BC$, let $\Pi_{\phi}=\Pi_{\phi}(G)\cup \Pi_{\phi}(G_D)$ be a tempered local $L$-packet of $G$ whose central character is trivial on $Z_{G,H}(F)$. Assume that the local Langlands conjecture holds for $G(F)$. Let 
$$\theta_{\Pi_{\phi}(G)}=\sum_{\pi\in \Pi_{\phi}(G)}\dim(\chi_\pi)\theta_{\pi},\;\theta_{\Pi_{\phi}(G_D)}=\sum_{\pi_D\in \Pi_{\phi}(G_D)}\dim(\chi_{\pi_D})\theta_{\pi_D}$$
be the corresponding stable characters (note that $G(F)$ has a unique Whittaker datum). The Kottwitz sign between $G$ and $G_D$ is $-1$. Hence we have 
$$\theta_{\Pi_{\phi}(G)}(g)=-\theta_{\Pi_{\phi}(G_D)}(g_D),\;\forall g\in G_{reg}(F),g_D\in G_D(F),g\leftrightarrow g_D.$$
Combining with Proposition 4.5.1 of \cite{B15}, we have
$$c_{\theta_{\Pi_{\phi}(G)}}(\nu_{T}(t))=-c_{\theta_{\Pi_{\phi}(G_D)}}(\nu_{T_D}(t_D)),\;\forall t\in T(F)\leftrightarrow t_D\in T_D(F);$$
$$c_{\theta_{\Pi_{\phi}(G)}}(\nu_{T^{2,0},i}(t))=-c_{\theta_{\Pi_{\phi}(G_D)}}(\nu_{T_{D}^{2,0},i}(t_D)),\;\forall t\in T^{2,0}(F)\leftrightarrow t_D\in T_{D}^{2,0}(F);$$
$$c_{\theta_{\Pi_{\phi}(G)}}(\nu_{T_1,T_2}(t))=-c_{\theta_{\Pi_{\phi}(G_D)}}(\nu_{T_{1,D},T_{2,D}}(t_D)),$$
$$\forall t\in (T_1\times T_2)^0(F)\leftrightarrow t_D\in (T_{1,D}\times T_{2,D})^0(F).$$
Here $g\leftrightarrow g_D$ (resp. $t\leftrightarrow t_D$) means that they have the same characteristic polynomial. Together with the multiplicity formula, we have
$$\sum_{\pi\in \Pi_{\phi}(G)} \dim(\chi_\pi)m(\pi)+\sum_{\pi_D\in \Pi_{\phi}(G_D)}\dim(\chi_{\pi_D})m(\pi_D)$$
$$= m_{geom}(\theta_{\Pi_\phi(G)})+m_{geom}(\theta_{\Pi_\phi(G_D)})=c_{\theta_{\Pi_{\phi}(G)}}(1)=1$$
where the last equality follows from the results for Whittaker model in \cite{Rod81}, \cite{Mat} and the fact that there is a unique generic element in the $L$-packet. In particular, we have proved that the summation of the multiplicities is equal to 1 over every tempered local Vogan $L$-packet and the unique distinguished element corresponds to a character of the component group.

\subsubsection{The model $(\GU_{4}\times \GU_{2},(\GU_2\times \GU_2)^0)$}
Let $(G,H)=(\GU_{2,2}\times \GU_{1,1},(\GU_{1,1}\times \GU_{1,1})^0)$, and 
$$(G_1,H_1)=(\GU_{2,2}\times \GU_{2,0},(\GU_{2,0}\times \GU_{0,2})^0),$$
$$(G_2,H_2)=(\GU_{3,1}\times \GU_{1,1},(\GU_{1,1}\times \GU_{2,0})^0),$$
$$(G_3,H_3)=(\GU_{3,1}\times \GU_{2,0},(\GU_{2,0}\times \GU_{1,1})^0),$$
$$(G_4,H_4)=(\GU_{4,0}\times \GU_{2,0},(\GU_{2,0}\times \GU_{2,0})^0)$$
be the pure inner forms (the pair $(G_4,H_4)$ only appears in the archimedean case).

Let $T_0$ be the unique element in $\CT_{ell}(\GU_{1,1})=\CT_{ell}(\GU_{2,0})$ that is isomorphic to 
$$E^{2,0}:=\{(a,b)\in E^\times \times E^{\times}|\;a\bar{a}=b\bar{b}\}\subset E^\times \times E^{\times}.$$ 
For $T\in \CT_{ell}(\GU_{1,1})=\CT_{ell}(\GU_{2,0})$ with $T\neq T_0$, let $$(T\times T)^{0}=\{(t_1,t_2)\in T\times T|\;\lambda(t_1)=\lambda(t_2)\}.$$ Up to conjugation, there is a unique embedding from $(T\times T)^{0}$ to $(\GU_{1,1}\times \GU_{1,1})^0$ (resp. $(\GU_{2,0}\times \GU_{0,2})^0$). Combining with the diagonal embedding from $T$ to $(T\times T)^{0}$, we get an embedding (denoted by $\nu_T$) from $T$ to $G$ (resp. $G_1(F)$) that factors through $H$ (resp. $H_1$), and we will denote this embedding by $\nu_T$ (resp. $\nu_{1,T}$).

For $T_0$, in the p-adic case up to conjugation there are two embeddings from $(T_0\times T_0)^{0}$ to $(\GU_{1,1}\times \GU_{1,1})^0$ (resp. $(\GU_{2,0}\times \GU_{0,2})^0$). Combining with the diagonal embedding from $T_0$ to $(T_0\times T_0)^{0}$, we get two embeddings  from $T_0$ to $G$ (resp. $G_1$). The centralizer of the image of one of the embedding is quasi-split (it is isomorphic to $(\GU_{1,1}\times \GU_{1,1})^0$ times a torus), we will denote this embedding by $\nu_{T_0}$ (resp. $\nu_{1,T_0}$), while the centralizer of the image of  the other embedding is not quasi-split. In the archimedean case, we can define the embedding $\nu_{T_0}$ in the same way as in the p-adic case. On the other hand, up to conjugation there is only one embedding from $(T_0\times T_0)^{0}$ to $(\GU_{2,0}\times \GU_{0,2})^0$ and this defines the embedding $\nu_{1,T_0}$. Note that in this case the centralizer of the image of $\nu_{1,T_0}$ is still quasi-split.

\begin{rmk}
For $T\in \CT_{ell}(\GU_{1,1})$, we can also define the embeddings to $G_2$ and $G_3$ (also $G_4$ in the archimedean case), but the centralizer of the images will not be quasi-split.
\end{rmk}

Meanwhile, consider the following two subgroups of  $(T_0\times T_0)^0$ (we identify $T_0$ with $E^{2,0}=\{(a,b)\in E^\times \times E^{\times}|\;a\bar{a}=b\bar{b}\}$):
$$T_0'=\{(1,1)\times (1,a)\in (T_0\times T_0)^{0}|\;a\in E^1\},$$
$$T_0''=\{(1,a)\times (1,b)\in (T_0\times T_0)^{0}|\;a,b\in E^1\}.$$
The two embeddings from $(T_0\times T_0)^{0}$ to $(\GU_{1,1}\times \GU_{1,1})^0$ (resp. $(\GU_{1,1}\times \GU_{2,0})^0$) induce two embeddings from $T_0'$ to $G$ (resp. $G_2$) that are conjugated to each other. Let $\nu_{T_0'}$ (resp. $\nu_{2,T_0'}$) be one of the embedding. Note that the projection of these embeddings to the $\GU_{1,1}$-factor is the trivial map. The centralizers of the image of these embeddings are quasi-split (they are isomorphic to $\GU_3\times \GU_{1,1}\times U_1$).

\begin{rmk}
We can also define embeddings from $T_0'$ to $G_1$ and $G_3$ (also $G_4$ in the archimedean case), but the centralizer of the images will not be quasi-split.
\end{rmk}

On the other hand, the two embeddings from $(T_0\times T_0)^{0}$ to $(\GU_{1,1}\times \GU_{1,1})^0$ induce two embeddings from $T_0''$ to $G$. The centralizer of the image of one of the embedding is quasi-split (isomorphic to $\GU_{1,1}$ times some torus, we will denote this embedding by $\nu_{T_0''}$) and the centralizer of the image of the other embedding is not quasi-split. Similarly, we can also define the embeddings $\nu_{i,T_0''}$ from $T_0''$ to $G_i$ for $1\leq i\leq 3$. 

\begin{rmk}
We can also define the embedding from $T_0''$ to $G_4$ in the archimedean case but the centralizer of the images will not be quasi-split.
\end{rmk}

Now we are ready to define the geometric multiplicity. Let $\pi$ (resp. $\pi_i$) be an irreducible representation of $G(F)$ (resp. $G_i(F)$) with trivial central character. For $T\in \CT_{ell}(\GU_{1,1})=\CT_{ell}(\GU_{2,0})$, we use $T^{\ast}(F)$ to denote $T(F)/Z_{\GU_{1,1}}(F)=T(F)/Z_{\GU_{2,0}}(F)$. Define
\begin{eqnarray*}
m_{geom}(\pi)&=&c_\pi(1)+\sum_{T\in \CT_{ell}(H)} |W(H,T)|^{-1} \int_{T(F)/Z_{G,H}(F)}^{\ast} D^H(t)\theta_\pi(t) \ud t\\
&&+\frac{1}{2}\sum_{T\in \CT_{ell}(\GU_{1,1})}\int_{T^\ast(F)}^{\ast}  D^H(\nu_T(t))c_\pi(\nu_T(t)) \ud t\\
&&+\int_{T_{0}'(F)}^{\ast} D^H(\nu_{T_0'}(t))c_\pi(\nu_{T_0'}(t)) \ud t\\
&&+\int_{T_{0}''(F)}^{\ast} D^H(\nu_{T_0''}(t))c_\pi(\nu_{T_0''}(t)) \ud t,
\end{eqnarray*}
\begin{eqnarray*}
m_{geom}(\pi_1)&=&\sum_{T\in \CT_{ell}(H_1)} |W(H_1,T)|^{-1} \int_{T(F)/Z_{G_1,H_1}(F)}^{\ast} D^{H_1}(t)\theta_{\pi_1}(t) \ud t\\
&&+\frac{1}{2}\sum_{T\in \CT_{ell}(\GU_{2,0})} \int_{T^\ast(F)}^{\ast} D^{H_1}(\nu_{1,T}(t))c_{\pi_1}(\nu_{1,T}(t)) \ud t\\
&&+\int_{T_{0}''(F)}^{\ast}D^{H_1}(\nu_{1,T_0''}(t))c_{\pi_1}(\nu_{1,T_0''}(t)) \ud t,
\end{eqnarray*}
\begin{eqnarray*}
m_{geom}(\pi_2)&=&\sum_{T\in \CT_{ell}(H_2)} |W(H_2,T)|^{-1} \int_{T(F)/Z_{G_2,H_2}(F)}^{\ast} D^{H_2}(t)\theta_{\pi_2}(t) \ud t\\
&&+\int_{T_{0}'(F)}^{\ast} D^{H_2}(\nu_{2,T_0'}(t))c_{\pi_2}(\nu_{2,T_0'}(t)) \ud t\\
&&+\int_{T_{0}''(F)}^{\ast} D^{H_2}(\nu_{2,T_0''}(t))c_{\pi_2}(\nu_{2,T_0''}(t)) \ud t,
\end{eqnarray*}
$$m_{geom}(\pi_3)=\sum_{T\in \CT_{ell}(H_3)} |W(H_3,T)|^{-1} \int_{T(F)/Z_{G_3,H_3}(F)}^{\ast} D^{H_3}(t)\theta_{\pi_3}(t) \ud t$$
$$+\int_{T_{0}''(F)}^{\ast} D^{H_3}(\nu_{3,T_0''}(t))c_{\pi_3}(\nu_{3,T_0''}(t)) \ud t.$$
If we are in the archimedean case, we also define
$$m_{geom}(\pi_4)=\sum_{T\in \CT_{ell}(H_4)} |W(H_4,T)|^{-1} \int_{T(F)/Z_{G_4,H_4}(F)}^{\ast} D^{H_4}(t)\theta_{\pi_4}(t) \ud t.$$

Like in the previous case, we always choose the Haar measure so that the total volume is equal to 1 and the extra $\frac{1}{2}$ factor comes from the cardinality of the Weyl group of $\GU_2$. Also the integrals in the geometric multiplicity may not be absolutely convergent and they need to be regularized (see Remark \ref{local convergence}). We leave it as an excise for the reader to check that our definition of $m_{geom}(\pi)$ matches the definition in \cite{Wan} for general spherical varieties. Like the previous case, by a similar but easier argument as in the Gan--Gross--Prasad model case (\cite{W10}, \cite{W12},  \cite{B15}) and the Ginzburg--Rallis model case (\cite{Wan15}, \cite{Wan16}, \cite{Wan17}, \cite{WZ}), we can prove the following theorem.

\begin{thm}
For all tempered representations $\pi$ of $G(F)$ (resp. $\pi_i$ of $G_i(F)$) whose central character is trivial on $Z_{G,H}(F)$ (resp. $Z_{G_i,H_i}(F)$), we have
$$m(\pi)=m_{geom}(\pi),\;m(\pi_i)=m_{geom}(\pi_i).$$
\end{thm}

Now let $\Pi_{\phi}=\Pi_{\phi}(G)\cup \Pi_{\phi}(G_i)$ be a tempered local $L$-packet whose central character is trivial on $Z_{G,H}(F)$ ($1\leq i\leq 3$ in the $p$-adic case and $1\leq i\leq 4$ in the archimedean case). We can also define the character $\theta_{\Pi_\phi(G)}$ and $\theta_{\Pi_\phi(G_i)}$ as before (note that the component group is always abelian in this case). The summation $\sum_{\pi\in \Pi_{\phi}(G)} m(\pi)+\sum_{1\leq i\leq k,\pi_i\in \Pi_{\phi}(G_i)}m(\pi_i)$ is equal to
$$m_{geom}(\theta_{\Pi_\phi(G)})+\sum_{i=1}^{k}m_{geom}(\theta_{\Pi_\phi(G_i)})=c_{\theta_{\Pi_{\phi}(G)}}(1)=1$$
where $k=3$ in the $p$-adic case and $k=4$ in the archimedean case. Here the last equality follows from the results for Whittaker model in \cite{Mat}, \cite{MW} and the fact that there is a unique generic element in the $L$-packet. For the identity
$$m_{geom}(\theta_{\Pi_\phi(G)})+\sum_{i=1}^{k}m_{geom}(\theta_{\Pi_\phi(G_i)})=c_{\theta_{\Pi_{\phi}(G)}}(1),$$
we just need to apply the following cancellations (the Kottwitz sign between $G$ and $G_3$ is equal to 1, the Kottwitz sign between $G$ and $G_i$ is equal to -1 for $i=1,2,4$)
\begin{itemize}
\item The term $\sum_{T\in \CT_{ell}(H)}$ in $m_{geom}(\theta_{\Pi_\phi(G)})$ plus the term $\sum_{T\in \CT_{ell}(H_3)}$ in $m_{geom}(\theta_{\Pi_\phi(G_3)})$ can be cancelled with the term $\sum_{T\in \CT_{ell}(H_1)}$ in $m_{geom}(\theta_{\Pi_\phi(G_1)})$ plus the term  $\sum_{T\in \CT_{ell}(H_2)}$ in $m_{geom}(\theta_{\Pi_\phi(G_2)})$ (and also plus the term $\sum_{T\in \CT_{ell}(H_4)}$ in $m_{geom}(\theta_{\Pi_\phi(G_4)})$ if we are in the archimedean case).
\item The term $\frac{1}{2}\sum_{T\in \CT_{ell}(\GU_{1,1})}$ in $m_{geom}(\theta_{\Pi_\phi(G)})$ can be cancelled with the term $\frac{1}{2}\sum_{T\in \CT_{ell}(\GU_{2,0})}$ in $m_{geom}(\theta_{\Pi_\phi(G_1)})$.
\item The term associated to $T_{0}'$ in $m_{geom}(\theta_{\Pi_\phi(G)})$ can be cancelled with  the term associated to $T_{0}'$ in $m_{geom}(\theta_{\Pi_\phi(G_2)})$. 
\item The terms associated to $T_{0}''$ in $m_{geom}(\theta_{\Pi_\phi(G)})$ and $m_{geom}(\theta_{\Pi_\phi(G_3)})$ can be cancelled with  the terms associated to $T_{0}''$ in $m_{geom}(\theta_{\Pi_\phi(G_1)})$ and $m_{geom}(\theta_{\Pi_\phi(G_2)})$. 
\end{itemize}

In particular, we have proved that the summation of the multiplicities is equal to 1 over every tempered local Vogan $L$-packet.

\subsection{The non-reductive case}\label{sec:non-reductive}
In this subsection we consider the non-reductive cases. Let $(G,H)=(G,H_0\ltimes U)$ be one of the non-reductive models in Table \ref{fig:1}. For all the cases, $H_0(F)$ is essentially $\GL_2(F)$ (up to the center). If $F\neq \BC$, we let $(G_D,H_{0,D}\ltimes U_D)$ be the quaternion version of the model. 

Let $\CT_{ell}(H_0)$ (resp. $\CT_{ell}(H_{0,D})$) be a set of representatives of maximal elliptic tori of $H_0(F)$ (resp. $H_{0,D}(F)$). Define
$$m_{geom}(\pi)=c_\pi(1)+\sum_{T\in \CT_{ell}(H_0)}|W(H_0,T)|^{-1}\int_{T(F)/Z_{G,H}(F)} D^{H}(t) c_{\pi}(t)\ud t,$$ 
\begin{eqnarray*}
m_{geom}(\pi_D)&=&\sum_{T_D\in \CT_{ell}(H_{0,D})}|W(H_{0,D},T_D)|^{-1}\\
&&\cdot \int_{T_D(F)/Z_{G_D,H_D}(F)} D^{H_D}(t)c_{\pi_D}(t) \ud t
\end{eqnarray*}
where $\pi$ (resp. $\pi_D$) is an  irreducible admissible representation of $G(F)$ (resp. $G_D(F)$) with trivial central character, $W(H_0,T)$ (resp. $W(H_{0,D},T_D)$ is the Weyl group,
and all the Haar measure are chosen so that the total volume is equal to 1. Again we leave it as an excise for the reader to check that our definition of $m_{geom}(\pi)$ matches the definition in \cite{Wan} for general spherical varieties.

\begin{thm}
Assume that $F\neq \BR$, and $(G,H)$ is not the last model $(E_7,\PGL_2\ltimes U)$ in Table \ref{fig:1}. For all tempered representations $\pi$ of $G(F)$ (resp. $\pi_D$ of $G_D(F)$) whose central character is trivial on $Z_{G,H}(F)$ (resp. $Z_{G_D,H_D}(F)$), we have
$$m(\pi)=m_{geom}(\pi),\;m(\pi_D)=m_{geom}(\pi_D).$$
\end{thm}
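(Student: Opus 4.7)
The plan is to adapt the local relative trace formula approach developed by Waldspurger for the orthogonal Gan--Gross--Prasad model in \cite{W10}, \cite{W12}, by Beuzart-Plessis for the unitary case in \cite{B15}, and by the first author for the Ginzburg--Rallis model in \cite{Wan15}, \cite{Wan16}, \cite{Wan17}, together with the fix circumventing the Gelfand pair hypothesis introduced in Section 6 and Appendix A of \cite{WZ}. Fix a non-reductive pair $(G,H)=(G,H_0\ltimes U)$ from Table \ref{fig:1} other than the $E_{7}$ row. In the complex case there is nothing to prove: every tempered $\pi$ is generic, so Matumoto's theorem gives $c_{\pi}(1)=1$ and $m_{geom}(\pi)=1$, while Whittaker induction together with the trilinear $\GL_2$-multiplicity-one forces $m(\pi)=1$. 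Hence one may assume $F$ is $p$-adic, which is the main case.

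The first step is to introduce a bilinear distribution
\[
J(f,f')=\int_{H_{0}(F)/Z_{G,H}(F)}^{\ast}(f*f')(h)\,\xi(h)^{-1}\,dh,\qquad f,f'\in C_{c}^{\infty}(Z_{G}(F)\backslash G(F)),
\]
where the outer integral is regularized exactly as in Lemma \ref{normalization of induced representation}: either by truncating $U(F)$ by $U_{n}(F)$ or by replacing $\xi$ by the Schwartz cut-offs $\varphi_{n}\circ\lambda$. Strong-temperedness of the trilinear $\GL_{2}$-model $(L,H_{0},\xi)$, together with the $a(t)$-equivariance coming from \eqref{the map a}, guarantees as in Section \ref{sec non-red strategy 1} that the regularized integral is well defined and that the two regularizations agree and stabilize for $n$ large. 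One then proves a geometric and a spectral expansion of $J(f,f')$, whose equality will yield the multiplicity formula.

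For the spectral side, one takes $f,f'$ to be truncated matrix coefficients of a fixed tempered representation $\pi$ and applies the Plancherel formula on $G(F)$. The argument is formally identical to Section 8 of \cite{W12} and Section 12 of \cite{B15}: the spectral side of $J(f,f')$ computes the multiplicity of the trivial representation in the restriction of $\pi\otimes\bar\pi$ to $H$ twisted by $\xi\otimes\xi^{-1}$, which equals $m(\pi)\overline{m(\pi)}$. Because we have not proved that $(G,H,\xi)$ is a Gelfand triple, one invokes the workaround of \cite[\S 6, Appendix A]{WZ}, which identifies $m(\pi)$ directly from $J(f,f')$ by using a second auxiliary matrix coefficient and the tempered intertwining; this bypasses the need to know a priori that $\dim\Hom_{H(F)}(\pi,\xi)\leq 1$. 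For the geometric side, one carries out Waldspurger's localization: using the Harish-Chandra--Howe local expansion of matrix coefficients, the distribution $J(f,f')$ is rewritten as a sum of weighted orbital integrals over the semisimple conjugacy classes in $H_{0}(F)$ meeting the support of the test functions. Semisimple descent around each $t\in T(F)$, $T\in\CT_{ell}(H_{0})$, together with the fact that $G_{t}$ inherits a strongly tempered spherical subgroup, reduces each local contribution to $|W(H_{0},T)|^{-1}D^{H}(t)c_{\pi}(t)$; the contribution at $t=1$ yields $c_{\pi}(1)$ in the quasi-split case and vanishes on $G_{D}(F)$, since there the centralizer of $1$ is not quasi-split and no regular unipotent orbit exists. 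Non-elliptic semisimple contributions vanish after parabolic truncation by the standard weighted-orbital-integral argument, because tempered matrix coefficients are majorized by Harish-Chandra's $\Xi$-function and $(L,H_{0})$ is strongly tempered.

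The main obstacle is the control of the geometric expansion in the presence of the non-trivial unipotent radical $U$. One has to show that the regularization in Lemma \ref{normalization of induced representation} commutes with the semisimple localization, i.e.\ that the cut-offs $\varphi_{n}\circ\lambda$ introduce no extra boundary contribution beyond the elliptic semisimple terms. This is precisely the point where the $a(t)$-equivariance of $\lambda$ from \eqref{the map a} and the stability of the unipotent integral on tempered matrix coefficients (analogous to \cite[Lemma 4.3.1]{Wan17b}) are needed; they allow the majorization estimates of \cite[\S 7]{Wan16} to be carried out uniformly in $n$. This is why the argument breaks at the $E_{7}$ model, where $H_{0}=\PGL_{2}$ is not of similitude type and the descent of $\xi$ to centralizers $G_{t}$ produces pairs that are outside the scope of the Ginzburg--Rallis--type analysis, and why the real place is excluded (the analogous germ estimates and Plancherel majorizations in the archimedean setting require the analytic machinery of \cite{B15}, which we plan to carry out separately).
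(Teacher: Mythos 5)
Your overall architecture matches what the paper actually does (the paper's own ``proof'' consists of citing \cite{Wan15}, \cite{Wan16}, \cite{Wan17} for Model 4, \cite{WZ} for Model 5, and asserting that the same arguments carry over to the remaining four models): a regularized relative-trace distribution $J(f,f')$ built from the cut-offs of Lemma \ref{normalization of induced representation}, a spectral expansion via truncated tempered matrix coefficients and the Plancherel formula, a geometric expansion via the Harish-Chandra--Howe local expansion and semisimple descent, and the device of \cite{WZ} (Section 6 and Appendix A) to avoid assuming the Gelfand-pair property. So the route is the right one.

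There is, however, a concrete gap on the geometric side: you never address the slice representation, which is where essentially all of the model-specific work lies. In the localization argument one must analyze the conjugation action of $H(F)$ on the normal space at each semisimple point of $H_0(F)$ and verify that the regular orbits there coincide with the stable conjugacy classes of $G(F)$; in the references this is done by an explicit characteristic-polynomial computation (Section 9 of \cite{W10}, Section 10 of \cite{B15}, Section 8 of \cite{Wan15}), and carrying out this verification for $\GSp_{10}$, $\GSp_6\times\GL_2$, $\GSO_8\times\GL_2$ and $\GSO_{12}$ is precisely what allows the Ginzburg--Rallis argument to be transported. It is also exactly the step that fails for $E_7$: the paper excludes that model only because its matrix presentation makes the characteristic-polynomial computation intractable, not because $H_0=\PGL_2$ is ``not of similitude type'' or because descent leaves the Ginzburg--Rallis framework, as you claim. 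Similarly, the real place is excluded not because of germ estimates or Plancherel majorizations (those are available from \cite{B15}) but because the nonvanishing of the explicit normalized intertwining functional is not known to be preserved under parabolic induction for these non-reductive models over $\BR$, and one cannot pass to a reductive model of a larger group as in the Gan--Gross--Prasad case; over $\BC$ this is rescued by the fact that every tempered representation is fully induced from a product of $\GL_2$'s, which is the input your one-line treatment of the complex case actually needs rather than a bare appeal to ``Whittaker induction plus trilinear multiplicity one.'' Supplying the slice-representation verification for the four new models would close the gap.
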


\begin{proof}
The multiplicity formula for Model 4 in Table \ref{fig:1} has been proved in the previous papers of the first author (\cite{Wan15}, \cite{Wan16}, \cite{Wan17}), and the multiplicity formula for the Model 5 has been proved in our previous paper \cite{WZ}. The argument for the remaining 4 models is very similar to the Ginzburg--Rallis model case (\cite{Wan15}, \cite{Wan16}, \cite{Wan17}), we will skip it here. Like the reductive case,  the Gelfand pair condition is not known for these models, but it can be solved by the same argument as the unitary Ginzburg-–Rallis model case in our previous paper (Section 6 and Appendix A of \cite{WZ}). 

The reason we need to assume that $F\neq \BR$ is that in the case when $F=\BR$, we don't know how to prove the nonvanishing property of certain explicit intertwining operator is invariant under the parabolic induction because the operator is defined by a normalized integral in the non-reductive case and it is not clear how to study it under the parabolic induction in the real case. In Gan--Gross--Prasad case (Section 7.4 of \cite{B15}), this can be solved by passing to a reductive model of a larger group (e.g. instead of studying  $(U_{n+2k+1}\times U_n,U_n\ltimes N)$ one can just study $(U_{n+2k+1},U_{n+2k})$). But for all the cases in Table \ref{fig:1}, we cannot pass it to a reductive model of a larger group simply because such a module does not exist. For Model 4 in Table \ref{fig:1}, we solved this issue by using a special property that all the tempered representations of $\GL_6(\BR)$ are the parabolic induction of some tempered representations of $\GL_2(\BR)\times \GL_2(\BR)\times \GL_2(\BR)$, see Section 5.4 of \cite{Wan16} for details. But this is not true for Models 5--10 of Table \ref{fig:1} (although it is still true in the complex case which is why we can prove the multiplicity formula in the complex case). In general if one can prove that the nonvanishing property of the explicit intertwining operator is invariant under the parabolic induction, then we can also prove the multiplicity formula in the real case.

On the other hand, the reason we exclude the  model $(E_7,\PGL_2\ltimes U)$ is that in the proof of the geometric side of the trace formula, we need to study the slice representation, i.e. the conjugation action of $H(F)$ on the tangent space. We need to show that the regular orbits coincide with the stable conjugacy classes of $G(F)$. For all the other cases, this can be down by computing the characteristic polynomials as in the Gan--Gross--Prasad model case (Section 9 of \cite{W10} and Section 10 of \cite{B15}) and the Ginzburg--Rallis model case (Section 8 of \cite{Wan15}). But this is not possible for the $E_7$ case since the matrix presentation of $E_7$ is very complicated. If one can prove this result for the model $(E_7,\PGL_2\ltimes U)$, then we can also prove the multiplicity formula in this case.

\end{proof}

As in the reductive cases, combining the multiplicity formulas and the local Langlands conjecture, we can show that for any tempered L-packet $\Pi_\phi=\Pi_\phi(G)\cup \Pi_\phi(G_D)$ of $G(F)$ whose central character is trivial on $Z_{G,H}(F)$, the summation
$$\sum_{\pi\in \Pi_{\phi}(G)} \dim(\chi_\pi)m(\pi)+\sum_{\pi_D\in \Pi_{\phi}(G_D)}\dim(\chi_{\pi_D})m(\pi_D)$$ 
is equal to
$$m_{geom}(\theta_{\Pi_\phi(G)})+m_{geom}(\theta_{\Pi_\phi(G_D)})=c_{\theta_{\Pi_{\phi}(G)}}(1)=1.$$
In other words, the summation of the multiplicities is equal to 1 over every tempered local Vogan $L$-packet and the unique distinguished element corresponds to a character of the component group.

\end{document}